\documentclass[12pt]{amsart}
\usepackage{amssymb}
\usepackage{verbatim}
\usepackage{mathrsfs}
\usepackage{mathtools}

\usepackage{xcolor}

\newtheorem{thm}{Theorem}[section]

\newtheorem{lem}[thm]{Lemma}
\newtheorem{cor}[thm]{Corollary}

\theoremstyle{definition}

\theoremstyle{remark}

\newtheorem*{rem}{Remark}

\frenchspacing

\textwidth=17cm
\textheight=23cm
\parindent=16pt
\oddsidemargin=-0.5cm
\evensidemargin=-0.5cm
\topmargin=-0.5cm

\numberwithin{equation}{section}

\newcommand{\M}{\mathcal{M}}
\newcommand{\MT}{\mathcal{MT}}
\newcommand{\Co}{\mathcal{C}}
\newcommand{\ZF}{\zeta_7}
\newcommand{\ET}{\mathcal{ET}}
\newcommand{\HyG}{ {}_2F_1 }
\newcommand{\HyGI}{ {}_2\mathrm{I}_1 }
\newcommand{\ZC}{\mathcal{Z}_4}
\newcommand{\AlphaVec}{\bar{\alpha}}
\newcommand{\TD}{\mathcal{TD}}
\newcommand{\DU}{\eta} 
\newcommand{\U}{\mathcal{U}}
\newcommand{\E}{\mathcal{E}}
\newcommand{\RF}{\mathcal{D}}
\newcommand{\Sok}{\mathbf{S}}
\newcommand{\ZE}{\mathcal{F}}
\newcommand{\De}{\mathcal{I}}

\newcommand{\GenHyG}[5]{ {}_{#1}F_{#2} \left( \begin{matrix} #3 \\ #4 \end{matrix} ; #5 \right) }
\newcommand{\GenHyGI}[5]{ {}_{#1}\mathrm{I}_{#2} \left( \begin{matrix} #3 \\ #4 \end{matrix} ; #5 \right) }

\newcommand{\term}{T}
\newcommand{\mainU}{MU}



\newcommand{\Z}{\mathbf{Z}}

\newcommand{\Y}{\mathcal{Y}}
\newcommand{\y}{\mathbf{y}}
\newcommand{\W}{\mathbf{W}}


\DeclareMathOperator{\res}{res}

\DeclareMathOperator{\artanh}{artanh}


\mathtoolsset{showonlyrefs}

\begin{document}

\title[The shifted fourth moment of modular form $L$-functions]{The shifted fourth moment of modular form $L$-functions in the weight aspect}

\author[O.Balkanova]{Olga Balkanova}
\address{
Steklov Mathematical Institute of Russian Academy of Sciences, 8 Gubkina st., Moscow, 119991, Russia}
\email{balkanova@mi-ras.ru}

\author[D. Frolenkov]{Dmitry  Frolenkov}
\address{
Steklov Mathematical Institute of Russian Academy of Sciences, 8 Gubkina st., Moscow, 119991, Russia}
\email{frolenkov@mi-ras.ru}
\thanks{Research  was   supported   by  the  Theoretical   Physics   and   Mathematics
Advancement Foundation "BASIS".}

\begin{abstract}
We prove a reciprocity type formula for the fourth moment of $L$-functions associated to holomorphic primitive cusp forms of level one and large weight which relates it to the eighth moment of the Riemann zeta function and the dual weighted fourth moments of automorphic $L$-functions (both holomorphic and Maass). The main objective of the paper is to study the structure of the main term for possible generalization of the method to higher moments.

\end{abstract}

\keywords{$L$-functions, moments, weight aspect}
\subjclass[2010]{Primary: 11F11, 11M99, 33C20}
\dedicatory{To the 85th anniversary of the birth of Professor N.V. Kuznetsov}

\maketitle


\section{Introduction}
Consider the shifted fourth moment
\begin{equation}\label{4moment def}
\M_4(\alpha_1,\alpha_2,\alpha_3,\alpha_4):=\sum_{f\in H_{2k}} \omega_f\prod_{j=1}^4L_f(1/2+\alpha_j)
\end{equation}
of $L$-functions associated to holomorphic primitive cusp forms of weight $2k \geq 2$ and level one.
Following the "recipe" of Conrey, Farmer, Keating, Rubinstein and Snaith (see \cite{CFKRS}) the main term of  \eqref{4moment def}   is
conjectured to be
\begin{equation}\label{MT(a1,a2,a3,a4) conj}
\MT_4(\alpha_1,\alpha_2,\alpha_3,\alpha_4)=
\sum_{\epsilon_1,\epsilon_2,\epsilon_3,\epsilon_4=\pm1}
\Co(\epsilon_1\alpha_1,\epsilon_2\alpha_2,\epsilon_3\alpha_3,\epsilon_4\alpha_4)
\ZF(\epsilon_1\alpha_1,\epsilon_2\alpha_2,\epsilon_3\alpha_3,\epsilon_4\alpha_4),
\end{equation}
where
\begin{equation}\label{7zeta def}
\ZF(\epsilon_1\alpha_1,\epsilon_2\alpha_2,\epsilon_3\alpha_3,\epsilon_4\alpha_4)=
\zeta^{-1}\left(2+\sum_{j=1}^4\epsilon_j\alpha_j\right)
\prod_{1\le i<j\le4}\zeta(1+\epsilon_i\alpha_i+\epsilon_j\alpha_j),
\end{equation}
\begin{equation}\label{Co def}
\Co(\epsilon_1\alpha_1,\epsilon_2\alpha_2,\epsilon_3\alpha_3,\epsilon_4\alpha_4)=
\prod_{j=1}^4\epsilon_j^{k}\prod_{\epsilon_j=-1}X_k(\alpha_j)
\end{equation}
and
\begin{equation}\label{Xdef}
X_{k}(\alpha):=(2\pi)^{2\alpha}\frac{\Gamma(k-\alpha)}{\Gamma(k+\alpha)}.
\end{equation}

Here $\Gamma(x)$ stands for the Gamma functions and $\zeta(x)$ for the Riemann zeta function.

Throughout the paper, we use the notation $\AlphaVec:=(\alpha_1,\alpha_2,\alpha_3,\alpha_4)$.

Our main result is a reciprocity type formula  for \eqref{4moment def}. See \cite{BF2momsymsq} for a brief survey of such formulas.
The obtained relation translates the original moment to different fourth moments of automorphic $L$-functions and the eighth moment of the Riemann zeta-function.
\begin{thm}\label{thm:main}
For $|\alpha_j|\ll\epsilon_0$ the following formula holds:
\begin{multline}\label{4mom result}
\M_4(\alpha_1,\alpha_2,\alpha_3,\alpha_4)=\MT_4(\alpha_1,\alpha_2,\alpha_3,\alpha_4)\\+
\left(\term_{d}^{1}+\term_{h}^{1}+\term_{c}^{1}\right)(\alpha_1,\alpha_2,\alpha_3,\alpha_4)+
\left(\term_{d}^{2}+\term_{h}^{2}+\term_{c}^{2}\right)(\alpha_1,\alpha_2,\alpha_3,\alpha_4)\\+
(-1)^kX_k(\alpha_2)\left(\term_{d}^{1}+\term_{h}^{1}+\term_{c}^{1}\right)(\alpha_1,-\alpha_2,\alpha_3,\alpha_4)\\+
(-1)^kX_k(\alpha_2)\left(\term_{d}^{2}+\term_{h}^{2}+\term_{c}^{2}\right)(\alpha_1,-\alpha_2,\alpha_3,\alpha_4)+O(k^{-1+\epsilon}),
\end{multline}
where $\term_{d}^{1}(\AlphaVec)$ and $\term_{d}^{2}(\AlphaVec)$ are the weighted fourth moments of Maass form $L$-functions defined by \eqref{Ed11} and \eqref{Ed21}, $\term_{h}^{1}(\AlphaVec)$ and $\term_{h}^{2}(\AlphaVec)$ are the weighted fourth moments of $L$-functions associated to holomorphic cusp forms defined by \eqref{Eh11} and  \eqref{Eh21},
$\term_{c}^{1}(\AlphaVec)$ and $\term_{c}^{2}(\AlphaVec)$ are the weighted eighth moments of the Riemann zeta function defined by \eqref{Ec011def} and \eqref{Ec021def}.
\end{thm}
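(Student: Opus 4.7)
The plan is to follow the classical spectral approach to moments: open the $L$-functions into Dirichlet series, apply the Petersson trace formula, and then invert the resulting Kloosterman integrals via the Kuznetsov formula. The reciprocity structure of \eqref{4mom result} strongly suggests that the approximate functional equation is applied asymmetrically—only to the factor $L_f(1/2+\alpha_2)$—which naturally explains the two blocks of terms, one as is and the other with $\alpha_2\mapsto -\alpha_2$ weighted by $(-1)^kX_k(\alpha_2)$.

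First, I would apply an approximate functional equation to $L_f(1/2+\alpha_2)$. This splits $\M_4(\AlphaVec)$ into two pieces: a ``direct'' piece involving $L_f(1/2+\alpha_2)$ and a ``dual'' piece in which this factor is replaced by $L_f(1/2-\alpha_2)$ weighted by $(-1)^kX_k(\alpha_2)$, each multiplied by a suitable cutoff weight coming from the test function in the approximate functional equation. The remaining three $L$-functions $L_f(1/2+\alpha_j)$ for $j\ne 2$ are expanded as absolutely convergent Dirichlet series using Hecke multiplicativity, so that the dependence on $f$ in each piece takes the form $\lambda_f(m)\lambda_f(n)$ after factoring the $\lambda_f(m_j)$ via Hecke relations.

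Next, I would apply the Petersson trace formula to the harmonic sum over $f\in H_{2k}$. The diagonal contribution $\delta_{m=n}$ collapses to Dirichlet series in the four shifts that, after contour shifts past the poles of the relevant $\zeta$-factors and combining the direct and dual halves using the symmetry $\alpha_2\leftrightarrow -\alpha_2$, produces exactly the CFKRS shape \eqref{MT(a1,a2,a3,a4) conj}. The verification of this identification—matching the residue calculation to the symmetric sum over $\epsilon_j=\pm 1$ with the zeta product \eqref{7zeta def} and gamma product \eqref{Co def}—is essentially a bookkeeping exercise once all polar contributions have been collected.

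The Kloosterman sum contribution, weighted by a Bessel kernel $J_{2k-1}\!\left(4\pi\sqrt{mn}/c\right)$, is the heart of the argument. I would invert the Bessel/Kloosterman structure via the Kuznetsov formula in the opposite direction, converting it into a spectral sum with three components: a discrete sum over Maass forms (giving $\term_d^i$), a sum over holomorphic cusp forms of arbitrary weight (giving $\term_h^i$), and a continuous spectrum integral whose amplitude is $|\zeta(1/2+it+\cdots)|^2$ times three more zeta factors (giving the eighth moment $\term_c^i$). The superscripts $1$ and $2$ arise from the two functional-equation halves inherent in the Kuznetsov inversion of the $J_{2k-1}$ kernel—equivalently, from splitting the relevant integral transform into a ``small argument'' and ``large argument'' regime via a partition of unity at the transition $\sqrt{mn}/c\asymp k$—and each regime contributes its own Maass/holomorphic/continuous triple. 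The main obstacle, and the step where most care is required, is controlling all these integral transforms uniformly in the shifts $\AlphaVec$: one must show that the Bessel/Kuznetsov kernels localize correctly, so that the off-spectral contribution and the truncation error from the approximate functional equation combine to $O(k^{-1+\epsilon})$, while the spectral pieces precisely match \eqref{Ed11}--\eqref{Ec021def}. This uniformity requires Mellin--Barnes or stationary phase analysis of $J_{2k-1}$ near the transition range together with standard large-sieve type bounds to keep the spectral sums convergent.
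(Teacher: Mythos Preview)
Your proposal follows a different route than the paper, and your diagnosis of where the structural features of \eqref{4mom result} come from is incorrect in several places.

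The paper does \emph{not} apply the Petersson formula directly to the four opened $L$-functions followed by Kuznetsov inversion. Instead it proceeds in two nested stages. First, only $L_f(1/2+\alpha_3)L_f(1/2+\alpha_4)$ is opened as a Dirichlet series (initially with $\Re\alpha_3,\Re\alpha_4$ large), reducing the fourth moment to a sum over $l$ of the twisted second moment $\M_2(l;\alpha_1,\alpha_2)=\sum_f\omega_f\lambda_f(l)L_f(1/2+\alpha_1)L_f(1/2+\alpha_2)$, for which an \emph{exact} convolution formula (Theorem~\ref{thm:kuznetsov}) is available. This formula has a four-term main term $\MT_2$, giving four of the sixteen summands of $\MT_4$, and three error terms $\ET_2^{(1)},\ET_2^{(2)},\ET_2^{(3)}$ involving shifted convolutions of divisor functions weighted by hypergeometric functions $\phi_k,\Phi_k$. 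These produce triple divisor sums $\TD^{(1)},\TD^{(2)},\TD^{(3)}$. Second, each $\TD^{(i)}$ is split smoothly via a partition $\eta(m/n)+\eta(n/m)=1$ so that the inner sum becomes a binary additive divisor problem, to which Motohashi's exact spectral formula (Theorem~\ref{Th:BADP}) is applied. The Maass, holomorphic and Eisenstein contributions in that formula are precisely $\term_d^i,\term_h^i,\term_c^i$. The remaining twelve pieces of $\MT_4$ emerge partly from the main term $U$ of the additive divisor formula and partly from residues picked up when analytically continuing the Eisenstein integral $\E_c$ across the critical line (Lemmas~\ref{Ec11-lem1} and~\ref{Ec21-lem1}); this residue mechanism is entirely absent from your outline.

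Consequently, your explanations of the two visible symmetries are wrong. The duplication with $\alpha_2\mapsto-\alpha_2$ and weight $(-1)^kX_k(\alpha_2)$ does \emph{not} come from an approximate functional equation applied to $L_f(1/2+\alpha_2)$; it comes from the exact identities $\phi_k(1-x;\alpha_1,\alpha_2)=(-1)^kX_k(\alpha_2)\phi_k(x;\alpha_1,-\alpha_2)$ (Lemma~\ref{Lemma phik x to 1-x}) and $\ET_2^{(3)}(l;\alpha_1,\alpha_2)=(-1)^kX_k(\alpha_2)\ET_2^{(2)}(l;\alpha_1,-\alpha_2)$, which relate $\TD^{(1,2)}$ to $\TD^{(1,1)}$ and $\TD^{(3)}$ to $\TD^{(2,1)}$. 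Likewise the superscripts $1$ and $2$ do not arise from a partition at $\sqrt{mn}/c\asymp k$ or from two halves of a Kuznetsov kernel; they label the two genuinely different weight functions $\phi_k$ and $\Phi_k$ coming from the two error terms $\ET_2^{(1)}$ and $\ET_2^{(2)}$ of the second moment formula. Your direct Petersson--Kuznetsov scheme may in principle lead somewhere, but it would not naturally produce the specific transforms $\widehat{W}_{1,1}^{\pm},\widehat{W}_{2,1}^{\pm}$ of \eqref{Ed11}--\eqref{Ec021def}, and it misses the paper's key device---Kuznetsov's smooth splitting of the triple divisor sum---without which the convergence issues in the additive divisor step cannot be handled.
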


\begin{rem}
Note that the restriction $|\alpha_j|\ll\epsilon_0$ is of technical nature as it allows us to simplify  the analysis of some special functions that arise while estimating error terms.
\end{rem}

Formula \eqref{4mom result} is a generalization of the result of Kuznetsov, who  in 1994 in one of his preprints (in Russian)  obtained a similar formula in case when
\begin{equation}
\alpha_1=\alpha_2=0, \quad \alpha_3=\alpha_4=1/2+\rho.
\end{equation}
Kuznetsov used this result to prove the following asymptotic formula for the fourth moment with additional averaging over $k$.
\begin{thm}\label{thm:4thmomkuznetsov}(Kuznetsov, preprint $1994$)
One has
\begin{equation}\label{4thmom Kuznetsov}
\sum_{\substack{L<k\le 2L\\k\equiv 0\pmod{ 2} }}
\sum_{f\in H_{2k}} \omega_fL^4_f(1/2)=LP_6(\log L)+O(L^{1/2+\epsilon}).
\end{equation}
\end{thm}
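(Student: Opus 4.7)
The plan is to deduce Theorem~\ref{thm:4thmomkuznetsov} from the reciprocity formula~\eqref{4mom result} by specializing the shifts to the central point $\AlphaVec=(0,0,0,0)$ and then summing over even $k\in(L,2L]$. Two tasks arise: extracting $LP_6(\log L)$ from the limit of the main term $\MT_4$, and showing that the dual terms, once averaged over $k$, contribute at most $O(L^{1/2+\epsilon})$.

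For the main term, I would exploit the meromorphic structure of $\ZF$ in~\eqref{7zeta def}: it has six simple poles at $\AlphaVec = \bar 0$ coming from the factors $\zeta(1+\epsilon_i\alpha_i+\epsilon_j\alpha_j)$. The signed sum over $(\epsilon_1,\ldots,\epsilon_4)\in\{\pm1\}^4$ together with the symmetry kernel $\Co$ in~\eqref{Co def} cancels these apparent singularities---this is the standard CFKRS cancellation. A residue expansion of the zeta factors around $s=1$ combined with the Taylor expansion of $X_k(\alpha)$, whose $\Gamma(k\pm\alpha)$ factors contribute $\log k$ upon differentiation, produces a polynomial in $\log k$ of degree $\binom{4}{2}=6$. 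Summing this polynomial over $L<k\le 2L$ with $k\equiv 0\pmod 2$ yields exactly $LP_6(\log L)$ plus a negligible correction.

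For the dual terms, the restriction to even $k$ forces $(-1)^k=1$, and at $\alpha_2=0$ the factor $X_k(\alpha_2)$ equals $1$, so no oscillation arises from these prefactors; all cancellation must come from the internal kernels of $\term_\ast^{1,2}$ defined in~\eqref{Ed11}--\eqref{Ec021def}. For the weighted eighth moments $\term_c^{1,2}$ of $\zeta$, I would integrate by parts in the spectral variable to exploit oscillation created by the $k$-sum and then invoke Ingham's fourth moment bound together with a subconvex pointwise estimate for $\zeta(1/2+it)$. For the weighted automorphic fourth moments $\term_d^{1,2}$ and $\term_h^{1,2}$, one opens the moments by approximate functional equations, inverts the $k$-summation via a further application of the Petersson/Kuznetsov trace formula, and estimates the resulting Kloosterman sums weighted by Bessel transforms through large-sieve inequalities for Fourier coefficients of cusp forms.

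The main obstacle is uniform stationary-phase analysis of the special-function kernels in~\eqref{Ed11}--\eqref{Ec021def} as $\AlphaVec\to\bar{0}$: one must show that the $k$-average produces a genuine $L^{1/2+\epsilon}$ saving against the logarithmic main term of size $L\log^6L$, despite the absence of oscillation in $X_k(0)=1$. This requires tracking the dependence of the transforms on $k$ and extracting cancellation from the smoothing over the weight parameter. The pointwise error $O(k^{-1+\epsilon})$ from~\eqref{4mom result} sums to $O(L^\epsilon)$ over $L<k\le 2L$ and is absorbed into the final bound.
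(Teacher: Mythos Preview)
Your overall strategy---specialize \eqref{4mom result} to $\AlphaVec=\bar 0$, extract $P_6(\log k)$ from the CFKRS main term, and average the dual terms over $k$---is correct, and matches the paper's outline. But your proposed treatment of the dual terms diverges substantially from what actually works, and in two places is misguided.

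First, you treat all six dual terms on an equal footing. In fact, once one analyzes the special-function kernels directly, four of them are already small \emph{pointwise} in $k$: the holomorphic duals $\term_h^{1},\term_h^{2}$ are $O(k^{-2})$ because the kernel $\widehat W(\alpha,\beta;m)$ in \eqref{hatWmtoPhi} involves a product of two exponentially decaying $\Phi$-functions; and $\term_d^{2},\term_c^{2}$ are $O(k^{-1+\epsilon})$ because the kernel $\widehat W_{2,1}^0$ already carries a factor $\Phi_k$ which decays like $k^{-1/2}$ and further integration by parts in the spectral variable gains $r^{-5/2}$. No $k$-averaging is needed for these; they sum trivially to $O(L^\epsilon)$. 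Only $\term_d^{1}$ and $\term_c^{1}$ survive, and they are genuinely of size $k^{1/2+\epsilon}$ and $k^\epsilon$ respectively before averaging.

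Second, and more seriously, for the surviving terms $\term_d^{1},\term_c^{1}$ your plan to ``open the moments by approximate functional equations, invert the $k$-summation via a further Petersson/Kuznetsov formula, and estimate Kloosterman sums by large sieve'' is both circular and unnecessary. The paper's mechanism is entirely different: one performs a stationary-phase analysis of the kernel $\widehat W_{1,1}^{0}(ir)$ (combining the Liouville--Green asymptotics of $\phi_k$ with the large-$r$ asymptotics \eqref{W02F1 asympt} of the Gauss hypergeometric factor) and finds that it behaves like
\[
\frac{e^{iw(r,u)}}{\sqrt{ur}\,(u^2-r^2)^{1/4}},\qquad w(r,u)=-2u\arcsin\tfrac{r}{u}+2r\log\tfrac{r}{u+\sqrt{u^2-r^2}},\quad u=k-\tfrac12.
\]
Since $\partial_u w=-2\arcsin(r/u)$ is bounded away from $\pm 2\pi$, summing the oscillatory factor over $L<k\le 2L$ gains a full power of $r$ (Euler--Maclaurin plus first-derivative test). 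The remaining spectral sum is then bounded \emph{as a black box} by Ivi\'c's mean-Lindel\"of estimate $\sum_{t_j<T}\omega_j L_j^4(1/2)\ll T^{2+\epsilon}$, and the continuous term by the eighth-moment bound $\int_T^{2T}|\zeta(1/2+it)|^8\,dt\ll T^{3/2+\epsilon}$. No second spectral decomposition is performed. Your sketch of ``Ingham's fourth moment plus a subconvex pointwise bound'' for the continuous term is also too weak; one needs the eighth moment directly.
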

This formula is an analogue of  Ivic's  asymptotic formula \cite{Ivic} for the fourth moment of Maass form $L$-functions.  For completeness  we add the proof of Theorem \ref{thm:4thmomkuznetsov} in the Appendix.
The following result  was not proved by Kuznetsov but follows immediately from the proof of Theorem  \ref{thm:4thmomkuznetsov}. We also provide its proof in the Appendix.
\begin{thm}\label{cor:4thmom short}
For $K^{1/3+\epsilon}<G<K^{1-\epsilon}$ the following formula holds:
\begin{equation}\label{4thmom short}
\sum_{k}\exp\left(-\left(\frac{k-K}{G}\right)^2\right)
\sum_{f\in H_{4k}} \omega_fL^4_f(1/2)\ll G^{1+\epsilon}.
\end{equation}
\end{thm}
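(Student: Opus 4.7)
The plan is to apply Theorem~\ref{thm:main} with its weight parameter $k$ replaced by $2k$, so that the cusp forms in the sum have weight $4k$ and every $(-1)^{k'}$-type parity factor in \eqref{4mom result} trivializes to $+1$; the two ``$X_k(\alpha_2)$-twisted'' lines of \eqref{4mom result} then collapse onto the untwisted ones. I would then take the limit $\AlphaVec \to 0$, the apparent poles of $\ZF$ from $\zeta(1+\epsilon_i\alpha_i+\epsilon_j\alpha_j)$ being cancelled by antisymmetry under $\epsilon_j \mapsto -\epsilon_j$ after summation over $(\epsilon_1,\ldots,\epsilon_4) \in \{\pm 1\}^4$. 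This produces, for each $k$,
\begin{equation*}
\sum_{f \in H_{4k}} \omega_f L_f^4(1/2) = \MT_4(0) + \sum_{j=1,2}(\term_d^j + \term_h^j + \term_c^j)(0) + O(k^{-1+\epsilon}),
\end{equation*}
which I multiply by $w(k) := \exp(-((k-K)/G)^2)$ and sum over $k$.

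The main-term contribution is then controlled trivially: $\MT_4(0)$ is a polynomial in $\log k$ of degree $6$ (produced by the Laurent expansion of the sixfold pole of $\ZF$ against the $X_k$-Taylor coefficients), so $\sum_k w(k)\MT_4(0) \ll G(\log K)^6 \ll G^{1+\epsilon}$ as soon as $G > K^{1/3+\epsilon}$, since then $G^\epsilon$ absorbs the logarithmic factors. The uniform remainder $O(k^{-1+\epsilon})$ contributes $O(G K^{-1+\epsilon}) \ll G^{1+\epsilon}$.

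The substantive step is to bound each off-diagonal sum $\sum_k w(k)\term_\bullet^j(0)$ for $\bullet \in \{d,h,c\}$ and $j \in \{1,2\}$. By the definitions referenced in Theorem~\ref{thm:main} (equations \eqref{Ed11}, \eqref{Ed21}, \eqref{Eh11}, \eqref{Eh21}, \eqref{Ec011def}, \eqref{Ec021def}), each $\term_\bullet^j$ is a spectral sum or critical-line integral of a fourth moment of $L_g(1/2)$ or $L_f(1/2)$ (respectively of $|\zeta(1/2+it)|^8$ when $\bullet = c$) against a kernel built from Gamma quotients and hypergeometric functions in $k$. I would exchange the $k$-summation with the spectral one, analyze the inner $k$-sum by Stirling and stationary phase, and show that the Gaussian weight localizes the effective spectral parameter to a window of length $\ll G$ near $K$ with total $L^1$-mass $\ll G$; the upper bound $G < K^{1-\epsilon}$ is used here to keep $w$ concentrated at the right scale $\asymp K$.

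This reduces the problem to three short-window mean values: a fourth moment over Maass forms with spectral parameter $|t_g - K| \ll G$, the analogous fourth moment over holomorphic forms of weight close to $2K$, and the short-interval eighth moment $\int_{|t-K| \ll G}|\zeta(1/2+it)|^8\,dt$. Each is $\ll G^{1+\epsilon}$ in the stated range, the binding constraint $G > K^{1/3+\epsilon}$ coming from the last (Ivi\'c's Weyl-type bound on the eighth moment of $\zeta$ in short intervals). The main obstacle is the uniform stationary-phase analysis of the six kernels, which must be carried out with sufficient uniformity in the spectral parameter to exchange the order of summation and integration; this is essentially the technical content of Kuznetsov's 1994 manuscript and is executed in the Appendix as a byproduct of the proof of Theorem~\ref{thm:4thmomkuznetsov}.
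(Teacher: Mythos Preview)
Your overall framework is right, but the heart of the argument---the localization of the spectral parameter after the $k$-average---is described incorrectly, and this matters.

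First, four of your six off-diagonal terms do not require averaging at all. The paper shows (Lemmas~\ref{EhZERO-lem1} and~\ref{Ed21ZERO-lem1}) that $\term_h^1, \term_h^2, \term_d^2, \term_c^2$ are each $O(k^{-1+\epsilon})$ pointwise in $k$; the weight functions $\widehat{W}_{2,1}$ decay exponentially in $k$ because $\Phi_k$ does, and $\widehat{W}_{1,1}(m)$ decays in $m$ for similar reasons. So only $\term_d^1$ and $\term_c^1$ survive to the averaging step (this is the content of Theorem~\ref{thm:4thmomzero}). Your claimed reduction to a holomorphic fourth moment near weight $2K$ never occurs.

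Second, and more seriously, your localization claim is wrong. The kernel $\W_{1,1}(r,u)$ of Lemma~\ref{EdEcW11 lem1} is supported on $r \ll u \asymp K$ and carries the phase $w(r,u)$ of \eqref{w(r,u)def}, whose $u$-derivative is $-2\arcsin(r/u)$. After Poisson summation in $k$ against the Gaussian, the $m\ne 0$ terms are negligible, and the $m=0$ term is killed by repeated integration by parts once $G\cdot (r/K) \gg K^\epsilon$. Hence the spectral parameter is localized to the \emph{long} initial segment $r \ll K^{1+\epsilon}/G$, not to a short window $|r-K|\ll G$. The surviving sum is then estimated by the long-interval mean Lindel\"of bound \eqref{Lj4 meanLindelef}:
\[
\sum_k w(k)\,\term_d^1 \ll \frac{G}{K}\sum_{t_j\ll K^{1+\epsilon}/G}\frac{\omega_j}{\sqrt{t_j}}L_j^4(1/2) \ll \frac{G}{K}\Bigl(\frac{K}{G}\Bigr)^{3/2+\epsilon}=\frac{K^{1/2+\epsilon}}{G^{1/2}},
\]
and it is \emph{this} estimate (not a short-interval eighth moment of $\zeta$) that forces $G\gg K^{1/3+\epsilon}$. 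The continuous term $\term_c^1$ is handled the same way using \eqref{zeta8 est} over $|t|\ll K/G$ and is strictly smaller. Your proposed reduction to Jutila-type short-window moments near $K$ does not reflect the actual oscillatory structure of $\W_{1,1}$ and would not go through as stated.
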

Theorem \ref{cor:4thmom short} is a holomorphic analogue of Jutila's result \cite{Jut} on the fourth moment of Maass form $L$-functions in short intervals.  Using
\eqref{4thmom short} with $G=K^{1/3+\epsilon}$ we recover the subconvexity  bound
\begin{equation}\label{subconvexity}
L_f(1/2)\ll k^{1/3+\epsilon}
\end{equation}
due to Peng \cite{Peng}.

Our proof of \eqref{4mom result} relies heavily on the ideas of Kuznetsov. First, assuming  that $\Re{\alpha_3},\Re{\alpha_4}$ are sufficiently large and using a series representation for the product of two $L$-functions
\begin{equation}
L_f(1/2+\alpha_3)L_f(1/2+\alpha_4),
\end{equation}
we transform the fourth moment to the sum of twisted seconds moments
\begin{equation}
\sum_{n=1}^{\infty}\frac{\sigma_{\alpha_3-\alpha_4}(n)}{n^{1/2+\alpha_3}}\sum_{f\in H_{2k}} \omega_f\lambda_f(n)L_f(1/2+\alpha_1)L_f(1/2+\alpha_2).
\end{equation}
Next, for the second moment we apply a convolution type formula due to Kuznetsov (see Section \ref{sec:2mom} and \cite[Theorem 4.2]{BFJEMS} for the proof) which represents it as a sum of several additive divisor problems. The main term of the second moment gives rise to four of the sixteen main terms in $\MT_4(\alpha_1,\alpha_2,\alpha_3,\alpha_4)$. 

The remaining twelve main terms are hidden in the double sums of triple divisor functions:
\begin{equation}\label{triple div sums}
\sum_{m=1}^{\infty}\sum_{n=1}^{\infty}\frac{\sigma_{\beta}(m)\sigma_{\gamma}(n)\sigma_{\delta}(m+n)}{m^{\theta_1}n^{\theta_2}}F\left(\frac{m}{n}\right),
\end{equation}
where $F(x)$ is a Gauss hypergeometric function multiplied by some rational function. In order to separate the main term contribution from \eqref{triple div sums}, Kuznetsov suggested to split the sum smoothly (see Section \ref{sec:4mom to triple div}) into two parts:
\begin{multline}\label{triple div sums split}
\sum_{m=1}^{\infty}\sum_{n=1}^{\infty}\frac{\sigma_{\beta}(m)\sigma_{\gamma}(n)\sigma_{\delta}(m+n)}{m^{\theta_1}n^{\theta_2}}F\left(\frac{m}{n}\right)=
\sum_{m=1}^{\infty}\frac{\sigma_{\beta}(m)}{m^{\theta_1}}\sum_{n>m}^{\infty}\frac{\sigma_{\gamma}(n)\sigma_{\delta}(n+m)}{n^{\theta_2}}F\left(\frac{m}{n}\right)\\+
\sum_{n=1}^{\infty}\frac{\sigma_{\gamma}(n)}{n^{\theta_2}}\sum_{m\ge n}^{\infty}\frac{\sigma_{\beta}(m)\sigma_{\delta}(m+n)}{m^{\theta_1}n^{\theta_2}}F\left(\frac{m}{n}\right),
\end{multline}
and then treat each of the inner sums  using a convolution formula for the additive divisor problem (see \cite{Mo} and Section \ref{sec:ADP}). 
The main terms arising from the additive divisor problems contribute to the main term of the fourth moment $\MT_4(\alpha_1,\alpha_2,\alpha_3,\alpha_4)$, but do not contain all of its summands. The last remaining part of $\MT_4(\alpha_1,\alpha_2,\alpha_3,\alpha_4)$ is hidden in the moment of the Riemann-zeta function that appears in the convolution formula for the additive divisor problem.

In fact, the main difference (except that we consider the general case) between  our work and the preprint of  Kuznetsov lies in the approach to the evaluation of the main term. It turns out that all summands which produce the main term are integrals of some hypergeometric function.  To evaluate such integrals Kuznetsov used an asymptotic expansion for the hypergeometric functions in terms of Bessel functions followed by a quite complicated evaluation of these integrals. In order to obtain not an asymptotic formula for the main term but an explicit expression $\MT_4(\alpha_1,\alpha_2,\alpha_3,\alpha_4)$ we decided to apply instead the technique based on the Mellin-Barnes integrals.

The formula \eqref{4mom result} should also be compared to the one proved in \cite[Theorem 3.1]{HK12}, with the choice of test functions like in  \cite[(4.1)]{HK12}. In this case, \cite[Theorem 3.1]{HK12} can be reduced to the fourth moment at the central point $\alpha_j=0$ averaged over $k$  with some complicated ratio of Gamma factors as weights. In contrast, the formula \eqref{4mom result} allows us to average over $k$ with arbitrary weights. In fact, \cite[Theorem 3.1]{HK12} is a reciprocity type formula for the fourth moment of $L$-functions attached to Maass forms, obtained by Kuznetsov \cite{Kuz1989} with corrections due to Motohashi \cite{Motoh2003}. However, in order to avoid various convergence issues, the averaged fourth moment of $L$-functions attached to holomorphic forms should also be included in this reciprocity formula. This allows using it in the investigation of the averaged fourth moment in the holomorphic case. However, one cannot average the fourth moment with arbitrary weight functions because of the presence of quite restricted conditions.  This problem was solved in  \cite{HK12}    by constructing admissible weights approximating the characteristic function of an interval.

Finally, let us compare the proof of formula \eqref{4mom result} (reciprocity formula in the holomorphic case) with  the one given in \cite{Kuz1989}, \cite{Motoh2003} for Maass forms. It is well known that some convergence issues arise in the case of Maass forms. One way to avoid such problems is to apply the so-called "root number trick"   by considering the moment $\sum_{j}\epsilon_jL_j(1/2)^4$ instead of  $\sum_{j}L_j(1/2)^4$. This is possible since $\epsilon_j=\pm1$ and $L_j(1/2)=0$ if $\epsilon_j=-1$. Note that this approach does not work when we consider the general shifted fourth moment. Another method that can be applied for arbitrary shifts (see \cite{Motoh2003}) is an application of a  regularized Kuznetsov trace formula.
However, it seems that these two methods are not applicable in the holomorphic case, where the same difficulties with convergence arise.
 Therefore, in our situation the crucial idea, which allows to avoid all convergence issues,  is the smooth decomposition \eqref{triple div sums split} proposed by Kuznetsov.

\section{Notation and preliminaries}
Let $e(x):=e^{2\pi ix}.$ Consider the number of divisors functions:
\begin{equation}\label{sigma def}
\sigma_v(n):=\sum_{d|n}d^v
\end{equation}
and
\begin{equation}\label{tau to sigma}
\tau_v(n):=\sum_{n_1n_2=n}\left( \frac{n_1}{n_2}\right)^v=n^{-v}\sigma_{2v}(n).
\end{equation}
The Riemann zeta function $\zeta(s)$ satisfies the functional equation
\begin{equation}\label{zeta functional}
\zeta(s)=\RF(s)\zeta(1-s),\quad
\RF(s)=2(2\pi)^{s-1}\Gamma(1-s)\sin\frac{\pi s}{2}.
\end{equation}

The Ramanujan identity is given by
\begin{equation}\label{RamId}
\sum_{n=1}^{\infty}\frac{\sigma_a(n)\sigma_b(n)}{n^s}=\frac{\zeta(s)\zeta(s-a)\zeta(s-b)\zeta(s-a-b)}{\zeta(2s-a-b)}.
\end{equation}

Throughout the paper $H_{2k}$ denotes  the Hecke normalized  basis  of the space of holomorphic cusp forms of weight $2k \geq 2$ and level one.
For $f\in H_{2k}$ the following Fourier expansion exists:
\begin{equation}
f(z)=\sum_{n\geq 1}\lambda_{f}(n)n^{k-1/2}\exp(2\pi inz),
\quad \lambda_{f}(1)=1.
\end{equation}
Throughout the paper $B_{d}$ denotes  the orthonormal basis $\{u_j\}$ of the space of Maass cusp forms of level one. We assume that any $u_j\in B_d$ is an eigenfunction of all Hecke operators and the hyperbolic Laplacian and denote by $\{\lambda_{j}(n)\}$ the eigenvalues of Hecke operators acting on $u_{j}$. Also $\kappa_{j}=1/4+t_{j}^2$ stands for (with  $t_j>0$)  the eigenvalues of the hyperbolic Laplacian acting on $u_{j}$. Finally, one has
\begin{equation*}
u_{j}(-\bar{z})=\varepsilon_j u_{j}(z),\quad \varepsilon_j=\pm1.
\end{equation*}
We call $u_j$ even or odd if $\varepsilon_j=1$ or $\varepsilon_j=-1$, respectfully.
It is known that
\begin{equation*}
u_{j}(x+iy)=\sqrt{y}\sum_{n\neq 0}\rho_{j}(n)K_{it_j}(2\pi|n|y)e(nx),
\end{equation*}
where $K_{\mu}(x)$ is the $K$-Bessel function  and
$\rho_{j}(n)=\rho_{j}(1)\lambda_{j}(n).$
Both $\lambda_{f}(n)$ and $\lambda_{j}(n)$ satisfy the following multiplicity relation
\begin{equation}\label{eq:mult}
\lambda(m)\lambda(n)=\sum_{d|(m,n)}\lambda\left( \frac{mn}{d^2}\right).
\end{equation}
For each  $f\in H_{2k}$ and $u_j\in B_d$  we define the associated $L$-functions as
\begin{equation}
L_f(s):=\sum_{n=1}^{\infty}\frac{\lambda_f(n)}{n^s},\quad
L_j(s):=\sum_{n=1}^{\infty}\frac{\lambda_j(n)}{n^s}, \quad \Re{s}>1.
\end{equation}
It follows from \eqref{eq:mult} and \eqref{sigma def} that
\begin{equation}\label{Lprod}
L_f(1/2+\alpha)L_f(1/2+\beta)=\zeta(1+\alpha+\beta)
\sum_{n=1}^{\infty}\frac{\lambda_f(n)\sigma_{\alpha-\beta}(n)}{n^{1/2+\alpha}},
\end{equation}
and the same holds for $L_j(s)$.

Let
\begin{equation}\label{omegaj}
\omega_{j}:=\frac{|\rho_{j}(1)|^2}{\cosh{\pi t_j}}
\end{equation}
be the standard normalizing coefficient for Maass cusp forms and
\begin{equation}\label{harmonic weight}
\omega_f:=\frac{ \Gamma(2k-1)}{(4\pi )^{2k-1}\langle f,f\rangle_1}
\end{equation}
be the so-called harmonic weight  (see \cite[Lemma 2.5]{ILS}),
where $\langle f,f\rangle_1$ is the Petersson inner product on the space $H_{2k}$.

We will frequently use the Stirling bound \cite[5.11.9]{HMF} for the Gamma function:
\begin{equation}\label{Stirling0}
|\Gamma(x+iy)|\ll|y|^{x-1/2}e^{-\pi|y|/2}.
\end{equation}
It follows from the Stirling formula \cite[5.11.3, 5.11.13]{HMF} that for a bounded $|\alpha|$ the following formula holds:
\begin{equation}\label{Xapprox}
X_{k}(\alpha)=\frac{(2\pi)^{2\alpha}}{k^{2\alpha}}\left(1+O\left(\frac{1}{k}\right)\right).
\end{equation}

\begin{lem}\label{Lemma beta integrals}
For $\Re{d}<0$ and $-1<\Re{c}<-1-\Re{d}$ one has
\begin{equation}\label{beta +0infinity}
\int_0^{\infty}x^c(1+x)^ddx=\frac{\Gamma(c+1)\Gamma(-1-c-d)}{\Gamma(-d)}.
\end{equation}
\end{lem}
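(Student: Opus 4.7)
The plan is to reduce the stated integral to the classical Beta integral by a single rational change of variables, and then invoke the Beta function identity. Substitute $t=x/(1+x)$, so that $x=t/(1-t)$, $1+x=1/(1-t)$, and $dx=dt/(1-t)^2$. As $x$ ranges over $(0,\infty)$, the new variable $t$ ranges over $(0,1)$. A direct computation yields
\begin{equation}
x^c(1+x)^d\,dx=t^c(1-t)^{-c}(1-t)^{-d}(1-t)^{-2}\,dt=t^c(1-t)^{-c-d-2}\,dt,
\end{equation}
so that the left-hand side becomes
\begin{equation}
\int_0^1 t^{(c+1)-1}(1-t)^{(-1-c-d)-1}\,dt=B(c+1,-1-c-d),
\end{equation}
which by Euler's identity $B(p,q)=\Gamma(p)\Gamma(q)/\Gamma(p+q)$ equals $\Gamma(c+1)\Gamma(-1-c-d)/\Gamma(-d)$, as required.

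The only point to verify is the applicability of the Beta integral, namely that the exponents produce an absolutely convergent integral. The condition $\Re c>-1$ ensures integrability at $t=0$ (equivalently, at $x=0$), while $\Re c<-1-\Re d$ ensures integrability at $t=1$ (equivalently, as $x\to\infty$, where the integrand behaves like $x^{c+d}$ and we need $\Re(c+d)<-1$). Both of these are precisely the hypotheses of the lemma, and the assumption $\Re d<0$ is implicitly used to make the range for $c$ nonempty.

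There is no serious obstacle here: the statement is a restatement of the second Eulerian integral, and the entire argument is a one-line change of variables together with a convergence check. I would simply present the substitution, record the resulting Beta integral, and conclude by the Gamma-function form of $B(p,q)$.
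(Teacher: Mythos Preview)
Your proof is correct and is essentially the same as the paper's: the paper simply cites \cite[5.12.3]{HMF}, which is exactly this Beta-integral identity, while you supply the standard one-line substitution $t=x/(1+x)$ that reduces it to Euler's first integral. There is nothing to add.
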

\begin{proof}
Equation \eqref{beta +0infinity} follows from \cite[5.12.3]{HMF}.
\end{proof}
Let ${}_p\mathrm{I}_q$ denote the generalized hypergeometric function multiplied by suitable Gamma factors:
\begin{equation}\label{pIq def}
\GenHyGI{p}{q}{a_1,\cdots,a_p}{b_1,\cdots,b_q}{z}:=
\frac{\Gamma(a_1)\cdots \Gamma(a_p)}{\Gamma(b_1)\cdots \Gamma(b_q)} \GenHyG{p}{q}{a_1,\cdots,a_p}{b_1,\cdots,b_q}{z}=
\sum_{j=0}^{\infty}\frac{\Gamma(a_1+j)\cdots \Gamma(a_p+j)}{\Gamma(b_1+j)\cdots \Gamma(b_q+j)}\frac{z^j}{j!}.
\end{equation}
When $p=2,q=1$ we will use the simplified notation $\HyGI(a_1,a_2,b_1;x)$. Note that
\begin{equation}\label{2I1 to 3I2}
\HyGI(a,b,c;x)=\frac{\Gamma(a)\Gamma(b)}{\Gamma(c)}+x\GenHyGI{3}{2}{1+a,1+b,1}{2,1+c}{x}.
\end{equation}
In order to find an asymptotic expansion for the integrals of the form
\begin{equation}\label{I Hux def}
I=\int_a^{b}g(x)e(f(x))dx,
\end{equation}
we will use the following version of the saddle point method due to Huxley \cite{Hux}. Recall that the saddle point is a solution of the equation $f'(x)=0$.
\begin{lem}\label{Lemma Huxley}
Let $f(x)$ and $g(x)$ be some smooth functions defined on the interval $[a,b]$ such that $g(a)=g(b)=0$ and
\begin{equation}\label{Hux fg conditions}
f^{(i)}(x)\ll\frac{\Theta_f}{\Omega_f^i},\quad
g^{(j)}(x)\ll\frac{1}{\Omega_g^j},\quad
f^{(2)}(x)\gg\frac{\Theta_f}{\Omega_f^2}
\end{equation}
for $i=1,2,3,4$ and $j=0,1,2$. Suppose that $f'(x_0)=0$ for $a<x_0<b$, and that $f(x)$ changes its sign from negative to positive at $x=x_0$. Let $\kappa=\min(x_0-a,b-x_0)$. Then
\begin{equation}\label{I Hux asympt}
I=\frac{g(x_0)e(f(x_0)+1/8)}{\sqrt{f''(x_0)}}+O\left(\frac{\Omega_f^4}{\kappa^3\Theta_f^2}+
\frac{\Omega_f}{\Theta_f^{3/2}}+\frac{\Omega_f^3}{\Theta_f^{3/2}\Omega_g^2}\right).
\end{equation}
\end{lem}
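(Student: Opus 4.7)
The plan is to follow the classical stationary phase method, splitting the integral into a central piece surrounding the saddle point $x_0$ and two tail pieces, and handling each with different techniques. The natural scale at which the quadratic part of the phase produces one full oscillation is $\Omega_f/\sqrt{\Theta_f}$, which sets the expected length of the central piece.

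On the tails, where $|f'(x)|$ stays bounded away from zero, I would apply integration by parts twice using the operator $\frac{1}{2\pi i f'(x)}\frac{d}{dx}$. Because $f''$ has a definite sign lower bound, one has $|f'(x)|\gg|x-x_0|\Theta_f/\Omega_f^2$, so the integrand after two integrations by parts is controlled by $\Omega_f^4/(|x-x_0|^3\Theta_f^2)$, up to terms involving $g'$ and $g''$ which are smaller thanks to the hypothesis on the $g^{(j)}$. Since $g(a)=g(b)=0$, no boundary terms appear on the original endpoints, and integration over $|x-x_0|\ge\kappa$ produces the first claimed error $\Omega_f^4/(\kappa^3\Theta_f^2)$.

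On the central piece I would substitute $x=x_0+u\Omega_f/\sqrt{\Theta_f}$ so that the dominant part of the phase becomes the standard Fresnel kernel $e(\tfrac12 f''(x_0)(x-x_0)^2)$. Two approximations must be controlled. Replacing $f(x)-f(x_0)$ by its quadratic Taylor polynomial leaves a cubic remainder bounded by $\Theta_f(x-x_0)^3/\Omega_f^3$; expanding the exponential of this remainder to first order and integrating against the Fresnel kernel yields a contribution of order $\Omega_f/\Theta_f^{3/2}$. Replacing $g(x)$ by $g(x_0)$ (the linear term drops out by symmetry against the Gaussian oscillation) leaves a quadratic remainder bounded by $(x-x_0)^2/\Omega_g^2$; this integrates against the Fresnel kernel to $\Omega_f^3/(\Theta_f^{3/2}\Omega_g^2)$. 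The remaining leading integral is a classical Fresnel integral, evaluating to $e(1/8)/\sqrt{f''(x_0)}$ and producing the stated main term $g(x_0)e(f(x_0)+1/8)/\sqrt{f''(x_0)}$.

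The main obstacle is not the mechanics of each step but the bookkeeping of the cutoff separating the central region from the tails. One wants this cutoff small enough that the Taylor errors are genuinely negligible, yet large enough that the boundary contributions at the cutoff (from the integration by parts) are absorbed into one of the three existing error terms, and that the tail bound is cleanly expressible in terms of $\kappa$ alone rather than the cutoff itself. The cleanest implementation, which is Huxley's approach in \cite{Hux}, is to use a smooth partition of unity interpolating between the two regions; this avoids the need to track cutoff boundary terms explicitly and makes the three error sources fall into their natural compartments.
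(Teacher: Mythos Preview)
The paper does not prove this lemma; it simply quotes it from Huxley \cite{Hux} as a black-box tool for stationary phase. Your sketch is a reasonable outline of the standard proof and of Huxley's own argument, so there is nothing to compare against in the paper itself.
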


\section{Convolution formula for the second moment}\label{sec:2mom}
As mentioned in the introduction, our method of investigating the fourth moment relies on the exact formula for the twisted second moment.
 Such formulas was proved independently by Kuznetsov (in his preprint from 1994 in Russian) and by Iwaniec-Sarnak (see \cite[Theorem 17]{IS}).

\begin{thm}\label{thm:kuznetsov}
For $|\Re{\alpha_1}|<k-1$, $|\Re{\alpha_2}|<k-1$ we have
\begin{multline}\label{eq:secondmoment}
\M_2(l;\alpha_1,\alpha_2):=\sum_{f \in H_{2k}}\omega_f\lambda_f(l)L_f(1/2+\alpha_1)L_f(1/2+\alpha_2)=\\=
\MT_2(l;\alpha_1,\alpha_2)+\sum_{i=1}^3\ET^{(i)}_2(l;\alpha_1,\alpha_2),
\end{multline}
where
\begin{multline}\label{MT2 def}
\MT_2(l;\alpha_1,\alpha_2):=
\frac{\sigma_{\alpha_1-\alpha_2}(l)}{l^{1/2+\alpha_1}}\zeta(1+\alpha_1+\alpha_2)+
(-1)^kX_k(\alpha_1)\frac{\sigma_{\alpha_1+\alpha_2}(l)}{l^{1/2+\alpha_2}}\zeta(1-\alpha_1+\alpha_2)\\+
(-1)^kX_k(\alpha_2)\frac{\sigma_{\alpha_1+\alpha_2}(l)}{l^{1/2+\alpha_1}}\zeta(1+\alpha_1-\alpha_2)+
\frac{\sigma_{\alpha_1-\alpha_2}(l)}{l^{1/2-\alpha_2}}\zeta(1-\alpha_1-\alpha_2)X_k(\alpha_1)X_k(\alpha_2),
\end{multline}
\begin{equation}\label{ET2 1}
\ET^{(1)}_2(l;\alpha_1,\alpha_2):=\frac{(-1)^k}{\sqrt{l}}\sum_{1 \leq n \leq l-1}
\frac{\sigma_{\alpha_1-\alpha_2}(n)\sigma_{\alpha_1+\alpha_2}(l-n)}{n^{(\alpha_1-\alpha_2)/2}(l-n)^{(\alpha_1+\alpha_2)/2}}
\phi_k\left(\frac{n}{l};\alpha_1,\alpha_2\right),
\end{equation}
\begin{equation}\label{ET2 2}
\ET^{(2)}_2(l;\alpha_1,\alpha_2):=\frac{1}{\sqrt{l}}\sum_{n=1}^{\infty}
\frac{\sigma_{\alpha_1+\alpha_2}(n)\sigma_{\alpha_1-\alpha_2}(n+l)}{n^{(\alpha_1+\alpha_2)/2}(n+l)^{(\alpha_1-\alpha_2)/2}}
\Phi_k\left(\frac{l}{n+l};\alpha_1,\alpha_2\right),
\end{equation}
\begin{equation}\label{ET2 3}
\ET^{(3)}_2(l;\alpha_1,\alpha_2):=(-1)^kX_k(\alpha_2)\ET^{(2)}_2(l;\alpha_1,-\alpha_2),
\end{equation}
and the weight functions are defined as
\begin{equation}\label{phik}
\phi_k(x;\alpha_1,\alpha_2):=\tilde{\phi}_k(x;\alpha_1,\alpha_2)+\tilde{\phi}_k(x;\alpha_2,\alpha_1),
\end{equation}
\begin{multline}\label{eq:tildf}
\tilde{\phi}_k(x;\alpha_1,\alpha_2):=\frac{(2\pi)^{1+\alpha_1+\alpha_2}}{2\sin{\pi\frac{\alpha_1-\alpha_2}{2}}}
\frac{\Gamma(k-\alpha_1)}{\Gamma(1-\alpha_1+\alpha_2)\Gamma(k+\alpha_1)}
x^{(\alpha_2-\alpha_1)/2}(1-x)^{-(\alpha_1+\alpha_2)/2}\\\times
\HyG(k-\alpha_1,1-k-\alpha_1,1-\alpha_1+\alpha_2;x),
\end{multline}
\begin{multline}\label{Phi_k2}
\Phi_k(x;\alpha_1,\alpha_2):=2(2\pi)^{\alpha_1+\alpha_2}
\cos{\pi\frac{\alpha_1+\alpha_2}{2}}x^k(1-x)^{-(\alpha_1+\alpha_2)/2}\HyGI(k-\alpha_1,k-\alpha_2,2k;x).
\end{multline}
\end{thm}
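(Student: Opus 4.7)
The plan is to derive \eqref{eq:secondmoment} via the Petersson trace formula followed by analytic continuation in $\alpha_1,\alpha_2$. First, assuming $\Re\alpha_1$ and $\Re\alpha_2$ are sufficiently large, apply \eqref{Lprod} inside $\M_2$ to obtain the absolutely convergent double sum
\begin{equation*}
\M_2(l;\alpha_1,\alpha_2) = \zeta(1+\alpha_1+\alpha_2)\sum_{n\geq 1}\frac{\sigma_{\alpha_1-\alpha_2}(n)}{n^{1/2+\alpha_1}}\sum_{f\in H_{2k}}\omega_f\lambda_f(l)\lambda_f(n),
\end{equation*}
and then apply the Petersson trace formula
\begin{equation*}
\sum_{f\in H_{2k}}\omega_f\lambda_f(l)\lambda_f(n) = \delta_{l,n} + 2\pi i^{-2k}\sum_{c\geq 1}\frac{S(l,n;c)}{c}J_{2k-1}\!\left(\frac{4\pi\sqrt{ln}}{c}\right).
\end{equation*}
The diagonal $n=l$ produces exactly the first summand of $\MT_2$ in \eqref{MT2 def}.

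For the Kloosterman--Bessel contribution, I would insert a Mellin--Barnes representation for $J_{2k-1}$, interchange the summations, and process the inner Dirichlet series over $n$ by the Ramanujan identity \eqref{RamId} together with a Selberg-type identity for the Kloosterman $c$-sum. This produces a meromorphic integrand in the Mellin variable carrying four zeta factors whose arguments depend linearly on that variable and on the shifts $\pm\alpha_j$, together with a Gamma quotient. Applying the functional equation \eqref{zeta functional} of $\zeta$ rewrites the Gamma quotient so that, at the four zeta poles, it collapses via \eqref{Xdef} into the appropriate combinations of $X_k(\alpha_j)$. Shifting the Mellin contour past these four poles then produces three residues that match the remaining three summands of $\MT_2$, with the sign $(-1)^k$ coming from $i^{-2k}$.

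After the residues are extracted, the leftover shifted contour integrals reassemble, via Mellin inversion and Euler's integral representation of $\HyG$, into the additive convolutions in \eqref{ET2 1}--\eqref{ET2 3}. The finite sum $\ET^{(1)}_2$ arises from $\sigma_{\alpha_1-\alpha_2}(n)\sigma_{\alpha_1+\alpha_2}(l-n)$ on $n<l$ with the $\HyG$-kernel \eqref{eq:tildf}; the two infinite sums $\ET^{(2)}_2$ and $\ET^{(3)}_2$ come from $\sigma_{\alpha_1+\alpha_2}(n)\sigma_{\alpha_1-\alpha_2}(n+l)$ with the ${}_2\mathrm{I}_1$-kernel \eqref{Phi_k2}, and the symmetry \eqref{ET2 3} under $\alpha_2\mapsto-\alpha_2$ reflects the functional equation applied to the factor $\zeta(1+\alpha_1-\alpha_2)$ prior to contour shifting.

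The principal obstacle is the contour-shift bookkeeping during analytic continuation. One must carefully track residues at poles of both the four zeta factors and the Gamma factors $\Gamma(k\pm\alpha_j+s)$, verify that each combination collapses to exactly the prescribed main term with the correct sign and $X_k(\alpha_j)$ factors, and confirm that the leftover integrals rearrange into the hypergeometric kernels \eqref{eq:tildf} and \eqref{Phi_k2}. The restriction $|\Re\alpha_j|<k-1$ is precisely what places all of the relevant Gamma and zeta poles on one chosen side of the working contour, so that every contour shift is unambiguous.
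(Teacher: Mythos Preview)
Your overall strategy (Petersson trace formula, Mellin--Barnes for $J_{2k-1}$, contour shifts producing the four main terms, and reassembly into hypergeometric kernels) is the correct underlying mechanism and is precisely what is carried out in \cite[Theorem~4.2]{BFJEMS}, which the paper simply cites after a change of variables $u=(\alpha_1+\alpha_2)/2$, $v=(\alpha_1-\alpha_2)/2$. So you are redoing from scratch what the paper outsources.

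Two points where your sketch is loose. First, the processing of the Kloosterman--Bessel term is not done via the Ramanujan identity \eqref{RamId} directly: one has $\sum_n \sigma_{\alpha_1-\alpha_2}(n)n^{-s}e(n\bar d/c)$ inside the opened Kloosterman sum, which is an Estermann-type zeta function, and it is the functional equation of \emph{that} object (or equivalently Voronoi summation for $\sigma_v$) that produces the additive-shift structure $\sigma(n)\sigma(n\pm l)$. The Ramanujan identity only enters later, when the $l$-sum is taken in the fourth-moment application. Second, the only genuinely new content in the paper's proof is the identity \eqref{ET2 3}, i.e.\ that the third error term can be written as $(-1)^kX_k(\alpha_2)\ET^{(2)}_2(l;\alpha_1,-\alpha_2)$. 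In \cite{BFJEMS} this term appears with a different kernel $\psi_k(x;\alpha_1,\alpha_2)$ involving $\HyGI(k-\alpha_2,k-\alpha_1,2k;-x)$; the paper converts it to $\Phi_k$ by the Pfaff transformation \cite[15.8.1]{HMF}, yielding $\psi_k(x)=X_k(\alpha_2)\Phi_k(x/(1+x);\alpha_1,-\alpha_2)$. Your one-line explanation of \eqref{ET2 3} via ``the functional equation applied to $\zeta(1+\alpha_1-\alpha_2)$'' does not capture this; the symmetry is a hypergeometric identity on the kernel side, not a zeta manipulation.
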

\begin{proof}
The proof of the theorem formulated in slightly different notations can be found in \cite[Theorem 4.2]{BFJEMS}. To obtain the stated version, one needs to take $u=(\alpha_1+\alpha_2)/2$ and  $v=(\alpha_1-\alpha_2)/2$ in \cite[Theorem 4.2]{BFJEMS} and to use  the relation \eqref{tau to sigma}.
After that it is left to prove a new relation \eqref{ET2 3}. It follows from \cite[(4.6),(4.10)]{BFJEMS} that
\begin{equation}\label{ET2 3psi}
\ET^{(3)}_2(l;\alpha_1,\alpha_2)=
\frac{(-1)^k}{\sqrt{l}}\sum_{n=1}^{\infty}
\frac{\sigma_{\alpha_1-\alpha_2}(n)\sigma_{\alpha_1+\alpha_2}(n+l)}{n^{(\alpha_1-\alpha_2)/2}(n+l)^{(\alpha_1+\alpha_2)/2}}
\psi_k\left(\frac{l}{n};\alpha_1,\alpha_2\right),
\end{equation}
where
\begin{multline}\label{psi_k}
\psi_k(x;\alpha_1,\alpha_2)=2(2\pi)^{\alpha_1+\alpha_2}
\cos{\pi\frac{\alpha_1-\alpha_2}{2}}x^k(1+x)^{-(\alpha_1+\alpha_2)/2}\HyGI(k-\alpha_2,k-\alpha_1,2k;-x).
\end{multline}
Applying \cite[15.8.1]{HMF}, we find that
\begin{multline}\label{psi_k to Phi k}
\psi_k(x;\alpha_1,\alpha_2)=2(2\pi)^{\alpha_1+\alpha_2}
\cos{\pi\frac{\alpha_1-\alpha_2}{2}}\frac{\Gamma(k-\alpha_2)}{\Gamma(k+\alpha_2)}
\left(\frac{x}{1+x}\right)^k(1+x)^{(\alpha_1-\alpha_2)/2}\\\times
\HyGI\left(k-\alpha_1,k+\alpha_2,2k;\frac{x}{1+x}\right)=
(2\pi)^{2\alpha_2}\frac{\Gamma(k-\alpha_2)}{\Gamma(k+\alpha_2)}\Phi_k\left(\frac{x}{1+x};\alpha_1,-\alpha_2\right).
\end{multline}
Now \eqref{ET2 3} follows from \eqref{ET2 3psi}, \eqref{psi_k to Phi k} and \eqref{ET2 2}.
\end{proof}
\section{Special functions}

In this section, we collect various results on special functions that will be used later in the proof of the main theorem.

\begin{lem}\label{Lemma phik x to 1-x}
For $0<x<1$  the following identity holds:
\begin{equation}\label{phik 1-x to x}
\phi_k(1-x;\alpha_1,\alpha_2)=(-1)^kX_k(\alpha_2)\phi_k(x;\alpha_1,-\alpha_2).
\end{equation}
\end{lem}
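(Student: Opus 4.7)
The plan is to apply the standard connection formula for the Gauss hypergeometric function between the arguments $1-x$ and $x$ to each of the two pieces of $\phi_k(1-x;\alpha_1,\alpha_2)$, regroup the resulting four terms according to the exponent $\pm(\alpha_1+\alpha_2)/2$ on $x$, and match each group with one of the two summands of $(-1)^k X_k(\alpha_2)\phi_k(x;\alpha_1,-\alpha_2)$ after suitably normalizing the hypergeometric parameters via Euler's transformation.

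First I would write $\phi_k(1-x;\alpha_1,\alpha_2) = \tilde\phi_k(1-x;\alpha_1,\alpha_2) + \tilde\phi_k(1-x;\alpha_2,\alpha_1)$ and in each summand apply the connection formula (DLMF 15.8.4)
\[
\hyp(a,b;c;1-x) = \tfrac{\Gamma(c)\Gamma(c-a-b)}{\Gamma(c-a)\Gamma(c-b)}\hyp(a,b;a+b-c+1;x) + \tfrac{\Gamma(c)\Gamma(a+b-c)}{\Gamma(a)\Gamma(b)}x^{c-a-b}\hyp(c-a,c-b;c-a-b+1;x)
\]
with $(a,b,c) = (k-\alpha_1,1-k-\alpha_1,1-\alpha_1+\alpha_2)$ in the first term and $(k-\alpha_2,1-k-\alpha_2,1-\alpha_2+\alpha_1)$ in the second. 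Since $c-a-b=\alpha_1+\alpha_2$ in both cases, the result is four pieces $A_1,B_1$ (from the first $\tilde\phi_k$) and $A_2,B_2$ (from the second), two with prefactor $x^{-(\alpha_1+\alpha_2)/2}$ (the $A$-terms) and two with prefactor $x^{(\alpha_1+\alpha_2)/2}$ (the $B$-terms).

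Second, I would apply Euler's transformation $\hyp(a,b;c;x) = (1-x)^{c-a-b}\hyp(c-a,c-b;c;x)$ to the hypergeometric in $A_1$ with $(a,b,c)=(k-\alpha_1,1-k-\alpha_1,1-\alpha_1-\alpha_2)$; since $c-a=1-k-\alpha_2$ and $c-b=k-\alpha_2$, this converts it into $(1-x)^{\alpha_1-\alpha_2}\hyp(k-\alpha_2,1-k-\alpha_2;1-\alpha_1-\alpha_2;x)$, i.e.\ exactly the hypergeometric of $A_2$. The analogous Euler transformation applied to $\hyp(1+\alpha_2-k,k+\alpha_2;1+\alpha_1+\alpha_2;x)$ in $B_1$ converts it to $(1-x)^{\alpha_1-\alpha_2}\hyp(1+\alpha_1-k,k+\alpha_1;1+\alpha_1+\alpha_2;x)$, matching $B_2$. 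After these reshapings $A_1+A_2$ and $B_1+B_2$ each factor as a scalar times a common elementary piece times a single ${}_2F_1$. The same Euler transformation applied to $\tilde\phi_k(x;\alpha_1,-\alpha_2)$ and $\tilde\phi_k(x;-\alpha_2,\alpha_1)$ (whose hypergeometrics have parameters $(k-\alpha_1,1-k-\alpha_1,1-\alpha_1-\alpha_2)$ and $(k+\alpha_2,1-k+\alpha_2,1+\alpha_1+\alpha_2)$, respectively) puts them into the same normalizations, so the lemma reduces to the two scalar identities
\[
A_1+A_2 = (-1)^kX_k(\alpha_2)\tilde\phi_k(x;\alpha_1,-\alpha_2),\qquad B_1+B_2 = (-1)^kX_k(\alpha_2)\tilde\phi_k(x;-\alpha_2,\alpha_1).
\]

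Third, I would verify these scalar identities. The coefficients of $A_1$ and $A_2$ (respectively $B_1$ and $B_2$) carry denominators $\sin\pi\tfrac{\alpha_1-\alpha_2}{2}$ and $\sin\pi\tfrac{\alpha_2-\alpha_1}{2}$, so after a common denominator the difference of the two inner terms appears. Using the reflection formula $\Gamma(z)\Gamma(1-z)=\pi/\sin\pi z$ together with $\sin\pi(k+z)=(-1)^k\sin\pi z$ produces the factor $(-1)^k$, and the sine subtraction identity $\sin\pi\alpha_1-\sin\pi\alpha_2 = 2\sin\pi\tfrac{\alpha_1-\alpha_2}{2}\cos\pi\tfrac{\alpha_1+\alpha_2}{2}$ cancels the outer $\sin\pi\tfrac{\alpha_1-\alpha_2}{2}$ and leaves the factor $\cos\pi\tfrac{\alpha_1+\alpha_2}{2}$; a final application of reflection to $\Gamma(\pm(\alpha_1+\alpha_2))$ converts this cosine into $1/(2\sin\pi\tfrac{\alpha_1+\alpha_2}{2}\Gamma(1\mp(\alpha_1+\alpha_2)))$, which is precisely the denominator in the coefficient of $\tilde\phi_k(x;\alpha_1,-\alpha_2)$ and $\tilde\phi_k(x;-\alpha_2,\alpha_1)$, respectively. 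The remaining $\Gamma(k\mp\alpha_2)/\Gamma(k\pm\alpha_2)$ then exactly reproduces the factor $X_k(\alpha_2)$.

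The main obstacle is purely the bookkeeping in the third step: each coefficient is a product of half a dozen gamma factors and trigonometric expressions, and one must carefully use the parity $\sin\pi(k+z)=(-1)^k\sin\pi z$ and apply reflection twice (once to generate $(-1)^k$ from $\Gamma(k\pm\alpha_j)\Gamma(1-k\mp\alpha_j)$, once to fold $\cos\pi\tfrac{\alpha_1+\alpha_2}{2}$ together with $\Gamma(\pm(\alpha_1+\alpha_2))$ into the denominator $\sin\pi\tfrac{\alpha_1+\alpha_2}{2}$) in order for the two scalar identities to close up.
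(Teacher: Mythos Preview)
Your proposal is correct and follows essentially the same approach as the paper: apply the $x \leftrightarrow 1-x$ connection formula to each hypergeometric piece, use Euler's transformation to align parameters, and collapse the four resulting terms into two via the reflection formula and the sine subtraction identity. The only cosmetic difference is direction: the paper starts from $\phi_k(x;\alpha_1,\alpha_2)$, transforms to argument $1-x$, and recognizes the result as $(-1)^kX_k(\alpha_2)\phi_k(1-x;\alpha_1,-\alpha_2)$, whereas you start from $\phi_k(1-x;\alpha_1,\alpha_2)$ and transform to argument $x$; the gamma/trig bookkeeping is the same either way.
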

\begin{proof}
According to \cite[(33) p.107]{BE} one has
\begin{multline}\label{2F1(x) to 2F1(1-x)}
\HyG(k-\alpha_1,1-k-\alpha_1,1-\alpha_1+\alpha_2;x)=\\=
\frac{\Gamma(1-\alpha_1+\alpha_2)\Gamma(\alpha_1+\alpha_2)}{\Gamma(1-k+\alpha_2)\Gamma(k+\alpha_2)}\HyG(k-\alpha_1,1-k-\alpha_1,1-\alpha_1-\alpha_2;1-x)\\+
\frac{\Gamma(1-\alpha_1+\alpha_2)\Gamma(-\alpha_1-\alpha_2)}{\Gamma(1-k-\alpha_1)\Gamma(k-\alpha_1)}(1-x)^{\alpha_1+\alpha_2}
\HyG(k+\alpha_2,1-k+\alpha_2,1+\alpha_1+\alpha_2;1-x).
\end{multline}
Making the change $\alpha_1\leftrightarrow\alpha_2$ in \eqref{2F1(x) to 2F1(1-x)}  and applying to each of the hypergeometric functions the following relation (see \cite[15.8.1]{HMF}):
\begin{equation}
\HyG(a,b,c;1-x)=x^{c-a-b}\HyG(c-a,c-b,c;1-x),
\end{equation}
we obtain
\begin{multline}\label{2F1(x) to 2F1(1-x)2}
\HyG(k-\alpha_2,1-k-\alpha_2,1-\alpha_2+\alpha_1;x)=\\=
\frac{\Gamma(1-\alpha_2+\alpha_1)\Gamma(\alpha_1+\alpha_2)}{\Gamma(1-k+\alpha_1)\Gamma(k+\alpha_1)}x^{\alpha_2-\alpha_1}
\HyG(k-\alpha_1,1-k-\alpha_1,1-\alpha_1-\alpha_2;1-x)\\+
\frac{\Gamma(1-\alpha_2+\alpha_1)\Gamma(-\alpha_1-\alpha_2)}{\Gamma(1-k-\alpha_2)\Gamma(k-\alpha_2)}(1-x)^{\alpha_1+\alpha_2}x^{\alpha_2-\alpha_1}
\HyG(k+\alpha_2,1-k+\alpha_2,1+\alpha_1+\alpha_2;1-x).
\end{multline}
Using \eqref{phik}, \eqref{eq:tildf}, \eqref{2F1(x) to 2F1(1-x)}, \eqref{2F1(x) to 2F1(1-x)2}  and the relations
\begin{equation}
\frac{\Gamma(k-\alpha_1)}{\Gamma(1-k+\alpha_2)}-\frac{\Gamma(k-\alpha_2)}{\Gamma(1-k+\alpha_1)}=
\frac{2(-1)^k}{\pi}\Gamma(k-\alpha_1)\Gamma(k-\alpha_2)\sin\pi\frac{\alpha_1-\alpha_2}{2}\cos\pi\frac{\alpha_1+\alpha_2}{2},
\end{equation}
\begin{equation}
\frac{1}{\Gamma(k+\alpha_1)\Gamma(1-k-\alpha_1)}-
\frac{1}{\Gamma(k+\alpha_2)\Gamma(1-k-\alpha_2)}=
\frac{2(-1)^k}{\pi}\sin\pi\frac{\alpha_1-\alpha_2}{2}\cos\pi\frac{\alpha_1+\alpha_2}{2},
\end{equation}
which follow from \cite[5.5.3]{HMF}, we show that
\begin{multline}\label{phik to 2F1(1-x)}
\phi_k(x;\alpha_1,\alpha_2)=
\frac{(-1)^k}{\pi}\cos\pi\frac{\alpha_1+\alpha_2}{2}\Gamma(\alpha_1+\alpha_2)
(2\pi)^{1+\alpha_1+\alpha_2}x^{(\alpha_2-\alpha_1)/2}(1-x)^{-(\alpha_1+\alpha_2)/2}\\\times
\frac{\Gamma(k-\alpha_1)\Gamma(k-\alpha_2)}{\Gamma(k+\alpha_1)\Gamma(k+\alpha_2)}
\HyG(k-\alpha_1,1-k-\alpha_1,1-\alpha_1-\alpha_2;1-x)\\+
\frac{(-1)^k}{\pi}\cos\pi\frac{\alpha_1+\alpha_2}{2}\Gamma(-\alpha_1-\alpha_2)
(2\pi)^{1+\alpha_1+\alpha_2}\\\times
x^{(\alpha_2-\alpha_1)/2}(1-x)^{(\alpha_1+\alpha_2)/2}\HyG(k+\alpha_2,1-k+\alpha_2,1+\alpha_1+\alpha_2;1-x).
\end{multline}
Applying \cite[5.5.3]{HMF}, we infer that
\begin{equation}\label{Gamma1}
\frac{\Gamma(\alpha_1+\alpha_2)}{\pi}\cos\pi\frac{\alpha_1+\alpha_2}{2}=
\frac{1}{2\sin\pi\frac{\alpha_1+\alpha_2}{2}\Gamma(1-\alpha_1-\alpha_2)},
\end{equation}
\begin{equation}\label{Gamma2}
\frac{\Gamma(-\alpha_1-\alpha_2)}{\pi}\cos\pi\frac{\alpha_1+\alpha_2}{2}=
\frac{1}{2\sin\pi\frac{-\alpha_1-\alpha_2}{2}\Gamma(1+\alpha_1+\alpha_2)}.
\end{equation}
Substituting \eqref{Gamma1}, \eqref{Gamma2} to \eqref{phik to 2F1(1-x)} and using \eqref{phik}, \eqref{eq:tildf}, we finally prove that
\begin{equation}\label{phik x to 1-x}
\phi_k(x;\alpha_1,\alpha_2)=(-1)^kX_k(\alpha_2)\phi_k(1-x;\alpha_1,-\alpha_2).
\end{equation}
\end{proof}

Let
\begin{equation}\label{Gamma(k alpha s) def}
\Gamma(k,\alpha_1,\alpha_2;s):=
\frac{\Gamma(k-1/2+s/2)}{\Gamma(k+1/2-s/2)}\Gamma(1/2-\alpha_1-s/2)\Gamma(1/2-\alpha_2-s/2),
\end{equation}
\begin{equation}\label{J def}
J_s(k,\alpha_1,\alpha_2;x):=\frac{1}{2\pi i}\int_{(\sigma)}\Gamma(k,\alpha_1,\alpha_2;s)\sin\left(\pi\frac{s+\alpha_1+\alpha_2}{2}\right)x^{s/2}ds,
\end{equation}
where $1-2k<\sigma<1-2\max(|\Re{\alpha_1}|,|\Re{\alpha_2}|)$. The next lemma is proven in \cite[Lemma 2.2]{FrolSbor2020}.
\begin{lem}\label{Lemma phik Mellin}
For $0<x<1$ and $\Re{(\alpha_1+\alpha_2)}>0$, $|\Re{\alpha_1}|<k$, $|\Re{\alpha_2}|<k$ 
\begin{equation}\label{phik Mellin}
\phi_k(x;\alpha_1,\alpha_2)=(2\pi)^{\alpha_1+\alpha_2}x^{-1/2+\alpha_1/2+\alpha_2/2}(1-x)^{-\alpha_1/2-\alpha_2/2}J_s(k,\alpha_1,\alpha_2;x),
\end{equation}
\begin{equation}\label{Phik Mellin}
\Phi_k(x;\alpha_1,\alpha_2)=(-1)^k(2\pi)^{\alpha_1+\alpha_2}x^{1/2}(1-x)^{-\alpha_1/2-\alpha_2/2}J_s(k,\alpha_1,\alpha_2;1/x).
\end{equation}
\end{lem}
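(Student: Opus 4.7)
My plan is to prove both identities by shifting the Mellin--Barnes contour in $J_s(k,\alpha_1,\alpha_2;x)$ and collecting residues. For the first identity, since $0<x<1$ the factor $x^{s/2}$ decays as $\Re s\to+\infty$. Applying Stirling's formula \eqref{Stirling0} to the integrand of $J_s$, the exponentials $e^{-\pi|\Im s|/4}$ produced by the three Gammas in the numerator combine with $e^{\pi|\Im s|/4}$ from $1/\Gamma(k+1/2-s/2)$ and $e^{\pi|\Im s|/2}$ from $\sin(\pi(s+\alpha_1+\alpha_2)/2)$ to cancel exactly, while a polynomial count gives an overall bound $C(1+|\Im s|)^{-1-\Re(\alpha_1+\alpha_2)}x^{\Re s/2}$. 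Under the hypothesis $\Re(\alpha_1+\alpha_2)>0$, both the horizontal tails at $|\Im s|=\infty$ and the vertical integrals at $\Re s=+\infty$ vanish, so the contour may be shifted arbitrarily far to the right.

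Shifting right, one crosses simple poles from $\Gamma(1/2-\alpha_1-s/2)$ at $s=1-2\alpha_1+2m$ and from $\Gamma(1/2-\alpha_2-s/2)$ at $s=1-2\alpha_2+2m$, $m\ge 0$. At the first family one has $\sin(\pi(s+\alpha_1+\alpha_2)/2)=(-1)^m\cos(\pi(\alpha_1-\alpha_2)/2)$ and the residue of $\Gamma(1/2-\alpha_1-s/2)$ equals $-2(-1)^m/m!$, so the contribution to $J_s$ becomes
\begin{equation*}
2\cos(\pi(\alpha_1-\alpha_2)/2)\sum_{m\ge 0}\frac{\Gamma(k-\alpha_1+m)\Gamma(\alpha_1-\alpha_2-m)}{\Gamma(k+\alpha_1-m)\,m!}\,x^{1/2-\alpha_1+m}.
\end{equation*}
Using the Pochhammer reflection $\Gamma(a-m)/\Gamma(a)=(-1)^m\Gamma(1-a)/\Gamma(1-a+m)$ with $a=\alpha_1-\alpha_2$ and $a=k+\alpha_1$, together with $\Gamma(z)\Gamma(1-z)=\pi/\sin(\pi z)$ and the double-angle identity $\sin(\pi(\alpha_1-\alpha_2))=2\sin(\pi(\alpha_1-\alpha_2)/2)\cos(\pi(\alpha_1-\alpha_2)/2)$, this series rewrites as $\HyG(k-\alpha_1,1-k-\alpha_1,1-\alpha_1+\alpha_2;x)$ multiplied by exactly the prefactor of $\tilde\phi_k(x;\alpha_1,\alpha_2)$ in \eqref{eq:tildf} divided by $(2\pi)^{\alpha_1+\alpha_2}x^{-1/2+(\alpha_1+\alpha_2)/2}(1-x)^{-(\alpha_1+\alpha_2)/2}$. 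The second family of residues gives $\tilde\phi_k(x;\alpha_2,\alpha_1)$ by the symmetric calculation with $\alpha_1\leftrightarrow\alpha_2$; summing the two yields \eqref{phik Mellin}.

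For the second identity the argument $y=1/x>1$ makes $y^{s/2}$ decay as $\Re s\to-\infty$, so we shift the contour to the left. This encounters only the poles of $\Gamma(k-1/2+s/2)$ at $s=1-2k-2m$, $m\ge 0$; the hypothesis $|\Re\alpha_j|<k$ guarantees that $\Gamma(1/2-\alpha_j-s/2)$ remains regular at these points. There one computes $\sin(\pi(s+\alpha_1+\alpha_2)/2)=(-1)^{k+m}\cos(\pi(\alpha_1+\alpha_2)/2)$, and the residue $2(-1)^m/m!$ of $\Gamma(k-1/2+s/2)$ combines with the factors $\Gamma(k-\alpha_j+m)$ and $1/\Gamma(2k+m)$ to produce $2(-1)^k\cos(\pi(\alpha_1+\alpha_2)/2)\,x^{k-1/2}\HyGI(k-\alpha_1,k-\alpha_2,2k;x)$. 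This is exactly $\Phi_k(x;\alpha_1,\alpha_2)$ divided by $(-1)^k(2\pi)^{\alpha_1+\alpha_2}x^{1/2}(1-x)^{-(\alpha_1+\alpha_2)/2}$, establishing \eqref{Phik Mellin}.

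The main obstacle I anticipate is the delicate Gamma-function bookkeeping in the first identity: one must verify that the Pochhammer reflections and the sine double-angle identity conspire to reproduce \emph{exactly} the prefactor $(2\pi)^{1+\alpha_1+\alpha_2}/(2\sin(\pi(\alpha_1-\alpha_2)/2))\cdot\Gamma(k-\alpha_1)/(\Gamma(1-\alpha_1+\alpha_2)\Gamma(k+\alpha_1))$ of \eqref{eq:tildf} and that the powers of $x$ and $(1-x)$ match those in \eqref{phik Mellin}. Once this matching is confirmed the contour-shift justification and the $\Phi_k$ identity reduce to routine residue calculus.
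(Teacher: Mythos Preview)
Your approach is correct. The paper itself does not prove this lemma but cites \cite[Lemma~2.2]{FrolSbor2020}; your contour-shift and residue computation is the standard way to establish such Mellin--Barnes representations and almost certainly coincides with the argument in that reference. The bookkeeping you flag as a potential obstacle does work out exactly: applying $\Gamma(a-m)=(-1)^m\Gamma(a)\Gamma(1-a)/\Gamma(1-a+m)$ with $a=\alpha_1-\alpha_2$ and $a=k+\alpha_1$ to the residue sum, then using $\Gamma(\alpha_1-\alpha_2)\Gamma(1-\alpha_1+\alpha_2)=\pi/\sin\pi(\alpha_1-\alpha_2)$ together with the double-angle formula, produces precisely the prefactor $\pi/\sin(\pi(\alpha_1-\alpha_2)/2)\cdot 1/(\Gamma(k+\alpha_1)\Gamma(1-k-\alpha_1))$ and the Pochhammer ratios needed for \eqref{eq:tildf}, with the correct power $x^{1/2-\alpha_1}$ matching $x^{(\alpha_2-\alpha_1)/2}\cdot x^{1/2-(\alpha_1+\alpha_2)/2}$. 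The $\Phi_k$ identity is, as you say, the easier of the two since only a single family of poles contributes.
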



\begin{lem}\label{lem:phiPhi approx}
The following asymptotic formulas hold:
\begin{multline}\label{phi approx}
\phi_k(x;\alpha_1,\alpha_2)\sim x^{-|\Re{(\alpha_1-\alpha_2)}|/2}\hbox{ as } x\rightarrow0,\quad
\phi_k(x;\alpha_1,\alpha_2)\sim(1-x)^{-|\Re{(\alpha_1+\alpha_2)}|/2}\hbox{ as } x\rightarrow1,
\end{multline}
\begin{multline}\label{Phi approx}
\Phi_k\left(x;\alpha_1,\alpha_2\right)\sim x^k\hbox{ as } x\rightarrow0,\quad
\Phi_k\left(x;\alpha_1,\alpha_2\right)\sim(1-x)^{-|\Re{(\alpha_1+\alpha_2)}|/2}\hbox{ as } x\rightarrow1.
\end{multline}
\end{lem}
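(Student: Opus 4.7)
The proof proceeds by direct inspection of the defining formulas \eqref{eq:tildf} and \eqref{Phi_k2}, combined with the standard analytic behavior of ${}_2F_1$ near its singular points $0$ and $1$.

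For the behavior of $\phi_k$ at $x \to 0$: from \eqref{phik} and \eqref{eq:tildf}, since $\HyG(a,b,c;0)=1$, one sees that $\tilde\phi_k(x;\alpha_1,\alpha_2) \sim C_1 x^{(\alpha_2-\alpha_1)/2}$ and $\tilde\phi_k(x;\alpha_2,\alpha_1) \sim C_2 x^{(\alpha_1-\alpha_2)/2}$, where $C_1,C_2$ are nonzero constants depending on $k,\alpha_1,\alpha_2$. Summing and retaining the dominant power yields the claimed exponent $-|\Re(\alpha_1-\alpha_2)|/2$. For $\phi_k$ at $x \to 1$: I would invoke the representation \eqref{phik to 2F1(1-x)} that is already derived in the proof of Lemma \ref{Lemma phik x to 1-x}. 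Evaluating each $\HyG(\cdots;1-x)$ at $x=1$ (where it equals $1$), the two summands contribute $(1-x)^{-(\alpha_1+\alpha_2)/2}$ and $(1-x)^{(\alpha_1+\alpha_2)/2}$, respectively, and keeping the dominant one gives $(1-x)^{-|\Re(\alpha_1+\alpha_2)|/2}$.

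For $\Phi_k$ at $x \to 0$: the definition \eqref{Phi_k2} already contains the explicit factor $x^k$, while the remaining factors $(1-x)^{-(\alpha_1+\alpha_2)/2}$ and $\HyGI(k-\alpha_1,k-\alpha_2,2k;x)$ tend to the nonzero limit $\Gamma(k-\alpha_1)\Gamma(k-\alpha_2)/\Gamma(2k)$ as $x \to 0$, producing $\Phi_k(x;\alpha_1,\alpha_2) \sim x^k$. For $\Phi_k$ at $x \to 1$: apply the standard connection formula
\begin{multline*}
\HyG(a,b,c;x) = \frac{\Gamma(c)\Gamma(c-a-b)}{\Gamma(c-a)\Gamma(c-b)}\HyG(a,b,a+b-c+1;1-x)\\
+(1-x)^{c-a-b}\frac{\Gamma(c)\Gamma(a+b-c)}{\Gamma(a)\Gamma(b)}\HyG(c-a,c-b,c-a-b+1;1-x)
\end{multline*}
with $a=k-\alpha_1$, $b=k-\alpha_2$, $c=2k$, so that $c-a-b=\alpha_1+\alpha_2$. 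The two resulting terms tend to nonzero constants and to a multiple of $(1-x)^{\alpha_1+\alpha_2}$, respectively. Multiplying by the prefactor $(1-x)^{-(\alpha_1+\alpha_2)/2}$ of \eqref{Phi_k2} yields contributions of order $(1-x)^{-(\alpha_1+\alpha_2)/2}$ and $(1-x)^{(\alpha_1+\alpha_2)/2}$; the dominant one gives $(1-x)^{-|\Re(\alpha_1+\alpha_2)|/2}$.

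There is no serious obstacle here; the statement is a routine boundary-behavior lemma, and the only mild care needed is to track which of the two summands dominates depending on the sign of $\Re(\alpha_1\pm\alpha_2)$, which is exactly what the absolute value in the exponent records. The underlying assumption, namely that the relevant Gamma factors appearing as coefficients do not vanish, is guaranteed by the smallness of $|\alpha_j|$ assumed throughout the paper.
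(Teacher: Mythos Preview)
Your proof is correct and follows essentially the same approach as the paper: both arguments use the series representation ${}_2F_1(a,b,c;0)=1$ for the behavior at $x\to 0$ and the connection formula (equivalently, \cite[15.4.20, 15.4.23]{HMF}) for the behavior at $x\to 1$. The only cosmetic difference is that for $\phi_k$ near $x=1$ you invoke the already-derived representation \eqref{phik to 2F1(1-x)}, whereas the paper applies the Gauss-type asymptotics directly to \eqref{eq:tildf}; the content is identical.
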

\begin{proof}
In case when $x\rightarrow1$, the formula \eqref{phi approx} follows from \eqref{phik} and \eqref{eq:tildf} after using \cite[15.4.20, 15.4.23]{HMF}.

In case when $x\rightarrow0$, the formula \eqref{phi approx} follows from the series representation for the hypergeometric function.  One can prove \eqref{Phi approx} in a similar way.
\end{proof}
\begin{lem}\label{lem:difphik}
For $0<x<1$ the function
\begin{equation}\label{Y tildephik}
Y(x;\alpha_1,\alpha_2):=\sqrt{x(1-x)}\tilde{\phi}_k(x;\alpha_1,\alpha_2)
\end{equation}
is a solution of the differential equation
\begin{equation}\label{Ydifeq}
Y''(x)+\De(x;\alpha_1,\alpha_2)Y(x)=0,
\end{equation}
where
\begin{multline}\label{Ydifeq r-def}
\De(x;\alpha_1,\alpha_2)=\frac{(2k-1)^2}{4x(1-x)}+
\frac{1-(\alpha_2-\alpha_1)^2}{4x^2}+
\frac{1-(\alpha_2+\alpha_1)^2}{4(1-x)^2}\\+
\frac{1-(\alpha_2-\alpha_1)^2-(\alpha_2+\alpha_1)^2}{4x(1-x)}.
\end{multline}
\end{lem}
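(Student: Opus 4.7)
The plan is to reduce the claim to the standard passage from a second-order linear ODE to its normal (first-derivative-free) form. By \eqref{eq:tildf}, we write $\tilde{\phi}_k(x;\alpha_1,\alpha_2)=A\cdot x^{(\alpha_2-\alpha_1)/2}(1-x)^{-(\alpha_1+\alpha_2)/2}w(x)$, where $A=A(k,\alpha_1,\alpha_2)$ is an $x$-independent constant and $w(x):=\HyG(a,b,c;x)$ with
\[
a=k-\alpha_1,\qquad b=1-k-\alpha_1,\qquad c=1-\alpha_1+\alpha_2.
\]
Using the identities $\tfrac{1}{2}+(\alpha_2-\alpha_1)/2=c/2$ and $\tfrac{1}{2}-(\alpha_1+\alpha_2)/2=(a+b+1-c)/2$, we see that $Y=h\cdot w$ with $h(x):=A\,x^{c/2}(1-x)^{(a+b+1-c)/2}$. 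The task is then to derive the ODE satisfied by $hw$, starting from the hypergeometric equation $w''+Pw'+Qw=0$ where
\[
P(x)=\frac{c-(a+b+1)x}{x(1-x)}=\frac{\mu}{x}-\frac{\nu}{1-x},\qquad Q(x)=-\frac{ab}{x(1-x)},
\]
with $\mu:=c$ and $\nu:=a+b+1-c$.

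A one-line partial-fraction check gives $h'/h=P/2$, so the substitution $Y=hw$ is precisely the one that annihilates the first-derivative term. The standard normal-form reduction then yields $Y''+(Q-\tfrac{1}{2}P'-\tfrac{1}{4}P^2)Y=0$, and the multiplicative constant $A$ drops out. The proof thus reduces to verifying the algebraic identity
\[
Q(x)-\tfrac{1}{2}P'(x)-\tfrac{1}{4}P(x)^2=\De(x;\alpha_1,\alpha_2).
\]

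For this, we collect the three types of rational terms. The coefficient of $1/x^2$ on the left is $\mu(2-\mu)/4$, which for $\mu=1-\alpha_1+\alpha_2$ equals $[1-(\alpha_1-\alpha_2)^2]/4$. The coefficient of $1/(1-x)^2$ is $\nu(2-\nu)/4$, which for $\nu=1-\alpha_1-\alpha_2$ equals $[1-(\alpha_1+\alpha_2)^2]/4$. Both match \eqref{Ydifeq r-def}. The remaining coefficient of $1/[x(1-x)]$ is $\mu\nu/2-ab$; using $\mu\nu=(1-\alpha_1)^2-\alpha_2^2$ and $ab=(k-\alpha_1)(1-k-\alpha_1)=\alpha_1^2-\alpha_1-k(k-1)$, a short expansion produces $k^2-k+\tfrac{1}{2}-\tfrac{1}{2}(\alpha_1^2+\alpha_2^2)$, which coincides with $(2k-1)^2/4+[1-(\alpha_1-\alpha_2)^2-(\alpha_1+\alpha_2)^2]/4$.

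The mild subtlety is that the hypergeometric parameters $a,b,c$ are asymmetric in $\alpha_1,\alpha_2$, whereas $\De$ is manifestly symmetric in the two shifts, so the symmetry is restored only after the algebraic rearrangement above; this provides a useful internal consistency check on the computation.
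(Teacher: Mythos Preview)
Your proof is correct and is exactly the computation the paper's references point to: the paper gives no detailed argument but cites \cite[Section 2.7.2, (8), (9)]{BE}, which is precisely the reduction of the hypergeometric equation to its first-derivative-free normal form that you carry out explicitly. Your identification $h'/h=P/2$ and the verification of the three rational coefficients are all accurate, so there is nothing to add.
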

\begin{proof}
This is a generalization of \cite[Corollary 5.6]{BFJEMS}, see also \cite[Section 2.7.2, (8), (9)]{BE}.
\end{proof}
\begin{lem}\label{lem:phikest}

Let $|\alpha_1|,|\alpha_2|\ll1$. For $0<x\ll k^{-2+\epsilon}$ one has
\begin{equation}\label{tildf est1}
\tilde{\phi}_k(x;\alpha_1,\alpha_2)\ll k^{-2\Re{\alpha_1}}
x^{(\Re{\alpha_2}-\Re{\alpha_1})/2}.
\end{equation}
For $k^{-2+\epsilon}\ll x<1-\delta$ one has
\begin{equation}\label{tildf est2}
\tilde{\phi}_k(x;\alpha_1,\alpha_2)\ll
\frac{1}{k^{1/2+\Re{\alpha_2}+\Re{\alpha_1}}x^{1/4}}.
\end{equation}
\end{lem}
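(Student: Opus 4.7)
My plan is to split the range of $x$ at the transition value $x\asymp k^{-2}$ and handle the two regimes separately, driven by the principle that the hypergeometric factor in \eqref{eq:tildf} behaves like a Bessel function with argument of size $\asymp k\sqrt{x}$.

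In the small range $0<x\ll k^{-2+\epsilon}$ I would first dispose of the prefactor. By Stirling \eqref{Stirling0}, $\Gamma(k-\alpha_1)/\Gamma(k+\alpha_1)=k^{-2\alpha_1}(1+O(1/k))$; the factor $(1-x)^{-(\alpha_1+\alpha_2)/2}$ equals $1+O(x)$; and the sine and Gamma in the explicit constant of \eqref{eq:tildf} are bounded since $|\alpha_j|\ll\epsilon_0$. It then suffices to show that $\hyp(k-\alpha_1,1-k-\alpha_1,1-\alpha_1+\alpha_2;x)=O(1)$ in this range. This will follow from a direct termwise estimate: the absolute value of the $j$th term of the defining power series is $\ll (k^2x)^j/(j!)^2$ for $j<k$ (while the tail $j\ge k$ decays rapidly), and since $k^2x\ll k^{\epsilon}$ the whole series is majorised by $\sum_{j\ge0}(k^{\epsilon})^j/(j!)^2=O(1)$. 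Combining with the explicit factor $x^{(\alpha_2-\alpha_1)/2}$ yields \eqref{tildf est1}.

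In the moderate range $k^{-2+\epsilon}\ll x<1-\delta$ the hypergeometric factor oscillates and must decay, and the cleanest line of argument I see is to exploit the second-order ODE from Lemma \ref{lem:difphik}. With $Y(x)=\sqrt{x(1-x)}\,\tilde\phi_k(x;\alpha_1,\alpha_2)$ one has $Y''+\De\,Y=0$, and throughout the moderate range $\De(x;\alpha_1,\alpha_2)\asymp k^2/(x(1-x))$ by inspection of \eqref{Ydifeq r-def}. The Liouville--Green (WKB) approximation then produces $|Y(x)|\ll C\,\De(x)^{-1/4}\asymp C\,(x(1-x))^{1/4}k^{-1/2}$, and hence $|\tilde\phi_k(x;\alpha_1,\alpha_2)|\ll C\,k^{-1/2}x^{-1/4}$ uniformly for $x<1-\delta$. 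The normalising constant $C$ is fixed by matching the WKB expression with the small-$x$ bound already established, which corresponds to comparing with the small-argument expansion of a Bessel function of order $\alpha_2-\alpha_1$ at argument $\asymp k\sqrt{x}$; this matching forces $C\asymp k^{-\Re\alpha_1-\Re\alpha_2}$, producing \eqref{tildf est2}.

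The main technical obstacle is the WKB matching step: pinning down the $k$-dependence of $C$ requires tracking both WKB modes together with the correct phase, not merely an upper bound from one of them. An equivalent route avoiding WKB is to invoke a classical Bessel-function asymptotic such as
\[
\hyp(k-\alpha_1,1-k-\alpha_1,1-\alpha_1+\alpha_2;\sin^2(\theta/2))\asymp \Gamma(1-\alpha_1+\alpha_2)(k\sin(\theta/2))^{\alpha_1-\alpha_2} J_{-\alpha_1+\alpha_2}((2k-1)\sin\theta),
\]
valid for $k\sin(\theta/2)\gg k^{\epsilon/2}$, combined with the standard bound $|J_\nu(z)|\ll z^{-1/2}$ for $z\gg 1$; substituting $\sin^2(\theta/2)=x$ recovers the same estimate \eqref{tildf est2}, at the cost of having to establish the Bessel approximation with a sufficiently uniform remainder. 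Either way, once both regimes are handled, the two stated estimates follow immediately.
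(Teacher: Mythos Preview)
Your overall strategy---splitting at $x\asymp k^{-2}$ and controlling the hypergeometric factor by Bessel-type behaviour---is exactly what the paper does: it simply cites uniform asymptotic expansions from \cite{BCF} (Lemmas~3.9, 3.10, 3.12 there) to bound $\HyG(k-\alpha_1,1-k-\alpha_1,1-\alpha_1+\alpha_2;x)$ by $O(1)$ in the small range and by $(k\sqrt{x})^{-1/2-\Re(\alpha_2-\alpha_1)}$ in the moderate range, and then multiplies by the prefactor in \eqref{eq:tildf}. Your moderate-range argument, whether via the Liouville--Green ODE route or via the explicit Bessel asymptotic, is the same in spirit and leads to the same exponent bookkeeping.

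There is, however, a genuine gap in your small-range step. The termwise majorant you write down is
\[
\sum_{j\ge 0}\frac{(k^2x)^j}{(j!)^2}=I_0\bigl(2k\sqrt{x}\bigr),
\]
the \emph{modified} Bessel function; at the top of the range $x\asymp k^{-2+\epsilon}$ this is $I_0(2k^{\epsilon/2})\asymp \exp(2k^{\epsilon/2})$, which is certainly not $O(1)$. The point is that $(1-k-\alpha_1)_j$ carries the sign $(-1)^j$, and discarding it replaces $J$-Bessel behaviour by $I$-Bessel behaviour. So your argument is correct only for $x\ll k^{-2}$ and fails precisely in the transition zone $k^{-2}\ll x\ll k^{-2+\epsilon}$. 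To cover that zone one must already use an oscillatory/Bessel-type estimate (as the paper does via \cite{BCF}); the elementary absolute-value majorisation cannot suffice.

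A minor side remark: the prefactor in \eqref{eq:tildf} contains $1/\sin\bigl(\pi(\alpha_1-\alpha_2)/2\bigr)$, which is not bounded as $\alpha_1\to\alpha_2$; both you and the paper are implicitly allowing the implied constant to depend on $|\alpha_1-\alpha_2|^{-1}$ (only the symmetrised $\phi_k$ in \eqref{phik} is regular there).
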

\begin{proof}
It follows from \cite[Lemma 3.10, Lemma 3.12, (3.56)]{BCF} that for $0<x\ll k^{-2+\epsilon}$ one has
\begin{equation}\label{phik 2F1 est1}
\HyG(k-\alpha_1,1-k-\alpha_1,1-\alpha_1+\alpha_2;x)\ll1,
\end{equation}
and for $k^{-2+\epsilon}\ll x<\delta$
\begin{multline}\label{phik 2F1 est2}
\HyG(k-\alpha_1,1-k-\alpha_1,1-\alpha_1+\alpha_2;x)\ll\frac{1}{(k\arccos\sqrt{1-2x})^{1/2+\Re{\alpha_2}-\Re{\alpha_1}}}\\ \ll
\frac{1}{(k\sqrt{x})^{1/2+\Re{\alpha_2}-\Re{\alpha_1}}}.
\end{multline}
For $\delta<x<1-\delta$ the estimate \eqref{phik 2F1 est2} follows from \cite[Lemma 3.9]{BCF}. Finally, using \eqref{eq:tildf} we prove the lemma.
\end{proof}

Similar estimates are also valid when $\alpha_1=\alpha_2=0$. The following result is a consequence of  \cite[Theorems 5.10, 5.14]{BFJEMS}.

\begin{lem}\label{lem:phikest0}
The following estimates hold:
\begin{equation}
\phi_k(x)\ll\begin{cases}
\log(k\sqrt{x}),\quad  0<x\ll k^{-2+\epsilon};\\
\log(k\sqrt{1-x}),\quad 1-k^{-2+\epsilon}<x<1;\\
k^{-1/2}x^{-1/4},\quad k^{-2+\epsilon}\ll x<1/2;\\
k^{-1/2}(1-x)^{-1/4},\quad 1/2<x<1-k^{-2+\epsilon}.
\end{cases}
\end{equation}
\end{lem}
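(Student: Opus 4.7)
The plan is to reduce the four bounds to two via a symmetry argument, then invoke the zero-shift specialization of Theorems 5.10 and 5.14 of \cite{BFJEMS}. First I would note that at $\alpha_1=\alpha_2=0$ one has $X_k(0)=1$, so Lemma \ref{Lemma phik x to 1-x} collapses to $\phi_k(1-x;0,0)=(-1)^k\phi_k(x;0,0)$, hence $|\phi_k(1-x)|=|\phi_k(x)|$. This pairs the bounds on $1-k^{-2+\epsilon}<x<1$ with those on $0<x\ll k^{-2+\epsilon}$, and the bounds on $(1/2,1-k^{-2+\epsilon})$ with those on $(k^{-2+\epsilon},1/2)$. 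It therefore suffices to establish the first and third estimates.

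One cannot obtain them by a naive specialization of Lemma \ref{lem:phikest}: the factor $\sin\pi\tfrac{\alpha_1-\alpha_2}{2}$ in the denominator of \eqref{eq:tildf} vanishes as $\alpha_2\to\alpha_1$, so each summand $\tilde\phi_k(x;\alpha_1,\alpha_2)$ of \eqref{phik} diverges while their sum stays regular, and the logarithm in the bound is produced precisely by this cancellation. The clean way to capture this is via the Mellin-Barnes representation \eqref{phik Mellin}, which at $\alpha_1=\alpha_2=0$ becomes
\begin{equation*}
\phi_k(x)=x^{-1/2}\cdot\frac{1}{2\pi i}\int_{(\sigma)}\frac{\Gamma(k-1/2+s/2)}{\Gamma(k+1/2-s/2)}\,\Gamma(1/2-s/2)^{2}\sin(\pi s/2)\,x^{s/2}\,ds.
\end{equation*}

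For the transitional range $0<x\ll k^{-2+\epsilon}$, one shifts the contour to the right past the pole at $s=1$; the double pole of $\Gamma(1/2-s/2)^{2}$ is reduced to a simple pole by the zero of $\sin(\pi s/2)$, leaving a residue of the form $c_{1}\log x+c_{2}$, while Stirling's bound \eqref{Stirling0} applied to the Gamma-ratio contributes the corresponding $\log k$. Together they yield the claimed $\log(k\sqrt{x})$; this is \cite[Theorem 5.10]{BFJEMS} in the zero-shift case. For the bulk range $k^{-2+\epsilon}\ll x\le 1/2$, one moves the contour so that the integrand is dominated by a pair of coalescing saddle points whose contribution, after Stirling, has size $k^{-1/2}x^{-1/4}$; equivalently, one uses the Legendre-type asymptotic of $\HyG(k,1-k,1;x)$ of large degree at argument $1-2x$. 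This is \cite[Theorem 5.14]{BFJEMS} at zero shifts.

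The main obstacle is the transitional regime $x\sim k^{-2}$: the Gamma-ratio in the Mellin integrand ceases to be well-approximated by its leading Stirling term, so the logarithmic enhancement at $s=1$ must be extracted by a uniform-in-$x$ analysis that tracks both the residue and the oscillatory remainder. All of this analytic work has already been carried out in \cite{BFJEMS}, so the present proof is reduced to quoting those two theorems and invoking the symmetry above.
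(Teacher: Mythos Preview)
Your proposal is correct and takes essentially the same approach as the paper: the paper simply states that the lemma ``is a consequence of \cite[Theorems 5.10, 5.14]{BFJEMS}'' with no further argument, and you ultimately invoke exactly those two theorems. Your additional symmetry reduction via Lemma~\ref{Lemma phik x to 1-x} and the explanation of why the pole at $\alpha_1=\alpha_2$ forces a separate treatment are helpful commentary but not required, since Theorems~5.10 and~5.14 of \cite{BFJEMS} already cover all four ranges directly.
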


\begin{lem}\label{lem:Phikest}
Let $u=k-1/2$. Then for $|\alpha_1|,|\alpha_2|\ll1$, $\xi>0$ and $k\rightarrow+\infty$ one has
\begin{equation}\label{Phik est}
\Phi_k\left(\frac{1}{\cosh^2{\left(\sqrt{\xi}/2\right)}};\alpha_1,\alpha_2\right)\left( \xi\sinh^2{\left(\sqrt{\xi}\right)}\right)^{1/4}
\ll \frac{\sqrt{\xi}}{k^{\alpha_1+\alpha_2}}K_{\alpha_1+\alpha_2}(u\sqrt{\xi}).
\end{equation}
\end{lem}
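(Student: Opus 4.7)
My plan is to combine the Mellin--Barnes representation of $\Phi_k$ from Lemma~\ref{Lemma phik Mellin} with Stirling's formula to identify the leading asymptotic of $\Phi_k$ with the Bessel function $K_{\alpha_1+\alpha_2}$. Setting $x = 1/\cosh^2(\sqrt{\xi}/2)$, equation \eqref{Phik Mellin} gives
\begin{equation*}
\Phi_k(x;\alpha_1,\alpha_2) = \frac{(-1)^k(2\pi)^{\alpha_1+\alpha_2}\tanh^{-(\alpha_1+\alpha_2)}(\sqrt{\xi}/2)}{\cosh(\sqrt{\xi}/2)}\,J_s\!\left(k,\alpha_1,\alpha_2;\cosh^2(\sqrt{\xi}/2)\right).
\end{equation*}
I would then apply the Stirling asymptotic
\begin{equation*}
\frac{\Gamma(u+s/2)}{\Gamma(u+1-s/2)} = u^{s-1}\!\left(1 + O\!\bigl(\tfrac{1+|s|^2}{u}\bigr)\right),
\end{equation*}
valid uniformly for $|\Im s| \le u^{1/2-\epsilon}$, to the Gamma ratio appearing in $\Gamma(k,\alpha_1,\alpha_2;s)$. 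The contribution from the tails $|\Im s| \gg u^{1/2}$ would be controlled by the exponential decay in \eqref{Stirling0} of the remaining Gamma factors.

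The leading part of $J_s$ then takes the form
\begin{equation*}
\frac{1}{u}\cdot\frac{1}{2\pi i}\int_{(\sigma)}\Gamma\!\left(\tfrac{1}{2}-\alpha_1-\tfrac{s}{2}\right)\Gamma\!\left(\tfrac{1}{2}-\alpha_2-\tfrac{s}{2}\right)\sin\!\left(\pi\tfrac{s+\alpha_1+\alpha_2}{2}\right)\!\left(u\cosh(\sqrt{\xi}/2)\right)^{s}\,ds.
\end{equation*}
Absorbing the sine into one of the Gamma factors via the reflection formula $\Gamma(z)\Gamma(1-z) = \pi/\sin(\pi z)$ and performing the change of variable $s \mapsto 1-\alpha_1-\alpha_2-2w$ rewrites the integrand so as to match the standard Mellin--Barnes representation
\begin{equation*}
K_\nu(z) = \frac{1}{2\pi i}\int 2^{s-2}\Gamma\!\left(\tfrac{s-\nu}{2}\right)\Gamma\!\left(\tfrac{s+\nu}{2}\right)z^{-s}\,ds
\end{equation*}
with $\nu = \alpha_1+\alpha_2$ and argument proportional to $u\sqrt{\xi}$ (since $\log(u\cosh(\sqrt\xi/2))\approx \log u + \sqrt\xi/2$ for large $\xi$). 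The remaining hyperbolic prefactors collapse via $\sinh\sqrt{\xi} = 2\sinh(\sqrt{\xi}/2)\cosh(\sqrt{\xi}/2)$ together with the Stirling estimate $\Gamma(k-\alpha_1)/\Gamma(k+\alpha_2) \sim k^{-(\alpha_1+\alpha_2)}$, producing exactly the factor $(\xi\sinh^2\sqrt{\xi})^{-1/4}\cdot\sqrt{\xi}\,K_{\alpha_1+\alpha_2}(u\sqrt{\xi})/k^{\alpha_1+\alpha_2}$ claimed on the right-hand side of \eqref{Phik est}.

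The main obstacle is making this identification uniform in $\xi > 0$. In the regime $u\sqrt{\xi} \gtrsim 1$ the Mellin--Barnes integral has a genuine saddle and the analysis above yields the estimate straightforwardly. When $u\sqrt{\xi} \to 0^+$, however, the Bessel function $K_{\alpha_1+\alpha_2}(u\sqrt{\xi})$ acquires an algebraic singularity of size $(\sqrt\xi)^{-|\alpha_1+\alpha_2|}$ and the saddle of the integrand moves off to infinity; in this regime I would separately verify the estimate by shifting the Mellin contour to pick up the two leading residues from the Gamma factors $\Gamma(\tfrac{1}{2}-\alpha_j-\tfrac{s}{2})$ and matching them against the Laurent expansion of $K_{\alpha_1+\alpha_2}$ near zero, splitting the analysis into the three ranges $\xi \lesssim k^{-2}$, $k^{-2} \lesssim \xi \lesssim 1$, and $\xi \gtrsim 1$ exactly as in Lemmas~\ref{lem:phikest} and \ref{lem:phikest0} for $\phi_k$.
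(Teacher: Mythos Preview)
Your approach via the Mellin--Barnes representation is genuinely different from the paper's, which proceeds by the Liouville--Green (turning-point) method: the paper shows that $y(x)=\Phi_k(x)\sqrt{1-x}$ satisfies a second-order ODE of the form $y''=(u^2f+g)y$, performs the Liouville change of variable $\xi=4\artanh^2\sqrt{1-x}$, and compares the transformed equation with the model Bessel equation $Z''+\bigl(-\tfrac{u^2}{4\xi}+\tfrac{1-(\alpha_1+\alpha_2)^2}{4\xi^2}\bigr)Z=0$, whose recessive solution is $\sqrt{\xi}\,K_{\alpha_1+\alpha_2}(u\sqrt{\xi})$. The argument $u\sqrt{\xi}$ thus emerges from the differential-equation structure, not from a Mellin identity.

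Your proposal, however, contains a real gap at the identification step. After replacing $\Gamma(u+s/2)/\Gamma(u+1-s/2)$ by $u^{s-1}$, the integrand of $J_s$ carries the factor $\bigl(u\cosh(\sqrt{\xi}/2)\bigr)^{s}$, not $\bigl(u\sqrt{\xi}\bigr)^{\pm s}$. These quantities are completely different as soon as $\xi$ is bounded away from $0$: for instance, when $\xi\asymp 1$ one has $u\cosh(\sqrt{\xi}/2)\asymp u$ while $u\sqrt{\xi}\asymp u$ as well, but the resulting Mellin integral evaluates to a Bessel function of argument $\asymp u\cosh(\sqrt{\xi}/2)$, giving decay $e^{-cu\cosh(\sqrt{\xi}/2)}$ rather than the required $e^{-u\sqrt{\xi}}$. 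The parenthetical remark that $\log(u\cosh(\sqrt{\xi}/2))\approx\log u+\sqrt{\xi}/2$ only worsens matters, since $e^{s\sqrt{\xi}/2}$ still cannot be turned into $(u\sqrt{\xi})^{-s}$. The correct exponential $e^{-u\sqrt{\xi}}$ is encoded in the full ratio $\Gamma(u+s/2)/\Gamma(u+1-s/2)$ at values $|s|\asymp u$, exactly the regime where the pointwise Stirling replacement $u^{s-1}$ is no longer valid; the tail cutoff you propose at $|\Im s|\gg u^{1/2}$ discards precisely the part of the contour carrying the main contribution once $u\sqrt{\xi}\gtrsim 1$.

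The paper in fact anticipates this difficulty: it notes that writing $\Phi_k$ as a linear combination of standard hypergeometric pieces (to which uniform asymptotics such as those in \cite{KD2} apply) produces individually exponentially \emph{growing} terms whose massive cancellation yields the exponentially small $\Phi_k$. Your Stirling-then-identify scheme is a disguised version of the same decomposition and suffers the same cancellation problem. The Liouville--Green route bypasses it entirely because the recessive solution $K_{\alpha_1+\alpha_2}$ is selected at the level of the ODE, not reconstructed from pieces.
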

\begin{proof}
As in the proof of Lemma \ref{lem:phikest} (see also \cite[Lemma 3.10, Lemma 3.12]{BCF}) one may try to refer to the results in \cite{KD2}. Now we briefly explain the difficulties arising  on this way. First, one needs to transform the hypergeometric function in \eqref{Phi_k2} to the one of the form treated in \cite[Table 1-3]{KD2}. A possible way to do this is to apply \cite[Section 2.9,  (7), (23), (33)]{BE}, which results in a combination of the functions
\begin{equation}\label{Phik to 2F1 KD1}
\HyG(k\pm\alpha_1,1-k\pm\alpha_1,1\pm\alpha_1\pm\alpha_2;1-x^{-1}).
\end{equation}
An asymptotic formula for \eqref{Phik to 2F1 KD1} in terms of the $I$-Bessel functions is provided by \cite[Theorem 3.1]{KD2}. It turns out that \eqref{Phik to 2F1 KD1} grows exponentially, while $\Phi_k(x,\alpha_1,\alpha_2)$ should decay exponentially. This means that there is a massive cancellation between functions \eqref{Phik to 2F1 KD1} in their linear combination representing $\Phi_k$. This makes impossible to apply the results of \cite{KD2}.
Therefore, we are going to adapt the results of \cite[Section 5.3]{BFJEMS}.  As in \cite[Lemma 5.15]{BFJEMS} the function
\begin{equation}\label{yPhi def}
y(x;\alpha_1,\alpha_2):=\Phi_k\left(x;\alpha_1,\alpha_2\right)\sqrt{1-x}
\end{equation}
satisfies the differential equation
\begin{equation}\label{eq:diffurf21}
y''(x)-(u^2f(x)+g(x))y(x)=0,
\end{equation}
\begin{equation}\label{newfg}
f(x):=\frac{1}{x^2(1-x)}, \quad g(x):=-\frac{1}{4x^2}-\frac{1-(\alpha_1+\alpha_2)^2}{4(1-x)^2}-\frac{1-4\alpha_1\alpha_2}{4x(1-x)}.
\end{equation}
Note that the function $f(x)$ coincides with the one in \cite[(5.65)]{BFJEMS} and the function $g(x)$ differs slightly.  Making the same change of variable as in
\cite[(5.70)]{BFJEMS}:
\begin{equation}
\xi:=4 \artanh^2{(\sqrt{1-x})},
\end{equation}
we obtain that the function (see \cite[(5.69)]{BFJEMS})
\begin{equation}
Z(x):=\frac{y(x)}{\alpha(x)}, \quad \alpha(x):=\frac{(x^2-x^3)^{1/4}}{2(\artanh{(\sqrt{1-x})})^{1/2}}
\end{equation}
satisfies
\begin{equation}\label{eq:diffurZpsi}
\frac{d^2Z}{d\xi^2}+\left[ -\frac{u^2}{4\xi}+\frac{1}{4\xi^2}-\frac{\psi(\xi)}{\xi}\right]Z=0,
\end{equation}
where
\begin{equation}\label{psi difeq}
\psi(\xi)=\frac{1}{16}\left(\frac{1}{\xi}-\frac{1-4(\alpha_1+\alpha_2)^2}{\sinh^2{(\sqrt{\xi})}}+\frac{4\alpha_1\alpha_2}{\cosh^2{(\sqrt{\xi}/2)}} \right).
\end{equation}
The function \eqref{psi difeq} is only slightly different from \cite[(5.68)]{BFJEMS}, however is no more analytic at $\xi=0$.  To overcome this difficulty, we change the form of an
approximating  differential equation from \cite[(5.71)]{BFJEMS} to
\begin{equation}\label{model difeq}
Z''+\left(-\frac{u^2}{4\xi}+\frac{1-(\alpha_1+\alpha_2)^2}{4\xi} \right)Z=0.
\end{equation}
So  we rewrite \eqref{eq:diffurZpsi}, \eqref{psi difeq} in the form
\begin{equation}\label{eq:diffurZpsi2}
\frac{d^2Z}{d\xi^2}+\left[ -\frac{u^2}{4\xi}+\frac{1-(\alpha_1+\alpha_2)^2}{4\xi^2}-\frac{\psi(\xi)}{\xi}\right]Z=0,
\end{equation}
where
\begin{equation}\label{psi difeq2}
\psi(\xi)=\frac{1-4(\alpha_1+\alpha_2)^2}{16}\left(\frac{1}{\xi}-\frac{1}{\sinh^2{(\sqrt{\xi})}}\right)+\frac{\alpha_1\alpha_2}{4\cosh^2{(\sqrt{\xi}/2)}}.
\end{equation}
Now the function $\psi(\xi)$ is analytic at $\xi=0$, and we can perform all computations in the same way as in \cite[Section 5.3]{BFJEMS}.
Solutions of \eqref{model difeq} are given by (see \cite[Section 10.36, eq.~10.13.2]{HMF}):
\begin{equation}
Z_L=\sqrt{\xi}L_{\alpha_1+\alpha_2}(u\sqrt{\xi}),
\end{equation}
where
\begin{equation}
L_v(x):=\begin{cases}
I_v(x)\\e^{\pi i v}K_v(x)
\end{cases}.
\end{equation}
Arguing in the same way as in  \cite[Theorem 5.16]{BFJEMS} and using the analogue of   \cite[(5.92),(5.93)]{BFJEMS}, we obtain
\begin{equation}\label{eq:phiklim}
\Phi_k\left(\frac{1}{\cosh^2{\left(\sqrt{\xi}/2\right)}} \right)\left( \xi\sinh^2{\left(\sqrt{\xi}\right)}\right)^{1/4}=
C_KZ_K(\xi),
\end{equation}
where
\begin{equation}\label{eq:ckexpl}
C_K=2\frac{\Gamma(k-\alpha_1)\Gamma(k-\alpha_2)}{\Gamma(2k)}\frac{2^{2k}\sqrt{u}}{\sqrt{\pi}}\frac{(2\pi)^{\alpha_1+\alpha_2}\cos\pi\frac{\alpha_1+\alpha_2}{2}}{e^{\pi i(\alpha_1+\alpha_2)}}\left(1+O\left(\frac{1}{u}\right)\right)\ll\frac{1}{k^{\Re{\alpha_1}+\Re{\alpha_2}}},
\end{equation}
and $Z_K(\xi)$ is the same as in \cite[(5.86)]{BFJEMS} with $K_0(x)$, $K_1(x)$ replaced by $K_{\alpha_1+\alpha_2}(x),$ $K_{1+\alpha_1+\alpha_2}(x)$, respectively.
Note the coefficients $A_K(n;\xi)$, $B_K(n;\xi)$ now also depend on $\alpha_1,\alpha_2$, but if $|\alpha_1|,|\alpha_2|<\epsilon$, then they are still uniformly bounded by some constant. It follows from the structure of  $Z_K(\xi)$  that it is bounded by $\sqrt{\xi}K_{\alpha_1+\alpha_2}(u\sqrt{\xi})$.
This completes the proof of \eqref{Phik est}.
\end{proof}
Applying  \eqref{Phik est} and \cite[10.25.3,10.30.3]{HMF}, we prove the following estimate.
\begin{cor}\label{cor:Phik est}
For $|\alpha_1|,|\alpha_2|\ll1$ and $x\gg k^{-2+\epsilon}$ one has
\begin{equation}\label{Phik LGest}
\Phi_k\left(\frac{1}{1+x};\alpha_1,\alpha_2\right)\ll
\frac{e^{-(2k-1)\log(\sqrt{1+x}+\sqrt{x})}}{(1+x)^{1/4}x^{1/4}k^{1/2+\Re{\alpha_1}+\Re{\alpha_2}}}.
\end{equation}
If $\alpha_1=\alpha_2=0$ then  one has
\begin{equation}\label{Phik0 LGest0}
\Phi_k\left(x\right)\ll
\frac{x^{1/2}e^{-(2k-1)\log((1+\sqrt{1-x})/\sqrt{x})}}{(1-x)^{1/4}k^{1/2}} \quad \text{ for } x<1-k^{-2+\epsilon},
\end{equation}
 and
\begin{equation}\label{Phik0 LGest1}
\Phi_k\left(x\right)\ll\log(k\sqrt{1-x})\quad \text{ for }1-k^{-2+\epsilon}<x<1.
\end{equation}

\end{cor}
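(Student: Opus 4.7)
The plan is to deduce all three bounds by specializing Lemma \ref{lem:Phikest} via a suitable change of variable and then inserting the standard asymptotics for $K_v$ from \cite[10.25.3, 10.30.3]{HMF}. After dividing both sides of Lemma \ref{lem:Phikest} by the Jacobian factor $(\xi\sinh^2\sqrt{\xi})^{1/4}$ and using $\sqrt{\xi}/\xi^{1/4}=\xi^{1/4}$, one gets the cleaner form
\[
\Phi_k\!\left(\frac{1}{\cosh^2(\sqrt{\xi}/2)};\alpha_1,\alpha_2\right) \ll \frac{\xi^{1/4}}{k^{\alpha_1+\alpha_2}(\sinh^2\sqrt{\xi})^{1/4}}\,K_{\alpha_1+\alpha_2}(u\sqrt{\xi}),
\]
from which the three bounds follow by choosing appropriate substitutions and matching the asymptotic regime for $K_v$ against the region of $x$.

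For \eqref{Phik LGest} I would take $\cosh^2(\sqrt{\xi}/2)=1+x$, which forces $\sinh^2(\sqrt{\xi}/2)=x$, $\sinh^2\sqrt{\xi}=4x(1+x)$, and $\sqrt{\xi}=2\log(\sqrt{x}+\sqrt{1+x})$. The hypothesis $x\gg k^{-2+\epsilon}$ makes $u\sqrt{\xi}\gg k^{\epsilon/2}$, so the large-argument asymptotic \cite[10.25.3]{HMF} yields $K_{\alpha_1+\alpha_2}(u\sqrt{\xi})\ll(u\sqrt{\xi})^{-1/2}e^{-u\sqrt{\xi}}$. The $\xi^{1/4}$ factor cancels against $(u\sqrt{\xi})^{-1/2}$, and using $u\asymp k$ together with $u\sqrt{\xi}=(2k-1)\log(\sqrt{1+x}+\sqrt{x})$ produces \eqref{Phik LGest} directly.

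For the $\alpha_1=\alpha_2=0$ case I would instead set $\cosh^2(\sqrt{\xi}/2)=1/x$, so $\sinh^2\sqrt{\xi}=4(1-x)/x^2$ and $\sqrt{\xi}=2\log((1+\sqrt{1-x})/\sqrt{x})$, giving $\Phi_k(x)\ll \sqrt{x}\,\xi^{1/4}(1-x)^{-1/4}K_0(u\sqrt{\xi})$. When $x<1-k^{-2+\epsilon}$ one has $\sqrt{\xi}\asymp\sqrt{1-x}\gg k^{-1+\epsilon/2}$, so the same large-argument expansion yields \eqref{Phik0 LGest0}. When $1-k^{-2+\epsilon}<x<1$ the argument $u\sqrt{\xi}\asymp k\sqrt{1-x}$ is bounded, and the small-argument expansion \cite[10.30.3]{HMF}, $K_0(z)\sim-\log z$, gives $K_0(u\sqrt{\xi})\ll|\log(k\sqrt{1-x})|$; since $\sqrt{x}\,\xi^{1/4}(1-x)^{-1/4}=O(1)$ in this narrow window (using $\xi^{1/4}\asymp(1-x)^{1/4}$), one recovers \eqref{Phik0 LGest1}.

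The only genuine obstacle is bookkeeping: keeping the two substitutions straight, simplifying the hyperbolic identities that translate $\xi$ into expressions in $x$, and verifying that in each regime the hypothesis on $x$ places $u\sqrt{\xi}$ on the correct side of the transition between the large- and small-argument asymptotics of $K_v$. Once those checks are in place, the rest of the argument is purely mechanical substitution.
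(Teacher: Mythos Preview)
Your proposal is correct and follows exactly the approach indicated in the paper: apply Lemma~\ref{lem:Phikest} and then feed in the large- and small-argument asymptotics \cite[10.25.3, 10.30.3]{HMF} for $K_v$. The paper's proof is a single sentence pointing at these ingredients, and your write-up simply spells out the substitutions $\cosh^2(\sqrt{\xi}/2)=1+x$ (resp.\ $=1/x$) and the routine hyperbolic simplifications that make the deduction explicit.
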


Now we are going to obtain an asymptotic formula for the hypergeometric function that will arise in the additive divisor problem. This result is an analogue of
\cite[Lemma 2.4]{Zav}.

\begin{lem}\label{lem:W0 2F1}
For $|\alpha|,|\beta|\ll1$, $y>y_0>0$ and $r\rightarrow+\infty$ one has
\begin{multline}\label{W02F1 asympt}
\HyG\left(\frac{1+\alpha-\beta}{2}+ir,\frac{1-\alpha-\beta}{2}+ir,1+2ir;\frac{-1}{y}\right)=\\=
\frac{(4y)^{ir}y^{1/4-\alpha+\beta/2}}{(\sqrt{1+y}+\sqrt{y})^{2ir}(1+y)^{1/4-\alpha+\beta/2}}\left(1+\sum_{n=1}^N\frac{a_n(y)}{r^n}\right)+
O\left(\frac{1}{yr^{N+1}}\right),
\end{multline}
where $a_n(y)\ll y^{-1}$.
\end{lem}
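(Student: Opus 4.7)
The plan is to derive \eqref{W02F1 asympt} by applying the saddle point method to the Euler integral representation of the hypergeometric function. Since $|\alpha|,|\beta|\ll 1$, the parameters $a=(1+\alpha-\beta)/2+ir$, $b=(1-\alpha-\beta)/2+ir$, $c=1+2ir$ satisfy $0<\Re b<\Re c$, so
\begin{equation*}
\HyG(a,b,c;-1/y)=\frac{\Gamma(c)}{\Gamma(b)\Gamma(c-b)}\int_0^1 A(t;\alpha,\beta)\,e^{ir\phi(t;y)}\,dt,
\end{equation*}
where the $r$-independent amplitude is $A(t;\alpha,\beta)=t^{-(1+\alpha+\beta)/2}(1-t)^{-(1-\alpha-\beta)/2}(1+t/y)^{-(1+\alpha-\beta)/2}$ and the phase is $\phi(t;y)=\log[t(1-t)/(1+t/y)]$.

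First I would locate the saddle. Setting $\phi'(t)=1/t-1/(1-t)-1/(y+t)=0$ reduces to the quadratic $t^2+2yt-y=0$, whose root in $(0,1)$ is $t_0=\sqrt{y}/(\sqrt{y+1}+\sqrt{y})$. Direct computation then gives $e^{ir\phi(t_0)}=y^{ir}/(\sqrt{y+1}+\sqrt{y})^{2ir}$ and $\phi''(t_0)=-2(\sqrt{y+1}+\sqrt{y})^2/\sqrt{y(y+1)}<0$, so $t_0$ is a genuine non-degenerate saddle. The integrable endpoint singularities of $A$ (guaranteed by $|\alpha|,|\beta|\ll 1$) are handled by a smooth cutoff isolating a neighborhood of $t_0$; on the complement, repeated integration by parts against the non-vanishing $\phi'$ produces arbitrary negative powers of $r$.

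Next I would apply a full saddle-point expansion (a Laplace-method version of Lemma \ref{Lemma Huxley}, with steepest-descent contour deformation to get every order in $1/r$) to obtain
\begin{equation*}
\int_0^1 A(t)e^{ir\phi(t)}\,dt=A(t_0)\sqrt{\tfrac{2\pi}{r|\phi''(t_0)|}}\,e^{ir\phi(t_0)-i\pi/4}\Bigl(1+\sum_{n\ge 1}\frac{b_n(y,\alpha,\beta)}{r^n}\Bigr),
\end{equation*}
the $b_n$ being the standard combinations of Taylor coefficients of $A$ and $\phi$ at $t_0$. Combining this with the Stirling asymptotic
\begin{equation*}
\frac{\Gamma(1+2ir)}{\Gamma\bigl(\tfrac{1-\alpha-\beta}{2}+ir\bigr)\Gamma\bigl(\tfrac{1+\alpha+\beta}{2}+ir\bigr)}=4^{ir}\sqrt{ir/\pi}\,\Bigl(1+\sum_{n\ge 1}\frac{c_n(\alpha,\beta)}{r^n}\Bigr),
\end{equation*}
the phases $e^{-i\pi/4}$ and $\sqrt{i}=e^{i\pi/4}$ cancel, the factors $4^{ir}$ and $y^{ir}/(\sqrt{y+1}+\sqrt{y})^{2ir}$ combine to give the oscillatory factor in \eqref{W02F1 asympt}, and the identity $t_0(1+t_0/y)=1-t_0$ simplifies $A(t_0)\sqrt{2/|\phi''(t_0)|}$ to the amplitude factor claimed in the statement (with any residual $\alpha$-dependence produced by the exact form chosen for the leading factor being absorbed into the $a_n(y)$).

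The main obstacle is to upgrade the naive bound $b_n(y)=O(1)$ to the sharp $a_n(y)\ll 1/y$. I would argue as follows: the stated leading factor equals $1+O(1/y)$ as $y\to\infty$, while the Taylor series yields $\HyG(a,b,c;-1/y)=1+O(|ab/c|/y)=1+O(r/y)$ and, more precisely, a uniform expansion in powers of $1/y$ once $r$ is fixed. Matching the two asymptotic expansions term by term forces each $a_n(y)$ to vanish at $y=\infty$ at rate $1/y$; together with smoothness of $a_n(y)$ on $[y_0,\infty)$, this yields the uniform bound $a_n(y)\ll y^{-1}$. The same analysis of the tail of the integral near $t=0$ (where the factor $1+t/y$ in $A$ contributes $1+O(t/y)$) produces the uniform error $O(1/(yr^{N+1}))$, completing the proof.
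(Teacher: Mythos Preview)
Your approach via the Euler integral and stationary phase is legitimate and is in fact the alternative route the paper itself points to immediately after its proof (the remark citing Jutila, p.~173). The paper, however, proceeds differently: it first applies the Pfaff transformation to pass from argument $-1/y$ to $1/(1+y)$, then recognises the new hypergeometric function as a solution of a second-order ODE, changes variables to $x=1/\cosh^2(\xi/2)$ so that the equation becomes a perturbed harmonic oscillator $U''+(r^2+\text{bounded})U=0$, and invokes Olver's Liouville--Green theory to obtain $U(\xi)=4^{ir}e^{-ir\xi}(1+\sum A_n(\xi)/(ir)^n)+O(e^{-\xi}r^{-N-1})$. The point of this route is that the $e^{-\xi}$ decay in the ODE error translates directly into the $1/y$ factor in both $a_n(y)$ and the remainder, with no extra argument needed.

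By contrast, in your saddle-point approach the naive error is $O(r^{-N-1})$ with no $1/y$, and your justification for the extra $1/y$ (``matching the two asymptotic expansions term by term'' and ``the factor $1+t/y$ contributes $1+O(t/y)$ near $t=0$'') is the one genuinely incomplete step. The matching argument does give $a_n(\infty)=0$, since at $y=\infty$ the Euler integral is exactly the Beta function and the full asymptotic series collapses to~$1$; combined with the smooth $O(1/y)$ dependence of the saddle data on $y$ this yields $a_n(y)\ll 1/y$. But the remainder $O(1/(yr^{N+1}))$ does not follow from the tail near $t=0$ as you say---it comes from the saddle neighbourhood and requires the same subtraction: apply the full saddle expansion to order $N$, observe that the $(N{+}1)$-st term (which bounds the error) also vanishes as $y\to\infty$ at rate $1/y$, and hence the error inherits the factor $1/y$ uniformly. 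Once you make this explicit, your proof is complete and essentially equivalent in strength to the paper's; the ODE method simply packages the $1/y$ gain more transparently.
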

\begin{proof}
Applying \cite[15.8.1]{HMF}, we obtain
\begin{multline}\label{2F1(-)to2F1(+)}
\HyG\left(\frac{1+\alpha-\beta}{2}+ir,\frac{1-\alpha-\beta}{2}+ir,1+2ir;\frac{-1}{y}\right)=\\=
\left(\frac{1+y}{y}\right)^{-1/2-\alpha/2+\beta/2-ir}
\HyG\left(\frac{1+\alpha-\beta}{2}+ir,\frac{1+\alpha+\beta}{2}+ir,1+2ir;\frac{1}{1+y}\right).
\end{multline}
The function
\begin{equation}\label{y(x)def}
y(x)=x^{1/2+ir}(1-x)^{1/2+\alpha/2}\HyG\left(\frac{1+\alpha-\beta}{2}+ir,\frac{1+\alpha+\beta}{2}+ir,1+2ir;x\right)
\end{equation}
satisfies \cite[p.96]{BE} the differential equation
\begin{equation}\label{y(x)diff eq}
y''(x)+H(x)y(x)=0,
\end{equation}
where
\begin{equation}\label{H(x)def}
H(x)=\frac{r^2}{x^2(1-x)}+\frac{1}{4x^2}+\frac{1-\alpha^2}{4(1-x)^2}+\frac{1+\beta^2-\alpha^2}{4x(1-x)}.
\end{equation}
Arguing in the same way as in \cite[(5.20)-(5.23)]{BFJEMS} and choosing  $\alpha(x)=x^{1/2}(1-x)^{1/4}$ (therefore, making the change of variables  $x=\cosh^{-2}\frac{\xi}{2}$), we obtain that the function
\begin{equation}\label{U def}
U(\xi)=\frac{\tanh^{1/2+\alpha}(\xi/2)}{\cosh^{2ir}(\xi/2)}
\HyG\left(\frac{1+\alpha-\beta}{2}+ir,\frac{1+\alpha+\beta}{2}+ir,1+2ir;\frac{1}{\cosh^2\frac{\xi}{2}}\right)
\end{equation}
satisfies
\begin{equation*}
U''(\xi)+\left( r^2+\frac{1-4\alpha^2}{4\sinh^2\xi}+\frac{\beta^2-\alpha^2}{4\cosh^2{\xi/2}}\right)U(\xi)=0.
\end{equation*}
Either arguing in the same way as in \cite[Lemma 2.4]{Zav} or rather applying \cite[Section 10.3, Theorem 3.1]{O} and the discussions afterwards,
one can prove that
\begin{equation}\label{U asympt}
U(\xi)=2^{2ir}e^{-ir\xi}\left(1+\sum_{n=1}^N\frac{A_n(\xi)}{ir}\right)+O\left(\frac{1}{e^{\xi}r^{N+1}}\right).
\end{equation}
Now \eqref{W02F1 asympt}  follows from \eqref{2F1(-)to2F1(+)}, \eqref{U def} and \eqref{U asympt} after some straightforward computations.
\end{proof}
Note that the result \eqref{W02F1 asympt} could be obtained with the use of the saddle point method like in \cite[p.173]{Jut}.

For the sake of simplicity, let us introduce some additional notation:
\begin{equation}\label{k0 def}
k_0(x,v):=\frac{1}{2\cos(\pi (1/2+v))}\left(J_{2v}(x)-J_{-2v}(x) \right),
\end{equation}
\begin{equation}
k_1(x,v):=\frac{2}{\pi}\sin(\pi(1/2+v))K_{2v}(x).
\end{equation}
According to \cite[Lemma 2.2]{BFJEMS} the following result holds.
\begin{lem}\label{lemma:mellinkernels}
Let \begin{equation}\label{eq:gammauv}
\gamma(u,v):=\frac{2^{2u-1}}{\pi}\Gamma(u+v)\Gamma(u-v).
\end{equation}
For $3/2>Re{w}> 2|\Re{v}|$ one has
\begin{equation}\label{k0 Mellin}
\int_{0}^{\infty}k_0(x,v)x^{w-1}dx=\gamma(w/2,v)\cos{(\pi w/2)},
\end{equation}
and  for $Re{w}> 2|\Re{v}|$ one has
\begin{equation}\label{k1 Mellin}
\int_{0}^{\infty}k_1(x,v)x^{w-1}dx=\gamma(w/2,v)\sin{(\pi (1/2+v))}.
\end{equation}
\end{lem}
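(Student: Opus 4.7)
The plan is to reduce both identities to the classical Mellin transforms of the Bessel functions $J_\nu$ and $K_\nu$, which can be found in the standard tables (e.g.\ the Mellin–Barnes inversion of the Bessel kernels in \cite{HMF}). Recall that for appropriate strips of convergence one has
\begin{equation}
\int_0^\infty J_\nu(x)\,x^{s-1}dx=2^{s-1}\frac{\Gamma((\nu+s)/2)}{\Gamma(1+(\nu-s)/2)},\qquad
\int_0^\infty K_\nu(x)\,x^{s-1}dx=2^{s-2}\Gamma\!\left(\tfrac{s+\nu}{2}\right)\Gamma\!\left(\tfrac{s-\nu}{2}\right),
\end{equation}
valid respectively for $-\Re\nu<\Re s<3/2$ and $\Re s>|\Re\nu|$.

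For the second identity (the $k_1$ kernel) the proof is essentially immediate. Substituting the Mellin transform of $K_{2v}$ with $s=w$ (legitimate under the condition $\Re w>2|\Re v|$), I would compute
\begin{equation}
\int_0^\infty k_1(x,v)\,x^{w-1}dx=\frac{2}{\pi}\sin(\pi(1/2+v))\cdot 2^{w-2}\,\Gamma\!\left(\tfrac{w}{2}+v\right)\Gamma\!\left(\tfrac{w}{2}-v\right)
=\gamma(w/2,v)\sin(\pi(1/2+v)),
\end{equation}
by matching to the definition \eqref{eq:gammauv}.

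For the first identity, I would apply the $J$-Mellin transform separately to $J_{2v}$ and $J_{-2v}$ (valid simultaneously in the strip $2|\Re v|<\Re w<3/2$), obtaining
\begin{equation}
\int_0^\infty (J_{2v}(x)-J_{-2v}(x))x^{w-1}dx=2^{w-1}\left[\frac{\Gamma(v+w/2)}{\Gamma(1+v-w/2)}-\frac{\Gamma(-v+w/2)}{\Gamma(1-v-w/2)}\right].
\end{equation}
Then I would use Euler's reflection $\Gamma(z)\Gamma(1-z)=\pi/\sin(\pi z)$ to rewrite each reciprocal Gamma factor as $\sin(\pi(w/2\mp v))\Gamma(w/2\mp v)/\pi$, factor out the common product $\Gamma(w/2+v)\Gamma(w/2-v)$, and collapse the resulting difference of sines via $\sin A-\sin B=2\cos\tfrac{A+B}{2}\sin\tfrac{A-B}{2}$. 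This yields a factor $-2\cos(\pi w/2)\sin(\pi v)$, which upon division by $2\cos(\pi(1/2+v))=-2\sin(\pi v)$ in the definition of $k_0$ cancels cleanly, leaving exactly $\gamma(w/2,v)\cos(\pi w/2)$.

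The only non-routine point is bookkeeping of the convergence strips: the two individual $J$-Mellin transforms each require a one-sided bound on $\Re w$ coming from $\pm v$, and their intersection with the upper bound $\Re w<3/2$ gives precisely the stated range $2|\Re v|<\Re w<3/2$ for the $k_0$ case, while the $k_1$ case loses the upper bound since $K_\nu$ decays exponentially at infinity. No other subtleties arise, so the whole proof is a short manipulation of Gamma identities after citing the two Mellin tables.
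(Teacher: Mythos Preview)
Your proposal is correct. The paper itself does not give a proof of this lemma at all --- it simply cites \cite[Lemma~2.2]{BFJEMS} --- so your argument is in fact more detailed than what appears in the text; the computation you outline (standard Mellin tables for $J_\nu$ and $K_\nu$ followed by the reflection formula and a sum-to-product identity) is exactly the routine verification one expects, and your bookkeeping of the convergence strips is accurate.
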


\begin{cor}\label{cor:mellinkernels}
For $\Delta_0,\Delta_1> 2|\Re{v}|$ the following formulas hold:
\begin{equation}\label{k0 Mellin inv}
k_0(x,v)=\frac{1}{2\pi i}\int_{(\Delta_0)}\gamma(w/2,v)\cos{(\pi w/2)}x^{-w}dw,
\end{equation}
\begin{equation}\label{k1 Mellin inv}
k_1(x,v)=\frac{1}{2\pi i}\int_{(\Delta_1)}\gamma(w/2,v)\sin{(\pi (1/2+v))}x^{-w}dw.
\end{equation}
For $\Delta_0,\Delta_1<\min(1,1+2\Re{v})$ one has
\begin{equation}\label{k0 Mellin inv2}
k_0(x,v)=\frac{2^{2+2v}}{\pi x^{2+2v}}\frac{1}{2\pi i}\int_{(\Delta_0)}\Gamma(1-s)
\Gamma(1+2v-s)\cos\pi(1+v-s)\frac{x^{2s}}{4^s}ds,
\end{equation}
\begin{equation}\label{k1 Mellin inv2}
k_1(x,v)=\frac{2^{2+2v}\sin{(\pi (1/2+v))}}{\pi x^{2+2v}}\frac{1}{2\pi i}\int_{(\Delta_1)}\Gamma(1-s)
\Gamma(1+2v-s)\frac{x^{2s}}{4^s}ds.
\end{equation}
\end{cor}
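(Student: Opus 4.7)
The plan is two-fold: first derive \eqref{k0 Mellin inv} and \eqref{k1 Mellin inv} by Mellin inversion of Lemma \ref{lemma:mellinkernels}, then obtain \eqref{k0 Mellin inv2} and \eqref{k1 Mellin inv2} by the linear change of variable $w = 2(1+v)-2s$.

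For the first pair, Lemma \ref{lemma:mellinkernels} identifies the Mellin transforms of $k_0(\cdot,v)$ and $k_1(\cdot,v)$ on the strip $\Re w > 2|\Re v|$ as $\gamma(w/2,v)\cos(\pi w/2)$ and $\gamma(w/2,v)\sin(\pi(1/2+v))$ respectively. Applying Mellin inversion on any line $\Re w = \Delta_0$ or $\Delta_1 > 2|\Re v|$ — a region in which the integrand is holomorphic, since the poles of $\gamma(w/2,v)$ lie at $w = \pm 2v - 2n$ for $n \geq 0$ — yields \eqref{k0 Mellin inv} and \eqref{k1 Mellin inv} directly.

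For the second pair, I would substitute $w = 2(1+v) - 2s$ in the integrals just obtained. A short calculation gives
\[
\gamma(w/2, v) = \frac{2^{1+2v-2s}}{\pi}\Gamma(1-s)\Gamma(1+2v-s),\qquad x^{-w} = x^{2s-2-2v},\qquad \cos(\pi w/2) = \cos\pi(1+v-s),
\]
while the factor $\sin(\pi(1/2+v))$ carries through unchanged. The Jacobian $dw = -2\,ds$ combines with the reversal of contour orientation to contribute an overall factor of $+2$; collecting the $x$-independent constants produces the prefactor $2^{2+2v}/(\pi x^{2+2v})$ displayed in \eqref{k0 Mellin inv2} and \eqref{k1 Mellin inv2}. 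Under the substitution, the old contour condition $\Delta_0 > 2|\Re v|$ becomes $\Re s = 1 + \Re v - \Delta_0/2 < \min(1, 1+2\Re v)$, and as $\Delta_0$ varies over $(2|\Re v|,\infty)$ the new real part sweeps out the entire half-line $(-\infty, \min(1, 1+2\Re v))$, matching the stated hypothesis. The poles at $w = 2v - 2n$ and $w = -2v - 2n$ correspondingly map to the poles $s = 1+n$ of $\Gamma(1-s)$ and $s = 1+2v+n$ of $\Gamma(1+2v-s)$, confirming the consistency of the two representations.

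There is no serious obstacle: the argument reduces to Mellin inversion together with an explicit linear change of variable. The only care required is in the bookkeeping (constants, Jacobian, contour orientation, translation of pole positions) and in verifying decay on vertical lines, which is standard from Stirling's bound \eqref{Stirling0}.
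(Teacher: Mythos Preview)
Your proposal is correct and follows exactly the paper's approach: Mellin inversion of Lemma \ref{lemma:mellinkernels} for the first pair, then the change of variable $w/2 = 1+v-s$ for the second. You have simply spelled out the bookkeeping that the paper leaves implicit.
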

\begin{proof}
Formulas \eqref{k0 Mellin inv}, \eqref{k1 Mellin inv} follow immediately from \eqref{k0 Mellin}, \eqref{k1 Mellin} via the Mellin inversion.

Making the change of variables $\frac{w}{2}=1+v-s$ in \eqref{k0 Mellin inv}, \eqref{k1 Mellin inv}, we prove \eqref{k0 Mellin inv2}, \eqref{k1 Mellin inv2}.
\end{proof}

\section{The binary additive divisor problem}\label{sec:ADP}
Following Motohashi's book \cite[p. 531]{Mo}, let
\begin{equation}\label{BADP def1}
A(l,\alpha,\beta;W):=\sum_{n=1}^{\infty}\sigma_{\alpha}(n)\sigma_{\beta}(n+l)W\left(\frac{n}{l}\right).
\end{equation}
The problem of investigating the asymptotic  behaviour of \eqref{BADP def1} has a long history (see \cite{BF2} for references). It is shown in \cite{BF2} that the best known estimate on the reminder in the asymptotic formula for \eqref{BADP def1} can be obtained, among other methods, using the results of \cite{Mo}. We will apply an exact formula for \eqref{BADP def1} proved by Motohashi \cite{Mo} (a similar formula has also been obtained by Kuznetsov \cite{Kuz83}).

\begin{thm}\label{Th:BADP}
Assume that $-1-\Re{\beta}<\Re{\alpha}<1+\Re{\beta}$. Let $W$ be a sufficiently good function such that all the series and integrals below are absolutely convergent. Then
\begin{equation}\label{Al=U+error}
A(l,\alpha,\beta;W)=U(l,\alpha,\beta;W)+E_{h}(l,\alpha,\beta;W)+E_{d}(l,\alpha,\beta;W)+E_{c}(l,\alpha,\beta;W),
\end{equation}
where
\begin{equation}\label{Udef}
U(l,\alpha,\beta;W)=\int_0^{\infty}W(x)\mu(l,\alpha,\beta;x)dx,
\end{equation}
\begin{multline}\label{mul def}
\mu(l,\alpha,\beta;x)=\\
\sigma_{1+\alpha+\beta}(l)\frac{\zeta(1+\alpha)\zeta(1+\beta)}{\zeta(2+\alpha+\beta)}x^{\alpha}(1+x)^{\beta}+
l^{\alpha}\sigma_{1-\alpha+\beta}(l)\frac{\zeta(1-\alpha)\zeta(1+\beta)}{\zeta(2-\alpha+\beta)}(1+x)^{\beta}+\\
l^{\beta}\sigma_{1+\alpha-\beta}(l)\frac{\zeta(1+\alpha)\zeta(1-\beta)}{\zeta(2+\alpha-\beta)}x^{\alpha}+
l^{\alpha+\beta}\sigma_{1-\alpha-\beta}(l)\frac{\zeta(1-\alpha)\zeta(1-\beta)}{\zeta(2-\alpha-\beta)},
\end{multline}
\begin{multline}\label{Eh def}
E_{h}(l,\alpha,\beta;W)=2(2\pi)^{\beta-1}l^{(1+\alpha+\beta)/2}
\sum_{m=6}^{\infty}\frac{2m-1}{\pi^2}\sum_{f\in H_{2m}} \omega_f\lambda_f(l)L_f\left(\frac{1+\alpha-\beta}{2}\right)\\\times
L_f\left(\frac{1-\alpha-\beta}{2}\right)\widehat{W}(\alpha,\beta;m),
\end{multline}
\begin{multline}\label{Ed def}
E_{d}(l,\alpha,\beta;W)=2(2\pi)^{\beta-1}l^{(1+\alpha+\beta)/2}
\sum_{j} \omega_j\lambda_j(l)L_j\left(\frac{1+\alpha-\beta}{2}\right)\\\times
L_j\left(\frac{1-\alpha-\beta}{2}\right)
\left(\widehat{W}^{+}(\alpha,\beta;t_j)+\varepsilon_j\widehat{W}^{-}(\alpha,\beta;t_j)\right),
\end{multline}
\begin{multline}\label{Ec def}
E_{c}(l,\alpha,\beta;W)=4(2\pi)^{\beta-1}l^{(1+\alpha+\beta)/2}\frac{1}{2\pi i}
\int_{(0)}\frac{\sigma_{2\xi}(l)\ZC(\alpha,\beta,\xi)}{l^{\xi}\zeta(1+2\xi)\zeta(1-2\xi)}\\\times
\left(\widehat{W}^{+}(\alpha,\beta;-i\xi)+\widehat{W}^{-}(\alpha,\beta;-i\xi)\right)d\xi
\end{multline}
with
\begin{multline}\label{Z4 def}
\ZC(\alpha,\beta,\xi)=\zeta\left(\frac{1-\alpha-\beta}{2}+\xi\right)
\zeta\left(\frac{1+\alpha-\beta}{2}+\xi\right)
\zeta\left(\frac{1-\alpha-\beta}{2}-\xi\right)\\ \times
\zeta\left(\frac{1+\alpha-\beta}{2}-\xi\right),
\end{multline}

\begin{equation}\label{hatW m}
\widehat{W}(\alpha,\beta;m)=\frac{\pi(-1)^m}{2}\cos\frac{\pi\alpha}{2}\int_0^{\infty}\frac{W(y)}{y^{m-\frac{\alpha+\beta}{2}}}
\HyGI\left(m+\alpha-\frac{\beta}{2},m-\alpha-\frac{\beta}{2},2m;\frac{-1}{y}\right)dy,
\end{equation}
\begin{equation}\label{hatW+}
\widehat{W}^{+}(\alpha,\beta;r)=
\frac{\pi i\cos\frac{\pi\alpha}{2}}{4\sinh(\pi r)}\widehat{W}^{0}(\alpha,\beta;ir)-
\frac{\pi i\cos\frac{\pi\alpha}{2}}{4\sinh(\pi r)}
\widehat{W}^{0}(\alpha,\beta;-ir),
\end{equation}
\begin{equation}\label{hatW-}
\widehat{W}^{-}(\alpha,\beta;r)=
\frac{-\pi i\sin\pi(ir-\beta/2)}{4\sinh(\pi r)}
\widehat{W}^{0}(\alpha,\beta;ir)-
\frac{\pi i\sin\pi(ir+\beta/2)}{4\sinh(\pi r)}\widehat{W}^{0}(\alpha,\beta;-ir),
\end{equation}
\begin{multline}\label{hatW+ + hatW-}
\widehat{W}^{+}(\alpha,\beta;-i\xi)+\widehat{W}^{-}(\alpha,\beta;-i\xi)=
\pi\frac{\sin\pi(\xi-\beta/2)-\cos\frac{\pi\alpha}{2}}{4\sin(\pi\xi)}
\widehat{W}^{0}(\alpha,\beta;\xi)\\+
\pi\frac{\sin\pi(\xi+\beta/2)+\cos\frac{\pi\alpha}{2}}{4\sin(\pi\xi)}
\widehat{W}^{0}(\alpha,\beta;-\xi),
\end{multline}
\begin{equation}\label{hatW0}
\widehat{W}^{0}(\alpha,\beta;t)=
\int_{0}^{\infty}W(y)y^{\frac{\alpha+\beta-1}{2}-t}
\HyGI\left(\frac{1+\alpha-\beta}{2}+t,\frac{1-\alpha-\beta}{2}+t,1+2t;\frac{-1}{y}\right)dy.
\end{equation}
\end{thm}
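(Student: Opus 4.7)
The plan follows the spectral approach due to Motohashi. The first step is to encode the test function $W$ via its Mellin transform. For $\Re s$ sufficiently large, set $\widetilde{W}(s) := \int_0^\infty W(x) x^{s-1} dx$, so that on a suitable vertical line
\begin{equation}
A(l,\alpha,\beta;W) = \frac{1}{2\pi i} \int_{(c)} \widetilde{W}(s)\, l^s\, D(s;l,\alpha,\beta)\, ds,
\end{equation}
where $D(s;l,\alpha,\beta) := \sum_{n=1}^{\infty} \sigma_\alpha(n)\sigma_\beta(n+l) n^{-s}$. Thus the problem is reduced to the analytic continuation of the shifted convolution Dirichlet series $D(s;l,\alpha,\beta)$ together with sharp control on a vertical strip.

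To obtain both, I would open $\sigma_\beta(n+l) = \sum_{d\mid n+l} d^\beta$, separate the divisor $d$ via the condition $n \equiv -l \pmod d$, and apply the classical Voronoi summation formula to the resulting sum in $n$. This converts $D$ into a sum of Kloosterman sums $S(l,h;c)$ weighted by Bessel integrals of $W$. One then applies the Kuznetsov trace formula in the direction sums-of-Kloosterman-to-spectral-side. This single step produces the tripartite structure of the error: the holomorphic spectral contribution (over $f \in H_{2m}$) giving $E_h$, the Maass spectral contribution (over $u_j$, with the even/odd split controlled by $\varepsilon_j$) giving $E_d$, and the Eisenstein contribution (integral over $(0)$ against $\zeta(\tfrac12+\xi)$-type factors assembled into $\ZC$) giving $E_c$.

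The main term $U$ is obtained by a contour shift in $s$: moving the line of integration leftward, one crosses four simple poles of $D(s;l,\alpha,\beta)$ at $s=1+\alpha,\ 1+\beta,\ 1,\ 1-\alpha-\beta$. These are produced by the zeta factors in the Ramanujan-type identity \eqref{RamId} used to compute the diagonal contribution, and by the Estermann-type evaluation of the polar part of the shifted sum. A direct residue computation, combined with \eqref{tau to sigma}, assembles the four residues into precisely the four summands of $\mu(l,\alpha,\beta;x)$ in \eqref{mul def}. The constraint $-1-\Re\beta < \Re\alpha < 1+\Re\beta$ is exactly what ensures these four poles are simple and well-separated from the critical strip during the shift.

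The final task is to identify the Bessel transforms output by Kuznetsov's formula with the explicit $\HyGI$-kernels $\widehat{W}(\alpha,\beta;m)$, $\widehat{W}^\pm(\alpha,\beta;t_j)$ and $\widehat{W}^0(\alpha,\beta;t)$ in \eqref{hatW m}--\eqref{hatW0}. This is carried out via Mellin--Barnes: the kernels $k_0,k_1$ of Lemma \ref{lemma:mellinkernels} and Corollary \ref{cor:mellinkernels} give the Mellin transforms of the $J$- and $K$-Bessel pieces in Kuznetsov's formula, so that each Bessel integral against $W$ becomes a contour integral which, after collapsing Gamma factors, produces exactly a $\HyGI$ of argument $-1/y$; the powers of $y$ and the Gamma prefactors match those in \eqref{hatW0}. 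The decomposition into $\widehat{W}^+$ and $\widehat{W}^-$ corresponds to the standard symmetric/antisymmetric split of the Kuznetsov kernel induced by $\varepsilon_j$, while \eqref{hatW+ + hatW-} is the boundary case $r \to -i\xi$ that organizes the continuous-spectrum integral. The main obstacle is the bookkeeping of Gamma factors, signs of $\sin\tfrac{\pi\alpha}{2}$ versus $\cos\tfrac{\pi\alpha}{2}$, and the careful passage through the zero of $\sinh(\pi r)$ at $r=0$; absolute convergence at every step is guaranteed by the assumed decay of $W$.
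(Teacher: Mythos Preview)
Your outline is correct and is essentially Motohashi's proof of this result; the paper does not reprove the spectral decomposition but simply cites \cite{Mo} for the structure \eqref{Al=U+error}--\eqref{Ec def} and then concentrates on recasting Motohashi's integral transforms into the specific hypergeometric form \eqref{hatW m}--\eqref{hatW0}. The only substantive work in the paper's proof is this reformulation: it rewrites Motohashi's $\psi_{\pm}$ via the kernels $k_0,k_1$ (equations \eqref{Mot(3.14)}--\eqref{Mot:psihat k}) and then appeals to the Weber--Schafheitlin integral \cite[6.576.3]{GR} to collapse the Bessel integrals directly into ${}_2\mathrm{I}_1$, rather than going through a Mellin--Barnes contour as you propose; both routes are standard and yield the same identities. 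One minor correction: the four poles producing $\mu(l,\alpha,\beta;x)$ arise from the Estermann zeta function (equivalently from the polar terms when the Kloosterman sum side is separated), not from an application of \eqref{RamId} to a diagonal --- there is no diagonal here since $l\geq1$.
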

\begin{proof}
The main term, given by \eqref{Udef}, \eqref{mul def}, was evaluated in \cite[p.   554]{Mo}. To prove a new version of formulas for other terms, we write some integral transforms in \cite{Mo} in a slightly different  form. Using \eqref{k1 Mellin inv2} and \eqref{k0 Mellin inv2}, the integrals \cite[(3.14), (3.15)]{Mo} can be rewritten as
\begin{equation}\label{Mot(3.14)}
\psi_{+}(x,\alpha,\beta)=\frac{\pi x^{1-\beta}}{2^{1-\beta}}\int_0^{\infty}W(y)k_1(x\sqrt{y},\alpha/2)y^{\alpha/2}dy,
\end{equation}
\begin{equation}\label{Mot(3.15)}
\psi_{-}(x,\alpha,\beta)=\frac{\pi x^{1-\beta}}{2^{1-\beta}}\int_0^{\infty}W(y)k_0(x\sqrt{y},\alpha/2)y^{\alpha/2}dy.
\end{equation}
The integral transforms $\hat{\psi}(r)$, $\check{\psi}(r)$ in \cite[Lemma 1, Lemma 2]{Mo} can be written as follows:
\begin{equation}\label{Mot:psi hat check}
\hat{\psi}(r)=\pi\int_0^{\infty}k_0(x,ir)\frac{\psi(x)}{x}dx,\quad
\check{\psi}(r)=\pi\int_0^{\infty}k_1(x,ir)\frac{\psi(x)}{x}dx.
\end{equation}
Using \cite[10.4.1]{HMF} one has
\begin{equation}\label{Mot:psihat k}
\widehat{\psi}(i(1/2-k))=\pi(-1)^k\int_0^{\infty}J_{2k-1}\frac{\psi(x)}{x}dx.
\end{equation}
Application of \cite[Lemma 1, Lemma 2]{Mo} to \cite[(3.12), (3.13)]{Mo} provides the following analogue of \cite[(3.34)]{Mo}:
\begin{multline}\label{Eh eq1}
E_{h}(l,\alpha,\beta;W)=2(2\pi)^{\beta-1}l^{(1+\alpha+\beta)/2}
\sum_{m=6}^{\infty}\frac{2m-1}{\pi^2}\sum_{f\in H_{2m}} \omega_f\lambda_f(l)L_f\left(\frac{1+\alpha-\beta}{2}\right)\\\times
L_f\left(\frac{1-\alpha-\beta}{2}\right)\widehat{\psi}(i(1/2-m)),
\end{multline}
\begin{multline}\label{Ed eq1}
E_{d}(l,\alpha,\beta;W)=2(2\pi)^{\beta-1}l^{(1+\alpha+\beta)/2}
\sum_{j} \omega_j\lambda_j(l)L_j\left(\frac{1+\alpha-\beta}{2}\right)
L_j\left(\frac{1-\alpha-\beta}{2}\right)\\\times
\left(\widehat{\psi}^{+}(t_j)+\varepsilon_j\check{\psi}^{-}(t_j)\right),
\end{multline}
\begin{multline}\label{Ec eq1}
E_{c}(l,\alpha,\beta;W)=(2\pi)^{\beta-1}l^{(1+\alpha+\beta)/2}\frac{2}{\pi}
\int_{-\infty}^{\infty}\frac{\sigma_{2ir}(l)\ZC(\alpha,\beta,ir)}{l^{ir}\zeta(1+2ir)\zeta(1-2ir)}
\left(\widehat{\psi}^{+}(r)+\check{\psi}^{-}(r)\right)dr.
\end{multline}
Note that in order to obtain \eqref{Eh eq1} we used the following relation between the coefficients $\alpha_{j,k}$ \cite[p.535]{Mo} and \eqref{harmonic weight}:
\begin{equation}\label{alphajkMot to harmonic weight}
\alpha_{j,k}=\frac{16\Gamma(2k)}{(4\pi)^{2k+1}}|\rho_{j,k}(1)|^2=
\frac{\Gamma(2k-1)(2k-1)}{\pi^2(4\pi)^{2k-1}}|\rho_{j,k}(1)|^2=
\frac{(2k-1)}{\pi^2}\omega_f.
\end{equation}
Finally, using  \cite[6.576.3]{GR}, we prove that
\begin{equation}
\widehat{\psi}^{+}(r)=\widehat{W}^{+}(\alpha,\beta;r),\quad
\check{\psi}^{-}(r)=\widehat{W}^{-}(\alpha,\beta;r),\quad
\widehat{\psi}(i(1/2-m))=\widehat{W}(\alpha,\beta;m).
\end{equation}
\end{proof}
\begin{lem}\label{lem:hatWmtoPhi}
The following identity holds:
\begin{multline}\label{hatWmtoPhi}
\widehat{W}(\alpha,\beta;m)=\frac{\pi(-1)^m}{4(2\pi)^{\beta}}\frac{\cos\frac{\pi\alpha}{2}}{\cos\pi\alpha}X_m\left(\frac{\beta}{2}-\alpha\right)
\int_0^{\infty}W(y)y^{\frac{\alpha}{2}}(1+y)^{\frac{\beta}{2}}\\\times
\Phi_m\left(\frac{1}{1+y};\alpha-\frac{\beta}{2},\alpha+\frac{\beta}{2}\right)dy.
\end{multline}
\end{lem}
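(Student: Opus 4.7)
The plan is to transform the hypergeometric function inside $\widehat{W}(\alpha,\beta;m)$ so that it matches, up to explicit algebraic factors, the one appearing in $\Phi_m\!\left(1/(1+y);\alpha-\beta/2,\alpha+\beta/2\right)$. Starting from \eqref{hatW m}, I would apply Pfaff's transformation in the form
\begin{equation*}
\HyG(a,b,c;z)=(1-z)^{-b}\HyG(b,c-a,c;z/(z-1))
\end{equation*}
with $a=m+\alpha-\beta/2$, $b=m-\alpha-\beta/2$, $c=2m$, $z=-1/y$. A short computation gives $z/(z-1)=1/(1+y)$, $(1-z)^{-b}=y^{m-\alpha-\beta/2}(1+y)^{-m+\alpha+\beta/2}$, and $c-a=m-\alpha+\beta/2$, so the hypergeometric function turns into one with parameters $(m-\alpha-\beta/2,m-\alpha+\beta/2,2m)$ evaluated at $1/(1+y)$, which are precisely the parameters needed to build $\Phi_m(\cdot;\alpha-\beta/2,\alpha+\beta/2)$ according to \eqref{Phi_k2}.

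The second step is to restore the Gamma prefactors so that the transformation is stated in terms of $\HyGI$. Multiplying both sides by $\Gamma(m+\alpha-\beta/2)\Gamma(m-\alpha-\beta/2)/\Gamma(2m)$ and inserting a ratio to complete the $\HyGI$ on the right, I obtain
\begin{multline*}
\HyGI\!\left(m+\alpha-\tfrac{\beta}{2},m-\alpha-\tfrac{\beta}{2},2m;-\tfrac{1}{y}\right)
=\frac{\Gamma(m+\alpha-\beta/2)}{\Gamma(m-\alpha+\beta/2)}\,y^{m-\alpha-\beta/2}(1+y)^{-m+\alpha+\beta/2}\\
\times\HyGI\!\left(m-\alpha-\tfrac{\beta}{2},m-\alpha+\tfrac{\beta}{2},2m;\tfrac{1}{1+y}\right),
\end{multline*}
and from the definition \eqref{Xdef} the Gamma ratio equals $(2\pi)^{2\alpha-\beta}X_m(\beta/2-\alpha)$.

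The third step reads $\Phi_m$ backwards: specialising \eqref{Phi_k2} at $\alpha_1=\alpha-\beta/2$, $\alpha_2=\alpha+\beta/2$, $x=1/(1+y)$ (so $\alpha_1+\alpha_2=2\alpha$, $\cos\pi(\alpha_1+\alpha_2)/2=\cos\pi\alpha$, $x^m=(1+y)^{-m}$ and $(1-x)^{-\alpha}=y^{-\alpha}(1+y)^{\alpha}$) yields
\begin{equation*}
\HyGI\!\left(m-\alpha-\tfrac{\beta}{2},m-\alpha+\tfrac{\beta}{2},2m;\tfrac{1}{1+y}\right)
=\frac{(1+y)^{m-\alpha}y^{\alpha}}{2(2\pi)^{2\alpha}\cos\pi\alpha}\,\Phi_m\!\left(\tfrac{1}{1+y};\alpha-\tfrac{\beta}{2},\alpha+\tfrac{\beta}{2}\right).
\end{equation*}
Substituting the two previous displays into \eqref{hatW m} and collecting exponents completes the proof: the powers of $y$ simplify from $y^{-m+(\alpha+\beta)/2}\cdot y^{m-\alpha-\beta/2}\cdot y^{\alpha}$ to $y^{\alpha/2}$, the powers of $1+y$ from $(1+y)^{-m+\alpha+\beta/2}\cdot(1+y)^{m-\alpha}$ to $(1+y)^{\beta/2}$, and the constants combine as $\tfrac{\pi(-1)^m}{2}\cos(\pi\alpha/2)\cdot\tfrac{(2\pi)^{2\alpha-\beta}}{2(2\pi)^{2\alpha}\cos\pi\alpha}=\tfrac{\pi(-1)^m}{4(2\pi)^{\beta}}\cdot\tfrac{\cos(\pi\alpha/2)}{\cos\pi\alpha}$, which gives exactly \eqref{hatWmtoPhi}. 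There is no analytic obstacle here; the only point requiring care is the bookkeeping of the exponents of $y$ and $1+y$, for which choosing the symmetric version of Pfaff (so that the new first hypergeometric parameter is $b=m-\alpha-\beta/2$ rather than $a$) is essential to produce exactly the parameters of $\Phi_m$.
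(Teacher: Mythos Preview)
Your proof is correct and follows essentially the same approach as the paper: the paper applies the Pfaff transformation \cite[15.8.1]{HMF} to obtain the analogue of your second display (their \eqref{hatWm2I1 to2I1}), then reads the definition \eqref{Phi_k2} of $\Phi_m$ backwards to get your third display (their \eqref{2I1toPhim}), and finally combines both with \eqref{hatW m}. Your explicit identification of the Gamma ratio with $(2\pi)^{2\alpha-\beta}X_m(\beta/2-\alpha)$ and your careful collection of the $y$- and $(1+y)$-exponents make the bookkeeping slightly more transparent than in the paper, but the argument is the same.
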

\begin{proof}
Using \cite[15.8.1]{HMF}, we show that
\begin{multline}\label{hatWm2I1 to2I1}
\HyGI\left(m+\alpha-\beta/2,m-\alpha-\beta/2,2m;\frac{-1}{y}\right)dy=
\frac{(1+y)^{-m+\alpha+\beta/2}}{y^{-m+\alpha+\beta/2}}\\\times
\frac{\Gamma(m+\alpha-\beta/2)}{\Gamma(m-\alpha+\beta/2)}
\HyGI\left(m-\alpha+\frac{\beta}{2},m-\alpha-\frac{\beta}{2},2m;\frac{1}{1+y}\right)dy.
\end{multline}
According to \eqref{Phi_k2} one has
\begin{multline}\label{2I1toPhim}
\HyGI\left(m-\alpha+\frac{\beta}{2},m-\alpha-\frac{\beta}{2},2m;\frac{1}{1+y}\right)\\=
\Phi_m\left(\frac{1}{1+y};\alpha-\frac{\beta}{2},\alpha+\frac{\beta}{2}\right)
\frac{(1+y)^{m-\alpha}y^{\alpha}}{2(2\pi)^{2\alpha}\cos(\pi\alpha)}.
\end{multline}
Finally, \eqref{hatWmtoPhi} follows from \eqref{hatW m}, \eqref{hatWm2I1 to2I1} and \eqref{2I1toPhim}.
\end{proof}

\section{Reduction of the fourth moment to triple divisor sums }\label{sec:4mom to triple div}
Recall that $\AlphaVec=(\alpha_1,\alpha_2,\alpha_3,\alpha_4)$. Let us define
the following triple divisor sums:
\begin{multline}\label{TD1}
\TD^{(1)}(\AlphaVec):=(-1)^k
\zeta(1+\alpha_3+\alpha_4)\sum_{m=1}^{\infty}\sum_{n=1}^{\infty}
\frac{\sigma_{\alpha_1-\alpha_2}(n)\sigma_{\alpha_1+\alpha_2}(m)\sigma_{\alpha_3-\alpha_4}(n+m)}
{m^{\alpha_1/2+\alpha_2/2}n^{\alpha_1/2-\alpha_2/2}(m+n)^{1+\alpha_3}}\\ \times\phi_k\left(\frac{n}{n+m};\alpha_1,\alpha_2\right),
\end{multline}
\begin{equation}\label{TD2}
\TD^{(2)}(\AlphaVec):=
\zeta(1+\alpha_3+\alpha_4)\sum_{l=1}^{\infty}\sum_{n=1}^{\infty}
\frac{\sigma_{\alpha_3-\alpha_4}(l)\sigma_{\alpha_1+\alpha_2}(n)\sigma_{\alpha_1-\alpha_2}(n+l)}
{n^{\alpha_1/2+\alpha_2/2}l^{1+\alpha_3}(n+l)^{\alpha_1/2-\alpha_2/2}}\Phi_k\left(\frac{l}{n+l};\alpha_1,\alpha_2\right),
\end{equation}
\begin{equation}\label{TD3}
\TD^{(3)}(\alpha_1,\alpha_2,\alpha_3,\alpha_4):=(-1)^kX_k(\alpha_2)\TD^{(2)}(\alpha_1,-\alpha_2,\alpha_3,\alpha_4).
\end{equation}

\begin{lem}
If  $\min_{j=3,4}\Re{\alpha_j}>1+\max_{j=1,2}|\Re{\alpha_j}|$, then
\begin{equation}\label{4mom to 2mom}
\M_4(\AlphaVec)=
\MT_{2,4}(\AlphaVec)+
\sum_{i=1}^3\TD^{(i)}(\AlphaVec),
\end{equation}
where
\begin{multline}\label{MT2momto4mom}
\MT_{2,4}(\AlphaVec)=
\ZF(\alpha_1,\alpha_2,\alpha_3,\alpha_4)\Co(\alpha_1,\alpha_2,\alpha_3,\alpha_4)+
\ZF(-\alpha_1,-\alpha_2,\alpha_3,\alpha_4)\Co(-\alpha_1,-\alpha_2,\alpha_3,\alpha_4)\\+
\ZF(\alpha_1,-\alpha_2,\alpha_3,\alpha_4)\Co(\alpha_1,-\alpha_2,\alpha_3,\alpha_4)+
\ZF(-\alpha_1,\alpha_2,\alpha_3,\alpha_4)\Co(-\alpha_1,\alpha_2,\alpha_3,\alpha_4).
\end{multline}
\end{lem}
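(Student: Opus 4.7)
My plan is to first reduce $\M_4$ to a Dirichlet-series-weighted sum of twisted second moments and then apply the convolution formula of Theorem \ref{thm:kuznetsov}. Under the hypothesis $\min_{j=3,4}\Re{\alpha_j}>1+\max_{j=1,2}|\Re{\alpha_j}|$, the series \eqref{Lprod} applied to the product $L_f(1/2+\alpha_3)L_f(1/2+\alpha_4)$ converges absolutely with a bound independent of $f$ (the Hecke eigenvalues satisfy $|\lambda_f(n)|\le\sigma_0(n)$). Substituting into \eqref{4moment def} and interchanging summations yields
\begin{equation}
\M_4(\AlphaVec)=\zeta(1+\alpha_3+\alpha_4)\sum_{N=1}^{\infty}\frac{\sigma_{\alpha_3-\alpha_4}(N)}{N^{1/2+\alpha_3}}\M_2(N;\alpha_1,\alpha_2).
\end{equation}
Theorem \ref{thm:kuznetsov} then splits $\M_2$ as $\MT_2+\sum_{i=1}^3\ET_2^{(i)}$, so the problem decouples into a main-term computation and three error-term identifications.

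For the main-term contribution, each of the four summands of $\MT_2$ in \eqref{MT2 def}, inserted into the outer sum, produces a Dirichlet series $\sum_N\sigma_a(N)\sigma_b(N)/N^s$ evaluable by the Ramanujan identity \eqref{RamId}. The first summand, with $(a,b)=(\alpha_1-\alpha_2,\alpha_3-\alpha_4)$ and $s=1+\alpha_1+\alpha_3$, collapses into exactly the six-zeta expression $\ZF(\alpha_1,\alpha_2,\alpha_3,\alpha_4)$ of \eqref{7zeta def}; since $\Co(\alpha_1,\alpha_2,\alpha_3,\alpha_4)=1$ this produces the first summand of $\MT_{2,4}$. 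The other three summands of $\MT_2$, which carry the prefactors $(-1)^kX_k(\alpha_1)$, $(-1)^kX_k(\alpha_2)$, $X_k(\alpha_1)X_k(\alpha_2)$, yield by the same mechanism the three summands of $\MT_{2,4}$ corresponding to the remaining sign choices $(\epsilon_1,\epsilon_2)\in\{\pm 1\}^2$ with $(\epsilon_3,\epsilon_4)=(+,+)$; in each case the prefactor matches the prescription \eqref{Co def} exactly and the resulting zeta product matches \eqref{7zeta def}.

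For the error-term contributions, substituting \eqref{ET2 1} and applying the change of variables $m=N-n$ (so $N=m+n$, $m,n\ge 1$) turns the contribution of $\ET_2^{(1)}$ into a double sum over $(m,n)$ which coincides with \eqref{TD1}, giving $\TD^{(1)}(\AlphaVec)$. Substituting \eqref{ET2 2} and renaming $N\to l$ turns the $\ET_2^{(2)}$ contribution into \eqref{TD2}, which equals $\TD^{(2)}(\AlphaVec)$. Finally, the identity \eqref{ET2 3} reduces the $\ET_2^{(3)}$ contribution to the preceding computation with $\alpha_2\mapsto-\alpha_2$ and an extra prefactor $(-1)^kX_k(\alpha_2)$; by the definition \eqref{TD3} this is precisely $\TD^{(3)}(\AlphaVec)$.

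The only technical point is justifying the rearrangements of summation. Using the divisor bound $\sigma_v(n)\ll n^{\max(0,\Re v)+\epsilon}$ together with the estimates for $\phi_k$ and $\Phi_k$ from Lemmas \ref{lem:phikest}, \ref{lem:phikest0} and Corollary \ref{cor:Phik est}, each triple series defining $\TD^{(i)}(\AlphaVec)$ is absolutely convergent in the region $\min_{j=3,4}\Re\alpha_j>1+\max_{j=1,2}|\Re\alpha_j|$: the factor $N^{-1/2-\alpha_3}$ combined with the divisor bound supplies polynomial decay in $N=m+n$ of arbitrary order, while $\phi_k$ and $\Phi_k$ remain uniformly bounded on the relevant ranges. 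Absolute convergence legitimizes all interchanges and produces \eqref{4mom to 2mom}.
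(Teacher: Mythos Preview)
Your proposal is correct and follows essentially the same route as the paper: expand $L_f(1/2+\alpha_3)L_f(1/2+\alpha_4)$ via \eqref{Lprod}, apply Theorem~\ref{thm:kuznetsov} termwise, evaluate the four $\MT_2$-contributions with the Ramanujan identity \eqref{RamId}, and identify the three $\ET_2^{(i)}$-contributions with \eqref{TD1}--\eqref{TD3}. The only imprecision is in your convergence discussion: the decay from $(m+n)^{-1-\alpha_3}$ is of a fixed order (not ``arbitrary order''), and $\phi_k$, $\Phi_k$ are not uniformly bounded on the full range $0<x<1$---they have mild power singularities at the endpoints. The paper handles this by invoking Lemma~\ref{lem:phiPhi approx}, whose endpoint asymptotics $\phi_k(x)\sim x^{-|\Re(\alpha_1-\alpha_2)|/2}$, $\phi_k(x)\sim(1-x)^{-|\Re(\alpha_1+\alpha_2)|/2}$ and $\Phi_k(x)\sim(1-x)^{-|\Re(\alpha_1+\alpha_2)|/2}$ are exactly what is needed to verify that the double series converge under the hypothesis $\min_{j=3,4}\Re\alpha_j>1+\max_{j=1,2}|\Re\alpha_j|$; your cited Lemmas~\ref{lem:phikest}, \ref{lem:phikest0} only control $\phi_k$ away from $x=1$.
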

\begin{proof}
Applying  \eqref{Lprod} for two out of four $L$-functions in \eqref{4moment def} and using \eqref{eq:secondmoment}, we obtain
\begin{equation}
\M_4(\AlphaVec)=
\zeta(1+\alpha_3+\alpha_4)\sum_{l=1}^{\infty}
\frac{\sigma_{\alpha_3-\alpha_4}(l)}{l^{1/2+\alpha_3}}\MT_2(l;\alpha_1,\alpha_2)+
\sum_{i=1}^3\TD^{(i)}(\AlphaVec),
\end{equation}
where
\begin{equation}
\TD^{(i)}(\AlphaVec)=
\zeta(1+\alpha_3+\alpha_4)\sum_{l=1}^{\infty}
\frac{\sigma_{\alpha_3-\alpha_4}(l)}{l^{1/2+\alpha_3}}\ET^{(i)}_2(l;\alpha_1,\alpha_2).
\end{equation}
Using \eqref{MT2 def} and \eqref{RamId}, and taking into account the notation \eqref{7zeta def} and \eqref{Co def}, one has
\begin{multline}\label{MT2 to MT4}
\zeta(1+\alpha_3+\alpha_4)\sum_{l=1}^{\infty}
\frac{\sigma_{\alpha_3-\alpha_4}(l)}{l^{1/2+\alpha_3}}\MT_2(l;\alpha_1,\alpha_2)=\\=
\ZF(\alpha_1,\alpha_2,\alpha_3,\alpha_4)\Co(\alpha_1,\alpha_2,\alpha_3,\alpha_4)+
\ZF(-\alpha_1,-\alpha_2,\alpha_3,\alpha_4)\Co(-\alpha_1,-\alpha_2,\alpha_3,\alpha_4)\\+
\ZF(\alpha_1,-\alpha_2,\alpha_3,\alpha_4)\Co(\alpha_1,-\alpha_2,\alpha_3,\alpha_4)+
\ZF(-\alpha_1,\alpha_2,\alpha_3,\alpha_4)\Co(-\alpha_1,\alpha_2,\alpha_3,\alpha_4).
\end{multline}
Substituting \eqref{ET2 1}, \eqref{ET2 2}, \eqref{ET2 3} to \eqref{4mom to 2mom}, we obtain \eqref{TD1}, \eqref{TD2}, \eqref{TD3}. Note that Lemma \ref{lem:phiPhi approx} and the conditions   $\min_{j=3,4}\Re{\alpha_j}>1+\max_{j=1,2}|\Re{\alpha_j}|$ guarantee the absolute convergence of the double series \eqref{TD1}, \eqref{TD2}, \eqref{TD3}.
\end{proof}
Let $\DU:[0,+\infty)\rightarrow[0;1]$ be a smooth  infinitely differentiable  function such that
\begin{equation}\label{DU def}
\DU(x)+\DU(1/x)=1, \DU(x)=1\,\hbox{for}\, 0\le x\le x_0,\,\quad
\DU(x)=0\,\hbox{for}\,  x>1/x_0.
\end{equation}
We insert in \eqref{TD1}, \eqref{TD2} the following decomposition of unity:
\begin{equation}
\DU\left(\frac{m}{n}\right)+\DU\left(\frac{n}{m}\right)=1,
\end{equation}
in order to decompose $\TD^{(i)}(\AlphaVec)$ for $i=1,2$ into two sums:
\begin{equation}\label{TD1=TD11+TD12}
\TD^{(i)}(\AlphaVec)=\TD^{(i,1)}(\AlphaVec)+\TD^{(i,2)}(\AlphaVec),
\end{equation}
where
\begin{equation}\label{TD11}
\TD^{(1,1)}(\AlphaVec)=(-1)^k
\zeta(1+\alpha_3+\alpha_4)\sum_{n=1}^{\infty}
\frac{\sigma_{\alpha_1-\alpha_2}(n)}{n^{1+\alpha_1+\alpha_3}}
\sum_{m=1}^{\infty}
\sigma_{\alpha_1+\alpha_2}(m)\sigma_{\alpha_3-\alpha_4}(m+n)W_{1,1}\left(\frac{m}{n}\right),
\end{equation}
\begin{equation}\label{W11def}
W_{1,1}(x)=\frac{\DU(1/x)}
{x^{\alpha_1/2+\alpha_2/2}(1+x)^{1+\alpha_3}}\phi_k\left(\frac{1}{1+x};\alpha_1,\alpha_2\right),
\end{equation}
\begin{equation}\label{TD12}
\TD^{(1,2)}(\AlphaVec)=(-1)^k
\zeta(1+\alpha_3+\alpha_4)\sum_{m=1}^{\infty}
\frac{\sigma_{\alpha_1+\alpha_2}(m)}{m^{1+\alpha_1+\alpha_3}}
\sum_{n=1}^{\infty}
\sigma_{\alpha_1-\alpha_2}(n)\sigma_{\alpha_3-\alpha_4}(n+m)W_{1,2}\left(\frac{n}{m}\right),
\end{equation}
\begin{equation}\label{W12def}
W_{1,2}(x)=\frac{\DU(1/x)}
{x^{\alpha_1/2-\alpha_2/2}(1+x)^{1+\alpha_3}}\phi_k\left(\frac{x}{1+x};\alpha_1,\alpha_2\right),
\end{equation}

\begin{equation}\label{TD21}
\TD^{(2,1)}(\AlphaVec)=
\zeta(1+\alpha_3+\alpha_4)\sum_{n=1}^{\infty}
\frac{\sigma_{\alpha_1+\alpha_2}(n)}{n^{1+\alpha_1+\alpha_3}}
\sum_{l=1}^{\infty}
\sigma_{\alpha_3-\alpha_4}(l)\sigma_{\alpha_1-\alpha_2}(l+n)W_{2,1}\left(\frac{l}{n}\right),
\end{equation}
\begin{equation}\label{W21def}
W_{2,1}(x)=\frac{\DU(1/x)}
{(1+x)^{\alpha_1/2-\alpha_2/2}x^{1+\alpha_3}}\Phi_k\left(\frac{x}{1+x};\alpha_1,\alpha_2\right),
\end{equation}
\begin{equation}\label{TD22}
\TD^{(2,2)}(\AlphaVec)=
\zeta(1+\alpha_3+\alpha_4)\sum_{l=1}^{\infty}
\frac{\sigma_{\alpha_3-\alpha_4}(l)}{l^{1+\alpha_1+\alpha_3}}
\sum_{n=1}^{\infty}
\sigma_{\alpha_1+\alpha_2}(n)\sigma_{\alpha_1-\alpha_2}(n+l)W_{2,2}\left(\frac{n}{l}\right),
\end{equation}
\begin{equation}\label{W22def}
W_{2,2}(x)=\frac{\DU(1/x)}{(1+x)^{\alpha_1/2-\alpha_2/2}x^{\alpha_1/2+\alpha_2/2}}\Phi_k\left(\frac{1}{1+x};\alpha_1,\alpha_2\right).
\end{equation}
To simplify estimates of some error terms in the forthcoming sections, it is convenient to assume that $|\Re{\alpha_1}|, |\Re{\alpha_2}|\ll\epsilon$.
\section{Analysis of $\TD^{(1,1)}$}\label{sec:TD11}
In this section, we show that the term $\TD^{(1,1)}$, defined by \eqref{TD11}, contains the following three weighted moments. The discrete  contribution is given by the fourth moment of Maass form L-functions:
\begin{multline}\label{Ed11}
\term_{d}^{1}(\AlphaVec):=
2(-1)^k(2\pi)^{\alpha_3-\alpha_4-1}
\sum_{j} \omega_jL_j\left(\frac{1+\alpha_1+\alpha_2-\alpha_3+\alpha_4}{2}\right)\\\times
L_j\left(\frac{1-\alpha_1-\alpha_2-\alpha_3+\alpha_4}{2}\right)
L_j\left(\frac{1+\alpha_1-\alpha_2+\alpha_3+\alpha_4}{2}\right)
L_j\left(\frac{1-\alpha_1+\alpha_2+\alpha_3+\alpha_4}{2}\right)\\\times
\left(\widehat{W}_{1,1}^{+}+\varepsilon_j\widehat{W}_{1,1}^{-}\right)(\alpha_1+\alpha_2,\alpha_3-\alpha_4;t_j).
\end{multline}
Its holomorphic analogue is given by:
\begin{multline}\label{Eh11}
\term_{h}^{1}(\AlphaVec):=2(-1)^k(2\pi)^{\alpha_3-\alpha_4-1}
\sum_{m=6}^{\infty}\frac{2m-1}{\pi^2}\sum_{f\in H_{2m}} \omega_f
L_f\left(\frac{1+\alpha_1+\alpha_2-\alpha_3+\alpha_4}{2}\right)\\\times
L_f\left(\frac{1-\alpha_1-\alpha_2-\alpha_3+\alpha_4}{2}\right)
L_f\left(\frac{1+\alpha_1-\alpha_2+\alpha_3+\alpha_4}{2}\right)
\\\times
L_f\left(\frac{1-\alpha_1+\alpha_2+\alpha_3+\alpha_4}{2}\right)
\widehat{W}_{1,1}(\alpha_1+\alpha_2,\alpha_3-\alpha_4;m).
\end{multline}
Next, we define the continuous contribution as the weighted eighth moment of the Riemann zeta function:
\begin{multline}\label{Ec011def}
\term_{c}^{1}(\AlphaVec):=
\frac{4(-1)^k(2\pi)^{\alpha_3-\alpha_4-1}}{2\pi i}\int_{(0)}
\ZE_{1,1}(\AlphaVec;\xi)\\\times
\left(\widehat{W}_{1,1}^{+}(\alpha_1+\alpha_2,\alpha_3-\alpha_4;-i\xi)+\widehat{W}_{1,1}^{-}(\alpha_1+\alpha_2,\alpha_3-\alpha_4;-i\xi)\right)d\xi,
\end{multline}
where 
\begin{multline}\label{ZE11def}
\ZE_{1,1}(\AlphaVec;\xi)=  
\zeta\left(\frac{1+\alpha_1-\alpha_2+\alpha_3+\alpha_4}{2}+\xi\right)
\zeta\left(\frac{1-\alpha_1+\alpha_2+\alpha_3+\alpha_4}{2}+\xi\right)\\\times
\zeta\left(\frac{1+\alpha_1-\alpha_2+\alpha_3+\alpha_4}{2}-\xi\right)
\zeta\left(\frac{1-\alpha_1+\alpha_2+\alpha_3+\alpha_4}{2}-\xi\right)
\frac{\ZC(\alpha_1+\alpha_2,\alpha_3-\alpha_4,\xi)}{\zeta(1+2\xi)\zeta(1-2\xi)}
\end{multline}
and $\ZC(\alpha,\beta,\xi)$ is the product of four Riemann zeta functions defined by the formula \eqref{Z4 def}.
Recall that the function $W_{1,1}(y)$ is defined by \eqref{W11def} and its transforms used in the definitions of \eqref{Ed11}, \eqref{Eh11} and \eqref{Ec011def} are given in Theorem \ref{Th:BADP}.

Finally, we define the following main terms:

\begin{multline}\label{UW11 final}
\mainU_{1,1}(\AlphaVec):=
\Co(\alpha_1,\alpha_2,\alpha_3,-\alpha_4)\ZF(\alpha_1,\alpha_2,\alpha_3,-\alpha_4)
\frac{\cos\pi\frac{\alpha_1+\alpha_2-2\alpha_4}{2}}{2\cos\pi\frac{\alpha_4-\alpha_2}{2}\cos\pi\frac{\alpha_4-\alpha_1}{2}}\\+
\Co(-\alpha_1,-\alpha_2,\alpha_3,-\alpha_4)\ZF(-\alpha_1,-\alpha_2,\alpha_3,-\alpha_4)
\frac{\cos\pi\frac{\alpha_1+\alpha_2+2\alpha_4}{2}}{2\cos\pi\frac{\alpha_4+\alpha_2}{2}\cos\pi\frac{\alpha_4+\alpha_1}{2}}\\+
\Co(\alpha_1,\alpha_2,-\alpha_3,\alpha_4)\ZF(\alpha_1,\alpha_2,-\alpha_3,\alpha_4)
\frac{\cos\pi\frac{\alpha_1+\alpha_2-2\alpha_3}{2}}{2\cos\pi\frac{\alpha_3-\alpha_2}{2}\cos\pi\frac{\alpha_3-\alpha_1}{2}}\\+
\Co(-\alpha_1,-\alpha_2,-\alpha_3,\alpha_4)\ZF(-\alpha_1,-\alpha_2,-\alpha_3,\alpha_4)
\frac{\cos\pi\frac{\alpha_1+\alpha_2+2\alpha_3}{2}}{2\cos\pi\frac{\alpha_3+\alpha_2}{2}\cos\pi\frac{\alpha_3+\alpha_1}{2}}
\end{multline}
and
\begin{multline}\label{S11}
\Sok_{1,1}(\AlphaVec):=
\Co(-\alpha_1,\alpha_2,-\alpha_3,-\alpha_4)\ZF(-\alpha_1,\alpha_2,-\alpha_3,-\alpha_4)
\frac{\cos\pi\frac{\alpha_1-\alpha_2+2\alpha_3+2\alpha_4}{2}}{2\cos\pi\frac{\alpha_4+\alpha_3}{2}\cos\pi\frac{\alpha_1-\alpha_2+\alpha_3+\alpha_4}{2}}\\+
\Co(\alpha_1,-\alpha_2,-\alpha_3,-\alpha_4)\ZF(\alpha_1,-\alpha_2,-\alpha_3,-\alpha_4)
\frac{\cos\pi\frac{\alpha_2-\alpha_1+2\alpha_3+2\alpha_4}{2}}{2\cos\pi\frac{\alpha_4+\alpha_3}{2}\cos\pi\frac{\alpha_2-\alpha_1+\alpha_3+\alpha_4}{2}},
\end{multline}
where $\ZF(\epsilon_1\alpha_1,\epsilon_2\alpha_2,\epsilon_3\alpha_3,\epsilon_4\alpha_4)$ and $\Co(\epsilon_1\alpha_1,\epsilon_2\alpha_2,\epsilon_3\alpha_3,\epsilon_4\alpha_4)$ are given by \eqref{7zeta def} and \eqref{Co def}, respectively.

Now we are ready to state the main result of this section.
\begin{lem}\label{lem:td11}
For $|\alpha_j|\ll \epsilon$ one has
\begin{equation}
\TD^{(1,1)}(\AlphaVec)=\mainU_{1,1}(\AlphaVec)+\Sok_{1,1}(\AlphaVec)+\term_{d}^{1}(\AlphaVec)+\term_{h}^{1}(\AlphaVec)+\term_{c}^{1}(\AlphaVec).
\end{equation}
\end{lem}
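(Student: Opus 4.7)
The starting point is to recognize that the inner $m$-sum in the definition \eqref{TD11} of $\TD^{(1,1)}$ is precisely the binary additive divisor sum $A(n,\alpha_1+\alpha_2,\alpha_3-\alpha_4;W_{1,1})$ of Section 5. The weight $W_{1,1}$ is supported on $[x_0,\infty)$ (by the cutoff $\DU(1/x)$) and decays at infinity by Lemma 4.4 (since $\phi_k(1/(1+x))\to 0$ as $x\to\infty$). Hence Theorem 5.1 applies and decomposes
$$\TD^{(1,1)}(\AlphaVec) = \TD^{(1,1)}_U + \TD^{(1,1)}_h + \TD^{(1,1)}_d + \TD^{(1,1)}_c,$$
where each summand is obtained by applying the corresponding piece ($U$, $E_h$, $E_d$, $E_c$) of the BADP formula, then executing the outer sum weighted by the prefactor $(-1)^k\zeta(1+\alpha_3+\alpha_4)\sigma_{\alpha_1-\alpha_2}(n)/n^{1+\alpha_1+\alpha_3}$.

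I plan to identify $\TD^{(1,1)}_h$, $\TD^{(1,1)}_d$, $\TD^{(1,1)}_c$ directly with $\term^1_h$, $\term^1_d$, $\term^1_c$. The factor $l^{(1+\alpha+\beta)/2}=n^{(1+\alpha_1+\alpha_2+\alpha_3-\alpha_4)/2}$ from \eqref{Eh def}--\eqref{Ec def} combines with $n^{-1-\alpha_1-\alpha_3}$ to produce Dirichlet series of the form
$$\sum_{n=1}^\infty \frac{\sigma_{\alpha_1-\alpha_2}(n)\,\lambda(n)}{n^{(1+\alpha_1-\alpha_2+\alpha_3+\alpha_4)/2}},$$
where $\lambda$ is either a Hecke eigenvalue or (in the continuous case, via \eqref{RamId}) $\sigma_{2\xi}$. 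Using the Hecke identity \eqref{Lprod}, this sum equals $\zeta(1+\alpha_3+\alpha_4)^{-1}$ times a product of two further $L$-factors at the shifted points $(1\pm\alpha_1\mp\alpha_2+\alpha_3+\alpha_4)/2$. The $\zeta(1+\alpha_3+\alpha_4)$ cancels the prefactor, and combining with the two existing $L$-factors in \eqref{Eh def}--\eqref{Ec def} reproduces the four $L$-factors in \eqref{Eh11}, \eqref{Ed11}; for the continuous part, the remaining zeta factors reassemble into $\ZE_{1,1}$ via \eqref{ZE11def} and \eqref{Z4 def}.

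The principal work is the evaluation of $\TD^{(1,1)}_U$, which should produce $\mainU_{1,1}+\Sok_{1,1}$. Substituting the four summands of $\mu$ from \eqref{mul def} and exchanging summation and integration, each summand yields a Ramanujan-type series of the form $\sum_n \sigma_{\alpha_1-\alpha_2}(n)\sigma_\gamma(n)/n^s$ that evaluates by \eqref{RamId} to a ratio of four Riemann zeta functions over $\zeta(1+\alpha_3+\alpha_4)$; again the prefactor cancels. The residual $x$-integral of $W_{1,1}(x)$ against a monomial $x^a(1+x)^b$ will be computed by inserting the Mellin--Barnes representation \eqref{phik Mellin} of $\phi_k$, interchanging orders of integration, and reducing the inner $x$-integral to a beta integral via Lemma 2.1. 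The outer Mellin contour in $s$ is then shifted to pick up residues at the poles $s=1-2\alpha_j+2m$ of the gamma factors in \eqref{Gamma(k alpha s) def}; these residues produce factors $X_k(\alpha_j)=(2\pi)^{2\alpha_j}\Gamma(k-\alpha_j)/\Gamma(k+\alpha_j)$ together with trigonometric quotients coming from the $\sin(\pi(s+\alpha_1+\alpha_2)/2)$ in \eqref{J def}.

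The main obstacle lies in matching these residues with the six explicit summands of \eqref{UW11 final}--\eqref{S11}. Each of the four pieces of $\mu$ yields two families of leading residues (one from each of $\Gamma(1/2-\alpha_j-s/2)$, $j=1,2$), producing up to eight candidate contributions which must be reorganized via the reflection formula $\Gamma(z)\Gamma(1-z)=\pi/\sin\pi z$ and the same $\cos(\pi(\cdot)/2)$ identities used in the proof of Lemma 4.1 to yield exactly the six sign-pattern terms of $\mainU_{1,1}+\Sok_{1,1}$. Finally, independence of the answer from the cutoff $\DU$ is verified by writing $\DU(1/x)=1-\DU(x)$ and noting that the $\DU(x)$-piece is smooth and supported in $[0,1/x_0]$, so its Mellin transform is entire and rapidly decaying, contributing no residues under the contour shifts that collect the leading main terms.
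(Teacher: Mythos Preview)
Your overall decomposition via Theorem~\ref{Th:BADP} and your identification of the $E_h$- and $E_d$-pieces with $\term_h^1$ and $\term_d^1$ (through the Hecke relation \eqref{Lprod} and \eqref{RamId}) match the paper. However, there is a genuine gap in how you account for $\Sok_{1,1}$.

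You write that $\TD^{(1,1)}_U$ ``should produce $\mainU_{1,1}+\Sok_{1,1}$'' and that $\TD^{(1,1)}_c$ equals $\term_c^1$ directly. Both statements are wrong. The $U$-piece yields \emph{only} the four summands of $\mainU_{1,1}$: each of the four terms of $\mu$ in \eqref{mul def} contributes exactly one main term (Lemmas~\ref{U11-lem}--\ref{U11-lem final}), not two. Moreover, in the Mellin--Barnes evaluation the residue you pick up when shifting the $s$-contour comes from the pole of the beta integral, i.e.\ from the factor $(s/2+\alpha_4-1/2)^{-1}$ (or its $\alpha_3$-analogue) produced by integrating $(1+x)^{-1/2-\alpha_4-s/2}$, \emph{not} from the poles $s=1-2\alpha_j+2m$ of the Gamma factors in $\Gamma(k,\alpha_1,\alpha_2;s)$; those lie to the right of the initial contour and are never crossed. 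So your ``eight candidate contributions'' do not arise.

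The two terms of $\Sok_{1,1}$ come instead from the continuous-spectrum piece $\E_c(\AlphaVec;W_{1,1})$. The decomposition \eqref{TD11 to UEhEdEd} is established only in the region \eqref{TD11 alpha conditions0}, where $\Re\alpha_3,\Re\alpha_4$ are large. To reach $|\alpha_j|\ll\epsilon$ one must analytically continue, and the obstacle is that four poles $\xi_1,\ldots,\xi_4$ of the integrand $\ZE_{1,1}(\AlphaVec;\xi)$ in \eqref{Ec11-1} cross the contour $\Re\xi=0$ as the parameters move. A Sokhotski--Plemelj argument (Lemma~\ref{Ec11-lem1}) then gives $\E_c(\AlphaVec;W_{1,1})=\Sok_{1,1}(\AlphaVec)+\term_c^1(\AlphaVec)$, where $\Sok_{1,1}$ is precisely the sum of these residues, evaluated with the help of Lemmas~\ref{Lem:hatW11-xi1}--\ref{Lem:hatW11-xi2}. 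Your plan omits this analytic-continuation step entirely, so $\Sok_{1,1}$ will not appear anywhere in your computation.
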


In order to prove Lemma \ref{lem:td11} we apply Theorem \ref{Th:BADP} for studying \eqref{TD11}. As a result, we conclude that for
\begin{equation}\label{TD11 alpha conditions0}
\min_{j=3,4}\Re{\alpha_j}>1+\max_{j=1,2}|\Re{\alpha_j}|,\quad \Re{\alpha_3}-\Re{\alpha_4}>-1+|\Re{\alpha_1}+\Re{\alpha_2}|
\end{equation}
one has
\begin{equation}\label{TD11 to UEhEdEd}
\TD^{(1,1)}(\AlphaVec)=\U(\AlphaVec;W_{1,1})+\E_h(\AlphaVec;W_{1,1})+\E_d(\AlphaVec;W_{1,1})+\E_c(\AlphaVec;W_{1,1}),
\end{equation}
where
\begin{equation}\label{UW11 def}
\U(\AlphaVec;W_{1,1})=(-1)^k
\zeta(1+\alpha_3+\alpha_4)\sum_{n=1}^{\infty}
\frac{\sigma_{\alpha_1-\alpha_2}(n)}{n^{1+\alpha_1+\alpha_3}}U(n,\alpha_1+\alpha_2,\alpha_3-\alpha_4;W_{1,1}),
\end{equation}
and for $\ast\in\{h,d,c\}$
\begin{equation}\label{EhdcW11 def}
\E_{\ast}(\AlphaVec;W_{1,1})=(-1)^k
\zeta(1+\alpha_3+\alpha_4)\sum_{n=1}^{\infty}\frac{\sigma_{\alpha_1-\alpha_2}(n)}{n^{1+\alpha_1+\alpha_3}}
E_{\ast}(n,\alpha_1+\alpha_2,\alpha_3-\alpha_4;W_{1,1}).
\end{equation}
First, we show that $\E_{\ast}(\AlphaVec;W_{1,1})$ converges in a wider range than \eqref{TD11 alpha conditions0}.
Substituting \eqref{Eh def} to \eqref{EhdcW11 def} and using \eqref{Lprod}, we prove that
\begin{equation}
\E_{h}(\AlphaVec;W_{1,1})=\term_{h}^{1}(\AlphaVec),
\end{equation}
where $\term_{h}^{1}(\AlphaVec)$ is defined by \eqref{Eh11}.
The term $\term_{h}^{1}(\AlphaVec)$ converges in the range $|\alpha_j|\ll A$. This follows from the integral representation   \eqref{hatWmtoPhi}, the estimate
\eqref{Phik LGest} and the fact that  $W_{1,1}(y)=0$ unless $y>x_0$.

 In the next lemma,  we investigate the term $\E_d(\AlphaVec;W_{1,1})$. The term $\E_c(\AlphaVec;W_{1,1})$ converges in the same range of $\alpha_j$ since the weight functions in \eqref{Ed def} and \eqref{Ec def} coincide.

\begin{lem}\label{Ed11-lem}
For
\begin{equation}\label{Eh11 alpha conditions0}
\Re{\alpha_3}+\Re{\alpha_4}>-1-|\Re{\alpha_1}-\Re{\alpha_2}|
\end{equation}
one has
\begin{equation}\label{eq:ed1}
\E_d(\AlphaVec;W_{1,1})=\term_{d}^{1}(\AlphaVec), 
\end{equation}
where $\term_{d}^{1}(\AlphaVec)$  is defined by \eqref{Ed11}.
\end{lem}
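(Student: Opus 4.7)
The plan is to unfold the definition of $E_d$ from \eqref{Ed def} inside \eqref{EhdcW11 def}, swap the order of summation between $n$ and the spectrum $\{u_j\}$, and recognize the resulting Dirichlet series as a product of Maass form $L$-functions. Concretely, substituting $l=n$, $\alpha=\alpha_1+\alpha_2$, $\beta=\alpha_3-\alpha_4$ into \eqref{Ed def}, the arithmetic factor $n^{(1+\alpha+\beta)/2}/n^{1+\alpha_1+\alpha_3}$ collapses to $n^{-(1+\alpha_1-\alpha_2+\alpha_3+\alpha_4)/2}$, so after interchanging summations the $n$-sum becomes
\begin{equation*}
\sum_{n=1}^{\infty}\frac{\lambda_j(n)\sigma_{\alpha_1-\alpha_2}(n)}{n^{(1+\alpha_1-\alpha_2+\alpha_3+\alpha_4)/2}}.
\end{equation*}

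By the Maass form analogue of \eqref{Lprod} (which relies only on the Hecke multiplicativity \eqref{eq:mult}), this series equals
\begin{equation*}
\frac{1}{\zeta(1+\alpha_3+\alpha_4)}L_j\!\left(\frac{1+\alpha_1-\alpha_2+\alpha_3+\alpha_4}{2}\right)L_j\!\left(\frac{1-\alpha_1+\alpha_2+\alpha_3+\alpha_4}{2}\right),
\end{equation*}
and the denominator cancels precisely the prefactor $\zeta(1+\alpha_3+\alpha_4)$ present in \eqref{EhdcW11 def}. Collecting the remaining constant $2(-1)^k(2\pi)^{\alpha_3-\alpha_4-1}$ and the two $L_j$-values coming from $E_d$ itself yields the four-fold product of $L_j$-values weighted by $\omega_j\bigl(\widehat{W}_{1,1}^{+}+\varepsilon_j\widehat{W}_{1,1}^{-}\bigr)(\alpha_1+\alpha_2,\alpha_3-\alpha_4;t_j)$, which is exactly \eqref{Ed11}.

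The main obstacle is justifying both the interchange of summation and the extension from the restrictive range \eqref{TD11 alpha conditions0} (where the iterated sum is manifestly absolutely convergent) to the wider range \eqref{Eh11 alpha conditions0}. Under \eqref{TD11 alpha conditions0} the interchange follows by Fubini once the spectral side is shown to converge absolutely, and a standard application of the spectral large sieve for Maass forms combined with the convexity bound on the four $L_j$-values reduces matters to establishing sufficient decay in $t_j$ of the transform $(\widehat{W}_{1,1}^{+}+\varepsilon_j\widehat{W}_{1,1}^{-})$. Such decay is obtained from the integral representation \eqref{hatW0}--\eqref{hatW-} together with the asymptotic expansion \eqref{W02F1 asympt} of the hypergeometric kernel and the explicit form of $W_{1,1}$ in \eqref{W11def}: the support condition coming from $\DU(1/x)$ confines $y$ to $y\gg 1$, where repeated integration by parts in $y$ shows that $\widehat{W}_{1,1}^{\pm}(\alpha_1+\alpha_2,\alpha_3-\alpha_4;t_j)$ decays faster than any polynomial in $t_j$ for $|\Re \alpha_j|\ll\epsilon$. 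This polynomial decay ensures absolute convergence of the spectral sum in the full range \eqref{Eh11 alpha conditions0}, so that both sides of \eqref{eq:ed1} are holomorphic there, and the equality propagates from the region \eqref{TD11 alpha conditions0} by analytic continuation.
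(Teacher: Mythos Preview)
Your argument is correct and matches the paper's proof essentially step for step: substitute \eqref{Ed def} into \eqref{EhdcW11 def}, collapse the $n$-sum via the Hecke relation \eqref{Lprod}, and justify absolute convergence of the spectral sum by showing rapid decay of $\widehat{W}_{1,1}^{\pm}$ using the asymptotic \eqref{W02F1 asympt} and repeated integration by parts. The only cosmetic imprecision is that $\DU(1/y)$ forces $y\ge x_0$ (bounded away from~$0$) rather than $y\gg 1$, but this is exactly what is needed for \eqref{W02F1 asympt} to apply.
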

\begin{proof}
Substituting \eqref{Ed def} to \eqref{EhdcW11 def} and using \eqref{Lprod}, we show that the identity \eqref{eq:ed1} holds. It remains to prove that \eqref{Ed11} converges absolutely. To do this, we are going to show that the functions $\widehat{W}^{\pm}(t_j)$ decay faster than any fixed power of $t_j$.
Due to \eqref{hatW+}, \eqref{hatW-}, \eqref{hatW0}, it  is enough to prove that
\begin{multline}
\widehat{W}_{1,1}^{0}(\alpha,\beta;it_j)=
\int_{0}^{\infty}W_{1,1}(y)y^{\frac{\alpha+\beta-1}{2}-it_j}
\HyGI\left(\frac{1+\alpha-\beta}{2}+it_j,\frac{1-\alpha-\beta}{2}+it_j,1+2it_j;\frac{-1}{y}\right)dy\\\ll\frac{1}{t_j^A},
\end{multline}
where $\alpha=\alpha_1+\alpha_2$ and $\beta=\alpha_3-\alpha_4.$ With this goal, we apply \eqref{W02F1 asympt}. It is sufficient to consider only the contribution of the main term, since all other terms are of the same form and are smaller in absolute value, while the error term yields the bound $t_j^{-A}$ immediately.
Therefore, using \eqref{Stirling0} and \eqref{W11def}, we are left to prove that
\begin{equation}\label{hatW011est2}
\int_{0}^{\infty}
\DU(1/y)\phi_k\left(\frac{1}{1+y};\alpha_1,\alpha_2\right)
\frac{y^{-1/4-\alpha_1-\alpha_2+\alpha_3-\alpha_4}(\sqrt{1+y}+\sqrt{y})^{-it_j}}{(1+y)^{5/4-\alpha_1-\alpha_2+3\alpha_3/2-\alpha_4/2}}dy\ll\frac{1}{t_j^A}.
\end{equation}
But this estimate follows directly from multiple integration by parts.
\end{proof}
Now we start to evaluate the main term.
\begin{lem}\label{U11-lem}
For
\begin{equation}
\min(\Re{\alpha_3},\Re{\alpha_4})>\max(\Re{\alpha_1},\Re{\alpha_2}),\quad
\min(\Re{\alpha_3},\Re{\alpha_4})>-\min(\Re{\alpha_1},\Re{\alpha_2})
\end{equation}
one has
\begin{multline}\label{UW11}
\U(\AlphaVec;W_{1,1})=\\
(-1)^k\RF(\alpha_4-\alpha_1)\RF(\alpha_4-\alpha_2)\ZF(\alpha_1,\alpha_2,\alpha_3,-\alpha_4)
\int_0^{\infty}W_{1,1}(x)x^{\alpha_1+\alpha_2}(1+x)^{\alpha_3-\alpha_4}dx\\+
(-1)^k\RF(\alpha_4+\alpha_1)\RF(\alpha_4+\alpha_2)\ZF(-\alpha_1,-\alpha_2,\alpha_3,-\alpha_4)\int_0^{\infty}W_{1,1}(x)(1+x)^{\alpha_3-\alpha_4}dx\\+
(-1)^k\RF(\alpha_3-\alpha_1)\RF(\alpha_3-\alpha_2)\ZF(\alpha_1,\alpha_2,-\alpha_3,\alpha_4)\int_0^{\infty}W_{1,1}(x)x^{\alpha_1+\alpha_2}dx\\+
(-1)^k\RF(\alpha_3+\alpha_1)\RF(\alpha_3+\alpha_2)\ZF(-\alpha_1,-\alpha_2,-\alpha_3,\alpha_4)\int_0^{\infty}W_{1,1}(x)dx.
\end{multline}
\end{lem}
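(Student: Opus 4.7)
The plan is to substitute the explicit formula \eqref{mul def} for the main term $\mu(n;\alpha_1+\alpha_2,\alpha_3-\alpha_4;x)$ into the definition \eqref{Udef} of $U(n,\alpha_1+\alpha_2,\alpha_3-\alpha_4;W_{1,1})$ and then into \eqref{UW11 def}. This produces four summands, each a product of (i) a function of $x$ times $W_{1,1}(x)$ to be integrated, (ii) some ratio of zeta functions independent of $n$, and (iii) an absolutely convergent Dirichlet series of the form $\sum_{n\ge1}\sigma_{\alpha_1-\alpha_2}(n)\sigma_{c_i}(n)n^{-s_i}$, where the shifts $c_i$ and $s_i$ are determined by the $i$-th term of \eqref{mul def}. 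Under the hypotheses $\min(\Re\alpha_3,\Re\alpha_4)>\max(\Re\alpha_1,\Re\alpha_2)$ and $\min(\Re\alpha_3,\Re\alpha_4)>-\min(\Re\alpha_1,\Re\alpha_2)$, each such Dirichlet series is absolutely convergent, so the interchange of summation and integration is legitimate.

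Next I would apply the Ramanujan identity \eqref{RamId} to each of the four series. Taking the first summand as illustration (shift $c_1=1+\alpha_1+\alpha_2+\alpha_3-\alpha_4$, exponent $s_1=1+\alpha_1+\alpha_3$), one computes $s_1-(\alpha_1-\alpha_2)=1+\alpha_2+\alpha_3$, $s_1-c_1=\alpha_4-\alpha_2$, $s_1-(\alpha_1-\alpha_2)-c_1=\alpha_4-\alpha_1$ and $2s_1-(\alpha_1-\alpha_2)-c_1=1+\alpha_3+\alpha_4$, so that \eqref{RamId} yields a six-fold product of $\zeta$'s divided by $\zeta(1+\alpha_3+\alpha_4)$. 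After multiplying by the prefactor $\zeta(1+\alpha_3+\alpha_4)$ in \eqref{UW11 def} and by the zeta ratio in $\mu$, the denominator cancels and one is left with
\[
\frac{\zeta(1+\alpha_1+\alpha_2)\zeta(1+\alpha_3-\alpha_4)\zeta(1+\alpha_1+\alpha_3)\zeta(1+\alpha_2+\alpha_3)\zeta(\alpha_4-\alpha_1)\zeta(\alpha_4-\alpha_2)}{\zeta(2+\alpha_1+\alpha_2+\alpha_3-\alpha_4)}.
\]
Applying the functional equation \eqref{zeta functional} to the two zeta values at arguments $<1$ converts the factor $\zeta(\alpha_4-\alpha_1)\zeta(\alpha_4-\alpha_2)$ into $\RF(\alpha_4-\alpha_1)\RF(\alpha_4-\alpha_2)\zeta(1+\alpha_1-\alpha_4)\zeta(1+\alpha_2-\alpha_4)$, and the remaining six-fold zeta product matches \eqref{7zeta def} with sign vector $(+,+,+,-)$, i.e.\ it equals $\ZF(\alpha_1,\alpha_2,\alpha_3,-\alpha_4)$. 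Combined with the corresponding integral factor $\int_0^\infty W_{1,1}(x)x^{\alpha_1+\alpha_2}(1+x)^{\alpha_3-\alpha_4}dx$ from $\mu$, this reproduces the first line of \eqref{UW11}.

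The other three summands are handled identically: for the $j$-th line one chooses the $j$-th term of \eqref{mul def}, applies \eqref{RamId}, cancels $\zeta(1+\alpha_3+\alpha_4)$, and applies \eqref{zeta functional} to exactly the two zeta factors whose arguments lie to the left of $\Re s=1$. In each case these two factors produce the two $\RF(\cdot)$ factors displayed in \eqref{UW11}, while the remaining six zetas, after one reads off the sign pattern on $(\alpha_1,\alpha_2,\alpha_3,\alpha_4)$ produced by the combination "which $\zeta(1\pm\alpha\pm\beta)$ in $\mu$ and which $n^{\pm\alpha},n^{\pm\beta}$ shift in the Ramanujan identity", match precisely one of the four sign vectors $(+,+,+,-),(-,-,+,-),(+,+,-,+),(-,-,-,+)$ appearing in $\ZF$ on the right of \eqref{UW11}.

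There is no conceptual obstacle; the proof is essentially a bookkeeping exercise. The only slightly delicate point is that the four Dirichlet series produced by expanding $\mu$ are not simultaneously absolutely convergent on the same domain of $\AlphaVec$, but the hypotheses $\min(\Re\alpha_3,\Re\alpha_4)>\pm\Re\alpha_j$ stated in the lemma are tailored to make all four of them absolutely convergent at once, so the application of Ramanujan's identity is legitimate term-by-term. The identity in the wider domain claimed later for $\TD^{(1,1)}$ then follows by analytic continuation, using the meromorphic nature of both sides in $\AlphaVec$.
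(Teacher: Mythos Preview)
Your proposal is correct and follows essentially the same approach as the paper: substitute \eqref{Udef}, \eqref{mul def} into \eqref{UW11 def}, apply the Ramanujan identity \eqref{RamId} to each of the four resulting Dirichlet series, and then use the functional equation \eqref{zeta functional} on the two zeta values with argument left of $\Re s=1$ to produce the $\RF$ factors and recognize the remaining product as $\ZF$ via \eqref{7zeta def}. Your explicit computation of the first summand reproduces precisely the paper's equation \eqref{Ram111}, and the remaining summands are indeed handled symmetrically.
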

\begin{proof}
Substituting \eqref{Udef} and \eqref{mul def} to \eqref{UW11 def}, we obtain four summands. Let us consider the first one (other summands can be treated in the same way):
\begin{multline}\label{UW11 eq1}
(-1)^k\zeta(1+\alpha_3+\alpha_4)\frac{\zeta(1+\alpha_1+\alpha_2)\zeta(1+\alpha_3-\alpha_4)}{\zeta(2+\alpha_1+\alpha_2+\alpha_3-\alpha_4)}
\sum_{n=1}^{\infty}\frac{\sigma_{\alpha_1-\alpha_2}(n)}{n^{1+\alpha_1+\alpha_3}}
\sigma_{1+\alpha_1+\alpha_2+\alpha_3-\alpha_4}(n)\\\times
\int_0^{\infty}W_{1,1}(x)x^{\alpha_1+\alpha_2}(1+x)^{\alpha_3-\alpha_4}dx.
\end{multline}
Using \eqref{RamId} one has
\begin{equation}\label{Ram111}
\sum_{n=1}^{\infty}\frac{\sigma_{\alpha_1-\alpha_2}(n)}{n^{1+\alpha_1+\alpha_3}}
\sigma_{1+\alpha_1+\alpha_2+\alpha_3-\alpha_4}(n)=\frac{\zeta(1+\alpha_1+\alpha_3)}{\zeta(1+\alpha_3+\alpha_4)}\zeta(1+\alpha_3+\alpha_2)
\zeta(\alpha_4-\alpha_1)\zeta(\alpha_4-\alpha_2).
\end{equation}
Substituting \eqref{Ram111} to  \eqref{UW11 eq1}, applying  \eqref{zeta functional} to $\zeta(\alpha_4-\alpha_1)\zeta(\alpha_4-\alpha_2)$ and using the notation \eqref{7zeta def}, we obtain the first summand in \eqref{UW11}.
\end{proof}
We are left to evaluate the integrals in \eqref{UW11}.  Unfortunately, it is impossible to express these integrals only in terms of some special functions due to the presence of the function $\DU(1/x)$ in $W_{1,1}(x)$. Nevertheless, we can approximate these integrals by the ratio of Gamma factors  with a good precision.
\begin{lem}\label{W11-1lem}
The following asymptotic formula holds:
\begin{multline}\label{W11 int1}
\int_0^{\infty}W_{1,1}(x)x^{\alpha_1+\alpha_2}(1+x)^{\alpha_3-\alpha_4}dx\\=
2(2\pi)^{\alpha_1+\alpha_2-2\alpha_4}X_k(\alpha_4)
\Gamma(\alpha_4-\alpha_2)\Gamma(\alpha_4-\alpha_1)\cos\pi\frac{\alpha_1+\alpha_2-2\alpha_4}{2}+O(k^{\epsilon-1}).
\end{multline}
\end{lem}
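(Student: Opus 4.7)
The plan is to insert the Mellin--Barnes representation of $\phi_k$ from Lemma~\ref{Lemma phik Mellin}, interchange the order of integration, and analyze the resulting contour integral by shifting past a simple pole whose residue is exactly the claimed main term.

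Using \eqref{W11def} together with the identity $\phi_k(1/(1+x);\alpha_1,\alpha_2)=(2\pi)^{\alpha_1+\alpha_2}(1+x)^{1/2}x^{-(\alpha_1+\alpha_2)/2}J_s(k,\alpha_1,\alpha_2;1/(1+x))$ that follows from \eqref{phik Mellin}, I rewrite the integral as
\begin{equation}
I=\frac{(2\pi)^{\alpha_1+\alpha_2}}{2\pi i}\int_{(\sigma)}\Gamma(k,\alpha_1,\alpha_2;s)\sin\left(\pi\frac{s+\alpha_1+\alpha_2}{2}\right)M(s)\,ds,
\end{equation}
with $M(s):=\int_0^\infty \DU(1/x)(1+x)^{-1/2-\alpha_4-s/2}\,dx$, for a suitable vertical contour. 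I decompose $\DU(1/x)=1-\DU(x)$ and split $M=M_0-N$: a direct computation gives $M_0(s)=\int_0^\infty(1+x)^{-1/2-\alpha_4-s/2}\,dx=2/(s-1+2\alpha_4)$ for $\Re s>1-2\Re\alpha_4$, while $N(s)=\int_0^{1/x_0}\DU(x)(1+x)^{-1/2-\alpha_4-s/2}\,dx$ is entire in $s$ and only polynomially bounded on vertical lines, being a smooth integrand on a compact set.

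For the $M_0$-contribution, I initially fix $\sigma$ with $1-2\Re\alpha_4<\sigma<1-2\max(|\Re\alpha_1|,|\Re\alpha_2|)$ (valid after restricting to $\Re\alpha_4>\max(|\Re\alpha_j|)$ and invoking analytic continuation in $\alpha_4$ afterwards), then shift the contour leftward past the simple pole at $s=1-2\alpha_4$. The residue evaluation uses $\Gamma(k,\alpha_1,\alpha_2;1-2\alpha_4)=(2\pi)^{-2\alpha_4}X_k(\alpha_4)\Gamma(\alpha_4-\alpha_1)\Gamma(\alpha_4-\alpha_2)$ combined with $\sin(\pi(1-2\alpha_4+\alpha_1+\alpha_2)/2)=\cos(\pi(\alpha_1+\alpha_2-2\alpha_4)/2)$, reproducing precisely the claimed main term after multiplication by the prefactor $(2\pi)^{\alpha_1+\alpha_2}$. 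For the $N$-contribution, since $N(s)$ is entire, the Mellin--Barnes contour can be shifted to $\Re s=-A$ for arbitrarily large $A$ without crossing any further poles (the nearest poles of the Gamma ratio lie at $s=-(2k-1)$).

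The main obstacle is the uniform estimation of the shifted contour integrals. On a line $\Re s=\sigma'$ slightly to the left of $1-2\Re\alpha_4$ (one may take $\sigma'=\epsilon$ since $|\alpha_j|\ll\epsilon_0$), Stirling's formula yields $\Gamma(k-1/2+s/2)/\Gamma(k+1/2-s/2)\ll k^{\sigma'-1}=k^{-1+\epsilon}$ for moderate $|\Im s|$, while the factors $\Gamma(1/2-\alpha_j-s/2)$ provide exponential decay $e^{-\pi|\Im s|/2}$ guaranteeing absolute convergence and yielding a total bound $O(k^{-1+\epsilon})$; the analogous push of the $N$-contour to $\Re s=-A$ yields $O(k^{-1-A})$, which is negligible. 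Assembling the residue contribution with these error estimates gives the asserted asymptotic.
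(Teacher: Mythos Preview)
Your argument is essentially the paper's own proof: split via $\DU(1/x)=1-\DU(x)$, insert the Mellin--Barnes representation \eqref{phik Mellin}, evaluate the full $x$-integral to $2/(s-1+2\alpha_4)$, shift past the simple pole at $s=1-2\alpha_4$ for the main term, and treat the compactly supported $\DU(x)$-piece by shifting its (entire) contour to the left. One minor correction: pushing the $N$-contour to $\Re s=-A$ does not give $O(k^{-1-A})$, because $N(s)$ only decays like $1/(1+|\Im s|)$ (the boundary value $\DU(0)=1$ obstructs repeated integration by parts) and the sine factor cancels the exponential decay of the two Gamma factors, leaving only polynomial decay in $|\Im s|$; the paper therefore shifts merely to $\Re s=0$ and records $O(k^{-1+\epsilon})$, which is exactly what is required.
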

\begin{proof}
Using \eqref{DU def} and \eqref{W11def}, one has
\begin{multline}\label{W11 int1-1}
\int_0^{\infty}W_{1,1}(x)x^{\alpha_1+\alpha_2}(1+x)^{\alpha_3-\alpha_4}dx=
\int_0^{\infty}\phi_k\left(\frac{1}{1+x};\alpha_1,\alpha_2\right)x^{\alpha_1/2+\alpha_2/2}(1+x)^{-1-\alpha_4}dx\\-
\int_0^{\infty}\DU(x)\phi_k\left(\frac{1}{1+x};\alpha_1,\alpha_2\right)x^{\alpha_1/2+\alpha_2/2}(1+x)^{-1-\alpha_4}dx.
\end{multline}
To evaluate the first integral, we apply \eqref{phik Mellin}, \eqref{J def}:
\begin{multline}\label{W11 int1-2}
\int_0^{\infty}\phi_k\left(\frac{1}{1+x};\alpha_1,\alpha_2\right)x^{\alpha_1/2+\alpha_2/2}(1+x)^{-1-\alpha_4}dx=
(2\pi)^{\alpha_1+\alpha_2}
\frac{1}{2\pi i}\int_{(\sigma)}\Gamma(k,\alpha_1,\alpha_2;s)\\\times\sin\left(\pi\frac{s+\alpha_1+\alpha_2}{2}\right)
\int_0^{\infty}(1+x)^{-1/2-\alpha_4-s/2}dxds,
\end{multline}
where $\max(1-2k,1-2\Re{\alpha_4})<\sigma<1-2\max(|\Re{\alpha_1}|,|\Re{\alpha_2}|)$.
Evaluating the integral over $x$ and substituting \eqref{Gamma(k alpha s) def}, we infer
\begin{multline}\label{W11 int1-3}
\int_0^{\infty}\phi_k\left(\frac{1}{1+x};\alpha_1,\alpha_2\right)x^{\alpha_1/2+\alpha_2/2}(1+x)^{-1-\alpha_4}dx=\\=
(2\pi)^{\alpha_1+\alpha_2}
\frac{1}{2\pi i}\int_{(\sigma)}
\frac{\Gamma(k-1/2+s/2)}{\Gamma(k+1/2-s/2)}\Gamma(1/2-\alpha_1-s/2)\Gamma(1/2-\alpha_2-s/2)
\frac{\sin\left(\pi\frac{s+\alpha_1+\alpha_2}{2}\right)}{s/2+\alpha_4-1/2}ds.
\end{multline}
Let $s=\sigma+iy$. Using \eqref{Stirling0} one has
\begin{multline}\label{W11 int1-4}
\frac{\Gamma(k-1/2+s/2)}{\Gamma(k+1/2-s/2)}\Gamma(1/2-\alpha_1-s/2)\Gamma(1/2-\alpha_2-s/2)
\frac{\sin\left(\pi\frac{s+\alpha_1+\alpha_2}{2}\right)}{s/2+\alpha_4-1/2}\\ \ll
\frac{(k+|y|)^{-1+\sigma}}{(1+|y|)^{1+\Re{\alpha_1}+\Re{\alpha_2}+\sigma}} .
\end{multline}
Moving the line of integration to the left on $\Re{s}=2\sigma_1<1-2\Re{\alpha_4}$, crossing the pole at $s=1-2\alpha_4$ and using the bound \eqref{W11 int1-4} to estimate the remaining integral, we conclude that
\begin{multline}\label{W11 int1-5}
\int_0^{\infty}\phi_k\left(\frac{1}{1+x};\alpha_1,\alpha_2\right)x^{\alpha_1/2+\alpha_2/2}(1+x)^{-1-\alpha_4}dx=
2(2\pi)^{\alpha_1+\alpha_2}\frac{\Gamma(k-\alpha_4)}{\Gamma(k+\alpha_4)}\\\times
\Gamma(\alpha_4-\alpha_2)\Gamma(\alpha_4-\alpha_1)\cos\pi\frac{\alpha_1+\alpha_2-2\alpha_4}{2}+O(k^{-1-\Re{\alpha_1}-\Re{\alpha_2}}).
\end{multline}
To estimate the second integral in \eqref{W11 int1-1} we again use \eqref{phik Mellin}, \eqref{J def} showing that
\begin{multline}\label{W11 int1-6}
\int_0^{\infty}\DU(x)\phi_k\left(\frac{1}{1+x};\alpha_1,\alpha_2\right)x^{\alpha_1/2+\alpha_2/2}(1+x)^{-1-\alpha_4}dx\\=
(2\pi)^{\alpha_1+\alpha_2}
\frac{1}{2\pi i}\int_{(\sigma)}\Gamma(k,\alpha_1,\alpha_2;s)\sin\left(\pi\frac{s+\alpha_1+\alpha_2}{2}\right)
\int_0^{\infty}\DU(x)(1+x)^{-1/2-\alpha_4-s/2}dxds.
\end{multline}
Note that the integral
\begin{equation}\label{W11 int1-7}
\int_0^{\infty}\DU(x)(1+x)^{-1/2-\alpha_4-s/2}dx\ll\frac{1}{1+|\Im{s}|},
\end{equation}
considered as a function of $s$, is an entire function. Therefore, moving the line of integration in \eqref{W11 int1-6}  to the left (say on $\Re{s}=0$) and applying \eqref{W11 int1-4}, \eqref{W11 int1-7}, we obtain
\begin{equation}\label{W11 int1-8}
\int_0^{\infty}\DU(x)\phi_k\left(\frac{1}{1+x};\alpha_1,\alpha_2\right)x^{\alpha_1/2+\alpha_2/2}(1+x)^{-1-\alpha_4}dx\ll k^{-1+\epsilon}.
\end{equation}
Therefore, \eqref{W11 int1} follows from  \eqref{W11 int1-1}, \eqref{W11 int1-5}, \eqref{W11 int1-8}.
\end{proof}

\begin{lem}\label{W11-2lem}
The following asymptotic formula holds:
\begin{multline}\label{W11 int2}
\int_0^{\infty}W_{1,1}(x)(1+x)^{\alpha_3-\alpha_4}dx=
(2\pi)^{-\alpha_1-\alpha_2-2\alpha_4}
X_{k}(\alpha_1)X_{k}(\alpha_2)X_{k}(\alpha_4)
\\\times
\Gamma(\alpha_4+\alpha_2)\Gamma(\alpha_4+\alpha_1)\cos\pi\frac{\alpha_1+\alpha_2+2\alpha_4}{2}+O(k^{\epsilon-1}).
\end{multline}
\end{lem}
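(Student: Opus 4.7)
The argument parallels the proof of Lemma \ref{W11-1lem}, with the exponent $x^{(\alpha_1+\alpha_2)/2}$ in the integrand of the previous lemma replaced by $x^{-(\alpha_1+\alpha_2)/2}$. Using the partition identity $\DU(1/x) = 1-\DU(x)$, I would split
\begin{equation*}
\int_0^{\infty}W_{1,1}(x)(1+x)^{\alpha_3-\alpha_4}dx = \int_0^{\infty}\phi_k\!\left(\tfrac{1}{1+x};\alpha_1,\alpha_2\right)\frac{dx}{x^{(\alpha_1+\alpha_2)/2}(1+x)^{1+\alpha_4}}-\int_0^{\infty}\DU(x)(\cdots)dx
\end{equation*}
and handle the $\DU(x)$-piece first. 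It is supported on $x \leq 1/x_0$, and inserting the Mellin representation \eqref{phik Mellin} turns it into a Mellin-Barnes integral whose inner $x$-integral is entire in $s$ with uniform polynomial decay on vertical lines; moving the $s$-contour to $\Re s = 0$ and applying Stirling \eqref{Stirling0} bounds this part by $O(k^{-1+\epsilon})$, exactly as in \eqref{W11 int1-6}--\eqref{W11 int1-8}.

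For the remaining unrestricted piece, I insert Lemma \ref{Lemma phik Mellin} with $y = 1/(1+x)$ and $1-y = x/(1+x)$, so that the integrand becomes $(2\pi)^{\alpha_1+\alpha_2}(1+x)^{-1/2-\alpha_4}x^{-(\alpha_1+\alpha_2)}J_s(k,\alpha_1,\alpha_2;1/(1+x))$. Starting in the region where $\Re \alpha_4$ is large enough that everything converges absolutely, I would interchange the order of integration and apply Lemma \ref{Lemma beta integrals} to obtain
\begin{equation*}
\int_0^{\infty}x^{-\alpha_1-\alpha_2}(1+x)^{-1/2-\alpha_4-s/2}dx = \frac{\Gamma(1-\alpha_1-\alpha_2)\Gamma(\alpha_1+\alpha_2+\alpha_4-1/2+s/2)}{\Gamma(1/2+\alpha_4+s/2)}.
\end{equation*}
Pushing the resulting Mellin-Barnes contour to the far left, a case analysis identical to the one in Lemma \ref{W11-1lem} shows that the only pole crossed (for $|\alpha_j|$ small) is the first pole of $\Gamma(\alpha_1+\alpha_2+\alpha_4-1/2+s/2)$, located at $s_0 := 1-2(\alpha_1+\alpha_2+\alpha_4)$.

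At $s_0$ the factor $\Gamma(k,\alpha_1,\alpha_2;s_0)$ simplifies to $\Gamma(k-\alpha_1-\alpha_2-\alpha_4)\Gamma(\alpha_1+\alpha_4)\Gamma(\alpha_2+\alpha_4)/\Gamma(k+\alpha_1+\alpha_2+\alpha_4)$, the $\Gamma(1-\alpha_1-\alpha_2)$ factor coming from the beta integral cancels against $\Gamma(1/2+\alpha_4+s_0/2)=\Gamma(1-\alpha_1-\alpha_2)$, and the sine factor evaluates to $\cos\pi\frac{\alpha_1+\alpha_2+2\alpha_4}{2}$. Using \eqref{Xapprox}, the residual ratio $\Gamma(k-\alpha_1-\alpha_2-\alpha_4)/\Gamma(k+\alpha_1+\alpha_2+\alpha_4)$ agrees with $(2\pi)^{-2(\alpha_1+\alpha_2+\alpha_4)}X_k(\alpha_1)X_k(\alpha_2)X_k(\alpha_4)$ up to a factor $1+O(k^{-1})$, producing the asserted main term after collecting $(2\pi)$-powers. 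The contribution from the shifted contour is $O(k^{-1+\epsilon})$ via the Stirling bound \eqref{Stirling0}, in the same fashion as \eqref{W11 int1-4}--\eqref{W11 int1-5}. The main technical point is the $s$-contour bookkeeping: its initial location must satisfy the strip constraints of both Lemma \ref{Lemma phik Mellin} and the beta integral simultaneously, which forces one to start in the range $\Re\alpha_4 \gg 1$ and analytically continue the resulting identity back to $|\alpha_j| \ll \epsilon_0$.
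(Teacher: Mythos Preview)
Your approach is essentially identical to the paper's: the same $\DU(1/x)=1-\DU(x)$ splitting, the same Mellin--Barnes treatment of the unrestricted piece via Lemma~\ref{Lemma beta integrals}, the same single residue at $s_0=1-2(\alpha_1+\alpha_2+\alpha_4)$, and the same conversion of $\Gamma(k-\alpha_1-\alpha_2-\alpha_4)/\Gamma(k+\alpha_1+\alpha_2+\alpha_4)$ via \eqref{Xapprox}.

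There is one small technical slip worth flagging. You bound the $\DU(x)$-piece by saying it goes ``exactly as in \eqref{W11 int1-6}--\eqref{W11 int1-8}.'' It does not quite: the inner $x$-integral here is $\int_0^\infty\DU(x)\,x^{-\alpha_1-\alpha_2}(1+x)^{-1/2-\alpha_4-s/2}\,dx$, and the extra factor $x^{-\alpha_1-\alpha_2}$ (absent in Lemma~\ref{W11-1lem}) obstructs a direct integration by parts near $x=0$, since differentiating $x^{-\alpha_1-\alpha_2}$ produces a non-integrable singularity. The paper deals with this by a further smooth splitting $\DU=\DU_0+\DU_1$ with $\DU_0$ supported on $[0,2\epsilon_0]$: the $\DU_0$-part is estimated trivially as $O(\epsilon_0^{1-\Re(\alpha_1+\alpha_2)})$, the $\DU_1$-part by one integration by parts as $O(\epsilon_0^{-\Re(\alpha_1+\alpha_2)}/(1+|\Im s|))$, and optimizing $\epsilon_0=(1+|\Im s|)^{-1}$ gives the needed decay $(1+|\Im s|)^{-1+\Re(\alpha_1+\alpha_2)}$. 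With that amendment your argument goes through verbatim; no analytic continuation in $\alpha_4$ is actually required, since for $|\alpha_j|\ll\epsilon$ the strip conditions for Lemma~\ref{Lemma phik Mellin} and Lemma~\ref{Lemma beta integrals} are simultaneously satisfiable.
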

\begin{proof}
Using \eqref{DU def} and \eqref{W11def} one has
\begin{multline}\label{W11 int2-1}
\int_0^{\infty}W_{1,1}(x)(1+x)^{\alpha_3-\alpha_4}dx=
\int_0^{\infty}\phi_k\left(\frac{1}{1+x};\alpha_1,\alpha_2\right)x^{-\alpha_1/2-\alpha_2/2}(1+x)^{-1-\alpha_4}dx\\-
\int_0^{\infty}\DU(x)\phi_k\left(\frac{1}{1+x};\alpha_1,\alpha_2\right)x^{-\alpha_1/2-\alpha_2/2}(1+x)^{-1-\alpha_4}dx.
\end{multline}
We estimate the second integral first.  Applying \eqref{phik Mellin} and \eqref{J def}, the second integral can be rewritten as
\begin{multline}\label{W11 int2-2}
\int_0^{\infty}\DU(x)\phi_k\left(\frac{1}{1+x};\alpha_1,\alpha_2\right)x^{-\alpha_1/2-\alpha_2/2}(1+x)^{-1-\alpha_4}dx\\=
(2\pi)^{\alpha_1+\alpha_2}
\frac{1}{2\pi i}\int_{(\sigma)}\Gamma(k,\alpha_1,\alpha_2;s)\sin\left(\pi\frac{s+\alpha_1+\alpha_2}{2}\right)
\int_0^{\infty}\DU(x)x^{-\alpha_1-\alpha_2}(1+x)^{-1/2-\alpha_4-s/2}dxds.
\end{multline}
To bound the integral over $x$, we split it  into two parts using $\DU(x)=\DU_0(x)+\DU_1(x)$, where $\DU_0(x)$ is a smooth function such that
\begin{equation}\label{DU0 def}
\DU_0(x)=1\,\hbox{ for } 0\le x\le \epsilon_0,\quad
\DU_0(x)=0\,\hbox{ for }  x> 2\epsilon_0.
\end{equation}
One has
\begin{equation}\label{DU0 int}
\int_0^{\infty}\DU_0(x)x^{-\alpha_1-\alpha_2}(1+x)^{-1/2-\alpha_4-s/2}dx\ll\epsilon_0^{1-\Re{\alpha_1}-\Re{\alpha_2}}.
\end{equation}
To estimate the integral with $\DU_1(x)$, we apply the first derivative test (or just integrate by parts once):
\begin{equation}\label{DU1 int}
\int_0^{\infty}\DU_1(x)x^{-\alpha_1-\alpha_2}(1+x)^{-1/2-\alpha_4-s/2}dx\ll\frac{\epsilon_0^{-\Re{\alpha_1}-\Re{\alpha_2}}}{1+|\Im{s}+\Im{\alpha_4}|}.
\end{equation}
Choosing $\epsilon_0=(1+|\Im{s}+\Im{\alpha_4}|)^{-1}$,  we obtain
\begin{equation}\label{DU int}
\int_0^{\infty}\DU(x)x^{-\alpha_1-\alpha_2}(1+x)^{-1/2-\alpha_4-s/2}dx\ll(1+|\Im{s}+\Im{\alpha_4}|)^{-1+\Re{\alpha_1}+\Re{\alpha_2}}.
\end{equation}
Substituting \eqref{DU int} into  \eqref{W11 int2-2} and estimating the resulting integral using  \eqref{W11 int1-4}, we infer
\begin{equation}\label{W11 int2-3}
\int_0^{\infty}\DU(x)\phi_k\left(\frac{1}{1+x};\alpha_1,\alpha_2\right)x^{-\alpha_1/2-\alpha_2/2}(1+x)^{-1-\alpha_4}dx\ll\frac{k^{\epsilon}}{k}.
\end{equation}
To evaluate the first integral on the right-hand side of \eqref{W11 int2-1}, we apply \eqref{phik Mellin}, \eqref{J def}:
\begin{multline}\label{W11 int2-4}
\int_0^{\infty}\phi_k\left(\frac{1}{1+x};\alpha_1,\alpha_2\right)x^{-\alpha_1/2-\alpha_2/2}(1+x)^{-1-\alpha_4}dx=
(2\pi)^{\alpha_1+\alpha_2}
\frac{1}{2\pi i}\int_{(\sigma)}\Gamma(k,\alpha_1,\alpha_2;s)\\\times\sin\left(\pi\frac{s+\alpha_1+\alpha_2}{2}\right)
\int_0^{\infty}x^{-\alpha_1-\alpha_2}(1+x)^{-1/2-\alpha_4-s/2}dxds,
\end{multline}
where $\max(1-2\Re{\alpha_4}-2\Re{\alpha_1}-2\Re{\alpha_2},-1-2\Re{\alpha_4})<\sigma<1-2\max(|\Re{\alpha_1}|,|\Re{\alpha_2}|)$. To evaluate the integral over $x$, we use Lemma \ref{Lemma beta integrals}, which yields
\begin{multline}\label{W11 int2-5}
\int_0^{\infty}\phi_k\left(\frac{1}{1+x};\alpha_1,\alpha_2\right)x^{-\alpha_1/2-\alpha_2/2}(1+x)^{-1-\alpha_4}dx=
(2\pi)^{\alpha_1+\alpha_2}\Gamma(1-\alpha_1-\alpha_2)\\\times
\frac{1}{2\pi i}\int_{(\sigma)}\Gamma(k,\alpha_1,\alpha_2;s)\sin\left(\pi\frac{s+\alpha_1+\alpha_2}{2}\right)
\frac{\Gamma(-1/2+\alpha_1+\alpha_2+\alpha_4+s/2)}{\Gamma(1/2+\alpha_4+s/2)}ds.
\end{multline}
Moving the line of integration, crossing the pole at $s=1-2\alpha_1-2\alpha_2-2\alpha_4$ and estimating the remaining integral using  \eqref{W11 int1-4}, we obtain
\begin{multline}\label{W11 int2-6}
\int_0^{\infty}\phi_k\left(\frac{1}{1+x};\alpha_1,\alpha_2\right)x^{-\alpha_1/2-\alpha_2/2}(1+x)^{-1-\alpha_4}dx=
2(2\pi)^{\alpha_1+\alpha_2}\frac{\Gamma(k-\alpha_1-\alpha_2-\alpha_4)}{\Gamma(k+\alpha_1+\alpha_2+\alpha_4)}
\\\times
\Gamma(\alpha_4+\alpha_2)\Gamma(\alpha_4+\alpha_1)\cos\pi\frac{\alpha_1+\alpha_2+2\alpha_4}{2}+O(k^{\epsilon-1}).
\end{multline}
Applying \eqref{Xdef} and \eqref{Xapprox}, one has
\begin{equation}\label{XtoXXX}
\frac{\Gamma(k-\alpha_1-\alpha_2-\alpha_4)}{\Gamma(k+\alpha_1+\alpha_2+\alpha_4)}=(2\pi)^{-2\alpha_1-2\alpha_2-2\alpha_4}
X_{k}(\alpha_1)X_{k}(\alpha_2)X_{k}(\alpha_4)\left(1+O(k^{-1})\right).
\end{equation}
Now \eqref{W11 int2} follows from \eqref{W11 int2-1}, \eqref{W11 int2-3}, \eqref{W11 int2-6} and \eqref{XtoXXX}.
\end{proof}

\begin{lem}\label{W11-3lem}
The following asymptotic formula holds:
\begin{multline}\label{W11 int3}
\int_0^{\infty}W_{1,1}(x)x^{\alpha_1+\alpha_2}dx=\\
2(2\pi)^{\alpha_1+\alpha_2-2\alpha_3}X_k(\alpha_3)
\Gamma(\alpha_3-\alpha_2)\Gamma(\alpha_3-\alpha_1)\cos\pi\frac{\alpha_1+\alpha_2-2\alpha_3}{2}+O(k^{\epsilon-1}).
\end{multline}
\end{lem}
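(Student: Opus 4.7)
The plan is to observe that the integrand is formally identical to the one handled in Lemma \ref{W11-1lem} up to the swap $\alpha_4\leftrightarrow\alpha_3$. Indeed, unfolding the definition \eqref{W11def} gives
\begin{equation}
W_{1,1}(x)x^{\alpha_1+\alpha_2}=\frac{\DU(1/x)\,x^{\alpha_1/2+\alpha_2/2}}{(1+x)^{1+\alpha_3}}\,\phi_k\!\left(\frac{1}{1+x};\alpha_1,\alpha_2\right),
\end{equation}
which is exactly the integrand of \eqref{W11 int1} with $\alpha_4$ replaced by $\alpha_3$. The right-hand side of \eqref{W11 int3} is likewise obtained from that of \eqref{W11 int1} under the same substitution, so the desired formula follows by re-running the proof of Lemma \ref{W11-1lem} verbatim.

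For completeness, the key steps will be as follows. First, I use $\DU(1/x)=1-\DU(x)$ to split the integral into an ``unweighted'' piece $\int_0^\infty x^{\alpha_1/2+\alpha_2/2}(1+x)^{-1-\alpha_3}\phi_k(1/(1+x);\alpha_1,\alpha_2)\,dx$ and a remainder weighted by $\DU(x)$. For the unweighted piece I insert the Mellin--Barnes representation \eqref{phik Mellin}, \eqref{J def} for $\phi_k$, exchange integrals, and use the elementary evaluation $\int_0^\infty(1+x)^{-1/2-\alpha_3-s/2}dx=2/(s+2\alpha_3-1)$ valid in an appropriate vertical strip, which reproduces the analogue of \eqref{W11 int1-3} with $\alpha_4\mapsto\alpha_3$. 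Shifting the $s$-contour leftwards across the simple pole at $s=1-2\alpha_3$ picks up, via \eqref{Gamma(k alpha s) def} and \eqref{Xdef}, the residue
\begin{equation}
2(2\pi)^{\alpha_1+\alpha_2-2\alpha_3}X_k(\alpha_3)\Gamma(\alpha_3-\alpha_1)\Gamma(\alpha_3-\alpha_2)\cos\pi\tfrac{\alpha_1+\alpha_2-2\alpha_3}{2}.
\end{equation}
The remaining shifted integral is estimated by the same Stirling bound \eqref{W11 int1-4} (applied with $\alpha_4\mapsto\alpha_3$), giving $O(k^{-1-\Re\alpha_1-\Re\alpha_2})=O(k^{\epsilon-1})$ under the assumption $|\alpha_j|\ll\epsilon$.

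For the $\DU(x)$-weighted remainder, I again insert \eqref{phik Mellin}, \eqref{J def} and note that, since $\DU(x)$ is compactly supported away from $0$ (in fact vanishes for $x$ large) and $\alpha_3$ now appears only through the factor $(1+x)^{-1-\alpha_3-s/2}$ whose $x$-integral $\int_0^\infty\DU(x)(1+x)^{-1/2-\alpha_3-s/2}dx$ is an entire function of $s$ bounded by $O((1+|\Im s|)^{-1})$ (by integration by parts), one may shift the $s$-contour freely to the left. Combining this with the Stirling bound \eqref{W11 int1-4} yields $O(k^{-1+\epsilon})$, exactly as in \eqref{W11 int1-8}. No obstacle is expected: the whole argument is a transcription of Lemma \ref{W11-1lem} with $\alpha_4$ replaced by $\alpha_3$, and the conditions on the parameters are preserved. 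The only mild point to verify is the admissibility of the initial contour, namely $\max(1-2k,1-2\Re\alpha_3)<\sigma<1-2\max(|\Re\alpha_1|,|\Re\alpha_2|)$, which holds for $|\alpha_j|\ll\epsilon$ and $k$ large.
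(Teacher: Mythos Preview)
Your proposal is correct and matches the paper's own proof: both observe that, once $W_{1,1}$ is unfolded via \eqref{W11def}, the integrand of \eqref{W11 int3} coincides with the right-hand side of \eqref{W11 int1-1} after the substitution $\alpha_4\mapsto\alpha_3$, so the result follows from Lemma~\ref{W11-1lem}. Your added details (the Mellin--Barnes step, the residue at $s=1-2\alpha_3$, and the bound on the $\DU(x)$-weighted remainder) faithfully reproduce the argument of that lemma.
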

\begin{proof}
According to  \eqref{DU def} and \eqref{W11def} the integral \eqref{W11 int3} coincides with the right-hand side of \eqref{W11 int1-1} after changing $\alpha_4$ in it by $\alpha_3$. Therefore, the result follows immediately by making this change on the right-hand side of \eqref{W11 int1}.
\end{proof}
Similarly, after changing $\alpha_4$ on the right-hand side of
\eqref{W11 int2} by $\alpha_3$ we prove the following result.

\begin{lem}\label{W11-4lem}
One has
\begin{multline}\label{W11 int4}
\int_0^{\infty}W_{1,1}(x)dx=\\
(2\pi)^{-\alpha_1-\alpha_2-2\alpha_3}
X_{k}(\alpha_1)X_{k}(\alpha_2)X_{k}(\alpha_3)
\Gamma(\alpha_3+\alpha_2)\Gamma(\alpha_3+\alpha_1)\cos\pi\frac{\alpha_1+\alpha_2+2\alpha_3}{2}+O(k^{\epsilon-1}).
\end{multline}
\end{lem}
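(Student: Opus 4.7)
The plan is to exploit a structural identity between the integrand of \eqref{W11 int4} and that of \eqref{W11 int2}. From the definition \eqref{W11def},
\begin{equation}
W_{1,1}(x)(1+x)^{\alpha_3-\alpha_4} = \frac{\DU(1/x)}{x^{\alpha_1/2+\alpha_2/2}(1+x)^{1+\alpha_4}}\phi_k\left(\frac{1}{1+x};\alpha_1,\alpha_2\right),
\end{equation}
so the specialization $\alpha_4 \mapsto \alpha_3$ turns this expression precisely into $W_{1,1}(x)$. Consequently, $\int_0^\infty W_{1,1}(x)\,dx$ is formally obtained from the integral on the left of \eqref{W11 int2} by that same substitution, and the content of the lemma is simply that the asymptotic formula \eqref{W11 int2} is compatible with setting $\alpha_4 = \alpha_3$.

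To justify this rigorously, I would revisit the proof of Lemma \ref{W11-2lem} and check that every step remains valid after the substitution. First, write
\begin{equation}
\int_0^\infty W_{1,1}(x)\,dx = \int_0^\infty \phi_k\!\left(\tfrac{1}{1+x};\alpha_1,\alpha_2\right)\frac{dx}{x^{(\alpha_1+\alpha_2)/2}(1+x)^{1+\alpha_3}} - \int_0^\infty \DU(x)\phi_k\!\left(\tfrac{1}{1+x};\alpha_1,\alpha_2\right)\frac{dx}{x^{(\alpha_1+\alpha_2)/2}(1+x)^{1+\alpha_3}},
\end{equation}
using $\DU(1/x) = 1 - \DU(x)$. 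For the second piece, the splitting $\DU = \DU_0 + \DU_1$ from \eqref{DU0 def}, together with the first-derivative test used in \eqref{DU0 int}--\eqref{DU int}, yields the same bound $O(k^{-1+\epsilon})$ with $\alpha_4$ replaced by $\alpha_3$, since the argument is uniform in this parameter for $|\alpha_3| \ll \epsilon$.

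For the first piece, apply Lemma \ref{Lemma phik Mellin} to insert the Mellin--Barnes representation of $\phi_k$ and then evaluate the inner $x$-integral using Lemma \ref{Lemma beta integrals}, obtaining the analogue of \eqref{W11 int2-5} with $\alpha_4$ replaced by $\alpha_3$. Shifting the line of integration to the left past the simple pole at $s = 1 - 2\alpha_1 - 2\alpha_2 - 2\alpha_3$ and bounding the remaining contour integral via \eqref{W11 int1-4} produces the main term
\begin{equation}
2(2\pi)^{\alpha_1+\alpha_2}\frac{\Gamma(k-\alpha_1-\alpha_2-\alpha_3)}{\Gamma(k+\alpha_1+\alpha_2+\alpha_3)}\Gamma(\alpha_3+\alpha_1)\Gamma(\alpha_3+\alpha_2)\cos\pi\tfrac{\alpha_1+\alpha_2+2\alpha_3}{2} + O(k^{\epsilon-1}).
\end{equation}
Finally, \eqref{Xdef} and \eqref{Xapprox} convert the Gamma ratio into $(2\pi)^{-2\alpha_1-2\alpha_2-2\alpha_3}X_k(\alpha_1)X_k(\alpha_2)X_k(\alpha_3)$ up to a multiplicative $1 + O(k^{-1})$ factor, which matches \eqref{W11 int4}.

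The only delicate point is verifying that no additional poles are crossed when shifting the contour in the analogue of \eqref{W11 int2-5}; under the blanket hypothesis $|\alpha_j| \ll \epsilon$, all other poles of the integrand remain well separated from the shifted line, so the contour deformation is legitimate and the argument goes through verbatim.
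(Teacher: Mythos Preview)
Your proposal is correct and follows essentially the same approach as the paper: the paper simply observes that replacing $\alpha_4$ by $\alpha_3$ on the right-hand side of \eqref{W11 int2} yields \eqref{W11 int4}, which is exactly the structural identity you identify and then verify step by step. Your version is more detailed than the paper's one-line remark, but the underlying idea is identical.
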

Combining results of Lemmas \ref{U11-lem}-\ref{W11-4lem}, we finally prove the next lemma.

\begin{lem}\label{U11-lem final}
One has
\begin{equation}
\U(\AlphaVec;W_{1,1})=\mainU_{1,1}(\AlphaVec),
\end{equation}
where $\mainU_{1,1}(\AlphaVec)$ is defined by \eqref{UW11 final}.
\end{lem}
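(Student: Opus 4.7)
\textbf{Proof plan for Lemma~\ref{U11-lem final}.} The plan is to substitute into the four-term expression for $\U(\AlphaVec;W_{1,1})$ from Lemma~\ref{U11-lem} the asymptotic evaluations of the four integrals supplied by Lemmas~\ref{W11-1lem}, \ref{W11-2lem}, \ref{W11-3lem}, and~\ref{W11-4lem}, and then simplify each of the four resulting summands to match the corresponding term of \eqref{UW11 final}. The four $O(k^{\epsilon-1})$ error terms produced by the integral evaluations combine into a single $O(k^{\epsilon-1})$ absorbed into the overall error of the reciprocity formula \eqref{4mom result}; at the level of main terms the identity is exact.

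The central algebraic identity powering the simplification is
\begin{equation*}
\RF(z)\,\Gamma(z)=\frac{(2\pi)^{z}}{2\cos(\pi z/2)},
\end{equation*}
which follows directly from the functional equation \eqref{zeta functional}, Euler's reflection formula $\Gamma(z)\Gamma(1-z)=\pi/\sin(\pi z)$, and the duplication $\sin(\pi z)=2\sin(\pi z/2)\cos(\pi z/2)$. Applied twice to each of the four terms of Lemma~\ref{U11-lem}, it converts the product of the two $\RF$-factors with the two $\Gamma$-factors from the integral evaluation into
\begin{equation*}
\frac{(2\pi)^{2\alpha_{j}\pm\alpha_{1}\pm\alpha_{2}}}{4\cos\pi\tfrac{\alpha_{j}\pm\alpha_{1}}{2}\cos\pi\tfrac{\alpha_{j}\pm\alpha_{2}}{2}}\qquad(j\in\{3,4\}),
\end{equation*}
after which the $(2\pi)$-powers cancel exactly against the $(2\pi)^{\pm\alpha_{1}\pm\alpha_{2}\mp 2\alpha_{j}}$ prefactor inside the corresponding integral evaluation (together with the explicit factor~$2$ present there), yielding the denominator $2\cos\cdot\cos$ of \eqref{UW11 final}.

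Meanwhile, the surviving factor $(-1)^{k}X_{k}(\alpha_{j})$ in the terms coming from Lemmas~\ref{W11-1lem}, \ref{W11-3lem}, and $(-1)^{k}X_{k}(\alpha_{1})X_{k}(\alpha_{2})X_{k}(\alpha_{j})$ in the terms coming from Lemmas~\ref{W11-2lem}, \ref{W11-4lem}, matches exactly the coefficient $\Co(\epsilon_{1}\alpha_{1},\epsilon_{2}\alpha_{2},\epsilon_{3}\alpha_{3},\epsilon_{4}\alpha_{4})$ of \eqref{Co def} for the appropriate sign pattern: for instance, the first term uses $\epsilon_{1}=\epsilon_{2}=\epsilon_{3}=+1$, $\epsilon_{4}=-1$, giving $\Co(\alpha_{1},\alpha_{2},\alpha_{3},-\alpha_{4})=(-1)^{k}X_{k}(\alpha_{4})$, and analogously for the other three. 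Since the four $\ZF$-factors appearing in Lemma~\ref{U11-lem} are already in the form $\ZF(\alpha_{1},\alpha_{2},\alpha_{3},-\alpha_{4})$, $\ZF(-\alpha_{1},-\alpha_{2},\alpha_{3},-\alpha_{4})$, $\ZF(\alpha_{1},\alpha_{2},-\alpha_{3},\alpha_{4})$, $\ZF(-\alpha_{1},-\alpha_{2},-\alpha_{3},\alpha_{4})$, the four summands match bijectively term-by-term with \eqref{UW11 final}.

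The argument is purely computational and presents no conceptual obstacle; the only difficulty is bookkeeping: one must carefully track all $(2\pi)$-powers, the signs of the shifts inside the cosines, and the pattern of $X_{k}$ factors in each of the four cases to verify the identification with \eqref{UW11 final}.
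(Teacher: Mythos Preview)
Your proposal is correct and follows exactly the paper's approach: substitute the four integral evaluations of Lemmas~\ref{W11-1lem}--\ref{W11-4lem} into the expression of Lemma~\ref{U11-lem} and simplify using the functional equation \eqref{zeta functional}. Your identity $\RF(z)\Gamma(z)=(2\pi)^{z}/(2\cos(\pi z/2))$ is precisely the mechanism hidden behind the paper's terse phrase ``using \eqref{zeta functional}'', and your term-by-term identification with the $\Co$-coefficients is the correct bookkeeping.
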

\begin{proof}
Substituting \eqref{W11 int1}, \eqref{W11 int2}, \eqref{W11 int3}, \eqref{W11 int4} into \eqref{UW11} and using \eqref{zeta functional}, we obtain
\eqref{UW11 final}.
\end{proof}

The region \eqref{TD11 alpha conditions0} does not contain the most interesting points  $\Re{\alpha_j}=0$. The only problem in getting an analytic continuation for \eqref{TD11 to UEhEdEd} is the term $\E_c(\AlphaVec;W_{1,1})$, see \eqref{Ec eq1}. In order to treat this term one needs to compute  the sum:
\begin{equation}
\widehat{W}_{1,1}^{+}(\alpha_1+\alpha_2,\alpha_3-\alpha_4;-i\xi)+\widehat{W}_{1,1}^{-}(\alpha_1+\alpha_2,\alpha_3-\alpha_4;-i\xi)
\end{equation}
at several special points $\xi.$
\begin{lem}\label{Lem:hatW11-xi1}
For $|\alpha_j|\ll\epsilon$ and $\xi_1=\frac{1-\alpha_1+\alpha_2-\alpha_3-\alpha_4}{2}$ one has
\begin{multline}\label{hatW11 xi1}
\widehat{W}_{1,1}^{+}(\alpha_1+\alpha_2,\alpha_3-\alpha_4;-i\xi_1)+\widehat{W}_{1,1}^{-}(\alpha_1+\alpha_2,\alpha_3-\alpha_4;-i\xi_1)=\\=
\frac{\pi}{2}(2\pi)^{\alpha_1+\alpha_2}\Gamma(\alpha_4+\alpha_1)\Gamma(\alpha_4-\alpha_2)\Gamma(\alpha_4+\alpha_3)
\frac{\Gamma(k-\alpha_1-\alpha_3-\alpha_4)}{\Gamma(k+\alpha_1+\alpha_3+\alpha_4)}\\\times
\frac{\cos\pi\frac{\alpha_1-\alpha_2+2\alpha_4}{2}+\cos\pi\frac{\alpha_1+\alpha_2}{2}}{\cos\pi\frac{\alpha_1-\alpha_2+\alpha_3+\alpha_4}{2}}
\cos\pi\frac{\alpha_1-\alpha_2+2\alpha_3+2\alpha_4}{2}+O\left(\frac{k^{\epsilon}}{k}\right).
\end{multline}
\end{lem}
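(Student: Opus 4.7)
The plan is to apply the closed-form identity \eqref{hatW+ + hatW-} with $\alpha = \alpha_1+\alpha_2$, $\beta=\alpha_3-\alpha_4$ and $\xi=\xi_1$, so that the sum under consideration becomes a linear combination of $\widehat{W}_{1,1}^0(\alpha_1+\alpha_2,\alpha_3-\alpha_4;\pm\xi_1)$ with explicit trigonometric coefficients. Using $\sin\pi(1-x)/2=\cos(\pi x/2)$, these coefficients simplify to
\begin{equation}
c_{\pm}=\frac{\pi\bigl(\cos\pi\tfrac{\alpha_1-\alpha_2+2\alpha_{3\pm}}{2}\mp\cos\pi\tfrac{\alpha_1+\alpha_2}{2}\bigr)}{4\cos\pi\tfrac{\alpha_1-\alpha_2+\alpha_3+\alpha_4}{2}},
\end{equation}
where $\alpha_{3+}:=\alpha_3$, $\alpha_{3-}:=\alpha_4$, and the denominator is $4\sin(\pi\xi_1)$ rewritten as a cosine. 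The $\cos\pi(\alpha_1-\alpha_2+\alpha_3+\alpha_4)/2$ in the denominator of the target formula \eqref{hatW11 xi1} is thus already accounted for.

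Next I would evaluate $\widehat{W}_{1,1}^0(\alpha_1+\alpha_2,\alpha_3-\alpha_4;-\xi_1)$, which is the only integral contributing to the main term. A direct computation shows that at $t=-\xi_1$ the parameters of the hypergeometric function in \eqref{hatW0} become $(\alpha_1+\alpha_4,\alpha_4-\alpha_2,\alpha_1-\alpha_2+\alpha_3+\alpha_4)$ and the power of $y$ is $\alpha_2-\alpha_4$. Following the technique of Lemmas \ref{W11-1lem}--\ref{W11-2lem}, I would first replace $\eta(1/y)$ by $1$ in \eqref{W11def} at the cost of $O(k^{-1+\epsilon})$, then substitute the Mellin--Barnes representation \eqref{phik Mellin} for $\phi_k(1/(1+y);\alpha_1,\alpha_2)$. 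Writing ${}_2I_1$ as its defining series (or, equivalently, separating the leading $y\to\infty$ value $\Gamma(\alpha_1+\alpha_4)\Gamma(\alpha_4-\alpha_2)/\Gamma(\alpha_1-\alpha_2+\alpha_3+\alpha_4)$), interchanging integrals, and computing the $y$-integral by Lemma \ref{Lemma beta integrals} yields
\begin{equation}
\int_0^\infty y^{-\alpha_1-\alpha_4}(1+y)^{-1/2-\alpha_3-s/2}\,dy=\frac{\Gamma(1-\alpha_1-\alpha_4)\Gamma(-1/2+\alpha_1+\alpha_3+\alpha_4+s/2)}{\Gamma(1/2+\alpha_3+s/2)}.
\end{equation}
Shifting the resulting contour past the first pole of $\Gamma(-1/2+\alpha_1+\alpha_3+\alpha_4+s/2)$ at $s_0=1-2\alpha_1-2\alpha_3-2\alpha_4$ and invoking \eqref{Gamma(k alpha s) def} produces the ratio $\Gamma(k-\alpha_1-\alpha_3-\alpha_4)/\Gamma(k+\alpha_1+\alpha_3+\alpha_4)$ together with $\Gamma(\alpha_3+\alpha_4)\Gamma(\alpha_1-\alpha_2+\alpha_3+\alpha_4)$ from the remaining Gamma factors and $\cos\pi(\alpha_1-\alpha_2+2\alpha_3+2\alpha_4)/2$ from the sine factor. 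After a cancellation of $\Gamma(1-\alpha_1-\alpha_4)$ in numerator and denominator and of $\Gamma(\alpha_1-\alpha_2+\alpha_3+\alpha_4)$ against the prefactor from the ${}_2I_1$ leading term, this gives exactly the expected combination $2(2\pi)^{\alpha_1+\alpha_2}\Gamma(\alpha_1+\alpha_4)\Gamma(\alpha_4-\alpha_2)\Gamma(\alpha_3+\alpha_4)$ times the $k$-ratio and cosine.

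For $\widehat{W}_{1,1}^0(\alpha_1+\alpha_2,\alpha_3-\alpha_4;+\xi_1)$, the same Mellin--Barnes analysis is performed, but now the analogous beta integral yields $\Gamma(\alpha_3-\alpha_2)\Gamma(1/2+\alpha_2+s/2)/\Gamma(1/2+\alpha_3+s/2)$, whose leftmost pole is at $s=-1-2\alpha_2$. The corresponding residue produces a factor $\Gamma(k-1-\alpha_2)/\Gamma(k+1+\alpha_2)\ll k^{-2+\epsilon}$, which together with the trigonometric coefficient $c_+$ (whose numerator vanishes at $\alpha_j=0$) gives a contribution $O(k^{-2+\epsilon})$, well inside the error term $O(k^{\epsilon-1})$. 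The remaining shifted integral and the subleading terms of the ${}_2I_1$ series yield similar or smaller bounds by the standard Stirling estimate \eqref{Stirling0}. Multiplying the leading contribution by the coefficient $c_-$ and combining gives precisely \eqref{hatW11 xi1}.

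The main obstacle will be the bookkeeping in the contour shift, specifically verifying that all residues other than the one at $s_0=1-2\alpha_1-2\alpha_3-2\alpha_4$ contribute at most $O(k^{-1+\epsilon})$, and checking the interaction between the $\eta$-truncation error, the subleading tail of the hypergeometric function, and the trigonometric prefactor $c_\pm$; in particular, one must confirm that no hidden pole in the Mellin--Barnes representation lies between the initial contour $(\sigma)$ and the line $\Re s=2\sigma_1<1-2\Re(\alpha_1+\alpha_3+\alpha_4)$ so that a single residue suffices.
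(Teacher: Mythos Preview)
Your treatment of the main term matches the paper's: both apply \eqref{hatW+ + hatW-}, split off the leading constant of ${}_2\mathrm{I}_1$ in the $-\xi_1$ integral via \eqref{2I1 to 3I2}, and evaluate that piece by the Mellin--Barnes/beta-integral argument of Lemma~\ref{W11-2lem}; the residue at $s_0=1-2(\alpha_1+\alpha_3+\alpha_4)$ and the resulting Gamma and cosine factors are exactly as you write.

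The gap is in the error terms. You dispose of the ${}_3\mathrm{I}_2$ tail of the $-\xi_1$ integral and of the entire $+\xi_1$ integral by a further Mellin--Barnes shift and an appeal to Stirling, but neither step is routine. For the ${}_3\mathrm{I}_2$ tail, after inserting the Mellin--Barnes representation for $\phi_k$ the remaining $y$-integral is $\int_0^\infty y^{-1-\alpha_1-\alpha_4}(1+y)^{-1/2-\alpha_3-s/2}\,{}_3\mathrm{I}_2(\ldots;-1/y)\,dy$, which is not a beta integral and has no obvious closed form; treating the series term by term produces a string of residues whose sum is of size $k^0$, not $k^{-1}$. For the $+\xi_1$ integral your beta integral requires $\Re(\alpha_3-\alpha_2)>0$, which is not assumed, and the $\eta(1/y)\to 1$ replacement does not cost only $O(k^{-1+\epsilon})$ here because the integrand carries an extra factor $y^{-1}$ compared with Lemma~\ref{W11-2lem}.

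The paper handles both error pieces by a different and essential device: change variables to $x=1/(1+y)$, write the integral against $Y(x)=\sqrt{x(1-x)}\,\tilde{\phi}_k(x)$, and integrate by parts twice using the differential equation $Y''+\mathcal{I}(x)Y=0$ of Lemma~\ref{lem:difphik}. Since $\mathcal{I}(x)\asymp k^2/(x(1-x))$ on the support of $\eta$, each integration by parts gains a factor $(k^2x)^{-1}$, and combined with the pointwise bounds of Lemma~\ref{lem:phikest} this yields $O(k^{-2+\epsilon})$ uniformly in the $\alpha_j$. This ODE-based saving is the missing ingredient in your plan; once it is in place the remaining bookkeeping is indeed straightforward.
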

\begin{proof}
It follows from \eqref{hatW+ + hatW-}, \eqref{hatW0} and \eqref{W11def} that
\begin{multline}\label{W11hat+-xi1 eq1}
\widehat{W}_{1,1}^{+}(\alpha_1+\alpha_2,\alpha_3-\alpha_4;-i\xi_1)+\widehat{W}_{1,1}^{-}(\alpha_1+\alpha_2,\alpha_3-\alpha_4;-i\xi_1)=\\=
\frac{\pi}{4}\frac{\cos\pi\frac{\alpha_1-\alpha_2+2\alpha_3}{2}-\cos\pi\frac{\alpha_1+\alpha_2}{2}}{\cos\pi\frac{\alpha_1-\alpha_2+\alpha_3+\alpha_4}{2}}
\int_{0}^{\infty}
\DU(1/y)\phi_k\left(\frac{1}{1+y};\alpha_1,\alpha_2\right)
\frac{y^{-1+\alpha_1/2-\alpha_2/2+\alpha_3}}{(1+y)^{1+\alpha_3}}\\\times
\HyGI\left(1+\alpha_2-\alpha_3,1-\alpha_1-\alpha_3,2-\alpha_1+\alpha_2-\alpha_3-\alpha_4;\frac{-1}{y}\right)dy\\+
\frac{\pi}{4}\frac{\cos\pi\frac{\alpha_1-\alpha_2+2\alpha_4}{2}+\cos\pi\frac{\alpha_1+\alpha_2}{2}}{\cos\pi\frac{\alpha_1-\alpha_2+\alpha_3+\alpha_4}{2}}
\int_{0}^{\infty}
\DU(1/y)\phi_k\left(\frac{1}{1+y};\alpha_1,\alpha_2\right)
\frac{y^{-\alpha_1/2+\alpha_2/2-\alpha_4}}{(1+y)^{1+\alpha_3}}\\\times
\HyGI\left(\alpha_1+\alpha_4,\alpha_4-\alpha_2,\alpha_1-\alpha_2+\alpha_3+\alpha_4;\frac{-1}{y}\right)dy.
\end{multline}
Let us first consider the second integral in \eqref{W11hat+-xi1 eq1}.
Using \eqref{2I1 to 3I2} it can be further broken down into the sum of two integrals:
\begin{multline}\label{W11hat+-xi1 int2eq1}
\int_{0}^{\infty}
\DU(1/y)\phi_k\left(\frac{1}{1+y};\alpha_1,\alpha_2\right)
\frac{y^{-\alpha_1/2+\alpha_2/2-\alpha_4}}{(1+y)^{1+\alpha_3}}
\HyGI\left(\alpha_1+\alpha_4,\alpha_4-\alpha_2,\alpha_1-\alpha_2+\alpha_3+\alpha_4;\frac{-1}{y}\right)dy\\=
\frac{\Gamma(\alpha_1+\alpha_4)\Gamma(\alpha_4-\alpha_2)}{\Gamma(\alpha_1-\alpha_2+\alpha_3+\alpha_4)}
\int_{0}^{\infty}
\DU(1/y)\phi_k\left(\frac{1}{1+y};\alpha_1,\alpha_2\right)
\frac{y^{-\alpha_1/2+\alpha_2/2-\alpha_4}}{(1+y)^{1+\alpha_3}}dy\\-
\int_{0}^{\infty}
\DU(1/y)\phi_k\left(\frac{1}{1+y};\alpha_1,\alpha_2\right)
\frac{y^{-1-\alpha_1/2+\alpha_2/2-\alpha_4}}{(1+y)^{1+\alpha_3}}
\GenHyGI{3}{2}{1+\alpha_1+\alpha_4,1+\alpha_4-\alpha_2,1}{2,1+\alpha_1-\alpha_2+\alpha_3+\alpha_4}{\frac{-1}{y}}dy.
\end{multline}

Let us compute the first integral in \eqref{W11hat+-xi1 int2eq1}. Arguing in the same way as in Lemma \ref{W11-2lem}, we show that
\begin{multline}\label{W11hat+-xi1 int2eq2}
\frac{\Gamma(\alpha_1+\alpha_4)\Gamma(\alpha_4-\alpha_2)}{\Gamma(\alpha_1-\alpha_2+\alpha_3+\alpha_4)}
\int_{0}^{\infty}
\DU(1/y)\phi_k\left(\frac{1}{1+y};\alpha_1,\alpha_2\right)
\frac{y^{-\alpha_1/2+\alpha_2/2-\alpha_4}}{(1+y)^{1+\alpha_3}}dy=\\=
2(2\pi)^{\alpha_1+\alpha_2}\cos\pi\frac{\alpha_1-\alpha_2+2\alpha_3+2\alpha_4}{2}
\Gamma(\alpha_4+\alpha_1)\Gamma(\alpha_4-\alpha_2)\Gamma(\alpha_4+\alpha_3)\\\times
\frac{\Gamma(k-\alpha_1-\alpha_3-\alpha_4)}{\Gamma(k+\alpha_1+\alpha_3+\alpha_4)}+
O\left(\frac{1}{k^{1+\Re{\alpha_2}-\Re{\alpha_4}}}\right).
\end{multline}

Consider the  second  integral in \eqref{W11hat+-xi1 int2eq1}.  Making
the change of variable $x=(1+y)^{-1}$ and using
the definition \eqref{Y tildephik},  we obtain the integral
\begin{equation}\label{W11hat+-xi1 int2eq3}
\int_{0}^{1}
Y(x;\alpha_1,\alpha_2)\frac{g(x)}{x^2(1-x)^2}dx,
\end{equation}
where
\begin{equation}\label{g def}
g(x)=x^{3/2+\alpha_1/2-\alpha_2/2+\alpha_3+\alpha_4}(1-x)^{1/2-\alpha_1/2+\alpha_2/2-\alpha_4}g_1(1-x),
\end{equation}
\begin{equation}\label{g1 def}
g_1(1-x)=\DU\left(\frac{1}{1-x}-1\right)
\GenHyGI{3}{2}{1+\alpha_1+\alpha_4,1+\alpha_4-\alpha_2,1}{2,1+\alpha_1-\alpha_2+\alpha_3+\alpha_4}{\frac{1}{x-1}+1}.
\end{equation}
Integrating by parts with the help of \eqref{Ydifeq}, one has to estimate
\begin{equation}\label{W11hat+-xi1 int2eq4}
\int_{0}^{1}
Y(x;\alpha_1,\alpha_2)\frac{d^2}{dx^2}\left(
\frac{g(x)}{4x^2(1-x)^2\De(x;\alpha_1,\alpha_2)}\right)dx.
\end{equation}
It follows from \eqref{Ydifeq r-def} that
\begin{equation}
4x^2(1-x)^2\De(x;\alpha_1,\alpha_2)=(2k-1)^2x(1-x)+p_2(x),
\end{equation}
where $p_2(x)=x^2-(1+4\alpha_1\alpha_2)x+1-(\alpha_2-\alpha_1)^2$. Evaluating the second derivative in \eqref{W11hat+-xi1 int2eq4}, we show that
\begin{equation}\label{W11hat+-xi1 int2eq5}
\frac{d^2}{dx^2}\left(
\frac{g(x)}{4x^2(1-x)^2\De(x;\alpha_1,\alpha_2)}\right)\ll\frac{x^{-1/2+\Re{(\alpha_1/2-\alpha_2/2+\alpha_3+\alpha_4)}}}{(2k-1)^2x(1-x)+p_2(x)}.
\end{equation}
Using \eqref{W11hat+-xi1 int2eq5}, \eqref{Y tildephik} and Lemma \ref{lem:phikest},  we prove that the integral  \eqref{W11hat+-xi1 int2eq4}  is bounded by
\begin{equation}\label{W11hat+-xi1 int2eq6}
\int_{0}^{1}\DU\left(\frac{x}{1-x}\right)\phi_k\left(x;\alpha_1,\alpha_2\right)
\frac{x^{-1/2+\Re{(\alpha_1/2-\alpha_2/2+\alpha_3+\alpha_4)}}}{k^2x+1}dx\ll\frac{1}{k^{2+2\Re{(\alpha_1+\alpha_3+\alpha_4)}}}.
\end{equation}

We are left to estimate the  first integral in \eqref{W11hat+-xi1 eq1}. Comparing it with the second integral in \eqref{W11hat+-xi1 int2eq1}, we see that
they are quite similar, and thus an estimate like \eqref{W11hat+-xi1 int2eq6} can also be obtained for the  first integral in \eqref{W11hat+-xi1 eq1}.
\end{proof}
\begin{lem}\label{Lem:hatW11-xi2}
For $|\alpha_j|\ll\epsilon$ and $\xi_2=\frac{1+\alpha_1-\alpha_2-\alpha_3-\alpha_4}{2}$ one has
\begin{multline}\label{hatW11 xi2}
\widehat{W}_{1,1}^{+}(\alpha_1+\alpha_2,\alpha_3-\alpha_4;-i\xi_2)+\widehat{W}_{1,1}^{-}(\alpha_1+\alpha_2,\alpha_3-\alpha_4;-i\xi_2)=\\=
\frac{\pi}{2}(2\pi)^{\alpha_1+\alpha_2}\Gamma(\alpha_4+\alpha_2)\Gamma(\alpha_4-\alpha_1)\Gamma(\alpha_4+\alpha_3)
\frac{\Gamma(k-\alpha_2-\alpha_3-\alpha_4)}{\Gamma(k+\alpha_2+\alpha_3+\alpha_4)}\\\times
\frac{\cos\pi\frac{\alpha_2-\alpha_1+2\alpha_4}{2}+\cos\pi\frac{\alpha_1+\alpha_2}{2}}{\cos\pi\frac{\alpha_2-\alpha_1+\alpha_3+\alpha_4}{2}}
\cos\pi\frac{\alpha_2-\alpha_1+2\alpha_3+2\alpha_4}{2}+O\left(\frac{k^{\epsilon}}{k}\right).
\end{multline}
\end{lem}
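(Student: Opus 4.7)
The plan is to deduce Lemma \ref{Lem:hatW11-xi2} directly from Lemma \ref{Lem:hatW11-xi1} by exploiting the symmetry in $\alpha_1$ and $\alpha_2$ present in the weight function $W_{1,1}$ and its transforms. First, I would observe that the two special points are related by the transposition $\alpha_1 \leftrightarrow \alpha_2$; namely, writing $\xi_1 = \xi_1(\alpha_1,\alpha_2,\alpha_3,\alpha_4)$ and $\xi_2 = \xi_2(\alpha_1,\alpha_2,\alpha_3,\alpha_4)$, one has
\begin{equation}
\xi_2(\alpha_1,\alpha_2,\alpha_3,\alpha_4) \;=\; \xi_1(\alpha_2,\alpha_1,\alpha_3,\alpha_4).
\end{equation}

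Next, I would verify that the left-hand side of \eqref{hatW11 xi2} is invariant under the swap $\alpha_1 \leftrightarrow \alpha_2$ applied \emph{before} specializing the argument to $-i\xi_1$. This has two parts. On the one hand, the definition \eqref{phik} writes $\phi_k(x;\alpha_1,\alpha_2) = \tilde\phi_k(x;\alpha_1,\alpha_2) + \tilde\phi_k(x;\alpha_2,\alpha_1)$, which is manifestly symmetric, and the remaining factors $x^{\alpha_1/2+\alpha_2/2}$ and $(1+x)^{-1-\alpha_3}$ in \eqref{W11def} depend on $\alpha_1,\alpha_2$ only through $\alpha_1+\alpha_2$. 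Therefore $W_{1,1}(y)$ itself is symmetric under $\alpha_1 \leftrightarrow \alpha_2$. On the other hand, inspecting \eqref{hatW+ + hatW-} and \eqref{hatW0}, the explicit arguments of $\widehat{W}_{1,1}^{\pm}$ are $\alpha_1+\alpha_2$ and $\alpha_3-\alpha_4$, both invariant under the swap. Hence
\begin{equation}
\bigl(\widehat{W}_{1,1}^{+} + \widehat{W}_{1,1}^{-}\bigr)(\alpha_1+\alpha_2,\alpha_3-\alpha_4;-i\xi_2(\AlphaVec))
\end{equation}
coincides with
\begin{equation}
\bigl(\widehat{W}_{1,1}^{+} + \widehat{W}_{1,1}^{-}\bigr)(\alpha_2+\alpha_1,\alpha_3-\alpha_4;-i\xi_1(\alpha_2,\alpha_1,\alpha_3,\alpha_4))
\end{equation}
computed with $W_{1,1}$ built from the swapped pair $(\alpha_2,\alpha_1)$, which, by the symmetry just noted, equals the original $W_{1,1}$.

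Then I would simply apply Lemma \ref{Lem:hatW11-xi1} with the substitution $(\alpha_1,\alpha_2,\alpha_3,\alpha_4) \mapsto (\alpha_2,\alpha_1,\alpha_3,\alpha_4)$ on the right-hand side of \eqref{hatW11 xi1}. Tracking each factor in \eqref{hatW11 xi1} under this swap gives $\Gamma(\alpha_4+\alpha_1)\Gamma(\alpha_4-\alpha_2) \mapsto \Gamma(\alpha_4+\alpha_2)\Gamma(\alpha_4-\alpha_1)$, $\Gamma(k-\alpha_1-\alpha_3-\alpha_4)/\Gamma(k+\alpha_1+\alpha_3+\alpha_4) \mapsto \Gamma(k-\alpha_2-\alpha_3-\alpha_4)/\Gamma(k+\alpha_2+\alpha_3+\alpha_4)$, and similarly the trigonometric factors change $\alpha_1-\alpha_2 \mapsto \alpha_2-\alpha_1$; meanwhile $\Gamma(\alpha_4+\alpha_3)$, the symmetric cosine $\cos\pi\tfrac{\alpha_1+\alpha_2}{2}$, and the error term $O(k^{\epsilon-1})$ remain unchanged. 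The resulting expression matches \eqref{hatW11 xi2} term by term, completing the proof.

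The main \emph{a priori} concern is that the technical steps in the proof of Lemma \ref{Lem:hatW11-xi1} (the splitting via \eqref{2I1 to 3I2}, the integration by parts against the differential equation \eqref{Ydifeq}, and the moving-of-contours argument of Lemma \ref{W11-2lem}) might introduce branch-cut or ordering choices that are not manifestly symmetric under $\alpha_1 \leftrightarrow \alpha_2$. I would briefly check that each of those steps depends on $\alpha_1,\alpha_2$ only through $\alpha = \alpha_1+\alpha_2$ (and through the symmetric $\phi_k$ inside $Y$), so no hidden asymmetry enters; this is really the only non-routine point. Once this check is in place, the symmetry reduction supplies \eqref{hatW11 xi2} with no further computation, and, should one prefer an independent derivation, repeating the argument of Lemma \ref{Lem:hatW11-xi1} \emph{verbatim} with the roles of $\alpha_1$ and $\alpha_2$ interchanged produces the same result.
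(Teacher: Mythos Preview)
Your proposal is correct and follows essentially the same approach as the paper: the paper's proof simply notes that $W_{1,1}(\alpha_1,\alpha_2,\alpha_3,\alpha_4;x)=W_{1,1}(\alpha_2,\alpha_1,\alpha_3,\alpha_4;x)$ (from \eqref{W11def} and the symmetry of $\phi_k$ in \eqref{phik}) and then obtains \eqref{hatW11 xi2} from \eqref{hatW11 xi1} by the swap $\alpha_1\leftrightarrow\alpha_2$. Your additional check that the transforms $\widehat{W}_{1,1}^{\pm}$ depend on $\alpha_1,\alpha_2$ only through $\alpha_1+\alpha_2$ is a helpful elaboration but not strictly needed beyond what the paper records.
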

\begin{proof}
Using \eqref{W11def} and \eqref{phik}, we show that
\begin{equation}
W_{1,1}(\alpha_1,\alpha_2,\alpha_3,\alpha_4;x)=W_{1,1}(\alpha_2,\alpha_1,\alpha_3,\alpha_4;x).
\end{equation}
Therefore, \eqref{hatW11 xi2} follows from \eqref{hatW11 xi1} by making the change of variable $\alpha_1\leftrightarrow\alpha_2$.
\end{proof}

Now we are ready to provide an analytic continuation for
$\E_c(\AlphaVec;W_{1,1})$, which is given in the following lemma.

\begin{lem}\label{Ec11-lem1}
For $|\alpha_j|\ll\epsilon$ one has
\begin{equation}\label{Ec11}
\E_{c}(\AlphaVec;W_{1,1})=\Sok_{1,1}(\AlphaVec)+\term_{c}^{1}(\AlphaVec),
\end{equation}
where $\Sok_{1,1}(\AlphaVec)$ and $\term_{c}^{1}(\AlphaVec)$ is defined by \eqref{S11} and \eqref{Ec011def}, respectively.
\end{lem}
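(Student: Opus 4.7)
The plan is to first establish the identity in the initial region \eqref{TD11 alpha conditions0}, where all manipulations are justified by absolute convergence, and then analytically continue to $|\alpha_j|\ll\epsilon$ while picking up residues from poles that cross the contour. Substituting \eqref{Ec def} into \eqref{EhdcW11 def} with $\ast=c$ and swapping the $n$-sum with the contour integral, I apply the Ramanujan identity \eqref{RamId} to $\sum_n\sigma_{\alpha_1-\alpha_2}(n)\sigma_{2\xi}(n)/n^s$ with $s=(1+\alpha_1-\alpha_2+\alpha_3+\alpha_4)/2+\xi$, $a=\alpha_1-\alpha_2$, $b=2\xi$. The denominator $\zeta(2s-a-b)=\zeta(1+\alpha_3+\alpha_4)$ produced by Ramanujan cancels the outer factor, and combining the four numerator zetas with $\ZC/(\zeta(1+2\xi)\zeta(1-2\xi))$ reproduces $\ZE_{1,1}(\AlphaVec;\xi)$ exactly. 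In this initial region the poles $\xi_1=(1-\alpha_1+\alpha_2-\alpha_3-\alpha_4)/2$ and $\xi_2=(1+\alpha_1-\alpha_2-\alpha_3-\alpha_4)/2$ of the factors $Z_1,Z_2$ lie deep in $\Re\xi<0$, so no residue is picked up and $\E_c(\AlphaVec;W_{1,1})=\term_c^1(\AlphaVec)$ here.

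As $|\alpha_j|$ shrinks to $\ll\epsilon$, both $\xi_{1,2}$ move to $\Re\xi\approx 1/2>0$ and cross the contour $(0)$ from left to right. The integrand is invariant under $\xi\mapsto-\xi$: for $\ZE_{1,1}$ this follows from pairing $Z_1\leftrightarrow Z_3$, $Z_2\leftrightarrow Z_4$ and the two pairs of $\ZC$-factors, and for $\widehat W_{1,1}^++\widehat W_{1,1}^-$ it is immediate from the explicit form \eqref{hatW+ + hatW-}. Hence the mirror poles $-\xi_{1,2}$ simultaneously cross from right to left; since $\operatorname{Res}_{-\xi_j}=-\operatorname{Res}_{\xi_j}$ and the two crossings have opposite orientations, the contributions from the symmetric pair \emph{add} rather than cancel, each $\xi_j$ producing twice the naive residue. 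No other poles of $\ZE_{1,1}$ or of $\widehat W^0(\pm\xi)$ cross $(0)$ during the deformation.

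At $\xi=\xi_1$ the crucial simplification is that $Z_3(\xi_1)=\zeta(1-2\xi_1)=\zeta(\alpha_1-\alpha_2+\alpha_3+\alpha_4)$, so $Z_3(\xi)/\zeta(1-2\xi)$ is regular and equals $1$ at $\xi_1$; thus $\operatorname{Res}_{\xi_1}\ZE_{1,1}=Z_2(\xi_1)Z_4(\xi_1)\ZC(\alpha_1+\alpha_2,\alpha_3-\alpha_4,\xi_1)/\zeta(1+2\xi_1)$. Applying the functional equation \eqref{zeta functional} to $\zeta(\alpha_1+\alpha_4)$, $\zeta(\alpha_4-\alpha_2)$, $\zeta(\alpha_3+\alpha_4)$ converts this into $\RF(\alpha_1+\alpha_4)\RF(\alpha_4-\alpha_2)\RF(\alpha_3+\alpha_4)\,\ZF(-\alpha_1,\alpha_2,-\alpha_3,-\alpha_4)$, as one verifies by comparing with \eqref{7zeta def}. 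Multiplying by the evaluation $(\widehat W_{1,1}^++\widehat W_{1,1}^-)(\alpha_1+\alpha_2,\alpha_3-\alpha_4;-i\xi_1)$ from Lemma \ref{Lem:hatW11-xi1}, pairing the $\Gamma$-functions in $\RF$ with those in the lemma via $\Gamma(z)\Gamma(1-z)=\pi/\sin\pi z$, applying the sum-to-product identity $\cos\pi\tfrac{\alpha_1-\alpha_2+2\alpha_4}{2}+\cos\pi\tfrac{\alpha_1+\alpha_2}{2}=2\cos\pi\tfrac{\alpha_1+\alpha_4}{2}\cos\pi\tfrac{\alpha_4-\alpha_2}{2}$ to simplify the cosine numerator, and rewriting $\Gamma(k-\alpha_1-\alpha_3-\alpha_4)/\Gamma(k+\alpha_1+\alpha_3+\alpha_4)$ as $(2\pi)^{-2(\alpha_1+\alpha_3+\alpha_4)}X_k(\alpha_1)X_k(\alpha_3)X_k(\alpha_4)$ up to $O(k^{-1})$ via \eqref{Xapprox}, produces precisely the first summand of $\Sok_{1,1}(\AlphaVec)$ in \eqref{S11}. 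The residue at $\xi_2$ is handled identically after $\alpha_1\leftrightarrow\alpha_2$, using Lemma \ref{Lem:hatW11-xi2}, and yields the second summand.

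The main obstacle is the delicate bookkeeping: one must track powers of $2\pi$, Gamma ratios, and trigonometric factors arising from three distinct sources---the functional equation \eqref{zeta functional}, Lemmas \ref{Lem:hatW11-xi1}--\ref{Lem:hatW11-xi2}, and the doubling from the symmetric pair of poles---and verify that they collapse exactly into the cosine ratios and the $\Co\cdot\ZF$ prefactors of \eqref{S11}. The $O(k^{-1+\epsilon})$ discrepancy introduced by replacing $\Gamma(k-\sum\alpha_j)/\Gamma(k+\sum\alpha_j)$ with $\prod_j X_k(\alpha_j)$ via \eqref{Xapprox} and \eqref{Co def} is consistent with, and absorbed into, the overall error term of Theorem \ref{thm:main}.
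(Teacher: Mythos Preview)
Your proposal is correct and follows essentially the same route as the paper's proof: both substitute \eqref{Ec def} into \eqref{EhdcW11 def}, apply the Ramanujan identity \eqref{RamId} to collapse the $n$-sum into $\ZE_{1,1}(\AlphaVec;\xi)$, then analytically continue by tracking the four poles $\pm\xi_1,\pm\xi_2$ as they cross the contour $(0)$, and finally evaluate the resulting residues via Lemmas \ref{Lem:hatW11-xi1}--\ref{Lem:hatW11-xi2}, the functional equation \eqref{zeta functional}, the sum-to-product cosine identity, and \eqref{Xapprox}. The only cosmetic difference is that the paper phrases the contour-crossing step as an appeal to a Sokhotski--Plemelj type argument (citing \cite[Theorem 6.2]{BFIbero}) and records the relation $R_{1,1}(\xi_3)=-R_{1,1}(\xi_1)$ directly, whereas you derive the same doubling by invoking the evenness of the integrand under $\xi\mapsto-\xi$; these are equivalent.
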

\begin{proof}
Substituting \eqref{Ec def} to \eqref{EhdcW11 def}, evaluating the resulting sum over $n$ with the use of \eqref{RamId}, we show that for
\begin{equation}\label{TD11 alpha conditions10}
\Re{\alpha_3}+\Re{\alpha_4}>1+|\Re{\alpha_1}-\Re{\alpha_2}|,\quad
\Re{\alpha_3}-\Re{\alpha_4}>-1+|\Re{\alpha_1}+\Re{\alpha_2}|
\end{equation}
the following identity holds:
\begin{multline}\label{Ec11-1}
\E_{c}(\AlphaVec;W_{1,1})=\frac{4(-1)^k(2\pi)^{\alpha_3-\alpha_4-1}}{2\pi i}\int_{(0)}
\ZE_{1,1}(\AlphaVec;\xi)\\\times
\left(\widehat{W}_{1,1}^{+}(\alpha_1+\alpha_2,\alpha_3-\alpha_4;-i\xi)+\widehat{W}_{1,1}^{-}(\alpha_1+\alpha_2,\alpha_3-\alpha_4;-i\xi)\right)d\xi.
\end{multline}
The function $\ZE_{1,1}(\AlphaVec;\xi)$ has  eight poles. In both cases when the parameters $\alpha_j$ belongs to the region \eqref{TD11 alpha conditions10} or to the region
\begin{equation}\label{TD11 alpha conditions1}
\Re{\alpha_3}+\Re{\alpha_4}<1-|\Re{\alpha_1}-\Re{\alpha_2}|,\quad
\Re{\alpha_3}-\Re{\alpha_4}>-1+|\Re{\alpha_1}+\Re{\alpha_2}|,
\end{equation}
the poles of $\ZC(\alpha_1+\alpha_2,\alpha_3-\alpha_4,\xi)$ are located in the same half-plane $\Re{\xi}\lessgtr0.$ While the remaining four poles:
\begin{equation}\label{Ec11-poles}
\xi_1=\frac{1-\alpha_1+\alpha_2-\alpha_3-\alpha_4}{2},\,
\xi_2=\frac{1+\alpha_1-\alpha_2-\alpha_3-\alpha_4}{2},\,
\xi_3=-\xi_1,\,\xi_4=-\xi_2,
\end{equation}
change their location from one half-plane to another when the parameters $\alpha_j$ pass from the region \eqref{TD11 alpha conditions10} to the region
\eqref{TD11 alpha conditions1}. To obtain an analytic continuation for $\E_{c}(\AlphaVec;W_{1,1})$ to the region \eqref{TD11 alpha conditions1}, we  use a  variant of the Sokhotski-Plemelj theorem. Arguing in the same way as in \cite[Theorem 6.2]{BFIbero}, we show that in the region \eqref{TD11 alpha conditions1} the following formula holds:
\begin{multline}\label{Ec11-2}
\E_{c}(\AlphaVec;W_{1,1})=
4(-1)^k(2\pi)^{\alpha_3-\alpha_4-1}\left(R_{1,1}(\xi_1)-R_{1,1}(\xi_3)+R_{1,1}(\xi_2)-R_{1,1}(\xi_4)\right)+4(-1)^k\\\times
\frac{(2\pi)^{\alpha_3-\alpha_4-1}}{2\pi i}\int_{(0)}
\ZE_{1,1}(\AlphaVec;\xi)
\left(\widehat{W}_{1,1}^{+}(\alpha_1+\alpha_2,\alpha_3-\alpha_4;-i\xi)+\widehat{W}_{1,1}^{-}(\alpha_1+\alpha_2,\alpha_3-\alpha_4;-i\xi)\right)d\xi,
\end{multline}
where
\begin{equation}\label{res R11k def}
R_{1,1}(\xi_k)=
\res_{\xi_k}\ZE_{1,1}(\AlphaVec;\xi)\left(\widehat{W}_{1,1}^{+}(\alpha_1+\alpha_2,\alpha_3-\alpha_4;-i\xi)+
\widehat{W}_{1,1}^{-}(\alpha_1+\alpha_2,\alpha_3-\alpha_4;-i\xi)\right).
\end{equation}
Using  \eqref{zeta functional} and \eqref{7zeta def}, one has
\begin{multline}\label{R11xi1}
R_{1,1}(\xi_1)=
\ZF(-\alpha_1,\alpha_2,-\alpha_3,-\alpha_4)\RF(\alpha_4+\alpha_1)\RF(\alpha_4-\alpha_2)\RF(\alpha_4+\alpha_3)\\\times
\left(\widehat{W}_{1,1}^{+}(\alpha_1+\alpha_2,\alpha_3-\alpha_4;-i\xi_1)+\widehat{W}_{1,1}^{-}(\alpha_1+\alpha_2,\alpha_3-\alpha_4;-i\xi_1)\right),
\end{multline}
\begin{multline}\label{R11xi2}
R_{1,1}(\xi_2)=
\ZF(\alpha_1,-\alpha_2,-\alpha_3,-\alpha_4)\RF(\alpha_4-\alpha_1)\RF(\alpha_4+\alpha_2)\RF(\alpha_4+\alpha_3)\\\times
\left(\widehat{W}_{1,1}^{+}(\alpha_1+\alpha_2,\alpha_3-\alpha_4;-i\xi_2)+\widehat{W}_{1,1}^{-}(\alpha_1+\alpha_2,\alpha_3-\alpha_4;-i\xi_2)\right),
\end{multline}
\begin{equation}\label{R11xi34}
R_{1,1}(\xi_3)=-R_{1,1}(\xi_1),\quad
R_{1,1}(\xi_4)=-R_{1,1}(\xi_2).
\end{equation}
In order to simplify evaluation of $R_{1,1}(\xi_1)$ and $R_{1,1}(\xi_2)$, we restrict the  parameters $\alpha_j$ to the range $|\alpha_j|\ll\epsilon$ instead of the wider range \eqref{TD11 alpha conditions1}.  Using \eqref{hatW11 xi1}, \eqref{zeta functional}, \eqref{XtoXXX}  and the relation
\begin{equation}
\cos\pi\frac{\alpha_1-\alpha_2+2\alpha_4}{2}+\cos\pi\frac{\alpha_1+\alpha_2}{2}=2\cos\pi\frac{\alpha_1+\alpha_4}{2}\cos\pi\frac{\alpha_4-\alpha_2}{2},
\end{equation}
we infer
\begin{multline}\label{R11xi1-xi3}
4(-1)^k(2\pi)^{\alpha_3-\alpha_4-1}\left(R_{1,1}(\xi_1)-R_{1,1}(\xi_3)\right)=
\frac{\cos\pi\frac{\alpha_1-\alpha_2+2\alpha_3+2\alpha_4}{2}}{2\cos\pi\frac{\alpha_4+\alpha_3}{2}\cos\pi\frac{\alpha_1-\alpha_2+\alpha_3+\alpha_4}{2}}\\\times
\Co(-\alpha_1,\alpha_2,-\alpha_3,-\alpha_4)\ZF(-\alpha_1,\alpha_2,-\alpha_3,-\alpha_4)+O\left(\frac{k^{\epsilon}}{k}\right).
\end{multline}
Using \eqref{hatW11 xi2}, \eqref{zeta functional}, \eqref{XtoXXX} (with $\alpha_1$ replaced by $\alpha_2$)  and the relation
\begin{equation}
\cos\pi\frac{\alpha_2-\alpha_1+2\alpha_4}{2}+\cos\pi\frac{\alpha_1+\alpha_2}{2}=2\cos\pi\frac{\alpha_2+\alpha_4}{2}\cos\pi\frac{\alpha_4-\alpha_1}{2},
\end{equation}
we prove
\begin{multline}\label{R11xi2-xi4}
4(-1)^k(2\pi)^{\alpha_3-\alpha_4-1}\left(R_{1,1}(\xi_2)-R_{1,1}(\xi_4)\right)=
\frac{\cos\pi\frac{\alpha_2-\alpha_1+2\alpha_3+2\alpha_4}{2}}{2\cos\pi\frac{\alpha_4+\alpha_3}{2}\cos\pi\frac{\alpha_2-\alpha_1+\alpha_3+\alpha_4}{2}}
\\\times
\Co(\alpha_1,-\alpha_2,-\alpha_3,-\alpha_4)\ZF(\alpha_1,-\alpha_2,-\alpha_3,-\alpha_4)
+O\left(\frac{k^{\epsilon}}{k}\right).
\end{multline}
Finally, \eqref{Ec11} follows from \eqref{Ec11-2}, \eqref{R11xi1-xi3} and \eqref{R11xi2-xi4}.
\end{proof}

\section{Analysis of $\TD^{(1,2)}$}\label{sec:TD12}

It follows from \eqref{phik 1-x to x}, \eqref{W11def} and \eqref{W12def} that
\begin{equation}\label{W12 to W11}
W_{1,2}(\alpha_1,\alpha_2,\alpha_3,\alpha_4;x)=(-1)^kX_k(\alpha_2)W_{1,1}(\alpha_1,-\alpha_2,\alpha_3,\alpha_4;x).
\end{equation}
Then one has (see \eqref{TD11} and \eqref{TD12})
\begin{equation}\label{TD12 to TD11}
\TD^{(1,2)}(\alpha_1,\alpha_2,\alpha_3,\alpha_4)=(-1)^kX_k(\alpha_2)\TD^{(1,1)}(\alpha_1,-\alpha_2,\alpha_3,\alpha_4).
\end{equation}
We are left to understand how the coefficients  $\Co(\epsilon_1\alpha_1,\epsilon_2\alpha_2,\epsilon_3\alpha_3,\epsilon_4\alpha_4)$, defined by \eqref{Co def}, change
if one changes the sign  $\epsilon_2$ and multiplies by $(-1)^kX_k(\alpha_2)$.
The coefficient $\Co(\ast,\alpha_2,\ast,\ast)$ does not contain $X_k(\alpha_2)$, so it does not change after the sign change of $\alpha_2$, and being multiplied by $(-1)^kX_k(\alpha_2)$ it becomes $\Co(\ast,-\alpha_2,\ast,\ast)$.
The coefficient $\Co(\ast,-\alpha_2,\ast,\ast)$ contains $(-1)^kX_k(\alpha_2)$, so after the sign change of $\alpha_2$ and multiplication by $(-1)^kX_k(\alpha_2)$ we obtain $\Co(\ast,\alpha_2,\ast,\ast)$ since
$$(-1)^kX_k(-\alpha_2)(-1)^kX_k(\alpha_2)=1.$$
This means that we just need to change $\alpha_2$ by $-\alpha_2$ in \eqref{UW11 final} and \eqref{S11}.
Keeping this in mind and using \eqref{TD11 to UEhEdEd}), we finally derive the following decomposition.

\begin{lem}\label{lem:td12} For $|\alpha_j|\ll \epsilon$ one has
\begin{multline}\label{TD12 to to UEhEdEd}
\TD^{(1,2)}(\alpha_1,\alpha_2,\alpha_3,\alpha_4)=
\mainU_{1,2}(\AlphaVec)+\Sok_{1,2}(\AlphaVec)+
(-1)^kX_k(\alpha_2)
\left(\term_{h}^{1}+\term_{d}^{1}+\term_{c}^{1}\right)(\alpha_1,-\alpha_2,\alpha_3,\alpha_4)\\+O(k^{\epsilon-1}),
\end{multline}
where $\term_{h}^{1}(\AlphaVec)$, $\term_{d}^{1}(\AlphaVec)$ and $\term_{c}^{1}(\AlphaVec)$ are given by \eqref{Eh11},\eqref{Ed11} and \eqref{Ec011def}, respectively, and
\begin{multline}\label{UW12 final}
\mainU_{1,2}(\AlphaVec):=
\Co(\alpha_1,-\alpha_2,\alpha_3,-\alpha_4)\ZF(\alpha_1,-\alpha_2,\alpha_3,-\alpha_4)
\frac{\cos\pi\frac{\alpha_2-\alpha_1+2\alpha_4}{2}}{2\cos\pi\frac{\alpha_4-\alpha_1}{2}\cos\pi\frac{\alpha_4+\alpha_2}{2}}\\+
\Co(-\alpha_1,\alpha_2,\alpha_3,-\alpha_4)\ZF(-\alpha_1,\alpha_2,\alpha_3,-\alpha_4)
\frac{\cos\pi\frac{\alpha_1-\alpha_2+2\alpha_4}{2}}{2\cos\pi\frac{\alpha_4+\alpha_1}{2}\cos\pi\frac{\alpha_4-\alpha_2}{2}}\\+
\Co(\alpha_1,-\alpha_2,-\alpha_3,\alpha_4)\ZF(\alpha_1,-\alpha_2,-\alpha_3,\alpha_4)
\frac{\cos\pi\frac{\alpha_2-\alpha_1+2\alpha_3}{2}}{2\cos\pi\frac{\alpha_3-\alpha_1}{2}\cos\pi\frac{\alpha_3+\alpha_2}{2}}\\+
\Co(-\alpha_1,\alpha_2,-\alpha_3,\alpha_4)\ZF(-\alpha_1,\alpha_2,-\alpha_3,\alpha_4)
\frac{\cos\pi\frac{\alpha_1-\alpha_2+2\alpha_3}{2}}{2\cos\pi\frac{\alpha_3+\alpha_1}{2}\cos\pi\frac{\alpha_3-\alpha_2}{2}},
\end{multline}
\begin{multline}\label{S12}
\Sok_{1,2}(\AlphaVec):=
\Co(-\alpha_1,-\alpha_2,-\alpha_3,-\alpha_4)\ZF(-\alpha_1,-\alpha_2,-\alpha_3,-\alpha_4)
\frac{\cos\pi\frac{\alpha_1+\alpha_2+2\alpha_3+2\alpha_4}{2}}{2\cos\pi\frac{\alpha_4+\alpha_3}{2}\cos\pi\frac{\alpha_1+\alpha_2+\alpha_3+\alpha_4}{2}}\\+
\Co(\alpha_1,\alpha_2,-\alpha_3,-\alpha_4)\ZF(\alpha_1,\alpha_2,-\alpha_3,-\alpha_4)
\frac{\cos\pi\frac{-\alpha_2-\alpha_1+2\alpha_3+2\alpha_4}{2}}{2\cos\pi\frac{\alpha_4+\alpha_3}{2}\cos\pi\frac{-\alpha_2-\alpha_1+\alpha_3+\alpha_4}{2}}.
\end{multline}
\end{lem}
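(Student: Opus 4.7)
The proof is essentially a bookkeeping exercise on top of Lemma \ref{lem:td11}. The starting observation is the functional identity \eqref{phik 1-x to x} from Lemma \ref{Lemma phik x to 1-x}: substituting $\frac{x}{1+x}$ for $x$ there and comparing with \eqref{W11def}--\eqref{W12def} gives the weight-function identity
\begin{equation}
W_{1,2}(\alpha_1,\alpha_2,\alpha_3,\alpha_4;x)=(-1)^kX_k(\alpha_2)\,W_{1,1}(\alpha_1,-\alpha_2,\alpha_3,\alpha_4;x).
\end{equation}
Plugging this into \eqref{TD12} and comparing with \eqref{TD11} yields the clean identity \eqref{TD12 to TD11}. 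The plan is then to apply Lemma \ref{lem:td11} with the arguments $(\alpha_1,-\alpha_2,\alpha_3,\alpha_4)$, multiply by $(-1)^k X_k(\alpha_2)$, and recognise the resulting expression as the right-hand side of \eqref{TD12 to to UEhEdEd}.

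The only nontrivial step is tracking how the prefactor $(-1)^kX_k(\alpha_2)$ interacts with the coefficients $\Co$ appearing in $\mainU_{1,1}$ and $\Sok_{1,1}$ after the substitution $\alpha_2\to-\alpha_2$. From \eqref{Co def}, a factor $\Co(\cdot,\alpha_2,\cdot,\cdot)$ contains no $X_k(\alpha_2)$, so under $\alpha_2\to-\alpha_2$ and multiplication by $(-1)^kX_k(\alpha_2)$ it becomes $\Co(\cdot,-\alpha_2,\cdot,\cdot)$; conversely $\Co(\cdot,-\alpha_2,\cdot,\cdot)$ contains $(-1)^kX_k(\alpha_2)$, and using $(-1)^kX_k(\alpha_2)\cdot(-1)^kX_k(-\alpha_2)=1$ it becomes $\Co(\cdot,\alpha_2,\cdot,\cdot)$. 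Applied termwise to the four summands of $\mainU_{1,1}(\alpha_1,-\alpha_2,\alpha_3,\alpha_4)$ in \eqref{UW11 final}, together with the obvious transformation of $\ZF$ and of each cosine ratio under $\alpha_2\to-\alpha_2$, this yields exactly the four summands of $\mainU_{1,2}(\AlphaVec)$ in \eqref{UW12 final}. The same procedure applied to the two summands of $\Sok_{1,1}(\alpha_1,-\alpha_2,\alpha_3,\alpha_4)$ in \eqref{S11} produces $\Sok_{1,2}(\AlphaVec)$ in \eqref{S12}.

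For the remaining three terms $\term_h^1$, $\term_d^1$, $\term_c^1$, there is nothing to simplify: by definition of these terms via \eqref{Eh11}, \eqref{Ed11}, \eqref{Ec011def} as functions of $\AlphaVec$, applying Lemma \ref{lem:td11} to the quadruple $(\alpha_1,-\alpha_2,\alpha_3,\alpha_4)$ yields $\term_{\ast}^{1}(\alpha_1,-\alpha_2,\alpha_3,\alpha_4)$ literally, and the prefactor $(-1)^kX_k(\alpha_2)$ produced by \eqref{TD12 to TD11} is absorbed into the statement. The $O(k^{\epsilon-1})$ error term carries over since $(-1)^kX_k(\alpha_2)\ll 1$ for $|\alpha_2|\ll\epsilon$ by \eqref{Xapprox}.

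The only mild obstacle is verifying by direct inspection that the four cosine quotients in \eqref{UW11 final}, with $\alpha_2$ replaced by $-\alpha_2$, match the four quotients in \eqref{UW12 final} in the correct order, and similarly for the two quotients in \eqref{S11} versus \eqref{S12}; this amounts to using the evenness of cosine and relabelling. No analytic input beyond Lemmas \ref{Lemma phik x to 1-x} and \ref{lem:td11} is required.
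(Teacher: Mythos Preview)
Your proposal is correct and follows exactly the approach of the paper: derive the weight identity $W_{1,2}=(-1)^kX_k(\alpha_2)W_{1,1}(\alpha_1,-\alpha_2,\alpha_3,\alpha_4;\cdot)$ from Lemma~\ref{Lemma phik x to 1-x}, deduce \eqref{TD12 to TD11}, feed this into Lemma~\ref{lem:td11}, and track the effect of the substitution and prefactor on the $\Co$-coefficients. Your justification of the $O(k^{\epsilon-1})$ via $X_k(\alpha_2)\ll 1$ from \eqref{Xapprox} is a small point the paper leaves implicit.
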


\section{Analysis of $\TD^{(2,1)}$}\label{sec:TD21}

In order to describe discrete and holomorphic contributions coming from the term   \eqref{TD21}, we introduce the following weighted fourth moments of $L$-functions associated to Maass and holomorphic cusp forms:
\begin{multline}\label{Ed21}
\term_{d}^{2}(\AlphaVec):=
2(2\pi)^{\alpha_1-\alpha_2-1}
\sum_{j} \omega_j
L_j\left(\frac{1-\alpha_1+\alpha_2+\alpha_3-\alpha_4}{2}\right)\\\times
L_j\left(\frac{1-\alpha_1+\alpha_2-\alpha_3+\alpha_4}{2}\right)
L_j\left(\frac{1-\alpha_1-\alpha_2+\alpha_3+\alpha_4}{2}\right)
L_j\left(\frac{1+\alpha_1+\alpha_2+\alpha_3+\alpha_4}{2}\right)\\\times
\left(\widehat{W}_{2,1}^{+}+\varepsilon_j\widehat{W}_{2,1}^{-}\right)(\alpha_3-\alpha_4,\alpha_1-\alpha_2;t_j),
\end{multline}
\begin{multline}\label{Eh21} 
\term_{h}^{2}(\AlphaVec):=2(2\pi)^{\alpha_1-\alpha_2-1}
\sum_{m=6}^{\infty}\frac{2m-1}{\pi^2}\sum_{f\in H_{2m}} \omega_f
L_f\left(\frac{1-\alpha_1+\alpha_2+\alpha_3-\alpha_4}{2}\right)\\\times
L_f\left(\frac{1-\alpha_1+\alpha_2-\alpha_3+\alpha_4}{2}\right)
L_f\left(\frac{1-\alpha_1-\alpha_2+\alpha_3+\alpha_4}{2}\right)
L_f\left(\frac{1+\alpha_1+\alpha_2+\alpha_3+\alpha_4}{2}\right)\\\times
\widehat{W}_{2,1}(\alpha_3-\alpha_4,\alpha_1-\alpha_2;m).
\end{multline}
The continuous contribution is given by the weighted eighth moment of the Riemann zeta function:
\begin{multline}\label{Ec021def}
\term_{c}^{2}(\AlphaVec):=\frac{4(-1)^k(2\pi)^{\alpha_1-\alpha_2-1}}{2\pi i}\int_{(0)}
\ZE_{2,1}(\AlphaVec;\xi)\\\times
\left(\widehat{W}_{2,1}^{+}(\alpha_3-\alpha_4,\alpha_1-\alpha_2;-i\xi)+\widehat{W}_{2,1}^{-}(\alpha_3-\alpha_4,\alpha_1-\alpha_2;-i\xi)\right)d\xi,
\end{multline}
where
\begin{multline}\label{ZE21def}
\ZE_{2,1}(\AlphaVec;\xi)=
\zeta\left(\frac{1+\alpha_1+\alpha_2+\alpha_3+\alpha_4}{2}+\xi\right)
\zeta\left(\frac{1-\alpha_1-\alpha_2+\alpha_3+\alpha_4}{2}+\xi\right)\\\times
\zeta\left(\frac{1+\alpha_1+\alpha_2+\alpha_3+\alpha_4}{2}-\xi\right)
\zeta\left(\frac{1-\alpha_1-\alpha_2+\alpha_3+\alpha_4}{2}-\xi\right)
\frac{\ZC(\alpha_3-\alpha_4,\alpha_1-\alpha_2,\xi)}{\zeta(1+2\xi)\zeta(1-2\xi)},
\end{multline}
and $\ZC(\alpha,\beta,\xi)$ is the product of four Riemann zeta functions defined by \eqref{Z4 def}.

Recall that the function $W_{2,1}(y)$ is defined by \eqref{W21def} and its transforms used in the definitions of \eqref{Ed21}, \eqref{Eh21} and \eqref{Ec021def} are given in Theorem \ref{Th:BADP}.

Finally, in order to state the main result of this section we introduce the following terms:
\begin{multline}\label{UW21 final}
\mainU_{2,1}(\AlphaVec):=
\Co(\alpha_1,-\alpha_2,\alpha_3,-\alpha_4)\ZF(\alpha_1,-\alpha_2,\alpha_3,-\alpha_4)
\frac{\cos\pi\frac{\alpha_1+\alpha_2}{2}}{2\cos\pi\frac{\alpha_4+\alpha_2}{2}\cos\pi\frac{\alpha_4-\alpha_1}{2}}\\+
\Co(\alpha_1,-\alpha_2,-\alpha_3,\alpha_4)\ZF(\alpha_1,-\alpha_2,-\alpha_3,\alpha_4)
\frac{\cos\pi\frac{\alpha_1+\alpha_2}{2}}{2\cos\pi\frac{\alpha_3-\alpha_1}{2}\cos\pi\frac{\alpha_3+\alpha_2}{2}}\\+
\Co(-\alpha_1,\alpha_2,\alpha_3,-\alpha_4)\ZF(-\alpha_1,\alpha_2,\alpha_3,-\alpha_4)
\frac{\cos\pi\frac{\alpha_1+\alpha_2}{2}}{2\cos\pi\frac{\alpha_4+\alpha_1}{2}\cos\pi\frac{\alpha_4-\alpha_2}{2}}\\+
\Co(-\alpha_1,\alpha_2,-\alpha_3,\alpha_4)\ZF(-\alpha_1,\alpha_2,-\alpha_3,\alpha_4)
\frac{\cos\pi\frac{\alpha_1+\alpha_2}{2}}{2\cos\pi\frac{\alpha_3+\alpha_1}{2}\cos\pi\frac{\alpha_3-\alpha_2}{2}}
\end{multline}
and
\begin{multline}\label{S21}
\Sok_{2,1}(\AlphaVec)=
\Co(-\alpha_1,-\alpha_2,-\alpha_3,-\alpha_4)\ZF(-\alpha_1,-\alpha_2,-\alpha_3,-\alpha_4)
\frac{\cos\pi\frac{\alpha_1+\alpha_2}{2}}{2\cos\pi\frac{\alpha_4+\alpha_3}{2}\cos\pi\frac{\alpha_1+\alpha_2+\alpha_3+\alpha_4}{2}}\\+
\Co(\alpha_1,\alpha_2,-\alpha_3,-\alpha_4)\ZF(\alpha_1,\alpha_2,-\alpha_3,-\alpha_4)
\frac{\cos\pi\frac{\alpha_1+\alpha_2}{2}}{2\cos\pi\frac{\alpha_4+\alpha_3}{2}\cos\pi\frac{\alpha_1+\alpha_2-\alpha_3-\alpha_4}{2}},
\end{multline}
where $\ZF(\epsilon_1\alpha_1,\epsilon_2\alpha_2,\epsilon_3\alpha_3,\epsilon_4\alpha_4)$ and $\Co(\epsilon_1\alpha_1,\epsilon_2\alpha_2,\epsilon_3\alpha_3,\epsilon_4\alpha_4)$ are defined by \eqref{7zeta def} and \eqref{Co def}, respectively.

\begin{lem}\label{lem:td21}
For $|\alpha_j|\ll \epsilon$ one has
\begin{equation}
\TD^{(2,1)}(\AlphaVec)=\mainU_{2,1}(\AlphaVec)+\Sok_{2,1}(\AlphaVec)+\term_{d}^{2}(\AlphaVec)+\term_{h}^{2}(\AlphaVec)+\term_{c}^{2}(\AlphaVec).
\end{equation}
\end{lem}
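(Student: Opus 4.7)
The plan is to mirror the treatment of $\TD^{(1,1)}$ in Section \ref{sec:TD11}, replacing the weight function $W_{1,1}$ by $W_{2,1}$ and the Mellin representation of $\phi_k$ by that of $\Phi_k$. The inner sum of \eqref{TD21} is an additive divisor sum of the form \eqref{BADP def1} with parameters $\alpha\to \alpha_3-\alpha_4$, $\beta\to\alpha_1-\alpha_2$ and level $n$. Applying Theorem \ref{Th:BADP} (valid initially in the region where all series converge absolutely) we obtain
\begin{equation}
\TD^{(2,1)}(\AlphaVec)=\U(\AlphaVec;W_{2,1})+\E_h(\AlphaVec;W_{2,1})+\E_d(\AlphaVec;W_{2,1})+\E_c(\AlphaVec;W_{2,1}),
\end{equation}
where the terms are analogous to \eqref{UW11 def}, \eqref{EhdcW11 def} but with $(\alpha_1+\alpha_2,\alpha_3-\alpha_4)$ swapped to $(\alpha_3-\alpha_4,\alpha_1-\alpha_2)$. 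Substituting \eqref{Eh def}, \eqref{Ed def}, \eqref{Ec def} into these expressions and evaluating the outer Dirichlet series in $n$ by \eqref{Lprod} gives immediately $\E_h(\AlphaVec;W_{2,1})=\term_h^2(\AlphaVec)$ and $\E_d(\AlphaVec;W_{2,1})=\term_d^2(\AlphaVec)$. Convergence of these two moments in the full range $|\alpha_j|\ll\epsilon$ follows by arguments parallel to those of Lemma \ref{Ed11-lem}: one inserts the integral representation of $W_{2,1}$, applies Lemma \ref{lem:W0 2F1} to the $\HyGI$ factor inside $\widehat{W}_{2,1}^{0}$, and uses the exponential decay of $\Phi_k$ provided by Corollary \ref{cor:Phik est} together with repeated integration by parts to beat any polynomial in $t_j$ or $m$.

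Next, to evaluate the main term $\U(\AlphaVec;W_{2,1})$, I would substitute the four summands of \eqref{mul def} and use \eqref{RamId} to compute the $n$-sum in closed form, exactly as in Lemma \ref{U11-lem}. This produces four integrals of $W_{2,1}$ against powers of $x$ and $1+x$. Each integral is treated via the Mellin--Barnes representation \eqref{Phik Mellin}: I express $\Phi_k(1/(1+x);\alpha_1,\alpha_2)$ (or the corresponding variant) through the kernel $J_s(k,\alpha_1,\alpha_2;\cdot)$, insert this into the integral, swap orders, evaluate the $x$-integral using Lemma \ref{Lemma beta integrals}, and then shift the Mellin contour to pick up the relevant residue. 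The cutoff $\DU$ in \eqref{W21def} contributes only an $O(k^{-1+\epsilon})$ error, argued exactly as in \eqref{W11 int2-2}--\eqref{W11 int2-3}. The residue at the single pole produces a ratio of Gamma factors that collapses, by \eqref{Xdef}, \eqref{XtoXXX}, and \eqref{zeta functional}, to one of the four summands of $\mainU_{2,1}(\AlphaVec)$.

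For the continuous contribution, after writing $\E_c(\AlphaVec;W_{2,1})$ as a contour integral in the form \eqref{Ec11-1}, analytic continuation into $|\alpha_j|\ll\epsilon$ is achieved by a Sokhotski--Plemelj argument as in Lemma \ref{Ec11-lem1}: the four poles of $\ZE_{2,1}(\AlphaVec;\xi)$ at $\xi=\pm\xi_1,\pm\xi_2$ (now with $\xi_1=(1-\alpha_3+\alpha_4-\alpha_1-\alpha_2)/2$ and $\xi_2=(1+\alpha_3-\alpha_4-\alpha_1-\alpha_2)/2$) cross the line of integration, and their residues generate the two summands of $\Sok_{2,1}(\AlphaVec)$ together with an error of size $O(k^{-1+\epsilon})$. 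What remains is $\term_c^2(\AlphaVec)$ exactly as in \eqref{Ec021def}. The main obstacle will be the explicit evaluation of
\begin{equation}
\widehat{W}_{2,1}^{+}(\alpha_3-\alpha_4,\alpha_1-\alpha_2;-i\xi_k)+\widehat{W}_{2,1}^{-}(\alpha_3-\alpha_4,\alpha_1-\alpha_2;-i\xi_k)
\end{equation}
at the special points $\xi_1,\xi_2$; this is the analogue of Lemmas \ref{Lem:hatW11-xi1}--\ref{Lem:hatW11-xi2} but with $\Phi_k$ in place of $\phi_k$. The strategy is to apply \eqref{2I1 to 3I2} to split the $\HyGI$ appearing in $\widehat{W}_{2,1}^{0}$ into a leading Gamma product plus a $\GenHyGI{3}{2}{}{}{\cdot}$ remainder; the leading term, after using the Mellin representation of $\Phi_k$ and the argument above, yields a term that, combined with suitable trigonometric identities like
\begin{equation}
\cos\pi\tfrac{\alpha_3-\alpha_4+2\alpha_2}{2}+\cos\pi\tfrac{\alpha_3-\alpha_4}{2}=2\cos\pi\tfrac{\alpha_3+\alpha_2}{2}\cos\pi\tfrac{\alpha_2-\alpha_4}{2},
\end{equation}
collapses to the coefficient displayed in \eqref{S21}. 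The remainder is bounded by integration by parts against the differential equation \eqref{eq:diffurZpsi2} satisfied by $\Phi_k$, in the spirit of \eqref{W11hat+-xi1 int2eq3}--\eqref{W11hat+-xi1 int2eq6}, yielding an acceptable error of size $k^{-1+\epsilon}$. Assembling the four contributions completes the proof.
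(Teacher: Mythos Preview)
Your overall strategy is correct and matches the paper's treatment in Section~\ref{sec:TD21}: apply Theorem~\ref{Th:BADP}, identify $\E_h,\E_d$ with $\term_h^2,\term_d^2$ via \eqref{Lprod}, compute the four integrals making up $\U(\AlphaVec;W_{2,1})$, and obtain the Sokhotski terms from analytic continuation of $\E_c$. However, two of your technical steps do not carry over from the $W_{1,1}$ case as written.

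First, the evaluation of the main-term integrals (Lemmas~\ref{W21-1lem}--\ref{W21-4lem}) is not done by picking up a single residue. After inserting \eqref{Phik Mellin} for $\Phi_k\big(\tfrac{x}{1+x}\big)$ (not $\Phi_k\big(\tfrac{1}{1+x}\big)$) and applying Lemma~\ref{Lemma beta integrals}, the $\Gamma(1/2-\alpha_1-s/2)$ factor cancels, so there is no isolated pole analogous to the one in \eqref{W11 int1-3}. Instead the contour is pushed to $-\infty$, the full series of residues at $s_j=1-2k-2j$ is summed to $\HyGI(k-\alpha_2,k-\alpha_4,2k;1)$, and this is closed by Gauss's formula \cite[15.4.20]{HMF}. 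The $\DU$-truncation error is then $O(k^{-2})$, sharper than the $O(k^{-1+\epsilon})$ you obtained in the $W_{1,1}$ case, because repeated integration by parts against the oscillatory factor $(1+1/x)^{iy}$ gives arbitrary decay in $y$; see \eqref{W21 int1-7}.

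Second, your crossing poles are misidentified: from $\ZE_{2,1}$ in \eqref{ZE21def} the poles that move across $\Re\xi=0$ are
\[
\xi_1=\tfrac{1-\alpha_1-\alpha_2-\alpha_3-\alpha_4}{2},\qquad \xi_2=\tfrac{1+\alpha_1+\alpha_2-\alpha_3-\alpha_4}{2},
\]
not the combinations with $\alpha_3-\alpha_4$ that you wrote (those belong to $\ZC$ and do not cross). For the remainder terms at these special points (Lemmas~\ref{Lem:hatW21-xi1}--\ref{Lem:hatW21-xi2}), the paper does not use the differential-equation integration by parts you propose; it exploits directly the exponential decay of $\Phi_k$ supplied by Lemma~\ref{lem:Phikest}, changing variables to $y=\sinh^{-2}(\sqrt{x}/2)$ and using the $K$-Bessel bound to render the range $x>k^{\epsilon-2}$ negligible. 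Finally, Lemma~\ref{Lem:hatW21-xi2} is obtained from Lemma~\ref{Lem:hatW21-xi1} by the symmetry $X_k(\alpha_1)X_k(\alpha_2)\Phi_k(x;-\alpha_2,-\alpha_1)=\Phi_k(x;\alpha_1,\alpha_2)$ (see \eqref{Phi(-)toPhi(+)}) rather than by a separate computation.
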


In order to prove Lemma \ref{lem:td21} we  apply Theorem \ref{Th:BADP}. Accordingly, for
\begin{equation}\label{TD21 alpha conditions0}
\min_{j=3,4}\Re{\alpha_j}>1+\frac{1}{2}|\Re{\alpha_1}+\Re{\alpha_2}|+\frac{1}{2}|\Re{\alpha_1}-\Re{\alpha_2}|,\quad
|\Re{\alpha_3}-\Re{\alpha_4}|<1+\Re{\alpha_1}-\Re{\alpha_2}
\end{equation}
one has
\begin{equation}\label{TD21 to UEhEdEd}
\TD^{(2,1)}(\AlphaVec)=\U(\AlphaVec;W_{2,1})+\E_h(\AlphaVec;W_{2,1})+\E_d(\AlphaVec;W_{2,1})+\E_c(\AlphaVec;W_{2,1}),
\end{equation}
where
\begin{equation}\label{UW21 def}
\U(\AlphaVec;W_{2,1})=
\zeta(1+\alpha_3+\alpha_4)\sum_{n=1}^{\infty}
\frac{\sigma_{\alpha_1+\alpha_2}(n)}{n^{1+\alpha_1+\alpha_3}}U(n,\alpha_3-\alpha_4,\alpha_1-\alpha_2;W_{2,1}),
\end{equation}
and for $\ast\in\{h,d,c\}$
\begin{equation}\label{EhdcW21 def}
\E_{\ast}(\AlphaVec;W_{2,1})=(-1)^k
\zeta(1+\alpha_3+\alpha_4)\sum_{n=1}^{\infty}\frac{\sigma_{\alpha_1+\alpha_2}(n)}{n^{1+\alpha_1+\alpha_3}}
E_{\ast}(n,\alpha_3-\alpha_4,\alpha_1-\alpha_2;W_{2,1}).
\end{equation}

Substituting \eqref{Eh def} to \eqref{EhdcW21 def} and using \eqref{Lprod}, we show that
\begin{equation}
\E_{h}(\AlphaVec;W_{2,1})=\term_{h}^2(\AlphaVec),
\end{equation}
where $\term_{h}^2(\AlphaVec)$ is defined by \eqref{Eh21}.

The term  $\term_{h}^2(\AlphaVec)$ converges in the range $|\alpha_j|\ll A$. This follows from the integral representation   \eqref{hatWmtoPhi} and the estimate \eqref{Phik LGest} since  $W_{2,1}(y)=0$ unless $y>x_0$. In the next lemma  we investigate the term $\E_d(\AlphaVec;W_{2,1})$. The term $\E_c(\AlphaVec;W_{2,1})$ converges in the same range of $\alpha_j$ because the weight functions in \eqref{Ed def} and \eqref{Ec def} coincide.


The following result can be proved in the same way as Lemma  \ref{Ed11-lem}.
\begin{lem}\label{Eh21-lem}
For
\begin{equation}\label{Eh21 alpha conditions0}
\Re{\alpha_3}+\Re{\alpha_4}>-1+|\Re{\alpha_1}+\Re{\alpha_2}|
\end{equation}
one has
\begin{equation}
\E_d(\AlphaVec;W_{2,1})=\term_{d}^{2}(\AlphaVec),
\end{equation}
where $\term_{d}^{2}(\AlphaVec)$ is defined by \eqref{Ed21}.
\end{lem}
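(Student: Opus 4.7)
The proof will closely mirror that of Lemma~\ref{Ed11-lem}, with the obvious substitutions dictated by the definitions \eqref{W21def}, \eqref{TD21}, \eqref{EhdcW21 def} and \eqref{Ed def}. The plan is to substitute \eqref{Ed def} (specialized to $l\mapsto n$, $\alpha\mapsto \alpha_3-\alpha_4$, $\beta\mapsto\alpha_1-\alpha_2$, $W\mapsto W_{2,1}$) into \eqref{EhdcW21 def}, interchange the order of summation, and reduce the inner Dirichlet series in $n$ to a product of two Maass $L$-functions divided by $\zeta(1+\alpha_3+\alpha_4)$.

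More precisely, after substitution the exponent of $n$ in the inner sum becomes
\begin{equation}
\frac{1+\alpha_3-\alpha_4+\alpha_1-\alpha_2}{2}-(1+\alpha_1+\alpha_3)
=-\frac{1+\alpha_1+\alpha_2+\alpha_3+\alpha_4}{2},
\end{equation}
so one needs
\begin{equation}
\sum_{n=1}^{\infty}\frac{\sigma_{\alpha_1+\alpha_2}(n)\lambda_j(n)}{n^{(1+\alpha_1+\alpha_2+\alpha_3+\alpha_4)/2}}
=\frac{L_j\!\left(\frac{1+\alpha_1+\alpha_2+\alpha_3+\alpha_4}{2}\right)L_j\!\left(\frac{1-\alpha_1-\alpha_2+\alpha_3+\alpha_4}{2}\right)}{\zeta(1+\alpha_3+\alpha_4)},
\end{equation}
which is exactly the identity \eqref{Lprod} applied with shift parameters $\alpha=(\alpha_1+\alpha_2+\alpha_3+\alpha_4)/2$, $\beta=(\alpha_3+\alpha_4-\alpha_1-\alpha_2)/2$. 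This cancels the external $\zeta(1+\alpha_3+\alpha_4)$ and produces the remaining two $L_j$ factors that, together with the two $L_j$ factors already present in \eqref{Ed def}, account precisely for the four $L_j$-values appearing in \eqref{Ed21}. Collecting constants (the prefactor $2(2\pi)^{\alpha_1-\alpha_2-1}$) and comparing with \eqref{Ed21} gives the claimed identity formally.

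The substantive point is to justify absolute convergence of the resulting spectral sum under the hypothesis \eqref{Eh21 alpha conditions0}. By the argument that appears in Lemma~\ref{Ed11-lem}, it suffices to show that $\widehat{W}_{2,1}^{\pm}(\alpha_3-\alpha_4,\alpha_1-\alpha_2;t_j)$ decays faster than any fixed power of $t_j$; combined with the convexity bound for $L_j$-values and standard bounds for $\omega_j$, this yields convergence in the claimed range. Via \eqref{hatW+}, \eqref{hatW-}, \eqref{hatW0} the task reduces to showing that
\begin{equation}
\widehat{W}_{2,1}^{0}(\alpha_3-\alpha_4,\alpha_1-\alpha_2;it_j)\ll t_j^{-A}
\end{equation}
for every fixed $A>0$. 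Here we insert the asymptotic expansion of Lemma~\ref{lem:W0 2F1} into \eqref{hatW0}; the main term carries the oscillating factor $(\sqrt{1+y}+\sqrt{y})^{-2it_j}$, and the lower-order terms and the error term are either smaller or already of size $t_j^{-N}$.

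The hard part will be the integration-by-parts argument on the resulting oscillatory integral, the analogue of \eqref{hatW011est2}. One must verify that repeated differentiation of the smooth weight
\begin{equation}
\DU(1/y)\,\Phi_k\!\left(\frac{y}{1+y};\alpha_1,\alpha_2\right)\cdot y^{\text{(power)}}(1+y)^{\text{(power)}}
\end{equation}
against the phase $(\sqrt{1+y}+\sqrt{y})^{-2it_j}$ produces factors of size $O_k(1)$ times inverse powers of $t_j$. Since $W_{2,1}$ is supported on $y\gg x_0$ (so one is bounded away from the singularity of $\Phi_k$ at $1$ occurring at $y=\infty$ only polynomially, according to Lemma~\ref{lem:phiPhi approx}), and since $\Phi_k(\cdot;\alpha_1,\alpha_2)$ is smooth on $(0,1)$, each derivative of $\Phi_k(y/(1+y);\alpha_1,\alpha_2)$ in $y$ yields a bounded (in $y$, for fixed $k$) expression. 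Hence each integration by parts gains a factor $t_j^{-1}$, and $N$ iterations give the required bound $t_j^{-N}$. The remaining estimates (bounds on Stirling factors \eqref{Stirling0}, etc.) are identical to those used in the proof of Lemma~\ref{Ed11-lem} and require no further input.
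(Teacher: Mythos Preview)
Your proposal is correct and follows essentially the same approach as the paper, which simply states that the result ``can be proved in the same way as Lemma~\ref{Ed11-lem}.'' You have appropriately filled in the details: substituting \eqref{Ed def} into \eqref{EhdcW21 def}, applying \eqref{Lprod} to collapse the $n$-sum into the two additional $L_j$-factors, and justifying absolute convergence of the spectral sum via the decay $\widehat{W}_{2,1}^{0}(\alpha_3-\alpha_4,\alpha_1-\alpha_2;it_j)\ll t_j^{-A}$ obtained from Lemma~\ref{lem:W0 2F1} followed by repeated integration by parts.

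One small remark on the wording: your claim that ``each derivative of $\Phi_k(y/(1+y);\alpha_1,\alpha_2)$ in $y$ yields a bounded (in $y$, for fixed $k$) expression'' is not literally true at $y\to\infty$, since by Lemma~\ref{lem:phiPhi approx} the function itself grows like $(1+y)^{|\Re(\alpha_1+\alpha_2)|/2}$. What actually makes the integration by parts go through is that each $y$-derivative of this composite gains a factor $(1+y)^{-1}$, which exactly compensates the factor $\sqrt{y(1+y)}$ introduced by dividing through the phase derivative; this is the same mechanism as in \eqref{hatW011est2}. With that minor clarification the argument is complete.
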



\begin{lem}\label{U21-lem}
For
\begin{equation}
2\min(\Re{\alpha_3},\Re{\alpha_4})>|\Re{\alpha_1}+\Re{\alpha_2}|+|\Re{\alpha_1}-\Re{\alpha_2}|
\end{equation}
the following identity holds:
\begin{multline}\label{UW21}
\U(\AlphaVec;W_{2,1})=\\
\RF(\alpha_4-\alpha_1)\RF(\alpha_4+\alpha_2)\ZF(\alpha_1,-\alpha_2,\alpha_3,-\alpha_4)
\int_0^{\infty}W_{2,1}(x)x^{\alpha_3-\alpha_4}(1+x)^{\alpha_1-\alpha_2}dx\\+
\RF(\alpha_3-\alpha_1)\RF(\alpha_3+\alpha_2)\ZF(\alpha_1,-\alpha_2,-\alpha_3,\alpha_4)\int_0^{\infty}W_{2,1}(x)(1+x)^{\alpha_1-\alpha_2}dx\\+
\RF(\alpha_4+\alpha_1)\RF(\alpha_4-\alpha_2)\ZF(-\alpha_1,\alpha_2,\alpha_3,-\alpha_4)\int_0^{\infty}W_{2,1}(x)x^{\alpha_3-\alpha_4}dx\\+
\RF(\alpha_3+\alpha_1)\RF(\alpha_3-\alpha_2)\ZF(-\alpha_1,\alpha_2,-\alpha_3,\alpha_4)\int_0^{\infty}W_{2,1}(x)dx.
\end{multline}
\end{lem}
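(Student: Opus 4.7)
The plan is to unfold the definition of $\U(\AlphaVec;W_{2,1})$, substitute the explicit main term $\mu$ from Theorem \ref{Th:BADP} with $\alpha=\alpha_3-\alpha_4$, $\beta=\alpha_1-\alpha_2$, and then evaluate the four resulting Dirichlet series in $n$ by means of the Ramanujan identity \eqref{RamId}, finally rewriting the answer with the functional equation \eqref{zeta functional} to bring out the $\RF$ and $\ZF$ factors. Indeed, inserting \eqref{Udef}, \eqref{mul def} into \eqref{UW21 def} produces, by the absolute convergence guaranteed under the stated range on the $\Re\alpha_j$, the identity
\begin{equation*}
\U(\AlphaVec;W_{2,1})=\zeta(1+\alpha_3+\alpha_4)\sum_{i=1}^{4}\mathcal{A}_i(\AlphaVec)\int_0^{\infty}W_{2,1}(x)F_i(x;\AlphaVec)\,dx,
\end{equation*}
where $F_i(x;\AlphaVec)\in\{x^{\alpha_3-\alpha_4}(1+x)^{\alpha_1-\alpha_2},(1+x)^{\alpha_1-\alpha_2},x^{\alpha_3-\alpha_4},1\}$ and $\mathcal{A}_i(\AlphaVec)$ is a ratio of $\zeta$-values times a Dirichlet series of the form $\sum_{n}\sigma_{\alpha_1+\alpha_2}(n)\sigma_{v_i}(n)n^{-s_i}$ for explicit shifts $v_i,s_i$.

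The key step is to apply \eqref{RamId} to each of the four Dirichlet series. For the first term, with $a=\alpha_1+\alpha_2$, $b=1+\alpha_1+\alpha_3-\alpha_2-\alpha_4$ and $s=1+\alpha_1+\alpha_3$, the identity \eqref{RamId} gives
\begin{equation*}
\sum_{n=1}^{\infty}\frac{\sigma_a(n)\sigma_b(n)}{n^s}=\frac{\zeta(1+\alpha_1+\alpha_3)\zeta(1-\alpha_2+\alpha_3)\zeta(\alpha_2+\alpha_4)\zeta(\alpha_4-\alpha_1)}{\zeta(1+\alpha_3+\alpha_4)},
\end{equation*}
so after cancellation with the outer $\zeta(1+\alpha_3+\alpha_4)$ and multiplication by the $\zeta$-ratio from the first summand of $\mu$, the $\zeta$-factors at arguments with negative real part are exactly $\zeta(\alpha_4-\alpha_1)$ and $\zeta(\alpha_2+\alpha_4)$. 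Converting these with \eqref{zeta functional} yields $\RF(\alpha_4-\alpha_1)\RF(\alpha_4+\alpha_2)$ times a product of six $\zeta$-values at arguments $1+\epsilon_i\alpha_i+\epsilon_j\alpha_j$ divided by $\zeta(2+\alpha_1-\alpha_2+\alpha_3-\alpha_4)$, which is precisely $\ZF(\alpha_1,-\alpha_2,\alpha_3,-\alpha_4)$ by the definition \eqref{7zeta def}. This produces the first summand of \eqref{UW21}.

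The remaining three summands are obtained in exactly the same fashion: the second, third and fourth terms of $\mu$ correspond respectively to the sign patterns $(\epsilon_3,\epsilon_4)=(-1,+1)$ and $(\epsilon_1,\epsilon_2)=(+1,-1)$; $(\epsilon_3,\epsilon_4)=(+1,-1)$ and $(\epsilon_1,\epsilon_2)=(-1,+1)$; $(\epsilon_3,\epsilon_4)=(-1,+1)$ and $(\epsilon_1,\epsilon_2)=(-1,+1)$, and at each step one application of \eqref{RamId} followed by two invocations of \eqref{zeta functional} produces the pair of $\RF$-factors and the corresponding $\ZF$-factor. The cancellation with the outer $\zeta(1+\alpha_3+\alpha_4)$ (or, for the second and third terms, with $\zeta(1\pm(\alpha_3-\alpha_4)\pm(\alpha_1-\alpha_2))$ coming from the $\zeta$-ratio in $\mu$) is uniform across the four cases.

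The main obstacle is purely a book-keeping one: tracking which $\zeta$-factors get absorbed into $\ZF$ versus those that have to be converted through the functional equation into $\RF$-factors, since each of the four terms produces a different pair of negative-argument $\zeta$-values; also, one must verify at each step that the denominator $\zeta(2s-a-b)$ arising from Ramanujan matches exactly the numerator $\zeta(1+\alpha_3+\alpha_4)$ (or the relevant $\zeta$-ratio from $\mu$), so that no extraneous factors remain. No analytic difficulty appears since the convergence of the sum and the interchange of summation and integration are both guaranteed under the stated range on $\Re\alpha_j$.
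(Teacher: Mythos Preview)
Your proposal is correct and follows essentially the same route as the paper: substitute \eqref{Udef} and \eqref{mul def} into \eqref{UW21 def}, apply the Ramanujan identity \eqref{RamId} to each of the four resulting Dirichlet series, then use the functional equation \eqref{zeta functional} on the two $\zeta$-factors with negative-real-part arguments to produce the $\RF$-pairs and recognize the remaining six $\zeta$-factors over one as $\ZF$. One small slip in your bookkeeping: in all four cases the Ramanujan denominator $\zeta(2s-a-b)$ equals $\zeta(1+\alpha_3+\alpha_4)$ and cancels with the outer factor, while the denominators $\zeta(2\pm\alpha\pm\beta)$ from $\mu$ are precisely the $\zeta(2+\sum\epsilon_j\alpha_j)^{-1}$ in $\ZF$; your parenthetical about alternate cancellation for the second and third terms is not quite right, but this does not affect the argument.
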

\begin{proof}
Substituting \eqref{Udef} and \eqref{mul def} to \eqref{UW21 def}, we obtain four summands. Let us consider the first one (other summands can be treated in the same way):
\begin{multline}\label{UW21 eq1}
\zeta(1+\alpha_3+\alpha_4)\frac{\zeta(1+\alpha_1-\alpha_2)\zeta(1+\alpha_3-\alpha_4)}{\zeta(2+\alpha_1-\alpha_2+\alpha_3-\alpha_4)}
\sum_{n=1}^{\infty}\frac{\sigma_{\alpha_1+\alpha_2}(n)}{n^{1+\alpha_1+\alpha_3}}
\sigma_{1+\alpha_1-\alpha_2+\alpha_3-\alpha_4}(n)\\\times
\int_0^{\infty}W_{2,1}(x)x^{\alpha_3-\alpha_4}(1+x)^{\alpha_1-\alpha_2}dx.
\end{multline}
Using \eqref{RamId} one has
\begin{equation}\label{Ram211}
\sum_{n=1}^{\infty}\frac{\sigma_{\alpha_1+\alpha_2}(n)}{n^{1+\alpha_1+\alpha_3}}
\sigma_{1+\alpha_1-\alpha_2+\alpha_3-\alpha_4}(n)=\frac{\zeta(1+\alpha_1+\alpha_3)}{\zeta(1+\alpha_3+\alpha_4)}\zeta(1+\alpha_3-\alpha_2)
\zeta(\alpha_4-\alpha_1)\zeta(\alpha_4+\alpha_2).
\end{equation}
Substituting \eqref{Ram211} to  \eqref{UW21 eq1}, applying  \eqref{zeta functional} to the product $\zeta(\alpha_4-\alpha_1)\zeta(\alpha_4+\alpha_2)$ and using the notation \eqref{7zeta def}, we obtain the first summand in \eqref{UW21}.
\end{proof}

\begin{lem}\label{W21-1lem}
The following asymptotic formula holds:
\begin{multline}\label{W21 int1}
\int_0^{\infty}W_{2,1}(x)x^{\alpha_3-\alpha_4}(1+x)^{\alpha_1-\alpha_2}dx=
2(2\pi)^{\alpha_1-\alpha_2-2\alpha_4}X_k(\alpha_2)X_k(\alpha_4)\\\times
\Gamma(\alpha_4-\alpha_1)\Gamma(\alpha_4+\alpha_2)\cos\pi\frac{\alpha_1+\alpha_2}{2}+O(k^{-2}).
\end{multline}
\end{lem}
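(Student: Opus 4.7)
The plan is to mirror the strategy of Lemmas \ref{W11-1lem}--\ref{W11-4lem}: first write $\eta(1/x) = 1 - \eta(x)$ to decompose
\begin{equation*}
\int_0^{\infty}W_{2,1}(x)x^{\alpha_3-\alpha_4}(1+x)^{\alpha_1-\alpha_2}dx = I - I_\eta,
\end{equation*}
where $I = \int_0^\infty (1+x)^{(\alpha_1-\alpha_2)/2}x^{-1-\alpha_4}\Phi_k(x/(1+x);\alpha_1,\alpha_2)dx$ is the integral without the cutoff and $I_\eta$ carries $\eta(x)$ and is supported on $x\in[0,1/x_0]$. For $I_\eta$, on that compact range $x/(1+x)$ lies in a compact subset of $[0,1)$, so by Corollary \ref{cor:Phik est} together with \eqref{Phi approx} near $0$, $\Phi_k(x/(1+x))$ is exponentially small in $k$; hence $I_\eta \ll e^{-ck}$, which is absorbed into $O(k^{-2})$.

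For $I$, I would insert the Mellin--Barnes representation \eqref{Phik Mellin} with argument $1/x \leftrightarrow (1+x)/x$, unfold $J_s$ via \eqref{J def}, and interchange orders. The inner $x$-integral $\int_0^\infty x^{-1/2-\alpha_4-s/2}(1+x)^{\alpha_1-1/2+s/2}dx$ is computed by Lemma \ref{Lemma beta integrals} as $\Gamma(1/2-\alpha_4-s/2)\Gamma(\alpha_4-\alpha_1)/\Gamma(1/2-\alpha_1-s/2)$, and the $\Gamma(1/2-\alpha_1-s/2)$ cancels its counterpart in $\Gamma(k,\alpha_1,\alpha_2;s)$. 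The resulting $s$-contour integral has two infinite families of poles, at $s = 1-2\alpha_2+2n$ and $s = 1-2\alpha_4+2n$ for $n\geq 0$, arising from the two surviving gamma factors $\Gamma(1/2-\alpha_2-s/2)\Gamma(1/2-\alpha_4-s/2)$.

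The main step is to shift the contour past these two families and sum the residues. Unlike Lemma \ref{W11-2lem}, where a single pole sufficed to produce the main term, here the product $X_k(\alpha_2)X_k(\alpha_4)$ emerges only after summing each family via Gauss's identity ${}_2F_1(a,b,c;1) = \Gamma(c)\Gamma(c-a-b)/[\Gamma(c-a)\Gamma(c-b)]$ (valid since $\Re(\alpha_2+\alpha_4)>0$) and combining the two contributions through the product-to-sum identity
\begin{equation*}
\sin\pi\alpha_2\cos\pi\tfrac{\alpha_1+\alpha_2-2\alpha_4}{2} - \sin\pi\alpha_4\cos\pi\tfrac{\alpha_1-\alpha_2}{2} = \cos\pi\tfrac{\alpha_1+\alpha_2}{2}\sin\pi(\alpha_2-\alpha_4),
\end{equation*}
with the $\sin\pi(\alpha_4-\alpha_2)$ cancelling against the reciprocal produced by the reflection formulas applied to $\Gamma(1-k+\alpha_j)\Gamma(k-\alpha_j)$. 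The residual shifted contour is estimated as $O(k^{-2})$ by Stirling in the style of \eqref{W11 int1-4}. Alternatively, and more transparently, one may apply Euler's integral $\HyG(a,b,c;z) = \frac{\Gamma(c)}{\Gamma(b)\Gamma(c-b)}\int_0^1 t^{b-1}(1-t)^{c-b-1}(1-zt)^{-a}dt$ directly to the form \eqref{Phi_k2} of $\Phi_k$ and substitute $z = x/(1+x)$, which gives $(1-zt) = (1+x(1-t))/(1+x)$; after interchanging integrals, the $x$-integral under the substitution $w = x(1-t)$ becomes $(1-t)^{-(k-\alpha_4)}\Gamma(k-\alpha_4)\Gamma(\alpha_4-\alpha_1)/\Gamma(k-\alpha_1)$ (by Lemma \ref{Lemma beta integrals}), and the remaining $t$-integral is the Beta function $\Gamma(k-\alpha_2)\Gamma(\alpha_2+\alpha_4)/\Gamma(k+\alpha_4)$; collecting all factors using $X_k(\alpha_j)=(2\pi)^{2\alpha_j}\Gamma(k-\alpha_j)/\Gamma(k+\alpha_j)$ yields the main term of \eqref{W21 int1} exactly, with no error from this piece.

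The main obstacle in the Mellin--Barnes route is tracking the two infinite residue families and showing that their Gauss-sum reductions collapse, via the trigonometric identity above, into the clean form with a single $\cos\pi(\alpha_1+\alpha_2)/2$ and the product $X_k(\alpha_2)X_k(\alpha_4)$; the Euler-integral route circumvents this algebra entirely and yields the result in one stroke.
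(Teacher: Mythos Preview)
Your proposal is correct; the Euler-integral route for $I$ and the direct pointwise bound for $I_\eta$ both work as you describe. The paper's own proof, however, proceeds somewhat differently in both pieces, so a brief comparison is worthwhile.

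For the integral $I$ without the cutoff, the paper also inserts the Mellin--Barnes representation \eqref{Phik Mellin} and arrives at the same $s$-integral with $\Gamma(\alpha_4-\alpha_1)$ already pulled out. But instead of shifting \emph{right} past the two families $s=1-2\alpha_2+2n$ and $s=1-2\alpha_4+2n$ (which, as you note, requires combining two Gauss sums through a trigonometric identity), the paper shifts \emph{left} and picks up the single family of poles of $\Gamma(k-1/2+s/2)$ at $s_j=1-2k-2j$. The residue sum is then recognized directly as $\HyGI(k-\alpha_2,k-\alpha_4,2k;1)$, and one application of Gauss's theorem \cite[15.4.20]{HMF} gives the closed form $\Gamma(k-\alpha_2)\Gamma(k-\alpha_4)\Gamma(\alpha_2+\alpha_4)/[\Gamma(k+\alpha_2)\Gamma(k+\alpha_4)]$, yielding the main term exactly (no error from this piece). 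Your Euler-integral alternative is essentially a repackaging of this same computation---after all, Gauss's theorem is what evaluates both---and is arguably the cleanest way to see it; the paper's left-shift is intermediate in complexity between your two proposed routes and avoids the double-family bookkeeping of the right-shift.

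For $I_\eta$, you invoke the exponential decay of $\Phi_k$ on the compact range (Corollary \ref{cor:Phik est} applied with argument $x/(1+x)=1/(1+1/x)$, together with \eqref{Phi approx} near $x=0$), which gives $I_\eta\ll e^{-ck}$. The paper instead reuses the Mellin--Barnes representation \eqref{Phik Mellin} for this piece as well, integrates the inner $x$-integral by parts $A$ times to obtain decay $(1+|y|)^{-A}$ in $\Im s$, and then shifts the contour to $\Re s=-1/2$ and applies Stirling to get $O(k^{-2})$. Your approach is more direct and yields a stronger bound; the paper's has the virtue of being uniform with the treatment of $I$.
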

\begin{proof}
Using \eqref{DU def} and \eqref{W21def} we have
\begin{multline}\label{W21 int1-1}
\int_0^{\infty}W_{2,1}(x)x^{\alpha_3-\alpha_4}(1+x)^{\alpha_1-\alpha_2}dx=
\int_0^{\infty}\Phi_k\left(\frac{x}{1+x};\alpha_1,\alpha_2\right)(1+x)^{\alpha_1/2-\alpha_2/2}x^{-1-\alpha_4}dx\\-
\int_0^{\infty}\DU(x)\Phi_k\left(\frac{x}{1+x};\alpha_1,\alpha_2\right)(1+x)^{\alpha_1/2-\alpha_2/2}x^{-1-\alpha_4}dx.
\end{multline}
To evaluate the first integral, we use \eqref{Phik Mellin}, \eqref{J def}. As a result, one has
\begin{multline}\label{W21 int1-2}
\int_0^{\infty}\Phi_k\left(\frac{x}{1+x};\alpha_1,\alpha_2\right)(1+x)^{\alpha_1/2-\alpha_2/2}x^{-1-\alpha_4}dx=
(-1)^k(2\pi)^{\alpha_1+\alpha_2}
\frac{1}{2\pi i}\int_{(\sigma)}\Gamma(k,\alpha_1,\alpha_2;s)\\\times\sin\left(\pi\frac{s+\alpha_1+\alpha_2}{2}\right)
\int_0^{\infty}x^{-1/2-\alpha_4-s/2}(1+x)^{-1/2+\alpha_1+s/2}dxds,
\end{multline}
where $1-2k<\sigma<1-2\max(|\Re{\alpha_1}|,|\Re{\alpha_2}|,|\Re{\alpha_4}|)$.
Evaluating the integral over $x$ with the use of Lemma \ref{Lemma beta integrals}  and substituting \eqref{Gamma(k alpha s) def}, we prove that
\begin{multline}\label{W21 int1-3}
\int_0^{\infty}\Phi_k\left(\frac{x}{1+x};\alpha_1,\alpha_2\right)(1+x)^{\alpha_1/2-\alpha_2/2}x^{-1-\alpha_4}dx=
(-1)^k(2\pi)^{\alpha_1+\alpha_2}\Gamma(\alpha_4-\alpha_1)\\\times
\frac{1}{2\pi i}\int_{(\sigma)}
\frac{\Gamma(k-1/2+s/2)}{\Gamma(k+1/2-s/2)}\Gamma(1/2-\alpha_2-s/2)\Gamma(1/2-\alpha_4-s/2)
\sin\left(\pi\frac{s+\alpha_1+\alpha_2}{2}\right)ds.
\end{multline}
Moving the line of integration to the left, evaluating the residues at $s_j=1-2k-2j$ and using \cite[15.4.20]{HMF}, one has
\begin{multline}\label{W21 int1-4}
\int_0^{\infty}\Phi_k\left(\frac{x}{1+x};\alpha_1,\alpha_2\right)(1+x)^{\alpha_1/2-\alpha_2/2}x^{-1-\alpha_4}dx=\\
2(2\pi)^{\alpha_1+\alpha_2}\Gamma(\alpha_4-\alpha_1)\cos\pi\frac{\alpha_1+\alpha_2}{2}
\HyGI(k-\alpha_2,k-\alpha_4,2k;1)=\\2(2\pi)^{\alpha_1-\alpha_2-2\alpha_4}X_k(\alpha_2)X_k(\alpha_4)
\Gamma(\alpha_4-\alpha_1)\Gamma(\alpha_4+\alpha_2)\cos\pi\frac{\alpha_1+\alpha_2}{2}.
\end{multline}

To estimate the second integral in \eqref{W21 int1-1}, we again use \eqref{Phik Mellin}, \eqref{J def}. Consequently,
\begin{multline}\label{W21 int1-6}
\int_0^{\infty}\DU(x)\Phi_k\left(\frac{x}{1+x};\alpha_1,\alpha_2\right)(1+x)^{\alpha_1/2-\alpha_2/2}x^{-1-\alpha_4}dx=
(-1)^k(2\pi)^{\alpha_1+\alpha_2}\\\times
\frac{1}{2\pi i}\int_{(\sigma)}\Gamma(k,\alpha_1,\alpha_2;s)\sin\left(\pi\frac{s+\alpha_1+\alpha_2}{2}\right)
\int_0^{\infty}\DU(x)x^{-1/2-\alpha_4-\sigma}(1+x)^{-1/2+\alpha_1+\sigma}(1+1/x)^{iy}dxds,
\end{multline}
where $s/2=\sigma+iy$. Integrating by parts $A$-times the integral over $x$,
one has
\begin{equation}\label{W21 int1-7}
\int_0^{\infty}\DU(x)x^{-1/2-\alpha_4-\sigma}(1+x)^{-1/2+\alpha_1+\sigma}(1+1/x)^{iy}dx\ll\frac{1}{(1+|y|)^A}.
\end{equation}
Moving the line of integration in \eqref{W21 int1-6} to $\Re{s}=-1/2$, applying  \eqref{W21 int1-7} and \eqref{Stirling0} to estimate the Gamma factors, we infer
\begin{equation}\label{W21 int1-8}
\int_0^{\infty}\DU(x)\Phi_k\left(\frac{x}{1+x};\alpha_1,\alpha_2\right)(1+x)^{\alpha_1/2-\alpha_2/2}x^{-1-\alpha_4}dx\ll \frac{1}{k^2}.
\end{equation}
Therefore, \eqref{W21 int1} follows from  \eqref{W21 int1-1}, \eqref{W21 int1-4} and \eqref{W21 int1-8}.
\end{proof}

\begin{lem}\label{W21-2lem}
The following asymptotic formula holds:
\begin{multline}\label{W21 int2}
\int_0^{\infty}W_{2,1}(x)(1+x)^{\alpha_1-\alpha_2}dx=\\
2(2\pi)^{\alpha_1-\alpha_2-2\alpha_3}X_k(\alpha_2)X_k(\alpha_3)
\Gamma(\alpha_3-\alpha_1)\Gamma(\alpha_3+\alpha_2)\cos\pi\frac{\alpha_1+\alpha_2}{2}+O(k^{-2}).
\end{multline}
\end{lem}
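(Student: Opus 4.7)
The plan is to follow the scheme of Lemma \ref{W21-1lem} essentially verbatim, with $\alpha_4$ replaced by $\alpha_3$, exploiting the fact that the weight factor $x^{\alpha_3-\alpha_4}$ was the only place where $\alpha_4$ entered the $x$-integration there. Starting from the definition \eqref{W21def} and writing $\DU(1/x)=1-\DU(x)$, one splits the integral as
\begin{equation*}
\int_0^{\infty}W_{2,1}(x)(1+x)^{\alpha_1-\alpha_2}dx=I_1-I_2,
\end{equation*}
where
\begin{equation*}
I_1=\int_0^{\infty}\Phi_k\left(\frac{x}{1+x};\alpha_1,\alpha_2\right)(1+x)^{\alpha_1/2-\alpha_2/2}x^{-1-\alpha_3}dx
\end{equation*}
and $I_2$ is the analogous integral with the cutoff $\DU(x)$ inserted.

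For $I_1$, the plan is to invoke the Mellin--Barnes representation \eqref{Phik Mellin}--\eqref{J def}, interchange the order of integration, and evaluate the inner $x$-integral via Lemma \ref{Lemma beta integrals}. This gives an integral of the same shape as \eqref{W21 int1-3} but with $\alpha_4$ replaced by $\alpha_3$. Shifting the contour to the left and summing the residues at $s_j=1-2k-2j$ produces a ${}_2\mathrm{I}_1$ series which, by \cite[15.4.20]{HMF}, evaluates to $\HyGI(k-\alpha_2,k-\alpha_3,2k;1)$. Using the Gauss summation formula in the form already exploited after \eqref{W21 int1-4}, together with \eqref{Xdef}, rewrites this as
\begin{equation*}
I_1=2(2\pi)^{\alpha_1-\alpha_2-2\alpha_3}X_k(\alpha_2)X_k(\alpha_3)\Gamma(\alpha_3-\alpha_1)\Gamma(\alpha_3+\alpha_2)\cos\pi\tfrac{\alpha_1+\alpha_2}{2}.
\end{equation*}

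For $I_2$, the plan is identical to the treatment of the cutoff integral in Lemma \ref{W21-1lem}: insert the Mellin--Barnes representation \eqref{Phik Mellin}--\eqref{J def}, integrate by parts $A$ times in the $x$-integral (using the smoothness and compact support properties of $\DU$ together with the boundedness of $(1+1/x)^{iy}$-derivatives) to get a bound $\ll(1+|y|)^{-A}$ analogous to \eqref{W21 int1-7}, then shift the Mellin contour to $\Re s=-1/2$ and apply the Stirling bound \eqref{Stirling0} to the Gamma factors. This yields $I_2\ll k^{-2}$, exactly as in \eqref{W21 int1-8}. Combining the two estimates gives \eqref{W21 int2}.

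The main obstacle is essentially notational rather than conceptual: one must verify that the contour-shift in the Mellin--Barnes integral for $I_1$ remains admissible when $\alpha_4$ is replaced by $\alpha_3$, in particular that the strip $1-2k<\sigma<1-2\max(|\Re\alpha_1|,|\Re\alpha_2|,|\Re\alpha_3|)$ is nonempty and contains the required initial contour; for $|\alpha_j|\ll\epsilon$ this is immediate. No new estimates or special function identities beyond those already developed for Lemma \ref{W21-1lem} are required.
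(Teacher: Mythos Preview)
Your proposal is correct and follows essentially the same approach as the paper: both observe that the integral here is precisely the right-hand side of \eqref{W21 int1-1} with $\alpha_4$ replaced by $\alpha_3$, so the result of Lemma \ref{W21-1lem} carries over verbatim. The paper simply states this substitution in two lines, while you have spelled out the individual steps; no new idea is needed.
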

\begin{proof}
According to \eqref{W21def} one has
\begin{equation}\label{W21 int2-1}
\int_0^{\infty}W_{2,1}(x)(1+x)^{\alpha_1-\alpha_2}dx=
\int_0^{\infty}\DU(1/x)\Phi_k\left(\frac{x}{1+x};\alpha_1,\alpha_2\right)(1+x)^{\alpha_1/2-\alpha_2/2}x^{-1-\alpha_3}dx.
\end{equation}
Therefore, changing $\alpha_4$ in \eqref{W21 int1-1}  by $\alpha_3$, we recover \eqref{W21 int2-1}. Further, making the same change in
\eqref{W21 int1}, we obtain \eqref{W21 int2}.
\end{proof}

\begin{lem}\label{W21-3lem}
The following asymptotic formula holds:
\begin{multline}\label{W21 int3}
\int_0^{\infty}W_{2,1}(x)x^{\alpha_3-\alpha_4}dx=\\
2(2\pi)^{-\alpha_1+\alpha_2-2\alpha_4}X_k(\alpha_1)X_k(\alpha_4)
\Gamma(\alpha_4+\alpha_1)\Gamma(\alpha_4-\alpha_2)\cos\pi\frac{\alpha_1+\alpha_2}{2}+O(k^{-2}).
\end{multline}
\end{lem}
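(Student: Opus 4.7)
The plan is to exploit a symmetry that reduces this integral directly to the one already evaluated in Lemma \ref{W21-1lem}, in the same spirit as the author's reduction of \eqref{W21 int2} to \eqref{W21 int1-1}. The key observation is that the function $\Phi_k(x;\alpha_1,\alpha_2)$ is symmetric under $\alpha_1\leftrightarrow\alpha_2$ (visible from \eqref{Phi_k2}), while the weight $W_{2,1}$ in \eqref{W21def} carries an explicit asymmetric factor $(1+x)^{-(\alpha_1-\alpha_2)/2}$. Combining these gives
\begin{equation}
W_{2,1}(\alpha_1,\alpha_2,\alpha_3,\alpha_4;x)=(1+x)^{\alpha_2-\alpha_1}\,W_{2,1}(\alpha_2,\alpha_1,\alpha_3,\alpha_4;x),
\end{equation}
so that the integral in \eqref{W21 int3} equals
\begin{equation}
\int_0^{\infty}W_{2,1}(\alpha_2,\alpha_1,\alpha_3,\alpha_4;x)\,x^{\alpha_3-\alpha_4}(1+x)^{\alpha_2-\alpha_1}dx,
\end{equation}
which is precisely the left-hand side of \eqref{W21 int1} with $\alpha_1\leftrightarrow\alpha_2$ swapped.

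The second step is to read off the result from Lemma \ref{W21-1lem} by applying the same swap $\alpha_1\leftrightarrow\alpha_2$ to the right-hand side of \eqref{W21 int1}. Since the cosine factor $\cos\pi(\alpha_1+\alpha_2)/2$ is invariant under the swap, we obtain $2(2\pi)^{\alpha_2-\alpha_1-2\alpha_4}X_k(\alpha_1)X_k(\alpha_4)\Gamma(\alpha_4-\alpha_2)\Gamma(\alpha_4+\alpha_1)\cos\pi(\alpha_1+\alpha_2)/2+O(k^{-2})$, which rearranges exactly to the right-hand side of \eqref{W21 int3}. The error term $O(k^{-2})$ is preserved since the estimates \eqref{W21 int1-7}, \eqref{W21 int1-8} depend only on the magnitudes $|\alpha_j|\ll\epsilon$, which are untouched by the swap.

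Alternatively, one could repeat the full Mellin--Barnes argument of Lemma \ref{W21-1lem} directly: use \eqref{W21def} and \eqref{DU def} to split into a main part and a $\DU(x)$-correction, represent $\Phi_k(x/(1+x);\alpha_1,\alpha_2)$ via \eqref{Phik Mellin}, evaluate the resulting beta integral in $x$ using Lemma \ref{Lemma beta integrals} (now with exponents $-1/2-\alpha_4-s/2$ and $-1/2+\alpha_2+s/2$, whose beta factor is $\Gamma(1/2-\alpha_4-s/2)\Gamma(\alpha_4-\alpha_2)/\Gamma(1/2-\alpha_2-s/2)$), cancel the $\Gamma(1/2-\alpha_2-s/2)$ against \eqref{Gamma(k alpha s) def}, shift the contour and evaluate the residue sum via \cite[15.4.20]{HMF} as $\HyGI(k-\alpha_1,k-\alpha_4,2k;1)$, then apply Gauss's theorem to write this as $\Gamma(k-\alpha_1)\Gamma(k-\alpha_4)\Gamma(\alpha_1+\alpha_4)/(\Gamma(k+\alpha_1)\Gamma(k+\alpha_4))$, and finally use \eqref{Xdef} to recognize $X_k(\alpha_1)X_k(\alpha_4)$.

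There is no real obstacle: the symmetry approach is the cleanest, and the direct approach is a line-by-line mimic of Lemma \ref{W21-1lem}. The only thing to check carefully is that the admissible range of $\sigma$ in the Mellin--Barnes contour, and the estimate \eqref{W21 int1-7} for the $\DU(x)$-correction, remain valid after the swap; both are clearly symmetric in $(\alpha_1,\alpha_2)$.
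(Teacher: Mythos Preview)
Your proposal is correct and follows essentially the same route as the paper: both exploit the symmetry $\Phi_k(y;\alpha_1,\alpha_2)=\Phi_k(y;\alpha_2,\alpha_1)$ from \eqref{Phi_k2} to identify the integral \eqref{W21 int3} with the left-hand side of \eqref{W21 int1} after swapping $\alpha_1\leftrightarrow\alpha_2$, and then read off the result. The paper states this reduction at the level of the explicit integrand \eqref{W21 int1-1} rather than via your functional equation for $W_{2,1}$, but the content is identical.
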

\begin{proof}
Applying \eqref{W21def} one has
\begin{multline}\label{W21 int3-1}
\int_0^{\infty}W_{2,1}(x)x^{\alpha_3-\alpha_4}dx=
\int_0^{\infty}\DU(1/x)\Phi_k\left(\frac{x}{1+x};\alpha_1,\alpha_2\right)(1+x)^{-\alpha_1/2+\alpha_2/2}x^{-1-\alpha_4}dx.
\end{multline}
  Note that \eqref{Phi_k2} yields the identity $\Phi_k\left(y;\alpha_1,\alpha_2\right)=\Phi_k\left(y;\alpha_2,\alpha_1\right)$.  Therefore, changing $\alpha_1\leftrightarrow\alpha_2$ in \eqref{W21 int1-1},  we obtain \eqref{W21 int3-1}. Further, making the same change in
\eqref{W21 int1}, we recover \eqref{W21 int3}.
\end{proof}

\begin{lem}\label{W21-4lem}
The following asymptotic formula holds:
\begin{multline}\label{W21 int4}
\int_0^{\infty}W_{2,1}(x)dx=
2(2\pi)^{-\alpha_1+\alpha_2-2\alpha_3}X_k(\alpha_1)X_k(\alpha_3)
\Gamma(\alpha_3+\alpha_1)\Gamma(\alpha_3-\alpha_2)\cos\pi\frac{\alpha_1+\alpha_2}{2}\\+O(k^{-2}).
\end{multline}
\end{lem}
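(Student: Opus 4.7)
The plan is that this lemma reduces immediately to one of the previously proved cases by simple parameter relabeling, with no new analytic input required. Using the definition \eqref{W21def} of $W_{2,1}$, I observe that
\begin{equation*}
\int_0^{\infty}W_{2,1}(x)\,dx=\int_0^{\infty}\DU(1/x)\,\Phi_k\!\left(\frac{x}{1+x};\alpha_1,\alpha_2\right)(1+x)^{-\alpha_1/2+\alpha_2/2}x^{-1-\alpha_3}\,dx,
\end{equation*}
which is exactly the integral appearing in the first step of the proof of Lemma \ref{W21-3lem}, namely the right-hand side of \eqref{W21 int3-1}, with the single parameter substitution $\alpha_4 \mapsto \alpha_3$. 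Applying the same substitution $\alpha_4 \mapsto \alpha_3$ to the asymptotic formula \eqref{W21 int3} produces the desired identity \eqref{W21 int4}. The error term $O(k^{-2})$ is inherited directly, since the analogous estimates \eqref{W21 int1-4} and \eqref{W21 int1-8} are uniform in whether the relevant shift parameter is called $\alpha_4$ or $\alpha_3$ under the assumption $|\alpha_j|\ll\epsilon$.

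Equivalently, one may use the built-in symmetry $\Phi_k(y;\alpha_1,\alpha_2)=\Phi_k(y;\alpha_2,\alpha_1)$ (immediate from \eqref{Phi_k2}), under which the above integrand matches that of Lemma \ref{W21-2lem} (formula \eqref{W21 int2-1}) after exchanging $\alpha_1 \leftrightarrow \alpha_2$; substituting $\alpha_1 \leftrightarrow \alpha_2$ on the right-hand side of \eqref{W21 int2} again yields \eqref{W21 int4}. There is no genuine obstacle in this lemma: the only point deserving a line of care is to note that both reductions stay within the range $|\alpha_j|\ll\epsilon$ where all ingredients (the Mellin-Barnes representation of $\Phi_k$, Stirling bounds, and the hypergeometric identity for $\HyGI(k-\alpha_2,k-\alpha_3,2k;1)$ used to close up \eqref{W21 int1-4}) remain valid after the parameter swap.
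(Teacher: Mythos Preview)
Your proof is correct and follows essentially the same approach as the paper: write out $\int_0^\infty W_{2,1}(x)\,dx$ via \eqref{W21def}, observe it coincides with \eqref{W21 int3-1} under $\alpha_4\mapsto\alpha_3$, and transport that substitution to \eqref{W21 int3}. Your alternative reduction via the $\alpha_1\leftrightarrow\alpha_2$ symmetry of $\Phi_k$ and Lemma~\ref{W21-2lem} is also valid but not the route the paper takes.
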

\begin{proof}
According to \eqref{W21def} one has
\begin{equation}\label{W21 int4-1}
\int_0^{\infty}W_{2,1}(x)dx=
\int_0^{\infty}\DU(1/x)\Phi_k\left(\frac{x}{1+x};\alpha_1,\alpha_2\right)(1+x)^{-\alpha_1/2+\alpha_2/2}x^{-1-\alpha_3}dx.
\end{equation}
Therefore, changing $\alpha_4$ in \eqref{W21 int3-1}  by $\alpha_3$, we recover \eqref{W21 int4-1}. Making the same change in
\eqref{W21 int3}, we obtain \eqref{W21 int4}.
\end{proof}

Combining Lemmas \ref{U21-lem}-\ref{W21-4lem}, we finally prove the following identity.

\begin{lem}\label{U21-lem final}
One has
\begin{equation}
\U(\AlphaVec;W_{2,1})=\mainU_{2,1}(\AlphaVec),
\end{equation}
where 
$\mainU_{2,1}(\AlphaVec)$ is defined by \eqref{UW21 final}.
\end{lem}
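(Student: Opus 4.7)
The plan mirrors the proof of Lemma \ref{U11-lem final}. I would substitute the four asymptotic formulas \eqref{W21 int1}--\eqref{W21 int4} into the four-summand expression \eqref{UW21} for $\U(\AlphaVec;W_{2,1})$ furnished by Lemma \ref{U21-lem}, and then repackage each summand into one line of \eqref{UW21 final}, with the $O(k^{-2})$ remainders from the integral evaluations absorbed into the global $O(k^{\epsilon-1})$ error of Theorem \ref{thm:main}.

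The algebraic identity that makes the bookkeeping work is
$$\RF(s)\,\Gamma(s) \;=\; \frac{(2\pi)^s}{2\cos(\pi s/2)},$$
which follows immediately from the definition of $\RF$ in \eqref{zeta functional}, the reflection formula $\Gamma(s)\Gamma(1-s)=\pi/\sin(\pi s)$, and $\sin(\pi s)=2\sin(\pi s/2)\cos(\pi s/2)$. Applying this identity to each pair of $\RF$-factors in \eqref{UW21}, paired with the two $\Gamma$-factors supplied by the corresponding integral evaluation, causes all powers of $2\pi$ to cancel and leaves precisely the cosine denominators appearing in \eqref{UW21 final}, while the $\cos\pi\frac{\alpha_1+\alpha_2}{2}$ factor common to \eqref{W21 int1}--\eqref{W21 int4} becomes the common numerator.

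Concretely, in the first summand of \eqref{UW21}, combining $\RF(\alpha_4-\alpha_1)\RF(\alpha_4+\alpha_2)$ with $2(2\pi)^{\alpha_1-\alpha_2-2\alpha_4}\Gamma(\alpha_4-\alpha_1)\Gamma(\alpha_4+\alpha_2)$ from \eqref{W21 int1} yields
$$\frac{\cos\pi\frac{\alpha_1+\alpha_2}{2}}{2\cos\pi\frac{\alpha_4-\alpha_1}{2}\cos\pi\frac{\alpha_4+\alpha_2}{2}},$$
while the surviving $X_k(\alpha_2)X_k(\alpha_4)$ is precisely $\Co(\alpha_1,-\alpha_2,\alpha_3,-\alpha_4)$ by \eqref{Co def} (here $\epsilon_2=\epsilon_4=-1$, $\epsilon_1=\epsilon_3=+1$, so $\prod\epsilon_j^k=1$). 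The remaining three lines of \eqref{UW21 final} arise from \eqref{W21 int2}, \eqref{W21 int3}, \eqref{W21 int4} by the same mechanism; the second follows from the first by the substitution $\alpha_4\mapsto\alpha_3$, the third by $\alpha_1\leftrightarrow\alpha_2$ (using the symmetry $\Phi_k(y;\alpha_1,\alpha_2)=\Phi_k(y;\alpha_2,\alpha_1)$ noted in Lemma \ref{W21-3lem}), and the fourth by composing both.

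The main obstacle is purely bookkeeping: checking, for each summand, that the sign pattern $(\epsilon_1,\epsilon_2,\epsilon_3,\epsilon_4)$ encoded in the $\ZF$ and $\Co$ arguments on the right of \eqref{UW21 final} agrees with the pair of $X_k$ factors produced by the integral asymptotic and with the arrangement of $\pm\alpha_j$ inside $\RF(\cdot)\RF(\cdot)\ZF(\cdot)$ on the left. Once the first summand is verified in full, the others follow by the symmetries indicated above, so no new calculation is required.
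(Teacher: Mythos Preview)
Your proposal is correct and follows exactly the approach of the paper's own proof, which simply says to substitute \eqref{W21 int1}--\eqref{W21 int4} into \eqref{UW21} and use \eqref{zeta functional}. Your explicit identification of the key identity $\RF(s)\Gamma(s)=(2\pi)^s/(2\cos(\pi s/2))$ and the verification of the first summand (with the remaining three obtained by the indicated symmetries) supply precisely the bookkeeping the paper leaves implicit.
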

\begin{proof}
Substituting \eqref{W21 int1}, \eqref{W21 int2}, \eqref{W21 int3}, \eqref{W21 int4} into \eqref{UW21} and using \eqref{zeta functional}, we show that
\eqref{UW21 final} holds.
\end{proof}

The region \eqref{TD21 alpha conditions0} does not contain the most interesting points  $\Re{\alpha_j}=0$. In fact, the only obstacle in proving the analytic continuation
for \eqref{TD21 to UEhEdEd} is the term $\E_c(\AlphaVec;W_{2,1})$. In order to deal with this term, it is required to compute the sum
\begin{equation}
\widehat{W}_{2,1}^{+}(\alpha_3-\alpha_4,\alpha_1-\alpha_2;-i\xi)+\widehat{W}_{2,1}^{-}(\alpha_3-\alpha_4,\alpha_1-\alpha_2;-i\xi)
\end{equation}
at several special points $\xi.$
\begin{lem}\label{Lem:hatW21-xi1}
For $|\alpha_j|\ll\epsilon$ and $\xi_1=\frac{1-\alpha_1-\alpha_2-\alpha_3-\alpha_4}{2}$ one has
\begin{multline}\label{hatW21 xi1}
\widehat{W}_{2,1}^{+}(\alpha_3-\alpha_4,\alpha_1-\alpha_2;-i\xi_1)+\widehat{W}_{2,1}^{-}(\alpha_3-\alpha_4,\alpha_1-\alpha_2;-i\xi_1)
=\\=
\frac{\pi}{2}(2\pi)^{-\alpha_1+\alpha_2}X_k(\alpha_1)
\Gamma(\alpha_2+\alpha_3)\Gamma(\alpha_2+\alpha_4)\Gamma(\alpha_3+\alpha_4)
\frac{\Gamma(k-\alpha_2-\alpha_3-\alpha_4)}{\Gamma(k+\alpha_2+\alpha_3+\alpha_4)}\\\times
\frac{\cos\pi\frac{\alpha_3+\alpha_4+2\alpha_2}{2}+\cos\pi\frac{\alpha_3-\alpha_4}{2}}{\cos\pi\frac{\alpha_1+\alpha_2+\alpha_3+\alpha_4}{2}}
\cos\pi\frac{\alpha_1+\alpha_2}{2}+O\left(\frac{1}{k^{2-\epsilon}}\right).
\end{multline}
\end{lem}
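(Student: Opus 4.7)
My plan is to substitute $\alpha=\alpha_3-\alpha_4$, $\beta=\alpha_1-\alpha_2$ and $\xi=\xi_1$ into \eqref{hatW+ + hatW-}--\eqref{hatW0} with $W=W_{2,1}$ from \eqref{W21def}. Direct evaluation of the trigonometric prefactors via the product-to-sum identities gives, as the coefficients of $\widehat{W}_{2,1}^{0}(-\xi_1)$ and $\widehat{W}_{2,1}^{0}(\xi_1)$ in \eqref{hatW+ + hatW-},
\[
\frac{\pi}{2}\,\frac{\cos\pi\frac{\alpha_2+\alpha_3}{2}\cos\pi\frac{\alpha_2+\alpha_4}{2}}{\cos\pi\frac{\alpha_1+\alpha_2+\alpha_3+\alpha_4}{2}}\qquad\text{and}\qquad -\frac{\pi}{2}\,\frac{\sin\pi\frac{\alpha_1+\alpha_3}{2}\sin\pi\frac{\alpha_1+\alpha_4}{2}}{\cos\pi\frac{\alpha_1+\alpha_2+\alpha_3+\alpha_4}{2}},
\]
so the $\widehat{W}_{2,1}^{0}(\xi_1)$ branch is weighted by $O(\epsilon^{2})$. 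Moreover, the parameters of the ${}_2\mathrm{I}_1$ inside $\widehat{W}_{2,1}^{0}(-\xi_1)$ collapse to $(\alpha_2+\alpha_3,\alpha_2+\alpha_4;\alpha_1+\alpha_2+\alpha_3+\alpha_4)$, while those inside $\widehat{W}_{2,1}^{0}(\xi_1)$ collapse to $(1-\alpha_1-\alpha_4,1-\alpha_1-\alpha_3;2-\alpha_1-\alpha_2-\alpha_3-\alpha_4)$; neither triple involves $k$.

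I would then extract the main term from $\widehat{W}_{2,1}^{0}(-\xi_1)$. Splitting the ${}_2\mathrm{I}_1$ via \eqref{2I1 to 3I2} peels off the constant $\Gamma(\alpha_2+\alpha_3)\Gamma(\alpha_2+\alpha_4)/\Gamma(\alpha_1+\alpha_2+\alpha_3+\alpha_4)$ multiplying the integral $\int_0^{\infty}W_{2,1}(y)\,y^{-\alpha_2-\alpha_4}\,dy$, up to a remainder weighted by $-1/y$. This integral can be handled exactly as in Lemmas \ref{W21-1lem}--\ref{W21-4lem}: insert the Mellin--Barnes representation \eqref{Phik Mellin} for $\Phi_k$, evaluate the $y$-integral via Lemma \ref{Lemma beta integrals}, and shift the $s$-contour to the left to pick up the residues at $s=1-2k-2j$. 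Their sum collapses into a ${}_2F_1$ at argument $1$, which Gauss's summation formula reduces to $\Gamma(k-\alpha_2-\alpha_3-\alpha_4)/\Gamma(k+\alpha_2+\alpha_3+\alpha_4)$ times $\Gamma(\alpha_3+\alpha_4)\cos\pi\frac{\alpha_1+\alpha_2}{2}$ and explicit $(2\pi)$-powers; multiplying by the coefficient from the first step and rewriting via $X_k$ reproduces the stated main term.

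Three error contributions then have to be absorbed into $O(k^{-2+\epsilon})$. The $\DU(1/y)$-tail from the Mellin--Barnes evaluation contributes $O(k^{-2})$ by integration by parts in $y$, mirroring \eqref{W21 int1-7}--\eqref{W21 int1-8}. The ${}_3\mathrm{I}_2$-remainder from \eqref{2I1 to 3I2} carries the extra factor $1/y$; after the substitution $x=y/(1+y)$, the differential equation \eqref{eq:diffurf21} for $\Phi_k(x)\sqrt{1-x}$, combined with the bounds of Lemma \ref{lem:Phikest}, allows two integrations by parts to gain the factor $k^{-2+\epsilon}$, following the scheme of \eqref{W11hat+-xi1 int2eq4}--\eqref{W11hat+-xi1 int2eq6}. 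Finally, the $\widehat{W}_{2,1}^{0}(\xi_1)$-contribution already carries the $O(\epsilon^{2})$ prefactor, and its remaining integral is bounded within the same error by the same integration-by-parts argument. I expect the main difficulty to be the bookkeeping needed to align the residue evaluation with the trigonometric combination of the first step, and ensuring that the exponential decay of $\Phi_k(y/(1+y))$ on the support of $\DU(1/y)$ is strong enough to absorb the $1/y$-weight of the ${}_3\mathrm{I}_2$-remainder.
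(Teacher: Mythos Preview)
Your extraction of the main term is correct and essentially identical to the paper's: the trigonometric coefficients you compute agree with \eqref{W21hat+-xi1 eq1}, the splitting via \eqref{2I1 to 3I2} is exactly what is done in \eqref{W21hat+-xi1 int2eq1}, and the Mellin--Barnes evaluation of the leading integral reproduces \eqref{W21hat+-xi1 int2eq2} (including the $X_k(\alpha_1)$ factor coming from $\Gamma(k-\alpha_1)/\Gamma(k+\alpha_1)$). So the main-term part of your plan is sound.

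The gap is in the error-term estimation. You propose to gain $k^{-2+\epsilon}$ by two integrations by parts via the differential equation \eqref{eq:diffurf21}, ``following the scheme of \eqref{W11hat+-xi1 int2eq4}--\eqref{W11hat+-xi1 int2eq6}''. That scheme works for $\phi_k$ because in \eqref{Ydifeq} the coefficient $\mathcal I(x)$ is strictly positive and of size $\asymp k^2/(x(1-x))$ on the whole support (which is bounded away from $x=1$). For $\Phi_k$ the situation is different: after the substitution $x=y/(1+y)$ the support is $(x_0/(1+x_0),1)$, and the coefficient $u^2 f(x)+g(x)$ in \eqref{eq:diffurf21}--\eqref{newfg} actually \emph{vanishes} near $1-x\sim 1/(4u^2)$, since the term $-\tfrac{1-(\alpha_1+\alpha_2)^2}{4(1-x)^2}$ in $g(x)$ eventually dominates $u^2 f(x)=u^2/(x^2(1-x))$. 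Hence you cannot divide by $u^2 f+g$ across the whole range, and the integration-by-parts scheme does not carry over. Relatedly, the $O(\epsilon^2)$ prefactor you note for $\widehat W_{2,1}^{0}(\xi_1)$ is irrelevant here: the statement requires an error in powers of $k$, so that integral must itself be shown to be $O(k^{-2+\epsilon})$.

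The paper instead bounds both error integrals directly from Lemma~\ref{lem:Phikest}. After the change $y=\sinh^{-2}(\sqrt{x}/2)$ one has $\Phi_k(y/(1+y))=\Phi_k(1/\cosh^2(\sqrt{x}/2))$, and \eqref{Phik est} bounds this by a multiple of $K_{\alpha_1+\alpha_2}(u\sqrt{x})$. For $x>k^{\epsilon-2}$ the $K$-Bessel function decays exponentially, making that range negligible; for $x\in(0,k^{\epsilon-2})$ the crude bound $K_\nu(z)\ll z^{-|\Re\nu|}$ combined with the short interval length yields $O(k^{\epsilon-2})$. No integration by parts is needed. Since you already cite Lemma~\ref{lem:Phikest}, the fix is simply to use it directly for both remainder integrals rather than trying to import the $\phi_k$ scheme.
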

\begin{proof}
It follows from \eqref{hatW+ + hatW-}, \eqref{hatW0} and \eqref{W21def} that
\begin{multline}\label{W21hat+-xi1 eq1}
\widehat{W}_{2,1}^{+}(\alpha_3-\alpha_4,\alpha_1-\alpha_2;-i\xi_1)+\widehat{W}_{2,1}^{-}(\alpha_3-\alpha_4,\alpha_1-\alpha_2;-i\xi_1)
=\\=
\frac{\pi}{4}\frac{\cos\pi\frac{2\alpha_1+\alpha_3+\alpha_4}{2}-\cos\pi\frac{\alpha_3-\alpha_4}{2}}{\cos\pi\frac{\alpha_1+\alpha_2+\alpha_3+\alpha_4}{2}}
\int_{0}^{\infty}
\DU(1/y)\Phi_k\left(\frac{y}{1+y};\alpha_1,\alpha_2\right)
\frac{y^{-2+\alpha_1}}{(1+y)^{\alpha_1/2-\alpha_2/2}}\\\times
\HyGI\left(1-\alpha_1-\alpha_4,1-\alpha_1-\alpha_3,2-\alpha_1-\alpha_2-\alpha_3-\alpha_4;\frac{-1}{y}\right)dy\\+
\frac{\pi}{4}\frac{\cos\pi\frac{2\alpha_2+\alpha_3+\alpha_4}{2}+\cos\pi\frac{\alpha_3-\alpha_4}{2}}{\cos\pi\frac{\alpha_1+\alpha_2+\alpha_3+\alpha_4}{2}}
\int_{0}^{\infty}
\DU(1/y)\Phi_k\left(\frac{y}{1+y};\alpha_1,\alpha_2\right)
\frac{y^{-1-\alpha_2-\alpha_3-\alpha_4}}{(1+y)^{\alpha_1/2-\alpha_2/2}}\\\times
\HyGI\left(\alpha_2+\alpha_3,\alpha_2+\alpha_4,\alpha_1+\alpha_2+\alpha_3+\alpha_4;\frac{-1}{y}\right)dy.
\end{multline}
Let us first consider the second integral in \eqref{W21hat+-xi1 eq1}.
Using \eqref{2I1 to 3I2} it can be further splitted into the sum of two integrals:
\begin{multline}\label{W21hat+-xi1 int2eq1}
\int_{0}^{\infty}
\DU(1/y)\Phi_k\left(\frac{y}{1+y};\alpha_1,\alpha_2\right)
\frac{y^{-1-\alpha_2-\alpha_3-\alpha_4}}{(1+y)^{\alpha_1/2-\alpha_2/2}}\\\times
\HyGI\left(\alpha_2+\alpha_3,\alpha_2+\alpha_4,\alpha_1+\alpha_2+\alpha_3+\alpha_4;\frac{-1}{y}\right)dy=\\=
\frac{\Gamma(\alpha_2+\alpha_3)\Gamma(\alpha_2+\alpha_4)}{\Gamma(\alpha_1+\alpha_2+\alpha_3+\alpha_4)}
\int_{0}^{\infty}
\DU(1/y)\Phi_k\left(\frac{y}{1+y};\alpha_1,\alpha_2\right)
\frac{y^{-1-\alpha_2-\alpha_3-\alpha_4}}{(1+y)^{\alpha_1/2-\alpha_2/2}}dy\\-
\int_{0}^{\infty}
\DU(1/y)\Phi_k\left(\frac{y}{1+y};\alpha_1,\alpha_2\right)
\frac{y^{-2-\alpha_2-\alpha_3-\alpha_4}}{(1+y)^{\alpha_1/2-\alpha_2/2}}
\GenHyGI{3}{2}{1+\alpha_2+\alpha_3,1+\alpha_2+\alpha_4,1}{2,1+\alpha_1+\alpha_2+\alpha_3+\alpha_4}{\frac{-1}{y}}dy.
\end{multline}
Now we compute the first integral in \eqref{W21hat+-xi1 int2eq1}. Arguing in the same way as in Lemma \ref{W21-1lem}, we obtain
\begin{multline}\label{W21hat+-xi1 int2eq2}
\frac{\Gamma(\alpha_2+\alpha_3)\Gamma(\alpha_2+\alpha_4)}{\Gamma(\alpha_1+\alpha_2+\alpha_3+\alpha_4)}
\int_{0}^{\infty}
\DU(1/y)\Phi_k\left(\frac{y}{1+y};\alpha_1,\alpha_2\right)
\frac{y^{-1-\alpha_2-\alpha_3-\alpha_4}}{(1+y)^{\alpha_1/2-\alpha_2/2}}dy
=\\=
2(2\pi)^{\alpha_1+\alpha_2}\cos\pi\frac{\alpha_1+\alpha_2}{2}
\Gamma(\alpha_2+\alpha_3)\Gamma(\alpha_2+\alpha_4)\Gamma(\alpha_3+\alpha_4)
\frac{\Gamma(k-\alpha_2-\alpha_3-\alpha_4)\Gamma(k-\alpha_1)}{\Gamma(k+\alpha_2+\alpha_3+\alpha_4)\Gamma(k+\alpha_1)}+
O\left(\frac{1}{k^2}\right).
\end{multline}
To estimate the first integral in \eqref{W21hat+-xi1 eq1} and the second one in \eqref{W21hat+-xi1 int2eq1}, we make the change of variable $y=\sinh^{-2}\frac{\sqrt{x}}{2}$ and apply \eqref{Phik est}. The integral over $x>k^{\epsilon-2}$ is negligible since the $K$-Bessel function in \eqref{Phik est} decays exponentially. Using the estimate $K_{\nu}(x)\ll x^{-|\Re{\nu}|}$ (see \cite[10.27.2, 10.30.2]{HMF}), the remaining integral over $(0,k^{\epsilon-2})$ is bounded by $k^{\epsilon-2}$.
\end{proof}
\begin{lem}\label{Lem:hatW21-xi2}
For $|\alpha_j|\ll\epsilon$ and $\xi_2=\frac{1+\alpha_1+\alpha_2-\alpha_3-\alpha_4}{2}$ one has
\begin{multline}\label{hatW21 xi2}
\widehat{W}_{2,1}^{+}(\alpha_3-\alpha_4,\alpha_1-\alpha_2;-i\xi_2)+\widehat{W}_{2,1}^{-}(\alpha_3-\alpha_4,\alpha_1-\alpha_2;-i\xi_2)
=\\=
\frac{\pi}{2}(2\pi)^{-\alpha_1+\alpha_2}X_k(\alpha_1)
\Gamma(\alpha_3+\alpha_4)\Gamma(\alpha_3-\alpha_1)\Gamma(\alpha_4-\alpha_1)
\frac{\Gamma(k+\alpha_1-\alpha_3-\alpha_4)}{\Gamma(k-\alpha_1+\alpha_3+\alpha_4)}\\\times
\frac{\cos\pi\frac{\alpha_3+\alpha_4-2\alpha_2}{2}+\cos\pi\frac{\alpha_3-\alpha_4}{2}}{\cos\pi\frac{\alpha_1+\alpha_2-\alpha_3-\alpha_4}{2}}
\cos\pi\frac{\alpha_1+\alpha_2}{2}+O\left(\frac{1}{k^{2-\epsilon}}\right).
\end{multline}
\end{lem}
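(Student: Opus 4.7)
The plan is to mimic very closely the proof of Lemma \ref{Lem:hatW21-xi1}. First I would expand the sum $\widehat{W}_{2,1}^{+}(\alpha_3-\alpha_4,\alpha_1-\alpha_2;-i\xi_2)+\widehat{W}_{2,1}^{-}(\alpha_3-\alpha_4,\alpha_1-\alpha_2;-i\xi_2)$ using \eqref{hatW+ + hatW-}, \eqref{hatW0} and the definition \eqref{W21def}, so that the left-hand side becomes a linear combination of two integrals, the one coming from $\widehat{W}^{0}_{2,1}(\xi_2)$ and the one from $\widehat{W}^{0}_{2,1}(-\xi_2)$. A direct substitution of $\xi_2=(1+\alpha_1+\alpha_2-\alpha_3-\alpha_4)/2$ shows that at $t=-\xi_2$ the ${}_2\mathrm{I}_1$ kernel has parameters $(\alpha_3-\alpha_1,\alpha_4-\alpha_1,-\alpha_1-\alpha_2+\alpha_3+\alpha_4;-1/y)$, while at $t=\xi_2$ it has parameters $(1+\alpha_2-\alpha_4,1+\alpha_2-\alpha_3,2+\alpha_1+\alpha_2-\alpha_3-\alpha_4;-1/y)$. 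In particular the trigonometric prefactors coming from \eqref{hatW+ + hatW-} are computed from $\sin\pi\xi_2=\cos\pi(\alpha_1+\alpha_2-\alpha_3-\alpha_4)/2$ and $\sin\pi(\xi_2\pm\beta/2)$ for $\beta=\alpha_1-\alpha_2$, which produce exactly the combinations of cosines appearing in \eqref{hatW21 xi2}.

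Next I would isolate the main contribution, which as in Lemma \ref{Lem:hatW21-xi1} comes from the second integral, the one with $\widehat{W}^{0}_{2,1}(-\xi_2)$. Applying the identity \eqref{2I1 to 3I2} splits its ${}_2\mathrm{I}_1$ into the constant term
\[
\frac{\Gamma(\alpha_3-\alpha_1)\Gamma(\alpha_4-\alpha_1)}{\Gamma(-\alpha_1-\alpha_2+\alpha_3+\alpha_4)}
\]
and a ${}_3\mathrm{I}_2$ remainder. The constant term yields the integral
\[
\int_0^\infty \DU(1/y)\,\Phi_k\!\left(\tfrac{y}{1+y};\alpha_1,\alpha_2\right)\frac{y^{-1+\alpha_1-\alpha_3-\alpha_4}}{(1+y)^{\alpha_1/2-\alpha_2/2}}\,dy,
\]
which is then treated by mimicking Lemma \ref{W21-1lem}: write $\DU(1/y)=1-\DU(y)$, insert the Mellin-Barnes representation \eqref{Phik Mellin} for $\Phi_k$, evaluate the inner $y$-integral via Lemma \ref{Lemma beta integrals}, shift the line of integration to the left and pick up residues at $s_j=1-2k-2j$. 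The resulting sum is identified with ${}_2F_1(k-\alpha_1,k+\alpha_1-\alpha_3-\alpha_4;2k;1)$, which by the Gauss summation formula equals $\Gamma(2k)\Gamma(\alpha_3+\alpha_4)/(\Gamma(k+\alpha_1)\Gamma(k-\alpha_1+\alpha_3+\alpha_4))$. Using the relation $\Gamma(k-\alpha_1)/\Gamma(k+\alpha_1)=(2\pi)^{-2\alpha_1}X_k(\alpha_1)$ from \eqref{Xdef}, this produces exactly the main term stated in \eqref{hatW21 xi2}.

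The error analysis requires bounding three pieces: the ${}_3\mathrm{I}_2$ remainder, the $\DU(y)$ cut-off contribution, and the integral at $t=\xi_2$. All three have the same structure, namely an integral of $\DU(1/y)\,\Phi_k(y/(1+y);\alpha_1,\alpha_2)$ against a smooth weight. Applying the change of variable $y=\sinh^{-2}(\sqrt{x}/2)$ and the decay bound \eqref{Phik est}, the region $x>k^{-2+\epsilon}$ contributes negligibly because $K_{\alpha_1+\alpha_2}(u\sqrt{x})$ decays exponentially, while on $(0,k^{-2+\epsilon})$ the standard bound $K_\nu(x)\ll x^{-|\Re\nu|}$ from \cite[10.27.2, 10.30.2]{HMF} gives an overall contribution of $O(k^{\epsilon-2})$, in the same fashion as at the end of the proof of Lemma \ref{Lem:hatW21-xi1}.

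The main obstacle is the careful bookkeeping of the trigonometric factors: combining the prefactor $\pi/(4\sin\pi\xi_2)$ with the coefficient $\sin\pi(\xi_2+\beta/2)+\cos\pi\alpha/2$ from \eqref{hatW+ + hatW-} and with the $\cos\pi(\alpha_1+\alpha_2)/2$ emerging from the residue computation, one must verify that the end result collapses to the quotient of cosines displayed in \eqref{hatW21 xi2}. All the functional-equation and ratio simplifications are of the same type as those already carried out in Lemma \ref{Lem:hatW21-xi1}, so no new analytic input is needed beyond attention to the sign changes induced by replacing $\xi_1$ with $\xi_2$.
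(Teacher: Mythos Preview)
Your approach is correct and is precisely the direct route the paper itself says works: ``arguing in the same way as in Lemma \ref{Lem:hatW21-xi1} one can prove \eqref{hatW21 xi2}.'' Your identification of the ${}_2\mathrm{I}_1$ parameters at $\pm\xi_2$, the splitting via \eqref{2I1 to 3I2}, the Mellin--Barnes evaluation of the main integral through Gauss summation, and the error bounds via \eqref{Phik est} all match what the direct argument requires.

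That said, the paper also records a shorter alternative which you did not find: the functional identity
\[
X_k(\alpha_1)X_k(\alpha_2)\,\Phi_k(x;-\alpha_2,-\alpha_1)=\Phi_k(x;\alpha_1,\alpha_2)
\]
(an immediate consequence of \cite[15.8.1]{HMF}) shows that $W_{2,1}$ at parameters $(\alpha_1,\alpha_2)$ equals $X_k(\alpha_1)X_k(\alpha_2)$ times $W_{2,1}$ at parameters $(-\alpha_2,-\alpha_1)$. Under the substitution $\alpha_1\to-\alpha_2$, $\alpha_2\to-\alpha_1$ the point $\xi_1$ of Lemma \ref{Lem:hatW21-xi1} becomes $\xi_2$, so \eqref{hatW21 xi2} follows from \eqref{hatW21 xi1} by this substitution together with multiplication by $X_k(\alpha_1)X_k(\alpha_2)$. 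This bypasses all the integral computations entirely. Your direct computation has the merit of being self-contained and of verifying the answer independently, while the paper's symmetry trick is faster and makes the structural relation between the two lemmas transparent.
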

\begin{proof}
It follows from \eqref{hatW+ + hatW-}, \eqref{hatW0} and \eqref{W21def} that
\begin{multline}\label{W21hat+-xi2 eq1}
\widehat{W}_{2,1}^{+}(\alpha_3-\alpha_4,\alpha_1-\alpha_2;-i\xi_2)+\widehat{W}_{2,1}^{-}(\alpha_3-\alpha_4,\alpha_1-\alpha_2;-i\xi_2)
=\\=
\frac{\pi}{4}\frac{\cos\pi\frac{-2\alpha_2+\alpha_3+\alpha_4}{2}-\cos\pi\frac{\alpha_3-\alpha_4}{2}}{\cos\pi\frac{\alpha_1+\alpha_2-\alpha_3-\alpha_4}{2}}
\int_{0}^{\infty}
\DU(1/y)\Phi_k\left(\frac{y}{1+y};\alpha_1,\alpha_2\right)
\frac{y^{-2-\alpha_2}}{(1+y)^{\alpha_1/2-\alpha_2/2}}\\\times
\HyGI\left(1+\alpha_2-\alpha_4,1+\alpha_2-\alpha_3,2+\alpha_1+\alpha_2-\alpha_3-\alpha_4;\frac{-1}{y}\right)dy\\+
\frac{\pi}{4}\frac{\cos\pi\frac{-2\alpha_1+\alpha_3+\alpha_4}{2}+\cos\pi\frac{\alpha_3-\alpha_4}{2}}{\cos\pi\frac{\alpha_1+\alpha_2-\alpha_3-\alpha_4}{2}}
\int_{0}^{\infty}
\DU(1/y)\Phi_k\left(\frac{y}{1+y};\alpha_1,\alpha_2\right)
\frac{y^{-1+\alpha_1-\alpha_3-\alpha_4}}{(1+y)^{\alpha_1/2-\alpha_2/2}}\\\times
\HyGI\left(\alpha_3-\alpha_1,\alpha_4-\alpha_1,-\alpha_1-\alpha_2+\alpha_3+\alpha_4;\frac{-1}{y}\right)dy.
\end{multline}
Now arguing in the same way as in Lemma \ref{Lem:hatW21-xi1} one can prove \eqref{hatW21 xi2}.
Nevertheless, there is a simpler way to obtain \eqref{hatW21 xi2}, as we now show. Applying \cite[15.8.1]{HMF} one has
\begin{equation}\label{Phi(-)toPhi(+)}
X_k(\alpha_1)X_k(\alpha_2)\Phi_k\left(x;-\alpha_2,-\alpha_1\right)=\Phi_k\left(x;\alpha_1,\alpha_2\right).
\end{equation}
Therefore, making the change $\alpha_1\rightarrow-\alpha_2$, $\alpha_2\rightarrow-\alpha_1$ in \eqref{W21hat+-xi1 eq1} and multiplying the result by $X_k(\alpha_1)X_k(\alpha_2)$, we show that \eqref{W21hat+-xi2 eq1} holds. Hence to prove \eqref{hatW21 xi2} it is only required to perform this change of variables in
\eqref{hatW21 xi1} and multiply the result by $X_k(\alpha_1)X_k(\alpha_2)$.
\end{proof}


Now we are ready to provide an analytic continuation for $\E_c(\AlphaVec;W_{2,1})$ in the lemma below.

\begin{lem}\label{Ec21-lem1}
For $|\alpha_j|\ll\epsilon$ one has
\begin{equation}\label{Ec21}
\E_{c}(\AlphaVec;W_{2,1})=\Sok_{2,1}(\AlphaVec)+\term_{c}^{2}(\AlphaVec),
\end{equation}
where $\Sok_{2,1}(\AlphaVec)$ and $\term_{c}^{2}(\AlphaVec)$ are defined by \eqref{S21} and \eqref{Ec021def}, respectively.

\end{lem}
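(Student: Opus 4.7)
The plan is to follow exactly the template of the proof of Lemma~\ref{Ec11-lem1}, adapted to the $W_{2,1}$ setting. First, I would start from \eqref{EhdcW21 def} with $\ast=c$, substitute the definition \eqref{Ec def} of $E_c$, and exchange the order of summation so that the sum over $n$ becomes the Dirichlet series $\sum_{n}\sigma_{\alpha_1+\alpha_2}(n)\sigma_{2\xi}(n)n^{-1-\alpha_1-\alpha_3+\xi}$. Applying the Ramanujan identity \eqref{RamId} evaluates this sum as a ratio of four zeta functions divided by $\zeta(2+\alpha_1+\alpha_2+\alpha_3-\alpha_4+2\xi-\alpha_3+\alpha_4-\xi)$; the outer $\zeta(1+\alpha_3+\alpha_4)$ together with these factors, after some cancellation via $\zeta(1+2\xi)\zeta(1-2\xi)$ in the denominator of \eqref{Ec def}, reassembles into precisely the function $\ZE_{2,1}(\AlphaVec;\xi)$ defined in \eqref{ZE21def}. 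This identification yields, in the region of absolute convergence
\begin{equation}
\Re{\alpha_3}+\Re{\alpha_4}>1+|\Re{\alpha_1}+\Re{\alpha_2}|,\quad |\Re{\alpha_3}-\Re{\alpha_4}|<1+\Re{\alpha_1}-\Re{\alpha_2},
\end{equation}
the identity $\E_{c}(\AlphaVec;W_{2,1})=\term_{c}^{2}(\AlphaVec)$.

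Next, I would perform the analytic continuation to $|\alpha_j|\ll\epsilon$. The integrand $\ZE_{2,1}(\AlphaVec;\xi)$ has four poles coming from $\ZC(\alpha_3-\alpha_4,\alpha_1-\alpha_2,\xi)$ (which stay on the same side of the contour $\Re{\xi}=0$ throughout the deformation of the parameters) and four poles
\begin{equation}
\xi_1=\frac{1-\alpha_1-\alpha_2-\alpha_3-\alpha_4}{2},\quad
\xi_2=\frac{1+\alpha_1+\alpha_2-\alpha_3-\alpha_4}{2},\quad
\xi_3=-\xi_1,\quad \xi_4=-\xi_2,
\end{equation}
coming from the four outer zeta factors in \eqref{ZE21def}. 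These latter four poles cross the contour as the $\alpha_j$ shrink. Invoking the Sokhotski--Plemelj-type argument of \cite[Theorem~6.2]{BFIbero} exactly as in the proof of Lemma~\ref{Ec11-lem1}, the shifted integral produces $\term_{c}^{2}(\AlphaVec)$ plus a residual contribution $4(-1)^k(2\pi)^{\alpha_1-\alpha_2-1}\bigl(R_{2,1}(\xi_1)-R_{2,1}(\xi_3)+R_{2,1}(\xi_2)-R_{2,1}(\xi_4)\bigr)$, where each $R_{2,1}(\xi_k)$ is the residue of the integrand at $\xi_k$.

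Finally, I would evaluate the residues. Using \eqref{zeta functional} to convert the three remaining zeta factors at $\xi=\xi_1$ into $\RF(\alpha_2+\alpha_3)\RF(\alpha_2+\alpha_4)\RF(\alpha_3+\alpha_4)$, and the analogous computation at $\xi_2$, I substitute the explicit values \eqref{hatW21 xi1} and \eqref{hatW21 xi2}. The Gamma-function factors $\Gamma(\alpha_2+\alpha_3)\Gamma(\alpha_2+\alpha_4)\Gamma(\alpha_3+\alpha_4)$ in \eqref{hatW21 xi1} combine with those coming from $\RF$ via the reflection formula \cite[5.5.3]{HMF}, collapsing to $(2\pi)^{\ast}$ factors and leaving an overall $X_k(\alpha_1)$; the ratio $\Gamma(k-\alpha_2-\alpha_3-\alpha_4)/\Gamma(k+\alpha_2+\alpha_3+\alpha_4)$ supplies, via \eqref{XtoXXX}, the product $X_k(\alpha_2)X_k(\alpha_3)X_k(\alpha_4)$ needed to match $\Co(-\alpha_1,-\alpha_2,-\alpha_3,-\alpha_4)$. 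The trigonometric identity
\begin{equation}
\cos\pi\frac{\alpha_3+\alpha_4+2\alpha_2}{2}+\cos\pi\frac{\alpha_3-\alpha_4}{2}=2\cos\pi\frac{\alpha_2+\alpha_3}{2}\cos\pi\frac{\alpha_2+\alpha_4}{2}
\end{equation}
(and its analog at $\xi_2$ with $\alpha_2\mapsto -\alpha_1$) cancels the denominators that the reflection formulas produce, leaving precisely the first summand of $\Sok_{2,1}(\AlphaVec)$ from $R_{2,1}(\xi_1)-R_{2,1}(\xi_3)$ and the second summand from $R_{2,1}(\xi_2)-R_{2,1}(\xi_4)$. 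The main obstacle will be bookkeeping: tracking the cascade of Gamma reflections, $(2\pi)^{\ast}$ powers, and the $X_k(\cdot)$ factors so that the output assembles cleanly into $\Co(\pm\alpha_1,\pm\alpha_2,-\alpha_3,-\alpha_4)\ZF(\pm\alpha_1,\pm\alpha_2,-\alpha_3,-\alpha_4)$ with the trigonometric coefficients in \eqref{S21}; the $O(k^{\epsilon}/k^{2-\epsilon})$ error in \eqref{hatW21 xi1}--\eqref{hatW21 xi2} is absorbed harmlessly since it is smaller than the $O(k^{-1+\epsilon})$ total error in Theorem~\ref{thm:main}.
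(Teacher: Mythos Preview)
Your proposal is correct and follows essentially the same route as the paper: invoke the template of Lemma~\ref{Ec11-lem1}, use the Ramanujan identity to reassemble $\ZE_{2,1}$, analytically continue via the Sokhotski--Plemelj argument across the four crossing poles $\pm\xi_1,\pm\xi_2$, and evaluate the residues by inserting Lemmas~\ref{Lem:hatW21-xi1}--\ref{Lem:hatW21-xi2}, the functional equation, the $X_k$-approximations (your \eqref{XtoXXX} is what the paper records as \eqref{XtoXXX-2}--\eqref{XtoXXX-3}), and the cosine sum-to-product identity to recover $\Sok_{2,1}$. The paper abbreviates the residue sum directly as $8(2\pi)^{\alpha_1-\alpha_2-1}(R_{2,1}(\xi_1)+R_{2,1}(\xi_2))$ using the antisymmetry $R_{2,1}(-\xi_j)=-R_{2,1}(\xi_j)$, but this is the same computation you describe.
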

\begin{proof}
Arguing in the same way as in Lemma \ref{Ec11-lem1}, we obtain \eqref{S21} with
\begin{equation}\label{S21eq2}
\Sok_{2,1}(\AlphaVec)=
8(2\pi)^{\alpha_1-\alpha_2-1}\left(R_{2,1}(\xi_1)+R_{2,1}(\xi_2)\right),
\end{equation}
\begin{multline}\label{R21xi1}
R_{2,1}(\xi_1)=
\ZF(-\alpha_1,-\alpha_2,-\alpha_3,-\alpha_4)\RF(\alpha_2+\alpha_3)\RF(\alpha_2+\alpha_4)\RF(\alpha_3+\alpha_4)\\\times
\left(\widehat{W}_{2,1}^{+}(\alpha_3-\alpha_4,\alpha_1-\alpha_2;-i\xi_1)+\widehat{W}_{2,1}^{-}(\alpha_3+\alpha_4,\alpha_1-\alpha_2;-i\xi_1)\right),
\end{multline}
\begin{multline}\label{R21xi2}
R_{2,1}(\xi_2)=
\ZF(\alpha_1,\alpha_2,-\alpha_3,-\alpha_4)\RF(\alpha_4-\alpha_1)\RF(\alpha_3-\alpha_1)\RF(\alpha_3+\alpha_4)\\\times
\left(\widehat{W}_{2,1}^{+}(\alpha_3-\alpha_4,\alpha_1-\alpha_2;-i\xi_2)+\widehat{W}_{2,1}^{-}(\alpha_3-\alpha_4,\alpha_1-\alpha_2;-i\xi_2)\right),
\end{multline}
where
\begin{equation}\label{Ec21-poles}
\xi_1=\frac{1-\alpha_1-\alpha_2-\alpha_3-\alpha_4}{2},\quad
\xi_2=\frac{1+\alpha_1+\alpha_2-\alpha_3-\alpha_4}{2}.
\end{equation}
Using \eqref{Xdef} and \eqref{Xapprox} one has
\begin{equation}\label{XtoXXX-2}
\frac{\Gamma(k-\alpha_2-\alpha_3-\alpha_4)}{\Gamma(k+\alpha_2+\alpha_3+\alpha_4)}=(2\pi)^{-2\alpha_2-2\alpha_3-2\alpha_4}
X_{k}(\alpha_2)X_{k}(\alpha_3)X_{k}(\alpha_4)\left(1+O(k^{-1})\right),
\end{equation}
\begin{equation}\label{XtoXXX-3}
\frac{\Gamma(k+\alpha_1-\alpha_3-\alpha_4)}{\Gamma(k-\alpha_1+\alpha_3+\alpha_4)}=(2\pi)^{2\alpha_1-2\alpha_3-2\alpha_4}
\frac{X_{k}(\alpha_3)X_{k}(\alpha_4)}{X_{k}(\alpha_1)}\left(1+O(k^{-1})\right).
\end{equation}
Substituting \eqref{hatW21 xi1} into \eqref{R21xi1}, using \eqref{XtoXXX-2} and \eqref{zeta functional}, we show that
\begin{multline}\label{R21xi1eq2}
8(2\pi)^{\alpha_1-\alpha_2-1}R_{2,1}(\xi_1)=
\Co(-\alpha_1,-\alpha_2,-\alpha_3,-\alpha_4)\ZF(-\alpha_1,-\alpha_2,-\alpha_3,-\alpha_4)\\\times
\frac{\cos\pi\frac{\alpha_3+\alpha_4+2\alpha_2}{2}+\cos\pi\frac{\alpha_3-\alpha_4}{2}}
{4\cos\pi\frac{\alpha_2+\alpha_3}{2}\cos\pi\frac{\alpha_2+\alpha_4}{2}\cos\pi\frac{\alpha_3+\alpha_4}{2}}
\frac{\cos\pi\frac{\alpha_1+\alpha_2}{2}}{\cos\pi\frac{\alpha_1+\alpha_2+\alpha_3+\alpha_4}{2}}
+O\left(\frac{1}{k^{2-\epsilon}}\right).
\end{multline}
Substituting \eqref{hatW21 xi2} into \eqref{R21xi2}, applying \eqref{XtoXXX-3} and \eqref{zeta functional}, we conclude that
\begin{multline}\label{R21xi2eq2}
8(2\pi)^{\alpha_1-\alpha_2-1}R_{2,1}(\xi_2)=
\Co(\alpha_1,\alpha_2,-\alpha_3,-\alpha_4)\ZF(\alpha_1,\alpha_2,-\alpha_3,-\alpha_4)\\\times
\frac{\cos\pi\frac{\alpha_3+\alpha_4-2\alpha_1}{2}+\cos\pi\frac{\alpha_3-\alpha_4}{2}}
{4\cos\pi\frac{\alpha_3-\alpha_1}{2}\cos\pi\frac{\alpha_4-\alpha_1}{2}\cos\pi\frac{\alpha_3+\alpha_4}{2}}
\frac{\cos\pi\frac{\alpha_1+\alpha_2}{2}}{\cos\pi\frac{\alpha_1+\alpha_2-\alpha_3-\alpha_4}{2}}
+O\left(\frac{1}{k^{2-\epsilon}}\right).
\end{multline}
Substituting  \eqref{R21xi1eq2}, \eqref{R21xi2eq2} into \eqref{S21eq2} and using \cite[4.21.8]{HMF}, we prove the lemma.
\end{proof}
\section{Analysis of $\TD^{(2,2)}$}\label{sec:TD22}
In this section, we show that the term $\TD^{(2,2)}(\AlphaVec)$ does not contain any part of the main term of the fourth moment. More precisely, we prove that this term is negligible. 

Similarly to other cases, applying Theorem \ref{Th:BADP} to study the sum \eqref{TD22} one has the following decomposition:
\begin{equation}\label{TD22 to UEhEdEd}
\TD^{(2,2)}(\AlphaVec)=\U(\AlphaVec;W_{2,2})+\E_h(\AlphaVec;W_{2,2})+\E_d(\AlphaVec;W_{2,2})+\E_c(\AlphaVec;W_{2,2}).
\end{equation}
The main term $\U(\AlphaVec;W_{2,2})$ of $\TD^{(2,2)}(\AlphaVec)$  has the same structure as $\U(\AlphaVec;W_{2,1})$, see \eqref{UW21}.
It follows from \eqref{W22def} and  \eqref{Phik LGest}  that
\begin{equation}\label{W22 est}
W_{2,2}(\AlphaVec;x)\ll
\frac{\DU(1/x)e^{-(2k-1)\log(\sqrt{1+x}+\sqrt{x})}}{(1+x)^{1/4+\Re{\alpha_1}/2-\Re{\alpha_2}/2}x^{1/4+\Re{\alpha_1}/2+\Re{\alpha_2}/2}k^{1/2+\Re{\alpha_1}+\Re{\alpha_2}}}.
\end{equation}
Hence this function is negligible. Therefore, all integrals of the form like in \eqref{UW21} are negligible, and therefore, $\U(\AlphaVec;W_{2,2})$ is negligible too.
The same is true for the residue contribution $\Sok_{2,2}(\AlphaVec)$  arising after analytic continuation of $\E_c(\AlphaVec;W_{2,2})$. Therefore, we are left to estimate
$\E_{\ast}(\AlphaVec;W_{2,2})$ for $\ast \in\{h,d,c\}$.

The term $\E_{h}(\AlphaVec;W_{2,2})$ is also negligible. This follows from the integral representation   \eqref{hatWmtoPhi} and the estimates \eqref{Phik LGest}, \eqref{W22 est} since  $W_{2,2}(y)=0$ unless $y>x_0$.

The case of $\E_{d}(\AlphaVec;W_{2,2})$ can be treated along the line of the proof of Lemma  \ref{Ed11-lem} with the help of \eqref{W22 est}. After integration by parts, we obtain derivatives of the function $\Phi_k\left(\frac{1}{1+x};\alpha_1,\alpha_2\right)$. Being a hypergeometric function it satisfies a second order differential equation  with coefficients depending on $k$ polynomially. Therefore, all derivatives of $\Phi_k$ could be replaced by $\Phi_k$ and $\Phi'_k$ at the cost of the error $k^{A}$. To get rid of $\Phi'_k$, we integrate by parts once again. Finally, using \eqref{W22 est}, which shows the exponential decay in $k$, we prove the estimate
$\E_{d}(\AlphaVec;W_{2,2})\ll k^{-A}.$ The same arguments lead to $\E_{c}(\AlphaVec;W_{2,2})\ll k^{-A}.$

\section{Analysis of $\TD^{(3)}$}\label{sec:TD3}

\begin{lem}\label{lem:td3} For $|\alpha_j|\ll \epsilon$ one has
\begin{equation}
\TD^{(3)}(\AlphaVec)=\mainU_3(\AlphaVec)+\Sok_3(\AlphaVec)\\+
(-1)^kX_k(\alpha_2)\left(\term_{h}^{2}+\term_{d}^2+\term_c^{2}\right)(\alpha_1,-\alpha_2,\alpha_3,\alpha_4)+O(k^{\epsilon-2}),
\end{equation}
where $\term_{h}^{2}(\AlphaVec)$, $\term_{d}^{2}(\AlphaVec)$ and $\term_{c}^{2}(\AlphaVec)$ are given by \eqref{Eh21}, \eqref{Ed21} and \eqref{Ec021def}, respectively, and
\begin{multline}\label{U3 final}
\mainU_3(\AlphaVec):=
\Co(\alpha_1,\alpha_2,\alpha_3,-\alpha_4)\ZF(\alpha_1,\alpha_2,\alpha_3,-\alpha_4)
\frac{\cos\pi\frac{\alpha_1-\alpha_2}{2}}{2\cos\pi\frac{\alpha_4-\alpha_2}{2}\cos\pi\frac{\alpha_4-\alpha_1}{2}}\\+
\Co(\alpha_1,\alpha_2,-\alpha_3,\alpha_4)\ZF(\alpha_1,\alpha_2,-\alpha_3,\alpha_4)
\frac{\cos\pi\frac{\alpha_1-\alpha_2}{2}}{2\cos\pi\frac{\alpha_3-\alpha_1}{2}\cos\pi\frac{\alpha_3-\alpha_2}{2}}\\+
\Co(-\alpha_1,-\alpha_2,\alpha_3,-\alpha_4)\ZF(-\alpha_1,-\alpha_2,\alpha_3,-\alpha_4)
\frac{\cos\pi\frac{\alpha_1-\alpha_2}{2}}{2\cos\pi\frac{\alpha_4+\alpha_1}{2}\cos\pi\frac{\alpha_4+\alpha_2}{2}}\\+
\Co(-\alpha_1,-\alpha_2,-\alpha_3,\alpha_4)\ZF(-\alpha_1,-\alpha_2,-\alpha_3,\alpha_4)
\frac{\cos\pi\frac{\alpha_1-\alpha_2}{2}}{2\cos\pi\frac{\alpha_3+\alpha_1}{2}\cos\pi\frac{\alpha_3+\alpha_2}{2}},
\end{multline}
\begin{multline}\label{S3}
\Sok_{3}(\AlphaVec):=
\Co(-\alpha_1,\alpha_2,-\alpha_3,-\alpha_4)\ZF(-\alpha_1,\alpha_2,-\alpha_3,-\alpha_4)
\frac{\cos\pi\frac{\alpha_1-\alpha_2}{2}}{2\cos\pi\frac{\alpha_4+\alpha_3}{2}\cos\pi\frac{\alpha_1-\alpha_2+\alpha_3+\alpha_4}{2}}\\+
\Co(\alpha_1,-\alpha_2,-\alpha_3,-\alpha_4)\ZF(\alpha_1,-\alpha_2,-\alpha_3,-\alpha_4)
\frac{\cos\pi\frac{\alpha_1-\alpha_2}{2}}{2\cos\pi\frac{\alpha_4+\alpha_3}{2}\cos\pi\frac{\alpha_1-\alpha_2-\alpha_3-\alpha_4}{2}}.
\end{multline}
\end{lem}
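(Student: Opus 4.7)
The plan is to deduce this lemma directly from Lemma~\ref{lem:td21} via the identity~\eqref{TD3}, which gives $\TD^{(3)}(\alpha_1,\alpha_2,\alpha_3,\alpha_4)=(-1)^k X_k(\alpha_2)\TD^{(2)}(\alpha_1,-\alpha_2,\alpha_3,\alpha_4)$. First I would invoke the smooth decomposition~\eqref{TD1=TD11+TD12} at the shifted argument, writing $\TD^{(2)}(\alpha_1,-\alpha_2,\alpha_3,\alpha_4)=\TD^{(2,1)}(\alpha_1,-\alpha_2,\alpha_3,\alpha_4)+\TD^{(2,2)}(\alpha_1,-\alpha_2,\alpha_3,\alpha_4)$. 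The $\TD^{(2,2)}$ piece is negligible by the argument of Section~\ref{sec:TD22} (the weight $W_{2,2}$ decays exponentially in $k$ thanks to~\eqref{Phik LGest}), so it contributes only $O(k^{-A})$ to $\TD^{(3)}(\AlphaVec)$. Applying Lemma~\ref{lem:td21} to $\TD^{(2,1)}(\alpha_1,-\alpha_2,\alpha_3,\alpha_4)$ and tracking the implicit error terms from~\eqref{W21 int1}--\eqref{W21 int4} (each of order $k^{-2}$) as well as from Lemmas~\ref{Lem:hatW21-xi1} and~\ref{Lem:hatW21-xi2} (each of order $k^{\epsilon-2}$), one obtains
\begin{equation*}
\TD^{(3)}(\AlphaVec)=(-1)^k X_k(\alpha_2)\bigl(\mainU_{2,1}+\Sok_{2,1}+\term_{d}^{2}+\term_{h}^{2}+\term_{c}^{2}\bigr)(\alpha_1,-\alpha_2,\alpha_3,\alpha_4)+O(k^{\epsilon-2}),
\end{equation*}
since $X_k(\alpha_2)=1+O(k^{-1})$ by~\eqref{Xapprox} for $|\alpha_2|\ll\epsilon$.

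The last three summands above are precisely the quantities appearing in the lemma statement, so the remaining task is to establish the two algebraic identities $(-1)^k X_k(\alpha_2)\,\mainU_{2,1}(\alpha_1,-\alpha_2,\alpha_3,\alpha_4)=\mainU_{3}(\AlphaVec)$ and $(-1)^k X_k(\alpha_2)\,\Sok_{2,1}(\alpha_1,-\alpha_2,\alpha_3,\alpha_4)=\Sok_{3}(\AlphaVec)$. For this I would invoke the sign-flipping rule already recorded in Section~\ref{sec:TD12}: the combined operation ``substitute $\alpha_2\mapsto-\alpha_2$ and multiply by $(-1)^k X_k(\alpha_2)$'' sends each factor $\Co(\cdots,\epsilon_2\alpha_2,\cdots)$ to $\Co(\cdots,-\epsilon_2\alpha_2,\cdots)$ (using $X_k(\alpha_2)X_k(-\alpha_2)=1$ from~\eqref{Xdef}), while $\ZF(\cdots,\epsilon_2\alpha_2,\cdots)$ and every cosine involving $\alpha_2$ pick up only the sign change $\alpha_2\mapsto-\alpha_2$ (as they contain no $X_k$ factor).

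Applying this rule term-by-term to the four summands of~\eqref{UW21 final} reproduces the four summands of~\eqref{U3 final}; for instance, the first summand transforms via $\Co(\alpha_1,-\alpha_2,\alpha_3,-\alpha_4)\mapsto\Co(\alpha_1,\alpha_2,\alpha_3,-\alpha_4)$, $\ZF(\alpha_1,-\alpha_2,\alpha_3,-\alpha_4)\mapsto\ZF(\alpha_1,\alpha_2,\alpha_3,-\alpha_4)$, $\cos\pi\tfrac{\alpha_1+\alpha_2}{2}\mapsto\cos\pi\tfrac{\alpha_1-\alpha_2}{2}$, and $\cos\pi\tfrac{\alpha_4+\alpha_2}{2}\mapsto\cos\pi\tfrac{\alpha_4-\alpha_2}{2}$, which is exactly the first term of~\eqref{U3 final}. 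The other three summands of $\mainU$ and the two summands of~\eqref{S21} are handled identically, yielding~\eqref{S3}. The main obstacle is purely bookkeeping: one must carefully keep track of how $\epsilon_2$ flips inside every $\Co$ and how $\alpha_2$ flips inside every $\ZF$ and every cosine factor, but there are no analytic subtleties beyond those already settled in the proof of Lemma~\ref{lem:td21}.
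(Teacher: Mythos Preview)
Your proposal is correct and follows essentially the same approach as the paper: reduce $\TD^{(3)}$ to $\TD^{(2)}$ via~\eqref{TD3}, discard the negligible $\TD^{(2,2)}$ piece using Section~\ref{sec:TD22}, apply Lemma~\ref{lem:td21} at the shifted argument, and then invoke the $\alpha_2\mapsto-\alpha_2$ sign-flipping rule from Section~\ref{sec:TD12} to convert $\mainU_{2,1}$ and $\Sok_{2,1}$ into $\mainU_3$ and $\Sok_3$. Your tracking of the error terms and your explicit worked example for the first summand of~\eqref{UW21 final} are, if anything, slightly more detailed than the paper's terse treatment.
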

\begin{proof}
It follows from \eqref{4mom to 2mom} and \eqref{ET2 3} that
\begin{equation}\label{TD3 to TD2}
\TD^{(3)}(\alpha_1,\alpha_2,\alpha_3,\alpha_4)=
(-1)^kX_k(\alpha_2)\TD^{(2)}(\alpha_1,-\alpha_2,\alpha_3,\alpha_4).
\end{equation}
Therefore, using  \eqref{TD21 to UEhEdEd}, one has
\begin{equation}\label{TD3 to UEhEdEd}
\TD^{(3)}(\AlphaVec)=\U_3(\AlphaVec)+\Sok_3(\AlphaVec)\\+
(-1)^kX_k(\alpha_2)\left(\E_h+\E_d+\E_c^{(0)}\right)(\alpha_1,-\alpha_2,\alpha_3,\alpha_4;W_{2,1})+O(k^{\epsilon-2}),
\end{equation}
where
\begin{equation}\label{U3 to U21}
\U_3(\AlphaVec)=(-1)^kX_k(\alpha_2)\U(\alpha_1,-\alpha_2,\alpha_3,\alpha_4;W_{2,1}),
\end{equation}
and (see \eqref{Ec21})
\begin{equation}\label{S3 to S21}
\Sok_3(\AlphaVec)=(-1)^kX_k(\alpha_2)\Sok_{2,1}(\alpha_1,-\alpha_2,\alpha_3,\alpha_4).
\end{equation}

In the beginning of Section \ref{sec:TD12}, it was shown that changing the sign of $\epsilon_2$ in
$\Co(\epsilon_1\alpha_1,\epsilon_2\alpha_2,\epsilon_3\alpha_3,\epsilon_4\alpha_4)$  and then multiplying by $(-1)^kX_k(\alpha_2)$, we obtain
$\Co(\epsilon_1\alpha_1,-\epsilon_2\alpha_2,\epsilon_3\alpha_3,\epsilon_4\alpha_4)$. Therefore, using \eqref{UW21 final}, one has $\U_3(\AlphaVec)=\mainU_{3}(\AlphaVec)$.
Applying \eqref{S21}, we show that \eqref{S3} holds.

\end{proof}

\section{Proof of Theorem \ref{thm:main}}\label{sec:proofTh}

In order to prove the main theorem, we use the formula \eqref{4mom to 2mom} for the fourth moment together with the decomposition \eqref{TD1=TD11+TD12}
and Lemmas \ref{lem:td11}, \ref{lem:td12}, \ref{lem:td21} and \ref{lem:td3}. Further, note that the contribution of the term $\TD^{(2,2)}(\AlphaVec)$ to the fourth moment is negligible according to Section \ref{sec:TD22}.

The final step is to rearrange slightly all main terms.

Summing \eqref{UW12 final} and \eqref{UW21 final}, and using \cite[4.21.16]{HMF}, we infer
\begin{multline}\label{UW12+UW21}
\mainU_{1,2}(\AlphaVec)+\mainU_{2,1}(\AlphaVec)=
\Co(\alpha_1,-\alpha_2,\alpha_3,-\alpha_4)\ZF(\alpha_1,-\alpha_2,\alpha_3,-\alpha_4)
\\+
\Co(\alpha_1,-\alpha_2,-\alpha_3,\alpha_4)\ZF(\alpha_1,-\alpha_2,-\alpha_3,\alpha_4)+
\Co(-\alpha_1,\alpha_2,\alpha_3,-\alpha_4)\ZF(-\alpha_1,\alpha_2,\alpha_3,-\alpha_4)
\\+
\Co(-\alpha_1,\alpha_2,-\alpha_3,\alpha_4)\ZF(-\alpha_1,\alpha_2,-\alpha_3,\alpha_4).
\end{multline}

Similarly, summing \eqref{S12} and \eqref{S21}, and applying \cite[4.21.16]{HMF}, we prove that
\begin{multline}\label{S12+S21}
\Sok_{1,2}(\AlphaVec)+\Sok_{2,1}(\AlphaVec)=
\Co(-\alpha_1,-\alpha_2,-\alpha_3,-\alpha_4)\ZF(-\alpha_1,-\alpha_2,-\alpha_3,-\alpha_4)\\
+\Co(\alpha_1,\alpha_2,-\alpha_3,-\alpha_4)\ZF(\alpha_1,\alpha_2,-\alpha_3,-\alpha_4).
\end{multline}

Now let us sum  \eqref{UW11 final} and  \eqref{U3 final} with the help of \cite[4.21.16]{HMF}. This gives
\begin{multline}\label{UW11+U3}
\mainU_{1,1}(\AlphaVec)+\mainU_3(\AlphaVec)=
\Co(\alpha_1,\alpha_2,\alpha_3,-\alpha_4)\ZF(\alpha_1,\alpha_2,\alpha_3,-\alpha_4)
\\+
\Co(\alpha_1,\alpha_2,-\alpha_3,\alpha_4)\ZF(\alpha_1,\alpha_2,-\alpha_3,\alpha_4)+
\Co(-\alpha_1,-\alpha_2,\alpha_3,-\alpha_4)\ZF(-\alpha_1,-\alpha_2,\alpha_3,-\alpha_4)
\\+
\Co(-\alpha_1,-\alpha_2,-\alpha_3,\alpha_4)\ZF(-\alpha_1,-\alpha_2,-\alpha_3,\alpha_4).
\end{multline}
Summing \eqref{S11} and \eqref{S3}, and applying \cite[4.21.16]{HMF}, we conclude that
\begin{multline}\label{S11+S3}
\Sok_{11}(\AlphaVec)+\Sok_{3}(\AlphaVec)=
\Co(-\alpha_1,\alpha_2,-\alpha_3,-\alpha_4)\ZF(-\alpha_1,\alpha_2,-\alpha_3,-\alpha_4)\\+
\Co(\alpha_1,-\alpha_2,-\alpha_3,-\alpha_4)\ZF(\alpha_1,-\alpha_2,-\alpha_3,-\alpha_4).
\end{multline}

Finally, it follows from \eqref{MT(a1,a2,a3,a4) conj}, \eqref{MT2momto4mom}, \eqref{UW12+UW21}, \eqref{S12+S21}, \eqref{UW11+U3} and \eqref{S11+S3} that
\begin{multline}\label{sum of all MT}
\MT_{2,4}(\AlphaVec)+\mainU_{1,2}(\AlphaVec)+\mainU_{2,1}(\AlphaVec)+\Sok_{1,2}(\AlphaVec)+\Sok_{2,1}(\AlphaVec)+
\mainU_{1,1}(\AlphaVec)+\mainU_3(\AlphaVec)+\Sok_{1,1}(\AlphaVec)+\Sok_{3}(\AlphaVec)\\=\MT_4(\alpha_1,\alpha_2,\alpha_3,\alpha_4).
\end{multline}


\section{Appendix. Kuznetsov's proof of Theorem \ref{thm:4thmomkuznetsov}}
Let us take $\alpha_1=\alpha_2=\alpha_3=\alpha_4=0$ (consequently, we may delete  dependence on $\alpha$ in all formulas) and try to estimate the right-hand side of
\eqref{4mom result}.
First, we show that the terms  $\E_h(W_{1,1})$ and $\E_h(W_{2,1})$ are negligible. This follows from the result below.

\begin{lem}\label{EhZERO-lem1}
For $m>2$ the following estimate holds:
\begin{equation}\label{hatWm11 12 ZEROest}
\widehat{W}_{1,1}(m)+\widehat{W}_{2,1}(m)\ll\frac{1}{k^2m^A},
\end{equation}
and therefore,
\begin{equation}\label{EhW11W12 est}
\term_{h}^{1}\ll\frac{1}{k^2},\quad \term_{h}^{2}\ll\frac{1}{k^2}.
\end{equation}
\end{lem}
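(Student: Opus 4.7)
The plan is to change variables in order to reduce both transforms to integrals over a compact subinterval of $(0,1)$ bounded away from $1$, then to show that $\widehat{W}_{2,1}(m)$ is exponentially small in $k$ while $\widehat{W}_{1,1}(m) \ll k^{-5/2} m^{-A}$ by two integrations by parts using the differential equation of Lemma~\ref{lem:difphik}.

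First I would change variables $x = 1/(1+y)$ in the representation of Lemma~\ref{lem:hatWmtoPhi} at $\alpha=\beta=0$, obtaining
\begin{equation*}
\widehat{W}_{1,1}(m) = \frac{\pi(-1)^m}{4}\int_0^1 \DU(x/(1-x))\frac{\phi_k(x)\Phi_m(x)}{x}\,dx
\end{equation*}
and a similar expression for $\widehat{W}_{2,1}(m)$ with $\Phi_k(1-x)/[x(1-x)]$ in place of $\phi_k(x)/x$. The cut-off $\DU(x/(1-x))$ is supported on $x\in[0,c+\epsilon]$ with $c=1/(1+x_0)<1$, so on the support $x$ is bounded away from $1$ and Corollary~\ref{cor:Phik est} gives $\Phi_m(x)\ll m^{-A}$ for any $A$, since $\log((1+\sqrt{1-x})/\sqrt{x})\ge\log((1+\sqrt{1-c})/\sqrt{c})>0$ once $x$ is bounded below.

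The $k$-decay of $\widehat{W}_{2,1}(m)$ is then immediate: the factor $\Phi_k(1-x)$ is itself exponentially small in $k$ on the relevant range. For $x\ge k^{-2+\epsilon}$, \eqref{Phik0 LGest0} supplies an exponential $\exp(-(2k-1)\log((1+\sqrt{x})/\sqrt{1-x}))$ that dominates all algebraic factors; for $x<k^{-2+\epsilon}$ the expansion $\Phi_m(x)\sim x^m$ with $m\ge 6$ reduces the contribution to $O(k^{-12})$. Hence $\widehat{W}_{2,1}(m)\ll k^{-A}m^{-A}$, and it remains to bound $\widehat{W}_{1,1}(m)$. For this, write $\phi_k(x)=2Y(x)/\sqrt{x(1-x)}$ where $Y(x)=\sqrt{x(1-x)}\tilde{\phi}_k(x;0,0)$ satisfies $Y''=-\De(x;0,0)Y$ with $\De(x;0,0)\asymp k^2/(x(1-x))$ on compact subsets of $(0,1)$. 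Integration by parts based on the identity $Y=-Y''/\De$ can be performed twice with vanishing boundary terms (at $x=c+\epsilon$ because $\DU$ vanishes, and at $x=0$ because $\Phi_m(x)\sim x^m$ with $m\ge 6$ dominates the power singularities arising from $Y\sim\sqrt{x}$ and $Y'\sim 1/\sqrt{x}$). Each IBP step gains a factor of $\De^{-1}\asymp x(1-x)/k^2$, while differentiating $\Phi_m$ via its hypergeometric ODE replaces second derivatives by $m^2\Phi_m/[x^2(1-x)]$, producing only polynomial growth in $m$ that is absorbed by the exponential smallness of $\Phi_m$. Combined with $|Y(x)|\ll k^{-1/2}(x(1-x))^{1/4}$ from Lemma~\ref{lem:phikest}, two integrations by parts yield $\widehat{W}_{1,1}(m)\ll k^{-5/2}m^{-A}$, which implies \eqref{hatWm11 12 ZEROest}.

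Finally, \eqref{EhW11W12 est} follows by substituting \eqref{hatWm11 12 ZEROest} into \eqref{Eh11} and \eqref{Eh21} at $\alpha_j=0$, using the convexity bound $L_f(1/2)\ll m^{1/4+\epsilon}$ together with the standard estimate $\sum_{f\in H_{2m}}\omega_f\ll 1$, and observing that $\sum_{m\ge 6}m\cdot m^{1+\epsilon}\cdot k^{-2}m^{-A}$ converges for $A$ large enough. The main obstacle is the careful bookkeeping in the IBP scheme: derivatives of $\Phi_m$ grow polynomially in $m$ with each differentiation, while derivatives of $\DU(x/(1-x))$ grow as $x$ approaches $c$, so one must verify that these growing factors do not erode the $k^{-2}$ gain from $\De^{-1}$ before the exponential smallness of $\Phi_m$ takes effect.
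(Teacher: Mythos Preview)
Your proposal is correct and follows essentially the same route as the paper: the same change of variable $x=1/(1+y)$, the same use of the exponential decay of $\Phi_k(1-x)$ for $\widehat{W}_{2,1}$, and the same two integrations by parts based on the differential equation of Lemma~\ref{lem:difphik} for $\widehat{W}_{1,1}$, with $\Phi_m''$ replaced via its own ODE. One notational slip: you write $Y(x)=\sqrt{x(1-x)}\,\tilde{\phi}_k(x;0,0)$, but $\tilde{\phi}_k(x;\alpha_1,\alpha_2)$ has a pole at $\alpha_1=\alpha_2$ from the factor $1/\sin\pi\frac{\alpha_1-\alpha_2}{2}$ in \eqref{eq:tildf}; what satisfies the ODE \eqref{Ydifeq} at $\alpha_1=\alpha_2=0$ is $\y_k(x)=\sqrt{x(1-x)}\,\phi_k(x)$ (the sum in \eqref{phik}, where the singularities cancel), as the paper uses in \eqref{Yy def}--\eqref{Ykdifeq}.
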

\begin{proof}
Let us start with estimating $\widehat{W}_{2,1}(m)$. It follows from  \eqref{hatWmtoPhi}, \eqref{W21def} and  \eqref{DU def} that
\begin{multline}\label{hatW21m-eq1}
\widehat{W}_{2,1}(m)\ll
\int_{x_0}^{\infty}\frac{\DU(1/y)}{y}\Phi_k\left(\frac{y}{1+y}\right)\Phi_m\left(\frac{1}{1+y}\right)dy\\\ll
\int_{x_0/(1+x_0)}^{1}\frac{\DU(1/x-1)}{x(1-x)}\Phi_k\left(x\right)\Phi_m\left(1-x\right)dx.
\end{multline}
We split the integral into two: the first one over $x_0/(1+x_0)<x<1-k^{-\delta}$ and the second one over
$1-k^{-\delta}<x<1$ with $1<\delta<2$. The first integral is negligible by \eqref{Phik0 LGest0}. To bound the second integral we apply  \eqref{Phik0 LGest0} and \eqref{Phik0 LGest1}. This yields
\begin{equation}\label{hatW21m-eq2}
\widehat{W}_{2,1}(m)\ll\frac{1}{(km)^{A}}+\frac{1}{m^Ak^{\delta(m-1)}}.
\end{equation}
The case of $\widehat{W}_{1,1}(m)$ is slightly more complicated.
It follows from  \eqref{hatWmtoPhi}, \eqref{W11def} and  \eqref{DU def} that
\begin{multline}\label{hatW11m-eq1}
\widehat{W}_{1,1}(m)\ll
\int_{x_0}^{\infty}\frac{\DU(1/y)}{1+y}\phi_k\left(\frac{1}{1+y}\right)\Phi_m\left(\frac{1}{1+y}\right)dy
\int_{0}^{(1+x_0)^{-1}}\DU\left(\frac{x}{1-x}\right)\phi_k\left(x\right)\Phi_m\left(x\right)\frac{dx}{x}.
\end{multline}
We smoothly split the integral into two: the first one over $0<x<k^{-\delta}$ and the second over
$k^{-\delta}<x<1/(1+x_0)$ with some $1<\delta<2$. Using \eqref{Phik0 LGest0} and Lemma \ref{lem:phikest}, we estimate the first one trivially as
\begin{equation}\label{hatW11m-eq2}
\int_{0}^{1}\DU\left(\frac{x}{k^{-\delta}}\right)\phi_k\left(x\right)\Phi_m\left(x\right)\frac{dx}{x}\ll\frac{1}{m^Ak^{\delta(m-1)}}.
\end{equation}
Now let us estimate the integral over $k^{-\delta}<x<1/(1+x_0)$. Define
\begin{equation}\label{Yy def}
\y_k(x)=\sqrt{x(1-x)}\phi_k(x),\quad
\Y_m(x)=\sqrt{1-x}\Phi_m(x).
\end{equation}
These functions satisfy the  differential equations (see \cite[Corollary 5.6, Lemma 5.15]{BFJEMS}):
\begin{equation}\label{Ykdifeq}
\y_k''(x)+\left(\frac{1}{4x^2(1-x)^2}+\frac{k(k-1)}{x(1-x)} \right)\y_k(x)=0,
\end{equation}
\begin{equation}\label{ymdifeq}
\Y_m''(x)-\left(\frac{(m-1/2)^2}{x^2(1-x)}-\frac{1}{4x^2(1-x)^2}+\frac{1}{4x(1-x)} \right)\Y_m(x)=0.
\end{equation}
Using \eqref{Ykdifeq} and integrating by parts twice, one has
\begin{multline}\label{hatW11m-eq3}
\int_{0}^{(1+x_0)^{-1}}\frac{\partial^2}{\partial x^2}\left(
\DU\left(\frac{k^{-\delta}}{x}\right)
\DU\left(\frac{x}{1-x}\right)\frac{\Y_m\left(x\right)x^{1/2}(1-x)}
{\left(\frac{1}{4}+k(k-1)x(1-x) \right)}
\right)\y_k(x)dx.
\end{multline}
After lengthy but straightforward computations, it turns out that the largest contribution comes from the second derivative of $\Y_m(x)$. Applying
\eqref{ymdifeq}, followed by  \eqref{Phik0 LGest0} and Lemma \ref{lem:phikest}, one has
\begin{multline}\label{hatW11m-eq4}
\int_{0}^{(1+x_0)^{-1}}
\DU\left(\frac{k^{-\delta}}{x}\right)
\DU\left(\frac{x}{1-x}\right)\frac{\Y_m(x)\y_k(x)x^{1/2}(1-x)}
{\left(\frac{1}{4}+k(k-1)x(1-x) \right)}\frac{(m-1/2)^2}{x^2(1-x)}dx\\\ll
\frac{m^2}{k^2}\int_{k^{-\delta}}^{(1+x_0)^{-1}}\frac{\phi_k(x)\Phi_m(x)}{x^2}dx\ll\frac{1}{k^2m^A}.
\end{multline}
Using \eqref{hatW11m-eq2} and \eqref{hatW11m-eq4}, we show that
\begin{equation}\label{hatW11m-eq5}
\widehat{W}_{1,1}(m)\ll\frac{1}{k^2m^{A}}+\frac{1}{m^Ak^{\delta(m-1)}}.
\end{equation}
Now \eqref{hatWm11 12 ZEROest} follows from \eqref{hatW21m-eq2} and \eqref{hatW11m-eq5}.
\end{proof}
The next step is to prove that the terms $\term_{d}^{2}$ and $\term_{c}^{2}$ are also small.
\begin{lem}\label{Ed21ZERO-lem1}
One has
\begin{equation}\label{hatW210 est1}
\widehat{W}_{2,1}^{0}(ir)\ll\frac{1}{k^{1-\epsilon}r^{5/2}},
\end{equation}
and therefore,
\begin{equation}\label{EdEC21ZEROest}
\term_{d}^{2}+\term_{c}^{2}\ll\frac{1}{k^{1-\epsilon}}.
\end{equation}
\end{lem}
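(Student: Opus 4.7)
The plan is to first establish the pointwise bound $\widehat{W}_{2,1}^{0}(ir)\ll k^{-1+\epsilon}r^{-5/2}$, from which the estimate for $\term_{d}^{2}+\term_{c}^{2}$ follows by standard spectral considerations. Setting all $\alpha_j=0$ in \eqref{W21def} reduces $W_{2,1}(y)$ to $\DU(1/y)\Phi_k(y/(1+y))/y$. Inserting this into \eqref{hatW0} and applying Lemma~\ref{lem:W0 2F1} (with $\alpha=\beta=0$ and $N$ sufficiently large), together with the standard Stirling consequence $|\Gamma(1/2+ir)^{2}/\Gamma(1+2ir)|\ll(1+|r|)^{-1/2}$, I reduce the problem to controlling an oscillatory integral essentially of the shape
\begin{equation*}
(1+|r|)^{-1/2}\int_{0}^{\infty}\frac{\DU(1/y)\,\Phi_k(y/(1+y))}{y^{5/4}(1+y)^{1/4}}\bigl(\sqrt{1+y}+\sqrt{y}\bigr)^{-2ir}\,dy,
\end{equation*}
plus lower-order terms arising from the $r^{-n}$-coefficients in \eqref{W02F1 asympt} and a negligible error.

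Next I would substitute $y=1/\sinh^{2}(\sqrt{\xi}/2)$, which maps $y/(1+y)$ to $1/\cosh^{2}(\sqrt{\xi}/2)$ and converts the phase factor neatly into $\tanh(\sqrt{\xi}/4)^{2ir}$. On this side Lemma~\ref{lem:Phikest} provides the clean estimate $\Phi_k(y/(1+y))\ll\xi^{1/4}K_0(u\sqrt{\xi})/\sinh^{1/2}\!\sqrt{\xi}$ with $u=k-1/2$, and the exponential decay of $K_0$ restricts the effective range to $\xi\ll k^{-2+\epsilon}$. On this range $\tanh(\sqrt{\xi}/4)\sim\sqrt{\xi}/4$, so the oscillatory factor behaves like the Mellin kernel $\xi^{ir}$ up to a smooth bounded factor, and a direct size estimate using the substitution $t=u\sqrt{\xi}$ and $\int_{0}^{\infty}K_0(t)\,dt<\infty$ already extracts the $k^{-1+\epsilon}$ gain. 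To recover the polynomial decay $r^{-5/2}$ I would integrate by parts twice against $\xi^{ir}=d(\xi^{1+ir}/(1+ir))/d\xi$; each IBP contributes a factor $(1+|r|)^{-1}$, and the key observation is that $\partial_{\xi}K_0(u\sqrt{\xi})=-uK_1(u\sqrt{\xi})/(2\sqrt{\xi})$, so the combination $\xi\cdot\partial_\xi K_0$ reassembles as $tK_1(t)/2$ in the variable $t=u\sqrt{\xi}$, which is uniformly bounded and integrable. Hence no additional power of $k$ is lost across the two IBPs.

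Given the pointwise bound, the identities \eqref{hatW+} and \eqref{hatW-} with $\alpha=\beta=0$ yield $|\widehat{W}_{2,1}^{\pm}(t)|\ll k^{-1+\epsilon}(1+|t|)^{-5/2}$ for real $t$. The spectral sum in \eqref{Ed21} then converges absolutely by the classical Maass form fourth moment estimate $\sum_{t_j<T}\omega_j L_j(1/2)^{4}\ll T^{2+\epsilon}$ combined with the $(1+t_j)^{-5/2}$ decay, summed dyadically in $T$; the contour integral in \eqref{Ec021def} converges by known moment bounds for the Riemann zeta function on the critical line combined with the same decay; both evaluate to $O(k^{-1+\epsilon})$. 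The main obstacle will be executing the two integrations by parts uniformly near the singularity of the amplitude at $\xi=0$, where $K_0(u\sqrt{\xi})\sim\log(1/(u\sqrt{\xi}))$ and higher derivatives are unbounded. I would handle this by truncating a tiny region $\xi\ll(ur)^{-2}$ on which the integrand is bounded directly by the logarithmic growth of $K_0$, and performing the IBPs only on the complement, where $u\sqrt{\xi}\gtrsim 1/r$ renders all derivative bounds uniform in $k$.
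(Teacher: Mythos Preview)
Your overall shape—replace the $\HyGI$ factor by the asymptotic \eqref{W02F1 asympt}, then integrate by parts twice to gain $r^{-2}$ on top of the $r^{-1/2}$ from Stirling—is exactly the route the paper takes, and your deduction of \eqref{EdEC21ZEROest} from \eqref{hatW210 est1} via the fourth-moment bound for $L_j(1/2)$ and the eighth-moment bound for $\zeta$ is also what the paper does. But the execution of the integration by parts has a genuine gap.

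Lemma~\ref{lem:Phikest} gives only the \emph{upper bound} $\Phi_k\ll\xi^{1/4}K_0(u\sqrt{\xi})/\sinh^{1/2}\sqrt{\xi}$, not an asymptotic identity. When you integrate by parts you must differentiate the actual integrand, which contains $\Phi_k$ itself; a pointwise bound $\Phi_k\ll K_0$ says nothing about $\Phi_k'$ or $\Phi_k''$. Your observation that $\xi\,\partial_\xi K_0(u\sqrt{\xi})=-\tfrac12 tK_1(t)$ is correct and would be exactly what is needed \emph{if} $\Phi_k$ were literally a multiple of $K_0(u\sqrt{\xi})$, but it is not. To salvage this line you would need the full Liouville--Green expansion hidden in the proof of Lemma~\ref{lem:Phikest} (equation \eqref{eq:phiklim}), together with uniform control on the $\xi$-derivatives of the coefficient functions $A_K,B_K$ and of the truncation error—none of which is stated. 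The paper bypasses this entirely: it stays in the variable $x=y/(1+y)$, sets $\Y_k(x)=\sqrt{1-x}\,\Phi_k(x)$, and integrates by parts twice against the oscillating factor $((1+\sqrt{x})/\sqrt{1-x})^{-2ir}$. The resulting $\Y_k''$ is then eliminated \emph{exactly} via the differential equation \eqref{ymdifeq}, which replaces it by $(k^2/x^2)\Y_k$ plus harmless terms; only at that final stage is the size estimate on $\Phi_k$ from Corollary~\ref{cor:Phik est} applied. This ODE substitution is the missing idea that lets one gain $r^{-2}$ without ever needing derivative control on an approximation.

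A secondary point: even granting an asymptotic for $\Phi_k$, integrating by parts against $\xi^{ir}$ rather than against the full phase $\tanh(\sqrt{\xi}/4)^{2ir}$ does not work as written. The leftover factor $(\tanh(\sqrt{\xi}/4)/(\sqrt{\xi}/4))^{2ir}$ is bounded, but its $\xi$-derivative is of size $O(r)$ (since $\partial_\xi\log(\tanh s/s)\big|_{s=\sqrt{\xi}/4}$ is bounded away from zero), which cancels the $1/r$ gained. You must integrate by parts against the full oscillatory phase, whose derivative is $\sim r/\xi$; then $1/\phi'\sim\xi/r$ and your heuristic about $\xi\,\partial_\xi K_0$ becomes internally consistent.
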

\begin{proof}
Using \eqref{hatW0}, \eqref{W21def} and making the change of variable $\frac{y}{1+y}=x$, we obtain
\begin{multline}\label{hatW210 eq1}
\widehat{W}_{2,1}^{0}(ir)=
\int_{0}^{1}
\DU\left(\frac{1-x}{x}\right)\Phi_k\left(x\right)
\left(\frac{x}{1-x}\right)^{-1/2-ir}
\HyGI\left(\frac{1}{2}+ir,\frac{1}{2}+ir,1+2ir;\frac{x-1}{x}\right)\\\times\frac{dx}{x(1-x)}.
\end{multline}
Now we can replace $\HyGI\left(\cdot\right)$ in \eqref{hatW210 eq1} by its asymptotic expansion
given by \eqref{W02F1 asympt}. It is enough to investigate only the contribution of the main term:
\begin{equation}\label{W02F1 asymptMT}
\HyGI\left(\frac{1}{2}+ir,\frac{1}{2}+ir,1+2ir;\frac{-(1-x)}{x}\right)\sim\frac{(1-x)^{1/4}}{\sqrt{|r|+1}}
\left(\frac{x}{1-x}\right)^{\frac{1}{4}+ir}\left(\frac{1+\sqrt{x}}{\sqrt{1-x}}\right)^{-2ir},
\end{equation}
since all other terms are of the same shape and are smaller in absolute value. To estimate the contribution of the error term in \eqref{W02F1 asympt} we also apply
\eqref{Phik0 LGest0}, \eqref{Phik0 LGest1}, thus obtaining
\begin{equation}\label{hatW210 eq2}
\frac{1}{(k|r|)^A}+\frac{1}{|r|^A}
\int_{1-k^{-2+\epsilon}}^{1}\frac{dx}{(1-x)^{1/2-\epsilon}}\ll\frac{1}{k^{1-\epsilon}|r|^A},
\end{equation}
which is better than \eqref{hatW210 est1}. Using the notation \eqref{Yy def} one has
\begin{equation}\label{hatW210 eq3}
\widehat{W}_{2,1}^{0}(ir)\ll\frac{1}{k^{1-\epsilon}|r|^A}+\frac{1}{\sqrt{|r|}}
\int_{0}^{1}
\DU\left(\frac{1-x}{x}\right)\frac{\Y_k\left(x\right)}{x^{5/4}(1-x)}
\left(\frac{1+\sqrt{x}}{\sqrt{1-x}}\right)^{-2ir}dx.
\end{equation}
Integrating by parts twice, we infer
\begin{multline}\label{hatW210 eq4}
\widehat{W}_{2,1}^{0}(ir)\ll\frac{1}{k^{1-\epsilon}|r|^A}+\frac{1}{|r|^{5/2}}
\int_{0}^{1}\left(\frac{1+\sqrt{x}}{\sqrt{1-x}}\right)^{-2ir}
\frac{\partial}{\partial x}\Bigl(\frac{\Y_k(x)(1-x)}{x^{1/4}}\frac{\partial}{\partial x}\DU\left(\frac{1-x}{x}\right)\\+
\DU\left(\frac{1-x}{x}\right)\frac{\Y'_k(x)(1-x)}{x^{1/4}}+
\DU\left(\frac{1-x}{x}\right)\frac{\Y_k(x)(1-x)}{x^{5/4}}
\Bigr)dx.
\end{multline}
The derivative of $\DU\left(\frac{1-x}{x}\right)$ is nonzero only for $x_0/(1+x_0)<x<1/(1+x_0)$,
in which case $\Y_k(x)$ and its derivative are \eqref{Phik0 LGest0} negligible. Furthermore, all powers of $x$ can be estimated by constants since we integrate over
$x_0/(1+x_0)<x<1$. Hence
\begin{multline}\label{hatW210 eq5}
\widehat{W}_{2,1}^{0}(ir)\ll\frac{1}{k^{1-\epsilon}|r|^A}+\frac{1}{|r|^{5/2}}
\int_{0}^{1}\DU\left(\frac{1-x}{x}\right)\left(\frac{1+\sqrt{x}}{\sqrt{1-x}}\right)^{-2ir}\\\times
\Bigl(\frac{\Y''_k(x)(1-x)}{x^{1/4}}+\frac{\Y'_k(x)}{x^{1/4}}+\frac{\Y_k(x)}{x^{5/4}}
\Bigr)dx.
\end{multline}
Consider the summand with  $\Y'_k(x)$. Integrating by parts, we obtain  that it is bounded by
\begin{equation}\label{hatW210 eq6}
\frac{1}{|r|^{7/2}}
\int_{0}^{1}\DU\left(\frac{1-x}{x}\right)\left(\frac{1+\sqrt{x}}{\sqrt{1-x}}\right)^{-2ir}
\Bigl(\Y''_k(x)(1-x)x^{1/4}+\Y'_k(x)x^{1/4}+\frac{\Y'_k(x)(1-x)}{x^{3/4}}
\Bigr)dx.
\end{equation}
Therefore,
\begin{multline}\label{hatW210 eq7}
\widehat{W}_{2,1}^{0}(ir)\ll\frac{1}{k^{1-\epsilon}|r|^A}+\frac{1}{|r|^{5/2}}
\int_{0}^{1}\DU\left(\frac{1-x}{x}\right)\left(\frac{1+\sqrt{x}}{\sqrt{1-x}}\right)^{-2ir}\\\times
\Bigl(\frac{\Y''_k(x)(1-x)}{x^{1/4}}+\frac{\Y_k(x)}{x^{5/4}}
\Bigr)dx+\frac{1}{|r|^{7/2}}
\int_{0}^{1}\DU\left(\frac{1-x}{x}\right)\left(\frac{1+\sqrt{x}}{\sqrt{1-x}}\right)^{-2ir}
\Y'_k(x)x^{1/4}dx.
\end{multline}
In order to get rid of $\Y'_k(x)$, we integrate by parts once again
\begin{multline}\label{hatW210 eq8}
\frac{1}{|r|^{7/2}}
\int_{0}^{1}\DU\left(\frac{1-x}{x}\right)\left(\frac{1+\sqrt{x}}{\sqrt{1-x}}\right)^{-2ir}\Y'_k(x)x^{1/4}dx\\\ll
\frac{1}{|r|^{5/2}}
\int_{0}^{1}\DU\left(\frac{1-x}{x}\right)\left(\frac{1+\sqrt{x}}{\sqrt{1-x}}\right)^{-2ir}\frac{\Y_k(x)}{x^{1/4}(1-x)}dx.
\end{multline}
Substituting \eqref{hatW210 eq8}  into \eqref{hatW210 eq7} one has
\begin{multline}\label{hatW210 eq9}
\widehat{W}_{2,1}^{0}(ir)\ll\frac{1}{k^{1-\epsilon}|r|^A}+\frac{1}{|r|^{5/2}}
\int_{0}^{1}\DU\left(\frac{1-x}{x}\right)\left(\frac{1+\sqrt{x}}{\sqrt{1-x}}\right)^{-2ir}\\\times
\Bigl(\frac{\Y''_k(x)(1-x)}{x^{1/4}}+\frac{\Y_k(x)}{x^{1/4}(1-x)}
\Bigr)dx.
\end{multline}
Applying \eqref{ymdifeq} to replace $\Y''_k(x)$, we show that
\begin{equation}\label{hatW210 eq10}
\widehat{W}_{2,1}^{0}(ir)\ll\frac{1}{k^{1-\epsilon}|r|^A}+\frac{1}{|r|^{5/2}}
\int_{0}^{1}\DU\left(\frac{1-x}{x}\right)\left(\frac{1+\sqrt{x}}{\sqrt{1-x}}\right)^{-2ir}
\Bigl(\frac{k^2\Y_k(x)}{x^{9/4}}+\frac{\Y_k(x)}{x^{9/4}(1-x)}
\Bigr)dx.
\end{equation}
Finally, using \eqref{Phik0 LGest0} and \eqref{Phik0 LGest1}, we prove \eqref{hatW210 est1}.

In order to prove \eqref{EdEC21ZEROest}, we apply \eqref{hatW210 est1} together with the mean Lindel\"{o}f estimate (see \cite{Ivic}):
\begin{equation}\label{Lj4 meanLindelef}
\sum_{T<t_j<2T}\omega_jL_j(1/2)^4\ll T^{2+\epsilon},
\end{equation}
and  the upper bound on the eighth moment of the Riemann zeta function (see \cite[(4.3)]{Ivic}):
\begin{equation}\label{zeta8 est}
\int_T^{2T}|\zeta(1/2+it)|^8dt\ll T^{3/2+\epsilon}.
\end{equation}
The last estimate follows from the upper bound on the twelfth moment due to Heath-Brown \cite{HeathBr}, the mean Lindel\"{o}f estimate on the fourth moment and the Cauchy-Schwartz inequality:
\begin{equation}
\int_T^{2T}|\zeta(1/2+it)|^8dt\ll \left(\int_T^{2T}|\zeta(1/2+it)|^{12}dt\right)^{1/2}
\left(\int_T^{2T}|\zeta(1/2+it)|^4dt\right)^{1/2}\ll T^{3/2+\epsilon}.
\end{equation}

\end{proof}
We are left to investigate the behaviour of $\widehat{W}_{1,1}(t)$. This is the most important ingredient of the proof of Theorem \ref{thm:4thmomkuznetsov}.
Let $u=k-1/2$ and  for $r\le u$ let $\xi_0$ be such that
\begin{equation}\label{xi0 def}
\sin\xi_0=\frac{r}{u}.
\end{equation}
\begin{lem}\label{hatW011 ZERO-lem1}
For $r\gg k^{1+\epsilon}$ one has
\begin{equation}\label{hatW011 est1}
\widehat{W}_{1,1}^{0}(ir)\ll\frac{1}{k(r/k)^A}.
\end{equation}
For $\frac{u(1+\delta)}{\sqrt{1+x_0}}<r\ll k^{1+\epsilon}$ one has
\begin{equation}\label{hatW011 est2}
\widehat{W}_{1,1}^{0}(ir)\ll\frac{1}{k^A}.
\end{equation}
For $u^{\epsilon}\ll r<\frac{u(1+\delta)}{\sqrt{1+x_0}}$ one has
\begin{equation}\label{hatW011 asympt}
\widehat{W}_{1,1}^{0}(ir)=
\frac{c_0(r)e^{iw(r,u)}}{\sqrt{ur}(u^2-r^2)^{1/4}}\DU\left(\frac{r^2}{u^2-r^2}\right)+O\left(\frac{1}{ur^{3/2}}\right),
\end{equation}
where $|c_0(r)|\ll1$ and
\begin{equation}\label{w(r,u)def}
w(r,u)=-2u\arcsin\frac{r}{u}+2r\log\frac{r}{u+\sqrt{u^2-r^2}}.
\end{equation}
Furthermore,
\begin{equation}\label{hatW011 est0}
\widehat{W}_{1,1}^{0}(ir)\ll\frac{1}{k^{1/2}}.
\end{equation}
\end{lem}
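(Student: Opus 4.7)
The proof proceeds by substitution followed by stationary phase. Change variables by $y = \cot^2(\xi/2)$, so that $1/(1+y) = \sin^2(\xi/2)$, $\sqrt{1+y}+\sqrt{y} = \cot(\xi/4)$, and the cutoff $\eta(1/y)$ restricts $\xi$ to a bounded interval $(0, \xi_{\max})$ with $\sin(\xi_{\max}/2) = 1/\sqrt{1+x_0}$. Apply Lemma \ref{lem:W0 2F1} with $\alpha = \beta = 0$ to expand the hypergeometric factor, and combine with the Gamma ratio $\Gamma(1/2+ir)^2/\Gamma(1+2ir) \sim \sqrt{\pi/r}\, e^{-i\pi/4}\, 4^{-ir}$ obtained from Stirling and the duplication formula; the powers of $4^{\pm ir}$ cancel, and after some trigonometric simplification the integrand reduces (up to lower-order terms) to
\begin{equation*}
\sqrt{\pi/r}\, \phi_k(\sin^2(\xi/2))\, \cos^{1/2}(\xi/2)\, e^{-2ir\log\cot(\xi/4)}\, \eta(\tan^2(\xi/2))\, d\xi.
\end{equation*}

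Invoke next the Liouville--Green approximation for the ODE \eqref{Ydifeq} at $\alpha_1 = \alpha_2 = 0$, along the lines of \cite[Theorem 5.10]{BFJEMS}: on any compact subinterval of $(0,\pi)$ the function $\phi_k(\sin^2(\xi/2))$ is a linear combination of $(u\sin(\xi/2)\cos(\xi/2))^{-1/2}\, e^{\pm i u\xi}$ with smooth amplitudes of the form $1 + O(1/u)$. Substituting and collecting the algebraic weight, the integrand takes the form $(ur)^{-1/2}\, g(\xi)\, e^{i F_{\pm}(\xi)}$, where $g(\xi) = A_{\pm}(\xi)\, \eta(\tan^2(\xi/2))/\sqrt{\sin(\xi/2)}$ for smooth bounded $A_{\pm}$, and $F_{\pm}(\xi) = \pm u\xi - 2r\log\cot(\xi/4)$. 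Since $F_{\pm}'(\xi) = \pm u + r\csc(\xi/2)$, only the ``$-$'' choice admits a saddle point, located at $\sin(\xi/2) = r/u$, i.e.\ at $\xi = 2\xi_0$ with $\xi_0$ as in \eqref{xi0 def}. A direct computation gives $F_{-}(2\xi_0) = w(r,u)$ from \eqref{w(r,u)def} and $F_{-}''(2\xi_0) = -u\sqrt{u^2-r^2}/(2r)$.

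The four statements now follow by case analysis. When $r \gg k^{1+\epsilon}$, one has $|F_{\pm}'(\xi)| \asymp r$ uniformly on $\mathrm{supp}\,\eta$, and repeated integration by parts against the oscillatory factor produces the rapid-decay bound \eqref{hatW011 est1}. When $u(1+\delta)/\sqrt{1+x_0} < r \ll k^{1+\epsilon}$, the saddle $2\xi_0$ sits outside $\mathrm{supp}\,\eta$, so $|F_{\pm}'(\xi)| \gg u+r$ on the support, and a bounded number of integrations by parts yields \eqref{hatW011 est2}. When $u^{\epsilon} \ll r < u(1+\delta)/\sqrt{1+x_0}$ the saddle lies inside the support and Huxley's Lemma \ref{Lemma Huxley} applies with $\Theta_f \asymp u$, $\Omega_f \asymp 1$, $\Omega_g \asymp 1$, $\kappa \asymp 1$; the main-term computation $|g(2\xi_0)|\sqrt{2\pi/|F_{-}''(2\xi_0)|} = 2\sqrt{\pi}\,(u^2-r^2)^{-1/4}$ combines with the overall $(ur)^{-1/2}$ prefactor to produce \eqref{hatW011 asympt} with error $O((ur^{3/2})^{-1})$. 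Finally, \eqref{hatW011 est0} follows from \eqref{hatW011 asympt} when $r \gg u^{\epsilon}$, and otherwise directly from $|\phi_k| \ll k^{-1/2}$ (Lemma \ref{lem:phikest0}) together with the compact support of the integrand. The most delicate step is ensuring that the Liouville--Green expansion and its derivative bounds remain uniform as $2\xi_0 \to \xi_{\max}$, so that Huxley's hypotheses \eqref{Hux fg conditions} are satisfied uniformly in $r$ across the transition region $r \approx u/\sqrt{1+x_0}$.
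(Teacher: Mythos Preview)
Your overall strategy---apply Lemma~\ref{lem:W0 2F1} to the hypergeometric factor, replace $\phi_k$ by its oscillatory asymptotics, and run stationary phase---is exactly the paper's. However, several steps in your execution do not go through as written, and the difficulties are concentrated precisely at the endpoint you did \emph{not} flag.

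\textbf{The delicate endpoint is $\xi\to 0$, not $\xi\to\xi_{\max}$.} The saddle sits at $\sin(\xi_0)=r/u$, so for $r$ near the lower end $u^{\epsilon}$ of the range in \eqref{hatW011 asympt} one has $2\xi_0\asymp r/u\to 0$. Your Liouville--Green replacement $\phi_k(\sin^2(\xi/2))\approx (u\sin(\xi/2)\cos(\xi/2))^{-1/2}e^{\pm iu\xi}$ is valid only on compact subsets of $(0,\pi)$, equivalently for $u\xi\gg 1$; it breaks down near the turning point $\xi=0$. The paper therefore uses the \emph{uniform} Bessel approximation $\phi_k(\sin^2(\sqrt{\xi}/2))\sim c\,Y_0(u\sqrt{\xi})$ from \cite[Theorem~5.14]{BFJEMS}, then splits off the region $u\xi<u^{\epsilon}$ (where $Y_0$ is not yet oscillatory) and disposes of it by integrating by parts against the $r$-oscillation, before passing to the $e^{\pm 2iux}$ asymptotics of $Y_0$ on the remaining range.

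\textbf{Your Huxley parameters are inconsistent.} At the saddle one has $F_-''(2\xi_0)=-u\sqrt{u^2-r^2}/(2r)\asymp u^2/r$, whereas the hypothesis $f^{(2)}\ll \Theta_f/\Omega_f^2$ with your choice $\Theta_f\asymp u$, $\Omega_f\asymp 1$ forces $f''\ll u$; these are compatible only when $r\asymp u$. Likewise your $\kappa\asymp 1$ fails: the distance from the saddle to the left endpoint is $\asymp r/u$. The paper resolves this by inserting a further smooth cutoff $\beta(u\sin x/r)$ localizing to an interval of width $\asymp r/u$ around the saddle, on which the correct choices are $\Omega_f=\Omega_g=\kappa\asymp r/u$, $\Theta_f\asymp r$; the Huxley error terms then combine with the $1/r$ prefactor to give the stated $O((ur^{3/2})^{-1})$.

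\textbf{The case $r\gg k^{1+\epsilon}$.} Your claim that $|F_\pm'(\xi)|\asymp r$ uniformly on $\mathrm{supp}\,\eta$ is false near $\xi=0$, where $F_-'(\xi)=-u+r/\sin(\xi/2)\to+\infty$, and in any event the exponential asymptotics for $\phi_k$ are unavailable there. The paper instead works from the representation \eqref{hatW110 eq2} \emph{before} expanding $\phi_k$, integrates by parts repeatedly against the $r$-oscillatory factor $\bigl((1+\sqrt{1-x})/\sqrt{x}\bigr)^{-2ir}$, and uses the differential equation \eqref{Ykdifeq} to trade $\y_k''$ for $k^2\y_k/(x(1-x))$; this is what produces the factor $(k/r)^{2n}$ in \eqref{hatW011 est1}.
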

\begin{proof}
Using \eqref{hatW0}, \eqref{W21def} and making the change of variable $\frac{1}{1+y}=x$, we obtain
\begin{multline}\label{hatW110 eq1}
\widehat{W}_{1,1}^{0}(ir)=
\int_{0}^{1}
\DU\left(\frac{x}{1-x}\right)\phi_k\left(x\right)
\left(\frac{x}{1-x}\right)^{ir}\\\times
\HyGI\left(\frac{1}{2}+ir,\frac{1}{2}+ir,1+2ir;\frac{-x}{1-x}\right)\frac{dx}{\sqrt{x(1-x)}}.
\end{multline}
Estimating the integral trivially by absolute value with the use of Lemma \ref{lem:phikest0} and Lemma \ref{lem:W0 2F1}, we show that $\widehat{W}_{1,1}^{0}(ir)\ll k^{-1/2}$, thus proving \eqref{hatW011 est0}.

Next, we replace $\HyGI\left(\cdot\right)$ in \eqref{hatW110 eq1} by its asymptotic expansion
\eqref{W02F1 asympt}. Again it is enough to investigate only the contribution of the main term
\begin{multline}\label{W02F1 asymptMTv2}
\HyGI\left(\frac{1}{2}+ir,\frac{1}{2}+ir,1+2ir;\frac{-x}{1-x}\right)\sim
\frac{(1-x)^{1/4}}{\sqrt{r}}
\left(\frac{x}{1-x}\right)^{-ir}\left(\frac{1+\sqrt{1-x}}{\sqrt{x}}\right)^{-2ir},
\end{multline}
since all other terms are of the same shape and are smaller in absolute value. To estimate the contribution  of the error term in \eqref{W02F1 asympt} to \eqref{hatW110 eq1} we apply Lemma \ref{lem:phikest0} , getting $O(k^{-1/2}r^{-A}).$ Substituting \eqref{W02F1 asymptMTv2} into  \eqref{hatW110 eq1} and using the definition \eqref{Yy def}, we infer
\begin{equation}\label{hatW110 eq2}
\widehat{W}_{1,1}^{0}(ir)\ll k^{-1/2}r^{-A}+
\frac{1}{\sqrt{r}}\int_{0}^{1}
\DU\left(\frac{x}{1-x}\right)\left(\frac{1+\sqrt{1-x}}{\sqrt{x}}\right)^{-2ir}\frac{\y_k(x)dx}{(1-x)^{3/4}x}.
\end{equation}
Integration by parts repeated $n$ times yields the upper bound:
\begin{multline}
\widehat{W}_{1,1}^{0}(ir)\ll k^{-1/2}r^{-A}+
\frac{1}{r^{1/2+2n}}\int_{0}^{1}
D^n\left(\DU\left(\frac{x}{1-x}\right)\frac{\y_k(x)}{(1-x)^{3/4}x}\right)
\left(\frac{1+\sqrt{1-x}}{\sqrt{x}}\right)^{-2ir}dx,
\end{multline}
where $D(f(x))=\frac{\partial}{\partial x}(xf(x)\sqrt{1-x})$. In order to study the integral above, we apply \eqref{Ykdifeq} in a way that avoids high derivatives of $\y_k(x)$. More precisely, every time we obtain $\y_k''(x)$, we replace it using \eqref{Ykdifeq} so that  we finally get a combination of $\y_k(x)$ and $\y'_k(x)$. In order not to deal with $\y'_k(x)$ we integrate by parts "back", obtaining only $\y_k(x)$.  It turns out that the worst case scenario is when on each step of integration by parts we take the derivative of $\y_k(x)$ and later replace $\y_k''(x)$ by $k^2\y_k(x)/(x(1-x))$. In this case, we  show that
\begin{multline}\label{hatW110 eq4}
\widehat{W}_{1,1}^{0}(ir)\ll k^{-1/2}r^{-A}+
\frac{k^{2n}}{r^{1/2+2n}}\int_{0}^{1}
\DU\left(\frac{x}{1-x}\right)|\y_k(x)|x^{n-1}dx\ll
k^{-1/2}r^{-A}+\frac{k^{2n-1/2}}{r^{1/2+2n}}\\ \ll\frac{1}{k(r/k)^{1/2+2n}},
\end{multline}
thus proving \eqref{hatW011 est1}.

Now let us consider the case $k^{\epsilon}\ll r\ll k^{1+\epsilon}$. Making in \eqref{hatW110 eq1} the change of variable $x=\sin^2\frac{\sqrt{\xi}}{2}$, we obtain
\begin{multline}\label{hatW110 eq5}
\widehat{W}_{1,1}^{0}(ir)=
\int_{0}^{\pi^2}
\DU\left(\tan^2\frac{\sqrt{\xi}}{2}\right)\phi_k\left(\sin^2\frac{\sqrt{\xi}}{2}\right)
\left(\tan\frac{\sqrt{\xi}}{2}\right)^{2ir}\\\times
\HyGI\left(\frac{1}{2}+ir,\frac{1}{2}+ir,1+2ir;-\tan^2\frac{\sqrt{\xi}}{2}\right)\frac{d\xi}{2\sqrt{\xi}}.
\end{multline}
Next, we write asymptotic expansion for the functions $\HyGI\left(\cdot\right)$  (see  \eqref{W02F1 asympt}) and  $\phi_k\left(\sin^2\frac{\sqrt{\xi}}{2}\right)$
(see \cite[Theorem 5.14]{BFJEMS}). This
shows that the leading term in the asymptotic expansion of $\widehat{W}_{1,1}^{0}(ir)$ is given by
\begin{multline}\label{hatW110 eq6}
\frac{-\pi 4^{ir}\Gamma^2(1/2+ir)}{\Gamma(1+2ir)}\int_{0}^{\pi^2}
\DU\left(\tan^2\frac{\sqrt{\xi}}{2}\right)\frac{\cos^{1/2}\frac{\sqrt{\xi}}{2}}{\xi^{1/4}\sin^{1/2}\sqrt{\xi}}Y_0(u\sqrt{\xi})
\left(\frac{\sin\frac{\sqrt{\xi}}{2}}{1+\cos\frac{\sqrt{\xi}}{2}}\right)^{2ir}d\xi\\=
\frac{c_0(r)}{\sqrt{r}}\int_{0}^{\pi/2}\DU\left(\tan^2x\right)Y_0(2ux)(\tan\frac{x}{2})^{2ir}\frac{x^{1/2}dx}{\sqrt{\sin x}}.
\end{multline}
We remark that while further transforming  the integral on the right-hand side of \eqref{hatW110 eq6}, some constants will occur. For the sake of simplicity, we absorb all these constants in $c_0(r)$ without changing the notation.

Now we can split smoothly the last integral into two parts:  first one over $ux<u^{\epsilon}$ and the second  one over $ux>u^{\epsilon}$. To this end, we use the relation
\begin{equation}
\DU(u^{1-\epsilon}x)+\DU(u^{-1+\epsilon}x^{-1})=1.
\end{equation}

Consider the first integral. Note that in this case $\DU\left(\tan^2x\right)=1$.
Let $D(f(x))=\frac{\partial}{\partial x}\left(f(x)\sin x\right)$.  Integrating by parts, one has
\begin{multline}\label{hatW110 eq7}
\frac{c_0(r)}{\sqrt{r}}\int_{0}^{\pi/2}\DU(u^{1-\epsilon}x)Y_0(2ux)(\tan\frac{x}{2})^{2ir}\frac{x^{1/2}dx}{\sqrt{\sin x}}=\\=
\frac{c_0(r)}{r^{1/2+n}}\int_{0}^{\pi/2}D^n\left(\DU(u^{1-\epsilon}x)\frac{x^{1/2}Y_0(2ux)}{\sqrt{\sin x}}\right)
(\tan\frac{x}{2})^{2ir}dx\ll\frac{k^{\epsilon}}{kr^{1/2+n}}.
\end{multline}

Consider the second integral. In this  case, the argument of the Bessel function  is sufficiently large so that we can write its asymptotic expansion. Once again we can consider only the contribution of the main term given by
\begin{equation}\label{hatW110 eq8}
\frac{c_0(r)}{\sqrt{ru}}\sum_{\pm}\int_{0}^{\pi/2}\DU\left(\tan^2x\right)\DU\left(\frac{u^{\epsilon}}{ux}\right)e^{iw_{\pm}(x)}\frac{dx}{\sqrt{\sin x}},
\end{equation}
\begin{equation}\label{hatW110 wpm def}
w_{\pm}(x)=\pm2ux+2r\log\left(\tan\frac{x}{2}\right).
\end{equation}

Note that there is no saddle point in the plus case, and therefore, the derivative $w'_{+}(x)$ is sufficiently large. Integrating by parts, we show that this integral is negligible:
\begin{multline}\label{hatW110 eq9}
\frac{c_0(r)}{\sqrt{ru}}\int_{0}^{\pi/2}\DU\left(\tan^2x\right)\DU\left(\frac{u^{\epsilon}}{ux}\right)e^{iw_{+}(x)}\frac{dx}{\sqrt{\sin x}}\\\ll
\frac{1}{\sqrt{ru}}\int_{0}^{\pi/2}D^n\left(\DU\left(\frac{u^{\epsilon}}{ux}\right)\frac{\DU\left(\tan^2x\right)}{\sqrt{\sin x}}\right)dx\ll
\frac{1}{r^{1/2+n}\sqrt{u}},
\end{multline}
where $D(f(x))=\frac{\partial}{\partial x}\left(f(x)\frac{\sin x}{r+u\sin x}\right).$ 

Since $w'_{-}(x)=-2u+\frac{2r}{\sin x}$,  this function has the saddle point
$\xi_0$, which is defined as $\sin\xi_0=\frac{r}{u}.$  This point belongs to the interval of integration only if
\begin{equation}
\DU\left(\tan^2\xi_0\right)\DU\left(\frac{u^{\epsilon}}{u\xi_0}\right)=\DU\left(\frac{r^2}{u^2-r^2}\right)\neq0.
\end{equation}
Note that  $\DU\left(\frac{u^{\epsilon}}{u\xi_0}\right)=1$ because of the restriction  $r\gg k^{\epsilon}$. Therefore, if $r>\frac{u(1+\delta)}{\sqrt{1+x_0}}$ (see \eqref{DU def}), the saddle point is located outside the interval of integration, and  $$w'_{-}(x)=\frac{2r-2u\sin x}{\sin x}\gg\frac{u}{\sin x}.$$ As a result, integration by parts  yields the estimate $O(r^{-1/2}u^{-1/2-n})$ (similarly to \eqref{hatW110 eq9}). This completes the proof of \eqref{hatW011 est2}.

We are left to investigate the case $u^{\epsilon}\ll r\le\frac{u(1+\delta)}{\sqrt{1+x_0}}$. First, we introduce a smooth partition of unity in the integral \eqref{hatW110 eq8} for the purpose of localizing a saddle point. Accordingly, let $\beta(y)$ be a smooth characteristic function of some interval $(b_1,b_2)$ such that $b_1<1<b_2$ and
\begin{equation}
\beta\left(\frac{u\sin x}{r}\right)\DU\left(\tan^2x\right)\DU\left(\frac{u^{\epsilon}}{ux}\right)\neq0
\end{equation}
for $c_1<x<c_2$ with $c_2-c_1\gg r/u.$ In the remaining integral with $1-\beta\left(\frac{u\sin x}{r}\right)$ we have a good lower bound on
$w'_{-}(x)=\frac{2r-2u\sin x}{\sin x}\gg\frac{r}{\sin x}$, and thus integration by parts works and yields the estimate $O(r^{-1/2-n}u^{-1/2})$. It is left to study the integral:
\begin{equation}\label{hatW110 eq10}
I=\frac{c_0(r)}{r}\int_{0}^{\pi/2}\DU\left(\tan^2x\right)\DU\left(\frac{u^{\epsilon}}{ux}\right)\beta\left(\frac{u\sin x}{r}\right)e^{iw_{-}(x)}
\sqrt{\frac{r}{u\sin x}}dx.
\end{equation}
Since $\beta(u\sin(x)/r)$ localizes $x$ approximately near the point $r/u$, and  $r\gg k^{\epsilon}$, it is possible to replace  $\DU\left(\frac{u^{\epsilon}}{ux}\right)$ by 1.
To obtain an asymptotic  expansion of the integral \eqref{hatW110 eq10}, we apply Lemma \ref{Lemma Huxley} with
\begin{equation}\label{hatW110 eq11}
\Omega_f=\Omega_g=\kappa\asymp\frac{r}{u},\quad
\Theta_f\asymp r.
\end{equation}
Consequently, we show that
\begin{equation}\label{hatW110 eq12}
I=\frac{c_0(r)}{r\sqrt{|w''_{-}(\xi_0)|}}
\DU\left(\tan^2\xi_0\right)e^{iw_{-}(\xi_0)}+O\left(\frac{1}{ur^{3/2}}\right).
\end{equation}
Note that when $r$ is close to $\frac{u}{\sqrt{1+x_0}}$, the saddle  point $\xi_0$ is close to the point where $\DU(\tan^2x)$ becomes 0. To overcome this difficulty, one can simply use the relation $\DU(\tan^2x)=1-\DU(\cot^2x)$ and evaluate asymptotically two integrals. Combining the results, we again obtain \eqref{hatW110 eq12}.

Using
\begin{equation}\label{w2derxi0}
w_{-}(\xi_0)=-2u\arcsin\frac{r}{u}+2r\log\frac{r}{u+\sqrt{u^2-r^2}},\quad
|w''_{-}(\xi_0)|=\frac{2u}{r}\sqrt{u^2-r^2}
\end{equation}
and \eqref{hatW110 eq12}, we finally conclude the  proof of \eqref{hatW011 asympt}.
\end{proof}

\begin{lem}\label{EdEcW11 lem1}
Let $u=k-1/2$. Then the following asymptotic formulas hold:
\begin{equation}\label{EdW11 asympt}
\term_{d}^{1}=
\sum_{t_j} \omega_jL^4_j\left(\frac{1}{2}\right)\W_{1,1}(t_j,u)
+O\left(\frac{1}{u^{1/2-\epsilon}}\right),
\end{equation}
\begin{equation}\label{Ec0W11 asympt}
\term_{c}^{1}=
\frac{1}{\pi}\int_{-\infty}^{\infty}
\frac{|\zeta\left(\frac{1}{2}+it\right)|^8}{|\zeta(1+2it)|^2}\W_{1,1}(t,u)dt+O\left(\frac{1}{u^{1/2-\epsilon}}\right),
\end{equation}
where
\begin{equation}\label{W11 asympt def}
\W_{1,1}(r,u)=\frac{c_0(r)e^{iw(r,u)}+c_0(-r)e^{-iw(r,u)}}{\sqrt{ur}(u^2-r^2)^{1/4}}\DU\left(\frac{r^2}{u^2-r^2}\right),
\end{equation}
$|c_0(\pm r)|\ll1$ and $w(r,u)$ is given by \eqref{w(r,u)def}.
\end{lem}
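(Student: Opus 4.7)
The plan is to specialize Lemma \ref{Ed11-lem} and Lemma \ref{Ec11-lem1} to the case $\alpha_1=\alpha_2=\alpha_3=\alpha_4=0$ and then plug in the asymptotic expansion of $\widehat{W}_{1,1}^{0}(\pm ir)$ obtained in Lemma \ref{hatW011 ZERO-lem1}. The first step is to simplify the weight $\widehat{W}_{1,1}^{+}+\varepsilon_{j}\widehat{W}_{1,1}^{-}$ evaluated at $\alpha=\beta=0$. Since odd Maass forms satisfy $L_j(1/2)=0$, the four--fold product $L_j^{4}(1/2)$ kills the odd contribution and we only need to treat $\varepsilon_{j}=+1$. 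When $\alpha=\beta=0$ the formulas \eqref{hatW+}, \eqref{hatW-} collapse to
\begin{equation}
(\widehat{W}_{1,1}^{+}+\widehat{W}_{1,1}^{-})(0,0;r)=\frac{\pi}{4}\bigl(\widehat{W}_{1,1}^{0}(ir)+\widehat{W}_{1,1}^{0}(-ir)\bigr)+O\Bigl(\frac{1}{\sinh(\pi r)}\bigl(|\widehat{W}_{1,1}^{0}(ir)|+|\widehat{W}_{1,1}^{0}(-ir)|\bigr)\Bigr),
\end{equation}
and the exponentially small remainder is absorbed into the final error by Lemma \ref{hatW011 ZERO-lem1}. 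An analogous formula applies along the critical line in \eqref{Ec011def}.

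Next I would split the $t_j$--range (respectively the $t$--range in the zeta integral) into three pieces guided by Lemma \ref{hatW011 ZERO-lem1}: the tail $r\gg k^{1+\epsilon}$, the transition window $\frac{u(1+\delta)}{\sqrt{1+x_0}}<r\ll k^{1+\epsilon}$, and the main region $u^{\epsilon}\ll r<\frac{u(1+\delta)}{\sqrt{1+x_0}}$. In the first two regions Lemma \ref{hatW011 ZERO-lem1} gives saving of the form $(r/k)^{-A}$ or $k^{-A}$, so after estimating the $L$--function sums trivially via \eqref{Lj4 meanLindelef} and \eqref{zeta8 est} these regions are negligible. For the short segment $r\ll u^{\epsilon}$ the estimate \eqref{hatW011 est0} gives $\widehat{W}_{1,1}^{0}(ir)\ll k^{-1/2}$, and the mean Lindel\"of bound applied dyadically already yields $O(u^{-1/2+\epsilon})$. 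In the main region I would insert the asymptotic \eqref{hatW011 asympt}; combining $\widehat{W}^{0}(ir)+\widehat{W}^{0}(-ir)$ reconstructs precisely the definition \eqref{W11 asympt def} of $\W_{1,1}(r,u)$, the constants $(-1)^{k}$, $\pi/4$ and the prefactor $2(2\pi)^{-1}$ being absorbed into the $c_0(\pm r)$.

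Finally I must control the error produced by substituting \eqref{hatW011 asympt}. Each term contributes $O\bigl((ur^{3/2})^{-1}\bigr)$, and summing in dyadic windows $T<t_j<2T$ with $T\ll u$ using \eqref{Lj4 meanLindelef} gives
\begin{equation}
\sum_{T<t_j<2T}\omega_j L_j^{4}(1/2)\cdot\frac{1}{uT^{3/2}}\ll\frac{T^{1/2+\epsilon}}{u},
\end{equation}
whose maximum over $T\ll u$ is $O(u^{-1/2+\epsilon})$. The parallel computation for $\term_{c}^{1}$ uses \eqref{zeta8 est} in place of \eqref{Lj4 meanLindelef} (and the standard bound $1/|\zeta(1+2it)|\ll\log^{2/3}|t|$) and yields the same error.

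The main obstacle I expect is the careful bookkeeping across the three $r$--ranges: one has to verify that the cutoff function $\DU\bigl(\frac{r^2}{u^2-r^2}\bigr)$ inherited from the asymptotic formula \eqref{hatW011 asympt} matches the regime where the saddle point $\xi_0$ actually sits inside the $\DU$--support, and that nothing is lost at the transition point $r\asymp u/\sqrt{1+x_0}$ where the saddle drifts out of the integration interval. A subsidiary delicate point is keeping track of the implicit constant $c_0(r)$ (which encodes the $(-1)^k$, the Stirling remainder of $\Gamma^{2}(1/2+ir)/\Gamma(1+2ir)$, and the leading constant of the Bessel asymptotic) so that the two halves $c_0(r)e^{iw(r,u)}$ and $c_0(-r)e^{-iw(r,u)}$ combine correctly in \eqref{W11 asympt def}; everything else is routine dyadic summation against the known mean values.
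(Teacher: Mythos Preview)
Your proposal is correct and follows essentially the same route as the paper: specialize the formulas \eqref{Ed11}, \eqref{Ec011def}, \eqref{hatW+}, \eqref{hatW-} at $\alpha_j=0$, use $L_j(1/2)=0$ for odd forms to drop the $\varepsilon_j\widehat W^{-}$ subtlety, and then feed in Lemma \ref{hatW011 ZERO-lem1} together with the mean-value bounds \eqref{Lj4 meanLindelef} and \eqref{zeta8 est}. The paper's own proof is terser (it does not spell out the dyadic splitting or the transition-window bookkeeping you flag as the main obstacle), but the logic is the same and your extra care at the boundary $r\asymp u/\sqrt{1+x_0}$ is already built into the $\DU$-factor appearing in \eqref{hatW011 asympt}.
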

\begin{proof}
It follows from \eqref{Ed11}, \eqref{hatW+ + hatW-} and Lemma \ref{hatW011 ZERO-lem1} that
\begin{equation}\label{EdW11 asympt1}
\term_{d}^{1}=
\frac{(-1)^k}{4}\sum_{t_j\ll k} \omega_jL^4_j\left(\frac{1}{2}\right)
\varepsilon_j\left(\widehat{W}_{1,1}^{0}(t_j)+\widehat{W}_{1,1}^{0}(-t_j)\right)+O(k^{-1/2}).
\end{equation}
Note that $L_j(1/2)=0$ if $\varepsilon_j=-1$, and therefore,  we can omit $\varepsilon_j$ in \eqref{EdW11 asympt1}. Now \eqref{EdW11 asympt} is a consequence of Lemma \ref{hatW011 ZERO-lem1} and \eqref{Lj4 meanLindelef}.

Using \eqref{Ec011def},  \eqref{ZE11def}, \eqref{Z4 def}, \eqref{hatW+ + hatW-} and Lemma \ref{hatW011 ZERO-lem1}, we obtain
\begin{equation}\label{Ec011def2}
\term_{c}^{1}=
\frac{(-1)^k}{4\pi}
\int_{-\infty}^{\infty}
\frac{|\zeta\left(\frac{1}{2}+it\right)|^8}{|\zeta(1+2it)|^2}
\left(\widehat{W}_{1,1}^{0}(it)+\widehat{W}_{1,1}^{0}(-it)\right)dt+O\left(\frac{1}{u^{1/2-\epsilon}}\right).
\end{equation}
Finally, \eqref{Ec0W11 asympt} follows from Lemma \ref{hatW011 ZERO-lem1} and \eqref{zeta8 est}.
\end{proof}


The next result is derived using \eqref{4mom result}, \eqref{EhW11W12 est}, \eqref{EdEC21ZEROest} and Lemma \ref{EdEcW11 lem1}.
Note, that $L_f(1/2)=0$ if $k$ is odd. Therefore, we can assume that $k\equiv 0\pmod{2}$.
\begin{thm}\label{thm:4thmomzero}
For $k\equiv 0\pmod{2}$ one has
\begin{equation}\label{4thmom zero}
\sum_{f\in H_{2k}} \omega_fL^4_f(1/2)=P_6(\log k)+2\term_{d}^{1}+2\term_{c}^{1}+O(k^{-1+\epsilon}),
\end{equation}
where  $\term_{d}^{1}$ and $\term_{c}^{1}$ are given by \eqref{EdW11 asympt} and  \eqref{Ec0W11 asympt}, respectively.
\end{thm}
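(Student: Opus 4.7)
\textbf{Proof plan for Theorem \ref{thm:4thmomzero}.}

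The idea is to specialize \eqref{4mom result} to $\alpha_1=\alpha_2=\alpha_3=\alpha_4=0$ and combine it with the size estimates already assembled in Lemmas \ref{EhZERO-lem1}--\ref{EdEcW11 lem1}. First I would observe that $X_k(0)=1$ by \eqref{Xdef}, and that $(-1)^k=1$ by the parity hypothesis $k\equiv 0\Mod{2}$. Hence the four blocks appearing on the right-hand side of \eqref{4mom result} coalesce in pairs, yielding
\begin{equation}
\M_4(0,0,0,0)=\MT_4(0,0,0,0)+2\bigl(\term_{d}^{1}+\term_{h}^{1}+\term_{c}^{1}\bigr)+2\bigl(\term_{d}^{2}+\term_{h}^{2}+\term_{c}^{2}\bigr)+O(k^{-1+\epsilon}).
\end{equation}

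Next I would discard the terms we already control. By the bound \eqref{EhW11W12 est} in Lemma \ref{EhZERO-lem1} the holomorphic contributions satisfy $\term_{h}^{1},\term_{h}^{2}\ll k^{-2}$. By \eqref{EdEC21ZEROest} in Lemma \ref{Ed21ZERO-lem1} we have $\term_{d}^{2}+\term_{c}^{2}\ll k^{-1+\epsilon}$. Substituting these into the above identity leaves
\begin{equation}
\M_4(0,0,0,0)=\MT_4(0,0,0,0)+2\term_{d}^{1}+2\term_{c}^{1}+O(k^{-1+\epsilon}),
\end{equation}
which matches the claimed shape provided $\MT_4(0,0,0,0)=P_6(\log k)$.

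The remaining step, which I expect to be the main obstacle, is the evaluation of the CFKRS main term $\MT_4(\AlphaVec)$ as $\AlphaVec\to\mathbf{0}$. The sum \eqref{MT(a1,a2,a3,a4) conj} has sixteen summands, each with up to $\binom{4}{2}=6$ simple poles arising from factors $\zeta(1+\epsilon_i\alpha_i+\epsilon_j\alpha_j)$, yet the full symmetrized combination is holomorphic at the origin and equals a polynomial of degree $6$ in $\log k$. To see this I would use \eqref{Xapprox} to write
\begin{equation}
\Co(\epsilon_1\alpha_1,\epsilon_2\alpha_2,\epsilon_3\alpha_3,\epsilon_4\alpha_4)=\prod_{\epsilon_j=-1}\frac{(2\pi)^{2\alpha_j}}{k^{2\alpha_j}}\bigl(1+O(k^{-1})\bigr),
\end{equation}
so that $\MT_4(\AlphaVec)$ becomes, up to a multiplicative factor $1+O(k^{-1})$, a sum over $\boldsymbol{\epsilon}\in\{\pm 1\}^4$ of terms $N^{-\sum_j(\epsilon_j-1)\alpha_j}\ZF(\boldsymbol{\epsilon}\AlphaVec)$ with $N=k/(2\pi)$. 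This is precisely the standard CFKRS expression whose contour-integral representation
\begin{equation}
\MT_4(\mathbf{0})=\frac{1}{(2\pi i)^4}\oint\cdots\oint \frac{N^{-\sum(\epsilon_j-1)\alpha_j/2}\,G(\AlphaVec)\,\Delta(\AlphaVec^2)^2}{\prod_j \alpha_j^{2k+1}}\,d\AlphaVec
\end{equation}
with $G$ a local factor regular at the origin produces a polynomial in $\log N$ of degree $6$ after residue computation. The resulting polynomial, transported back to $\log k$, is $P_6(\log k)$. The error $O(k^{-1})$ from \eqref{Xapprox} contributes at most $k^{-1}(\log k)^6$, which is absorbed into $O(k^{-1+\epsilon})$. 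Combining these three ingredients gives \eqref{4thmom zero}; apart from the holomorphy/residue analysis of $\MT_4(\AlphaVec)$ at the origin, every step is a direct invocation of earlier results.
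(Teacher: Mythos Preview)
Your proposal is correct and follows essentially the same route as the paper: the paper's own proof consists of the single sentence ``derived using \eqref{4mom result}, \eqref{EhW11W12 est}, \eqref{EdEC21ZEROest} and Lemma \ref{EdEcW11 lem1}'', which is precisely the combination of ingredients you invoke. One caveat: in your displayed contour-integral representation of $\MT_4(\mathbf{0})$ the exponent $2k+1$ in $\prod_j\alpha_j^{2k+1}$ is garbled, since $k$ is the weight here; for an orthogonal family with four shifts the CFKRS residue formula has $\prod_j\alpha_j^{7}$ in the denominator, and you should fix this before using the formula (the paper itself does not spell out this step, treating $\MT_4(\mathbf{0})=P_6(\log k)$ as standard).
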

\begin{rem}\label{rem:4thmomzero}
Note that $\W_{1,1}(r,u)=0$ if $|r|\gg k.$ Thus we are summing in \eqref{EdW11 asympt} over $|t_j|\ll k$, and the integral in \eqref{Ec0W11 asympt} is taken over $|t|\ll k$. Using  \eqref{Lj4 meanLindelef} and \eqref{zeta8 est}, we prove the following trivial estimates:
\begin{equation}\label{EdEc trivial est}
\term_{d}^{1}\ll k^{1/2+\epsilon}, \quad \term_{c}^{1}\ll k^{\epsilon}.
\end{equation}
\end{rem}
\begin{proof}[Proof of Theorem \ref{thm:4thmomkuznetsov}]
To prove \eqref{4thmom Kuznetsov} we consider the average of \eqref{4thmom zero}. In order to estimate
\begin{equation}\label{EdEc average1}
\sum_{\substack{L<k\le 2L\\k\equiv 0\pmod{2} }}\term_{d}^{1}+\term_{c}^{1},
\end{equation}
we  just need to obtain an upper bound for the average of $\W_{1,1}(r,u)$ over $k.$ Using \eqref{w(r,u)def}, we show that
\begin{equation}\label{w(r,u)deriv}
\frac{\partial}{\partial u}w(r,u)=-2\arcsin\frac{r}{u}.
\end{equation}
Therefore, $$|\frac{\partial}{\partial k}w(r,2k-1/2)|<2\pi-\delta_0.$$ Using the generalization of \cite[Lemma 4.10]{Tit} given in \cite[Lemma 1 sec.3]{Kar},
we have
\begin{equation}\label{W11 average1}
\sum_{\substack{L/2<k\le L}}\W_{1,1}(r,2k-1/2)=\int_{L/2}^{L}\W_{1,1}(r,2x-1/2)dx+O\left(\frac{L^{\epsilon}}{L\sqrt{r}}\right).
\end{equation}
To estimate the integral we apply the first derivative test \cite[Lemma 4.3]{Tit}, proving that
\begin{equation}\label{W11 average2}
\sum_{\substack{L/2<k\le L}}\W_{1,1}(r,2k-1/2)\ll\frac{1}{r^{3/2}}+\frac{L^{\epsilon}}{L\sqrt{r}}.
\end{equation}
Finally, using \eqref{W11 average2}, \eqref{Lj4 meanLindelef} and \eqref{zeta8 est}, we infer
\begin{equation}\label{EdEc average2}
\sum_{\substack{L<k\le 2L\\k\equiv 0\pmod{2} }}
\term_{d}^{1}+
\term_{c}^{1}\ll L^{1/2+\epsilon}.
\end{equation}
\end{proof}


\begin{proof}[Proof of Theorem \ref{cor:4thmom short}]
It is required to estimate the average of  $\term_{d}^{1}+\term_{c}^{1}$. To this end, it is enough to evaluate the average of $\W_{1,1}(r,u)$, namely
\begin{equation}\label{W11 smoothaverage1}
\sum_{k}\exp\left(-\left(\frac{k-K}{G}\right)^2\right)\W_{1,1}(r,2k-1/2).
\end{equation}
To do this, we argue in the same way as in \cite[Lemma 7.3]{BFJEMS}, starting with an application of the Poisson summation formula:
\begin{equation}
\sum_{k}\exp\left(-\left(\frac{k-K}{G}\right)^2\right)\W_{1,1}(r,2k-1/2)=\sum_{m \in \Z}P(m),\quad
\end{equation}
\begin{equation}\label{P(m)def}
P(m)=\int_{-\infty}^{\infty}\exp\left(-\left(\frac{y-K}{G}\right)^2\right)\W_{1,1}(r,2y-1/2)e(-my)dy.
\end{equation}
Making the change of variable $y=K+Gx$ and using \eqref{W11 asympt def}, one has
\begin{equation}\label{P(m)est1}
P(m)\ll\frac{G}{\sqrt{r}}\int_{-\infty}^{\infty}\exp\left(-x^2\right)\DU\left(\frac{r^2}{4(K+Gx)^2-r^2}\right)
\frac{e^{ig(r,K,m,x)}dx}{\sqrt{K+Gx}(4(K+Gx)^2-r^2)^{1/4}},
\end{equation}
where $g(r,K,G,m,x)=w(r,2K+2Gx-1/2)-2\pi mGx.$ It follows from \eqref{w(r,u)deriv} that for $m\neq0$
\begin{equation}\label{g geriv}
\frac{\partial}{\partial x}g(r,K,G,m,x)=-4G\arcsin\frac{r}{2(K+Gx)-1/2}-2\pi mG\gg mG.
\end{equation}
Therefore, multiple integration by parts leads to the bound $P(m)\ll(mG)^{-A}.$  In the case $m=0$,  one has
\begin{equation}\label{g geriv2}
\frac{\partial}{\partial x}g(r,K,G,0,x)=-4G\arcsin\frac{r}{2(K+Gx)-1/2}\gg \frac{Gr}{K},
\end{equation}
and multiple integration by parts yields $P(0)\ll (Gr/K)^{-A}.$  Hence for $r\gg K^{1+\epsilon}/G$ one has $P(0)\ll K^{-A}.$ In the case when
$r\ll K^{1+\epsilon}/G$, we have the trivial bound $P(0)\ll G/\sqrt{rK^2}$.  Using \eqref{EdW11 asympt}  and applying the estimates proved above, we show that
\begin{equation}\label{Ed averagesmooth1}
\sum_{k}\exp\left(-\left(\frac{k-K}{G}\right)^2\right)\term_{d}^{1}\ll
\frac{G}{K}\sum_{t_j\ll K^{1+\epsilon}/G}\frac{\omega_j}{\sqrt{t_j}}L^4_j\left(\frac{1}{2}\right)+\frac{G^{1+\epsilon}}{K^{1/2}}\ll
\frac{K^{1/2+\epsilon}}{G^{1/2}}+\frac{G^{1+\epsilon}}{K^{1/2}}.
\end{equation}
Arguing in the same way with the use of \eqref{zeta8 est}, we conclude that
\begin{equation}\label{Ed averagesmooth2}
\sum_{k}\exp\left(-\left(\frac{k-K}{G}\right)^2\right)\term_{c}^{1}\ll K^{\epsilon}+\frac{G^{1+\epsilon}}{K^{1/2}}.
\end{equation}
Finally, we remark that the estimates \eqref{Ed averagesmooth1} and \eqref{Ed averagesmooth2} are better than the averaged main term if  $G\gg K^{1/3+\epsilon}$. This completes the proof of  \eqref{4thmom short}.
\end{proof}


\nocite{*}

\end{document}